\numberwithin{equation}{section}
\numberwithin{subsection}{section}
\newenvironment{enumeratea} {\begin{enumerate}[\upshape (a)]} {\end{enumerate}}
\newenvironment{enumeratei} {\begin{enumerate}[\upshape (i)]} {\end{enumerate}}
\newenvironment{enumerate1} {\begin{enumerate}[\upshape (1)]} {\end{enumerate}}
\theoremstyle{definition}
\newtheorem{defn}[subsection]{Definition}
\newtheorem{theorem}[subsection]{Theorem}
\newtheorem{lemma}[subsection]{Lemma}
\newtheorem{example}[subsection]{Example}
\newtheorem{thm}[subsection]{Theorem}
\newtheorem{cor}[subsection]{Corollary}
\newtheorem{lem}[subsection]{Lemma}
\newtheorem{prop}[subsection]{Proposition}
\newtheorem*{props}{Main Properties}
\newtheorem*{defns}{Definition}
\theoremstyle{remark}
\newtheorem{remark}[subsection]{Remark}
\newcommand\cA{\mathcal{A}}  \newcommand\cD{\mathcal{D}} \newcommand\cE{\mathcal{E}}\newcommand\cF{\mathcal{F}} \newcommand\cG{\mathcal{G}}\newcommand\cI{\mathcal{I}}\newcommand\cJ{\mathcal{J}}\newcommand\cL{\mathcal{L}}\newcommand\cM{\mathcal{M}}\newcommand\cO{\mathcal{O}}\newcommand\cS{\mathcal{S}}\newcommand\cU{\mathcal{U}}\newcommand\cW{\mathcal{W}}\newcommand\cX{\mathcal{X}}\newcommand\cY{\mathcal{Y}}\newcommand\cZ{\mathcal{Z}}
    \newcommand\fY{\mathfrak{Y}}\newcommand\fZ{\mathfrak{Z}}
\renewcommand\AA{\mathbb{A}}\newcommand\CC{\mathbb{C}}\newcommand\GG{\mathbb{G}}\newcommand\PP{\mathbb{P}}\newcommand\QQ{\mathbb{Q}}\newcommand\RR{\mathbb{R}}
\newcommand\ZZ{\mathbb{Z}}
\newcommand\id{\mathrm{id}}
\newcommand\aut{\operatorname{Aut}}
\newcommand\spec{\operatorname{Spec}}
\newcommand\arr{\ifinner\to\else\longrightarrow\fi}
\newcommand\hookarr{\hookrightarrow}
\newcommand\im{\operatorname{im}}
\newcommand{\qcoh}{\operatorname{QCoh}}
\newcommand{\coh}{\operatorname{Coh}}
\newcommand{\catsch}[1]{(\mathrm{Sch}/#1)}
\renewcommand{\setminus}{\smallsetminus}
\renewcommand{\v}{\operatorname{v}}
\newcommand{\red}{_{\mathrm{red}}}
\newcommand{\ps}{\operatorname{ps}}
\newcommand{\Supp}{\operatorname{Supp}}
\newcommand{\tors}{\operatorname{tors}}
\newcommand{\what}{\widehat}
\newcommand{\charr}{\operatorname{char}}
\newcommand{\reg}{\operatorname{reg}}
\renewcommand{\ss}{\operatorname{ss}}
\newcommand{\s}{\operatorname{s}}
\newcommand{\rrarrows}{\rightrightarrows}
\newcommand{\coker}{\operatorname{coker}}
\newcommand{\Hom}{\operatorname{Hom}}
\newcommand{\Pic}{\operatorname{Pic}}
\newcommand{\sSpec}{\operatorname{\mathcal{S}pec}}
\newcommand{\sProj}{\operatorname{\mathcal{P}roj}}
\newcommand{\Proj}{\operatorname{Proj}}
\newcommand{\Aut}{\operatorname{Aut}}
\newcommand{\Et}{\operatorname{Et}}
\newcommand{\oh}{\cO}
\newcommand{\Ab}{\operatorname{Ab}}
\newcommand{\Spec}{\spec}
\newcommand{\tensor} {\otimes}
\newcommand{\pre}{\operatorname{pre}}
\newcommand{\Ob}{\operatorname{Ob}}
\newcommand{\Sch}{\operatorname{Sch}}
\newcommand{\iso}{\stackrel{\sim}{\arr}}
\newcommand{\dlim}{{\displaystyle \lim_{\longrightarrow}}\,}
\newcommand{\GL}{\operatorname{GL}}
\newcommand{\PGL}{\operatorname{PGL}}
\newcommand{\SL}{\operatorname{SL}}
\newcommand{\UT}{\operatorname{UT}}
\newcommand{\mapsonto} {\twoheadrightarrow}
\renewcommand{\bar}{\overline}
\newcommand{\gr}{\operatorname{gr}}
\newcommand{\Quot}{\operatorname{Quot}}
\newcommand{\wt} {\widetilde}
\newcommand{\bpf}{\noindent {\em Proof.  }}
\newcommand{\epf}{\qed \vspace{+10pt}}
\begin{document}

\title{Good Moduli Spaces for Artin stacks}

\author[Alper]{Jarod Alper}

\address[Alper]{Department of Mathematics\\
Columbia University\\
2990 Broadway\\
New York, NY 10027\\
U.S.A.}
\email{jarod@math.columbia.edu}


\maketitle

\begin{abstract}
We develop the theory of associating moduli spaces with nice geometric properties to arbitrary Artin stacks generalizing Mumford's geometric invariant theory and tame stacks.
\end{abstract}

\tableofcontents

\section{Introduction}

\subsection{Background}

David Mumford developed geometric invariant theory (GIT) (\cite{git}) as a means to construct moduli spaces.  Mumford used GIT to construct the moduli space of curves and rigidified abelian varieties.  Since its introduction, GIT has been used widely in the construction of other moduli spaces.  For instance,  GIT has been used by Seshadri (\cite{seshadri_bundles}), Gieseker (\cite{gieseker_surface_bundles}), Maruyama (\cite{maruyama_sheaves}), and Simpson (\cite{simpson_sheaves}) to construct various moduli spaces of bundles and sheaves over a variety as well as by Caporaso in \cite{caporaso} to construct a compactification of the universal Picard variety over the moduli space of stable curves.  In addition to being a main tool in moduli theory, GIT has had numerous applications throughout algebraic and symplectic geometry.

Mumford's geometric invariant theory attempts to construct moduli spaces (e.g., of curves) by showing that the moduli space is a quotient of a bigger space parameterizing additional information (e.g. a curve together with an embedding into a fixed projective space) by a reductive group.  In \cite{git}, Mumford systematically developed the theory for constructing quotients of schemes by reductive groups.  The property of reductivity is essential in both the construction of the quotient and the geometric properties that the quotient inherits.  

It might be argued though that the GIT approach to constructing moduli spaces is not entirely natural since one must make a choice of the additional information to parameterize.  Furthermore, a moduli problem may not necessarily be expressed as a quotient. 

Algebraic stacks, introduced by Deligne and Mumford in \cite{deligne-mumford} and generalized by Artin in \cite{artin_versal}, are now widely regarded as the right geometric incarnation of a moduli problem.  A useful technique to study stacks has been to associate to it a coarse moduli space, which retains much of the geometry of the moduli problem, and to study this space to infer geometric properties of the moduli problem.  It has long been folklore (\cite{faltings-chai}) that algebraic stacks with finite inertia (in particular, separated Deligne-Mumford stacks) admit coarse moduli spaces.  Keel and Mori gave a precise construction of the coarse moduli space in \cite{keel-mori}.  Recently, Abramovich, Olsson and Vistoli in \cite{tame} have distinguished a subclass of stacks with finite inertia, called \emph{tame stacks}, whose coarse moduli space has additional desired properties such as its formation commutes with arbitrary base change.  Artin stacks without finite inertia rarely admit coarse moduli spaces.  

We develop an intrinsic theory for associating algebraic spaces to arbitrary Artin stacks which encapsulates and generalizes geometric invariant theory.  If one considers moduli problems of objects with infinite stabilizers (e.g. vector bundles), one must allow a point in the associated space to correspond to potentially multiple non-isomorphic objects (e.g. $S$-equivalent vector bundles) violating one of the defining properties of a coarse moduli space.  However, one might still hope for nice geometric and uniqueness properties similar to those enjoyed by GIT quotients.  

\subsection{Good moduli spaces and their properties}

We define the notion of a \emph{good moduli space} (see Definition \ref{defn_good}) which was inspired by and generalizes the existing notions of a good GIT quotient and tame stack (see \cite{tame}).   The definition is strikingly simple:  
\begin{defns} A quasi-compact morphism $\phi: \cX \arr Y$ from an Artin stack to an algebraic space is a \emph{good moduli space} if 
\begin{enumerate1}
\item The push-forward functor on quasi-coherent sheaves is exact.
\item The induced morphism on sheaves $\oh_Y \arr \phi_* \oh_{\cX}$ is an isomorphism.   
\end{enumerate1}
\end{defns}

A good moduli space $\phi: \cX \arr Y$ has a large number of desirable geometric properties.  We summarize the main properties below:
\begin{props}  If $\phi: \cX \arr Y$ is a good moduli space, then:
\begin{enumerate1}
\item $\phi$ is surjective and universally closed (in particular, $Y$ has the quotient topology).
\item Two geometric points $x_1$ and $x_2 \in \cX(k)$ are identified in $Y$ if and only if their closures $\overline{\{x_1\}}$ and $\overline{\{x_2\}}$ in $\cX \times_{\ZZ} k$ intersect.
\item If $Y' \arr Y$ is any morphism of algebraic spaces, then $\phi_{Y'}: \cX \times_Y Y' \arr Y'$ is a good moduli space.
\item If $\cX$ is locally noetherian, then $\phi$ is universal for maps to algebraic spaces.
\item If $\cX$ is finite type over an excellent scheme $S$, then $Y$ is finite type over $S$.
\item If $\cX$ is locally noetherian, a vector bundle $\cF$ on $\cX$ is the pullback of a vector bundle on $Y$ if and only if for every geometric point $x: \Spec k \arr \cX$ with closed image, the $G_x$-representation $\cF \tensor k$ is trivial.
\end{enumerate1}
\end{props}

\subsection{Outline of results in paper}

Good moduli spaces appear to be the correct notion characterizing morphisms from stacks arising from quotients by \emph{linearly reductive groups} to the quotient scheme.   For instance, if $G$ is a linearly reductive group scheme acting linearly on $X \subseteq \PP^n$ over a field $k$, then the morphism from the quotient stack of the semi-stable locucs to the good GIT quotient $[X^{\ss} / G] \arr X^{\ss} // G$ is a good moduli space.

 In section \ref{git_section}, it is shown that this theory encapsulates the geometric invariant theory of quotients by linearly reductive groups.  In fact, most of the results from \cite[Chapters 0-1]{git} carry over to this much more general framework and we argue that the proofs, while similar, are cleaner.  In particular, in section \ref{stability_section} we introduce the notion of stable and semi-stable points with respect to a line bundle which gives an answer to \cite[Question 19.2.3]{lmb}.

With a locally noetherian hypothesis, we prove that good moduli spaces are universal for maps to arbitrary algebraic spaces (see Theorem \ref{universal_thm}) and, in particular, establish that good moduli spaces are unique.  In the classical GIT setting, this implies the essential result that good GIT quotients are unique in the category of algebraic spaces, an enlarged category where quotients by free finite group actions always exist.

Our approach has the advantage that it is no more difficult to work over an arbitrary base scheme.  This offers a different approach to relative geometric invariant theory than provided by Seshadri in \cite{seshadri_reductivity}, which characterizes quotients by \emph{reductive group schemes}.  We note that geometric invariant theory is valid for non-reduced groups schemes as well as non-affine group schemes.

We show that GIT quotients behave well in flat families (see Corollary \ref{git_flat_families}).  We give a quick proof and generalization (see Theorem \ref{matsushima}) of a result often credited to Matsushima stating that a subgroup of a linearly reductive group is linearly reductive if and only if the quotient is affine.  In section \ref{vector_bundle_section}, we give a characterization of vector bundles on an Artin stack that descend to a good moduli space which generalizes a result of Knop, Kraft and Vust.  Furthermore, in section \ref{topology_section}, we give conditions for when a closed point of an Artin stack admitting a good moduli space is in the closure of a point with lower dimensional stabilizer. 

Although formulated differently by Hilbert in 1900, the modern interpretation of Hilbert's 14th problem asks when the algebra of invariants $A^G$ is finitely generated over $k$ for the dual action of a linear algebraic group $G$ on a $k$-algebra $A$.  The question has a negative answer in general (see \cite{nagata_hilbert14}) but when $G$ is linearly reductive over a field, $A^G$ is finitely generated.  We prove the natural generalization to good moduli spaces (see Theorem \ref{good_thm}(\ref{good_finite_type})): if $\cX \arr Y$ is a good moduli space with $\cX$ finite type over an excellent scheme $S$, then $Y$ is finite type over $S$.  We stress that the proof follows directly from a very mild generalization of a result due to Fogarty in \cite{fogarty2} concerning the finite generation of certain subrings.

Finally, we note here the following trivial but yet interesting consequence of the definition of a good moduli space:  if $\pi: \cX \arr S$ is an Artin stack over a noetherian base $S$ admits a good moduli space $\phi: \cX \arr Y$ with $Y$ proper over $S$, then for any coherent sheaf $\cF$ on $\cX$, the higher direct image sheaves $R^i \pi_* \cF$ are finite.

\subsection{Summary}

The main contribution of this paper is the introduction and systematic development of the theory of good moduli spaces.  Many of the fundamental results of Mumford's geometric invariant theory are generalized.  The proofs of the main properties of good moduli spaces are quite natural except for the proof that good moduli spaces are finite type over the base (Theorem \ref{good_thm} (\ref{good_finite_type})) and the proof that good moduli spaces are unique in the category of algebraic spaces  (Theorem \ref{universal_thm}). 

We give a number of examples of moduli stacks in section \ref{good_examples} admitting good moduli spaces including the moduli of semi-stable sheaves and alternative compactifications of $\cM_g$.   In each of these examples, the existence of the good moduli space was already known due to a GIT stability computation, which is often quite involved.  It would be ideal to have a more direct and intrinsic approach to construct the moduli spaces much in the flavor of Keel and Mori's construction of a coarse moduli space.  For instance, in constructing moduli interpretations of log canonical models of $\bar{M}_g$, the GIT stability computation seems beyond our current means.

One could hope that there is a topological criterion for an Artin stack (eg. a weak valuative criterion) together with an algebraic condition (eg. closed points should have a linearly reductive stabilizers) which would guarantee existence of a good moduli space.  Alternatively, one could ask whether the Hilbert-Mumford numerical criterion \cite[Theorem 2.1]{git} can be generalized to this setting to give an intrinsic and practical criteria for the existence of good moduli spaces. 

 It is also interesting to develop a characteristic $p$ generalization of the theory of good moduli spaces characterizing quotients by \emph{reductive} group schemes.  The author is currently considering these questions.

\subsection*{Acknowledgments}   This paper consists of part of my Ph.D. thesis.  I am indebted to my advisor Ravi Vakil for not only teaching me algebraic geometry but for his encouragement to pursue this project.  I would also like to thank Max Lieblich and Martin Olsson for many inspiring conversations and helpful suggestions.  This work has benefited greatly from conversations with Johan de Jong, Andrew Kresch, David Rydh, Jason Starr and Angelo Vistoli.  

\section{Notation}
Throughout this paper, all schemes are assumed quasi-separated.  Let $S$ be a scheme.  Recall that an \emph{algebraic space} over $S$ is a sheaf of sets $X$ on $\catsch{S}_{\Et}$ such that
\begin{enumeratei}
\item $\Delta_{X/S}: X \arr X \times_S X$ is representable by schemes and quasi-compact.
\item There exists an \'etale, surjective map $U \arr X$ where $U$ is a scheme.
\end{enumeratei}
An \emph{Artin stack} over $S$ is a stack $\cX$ over $\catsch{S}_{\Et}$ such that
\begin{enumeratei}
\item $\Delta_{\cX/S}: \cX \arr \cX \times_S \cX$ is representable, separated and quasi-compact.
\item There exists a smooth, surjective map $X \arr \cX$ where $X$ is an algebraic space.
\end{enumeratei}

All schemes, algebraic spaces, Artin stacks and their morphisms will be over a fixed base scheme $S$.  $\qcoh(\cX)$ will denote the category of quasi-coherent $\oh_{\cX}$-modules for an Artin stack $\cX$ while $\coh(\cX)$ will denote the category of coherent $\oh_{\cX}$-modules for a locally noetherian Artin stack $\cX$.

A morphism $f: X \arr Y$ of schemes is \emph{fppf} if $f$ is locally of finite presentation and faithfully flat.  A morphism $f$ is \emph{fpqc} (see  \cite[Section 2.3.2]{vistoli_fga})) if $f$ is faithfully flat and every quasi-compact open subset of $Y$ is the image of a quasi-compact open subset of $X$.  This notion includes both fppf morphisms as well as faithfully flat and quasi-compact morphisms.

We will say $G \arr S$ is an \emph{fppf group scheme} (resp. an \emph{fppf group algebraic space}) if $G \arr S$ is a faithfully flat, finitely presented and separated group scheme (resp. group algebraic space).  If $G \arr S$ is an fppf group algebraic space, then $BG = [S/G]$ is an Artin stack.  The quasi-compactness and separatedness of $G \arr S$ guarantee that the diagonal of $BG \arr S$ has the same property. 

\subsection{Stabilizers and orbits}
Given an Artin stack $\cX$ a morphism $f: T \arr \cX$ from a scheme $T$, we define the \emph{stabilizer} of $f$, denoted by $G_f$ or $\aut_{\cX(T)}(f)$, as the fiber product
 $$ \xymatrix{
 G_f \ar[r] \ar[d]	& T \ar[d]^{f,f}\\
 \cX \ar[r]^{\Delta_{\cX/S}}		& \cX \times_S \cX				. 		
}$$

\begin{prop} \label{BG_map}
There is a natural monomorphism of stacks $BG_f \arr \cX \times_S T$.   If $G_f \arr T$ is an fppf group algebraic space, then this is a morphism of Artin stacks.  \end{prop}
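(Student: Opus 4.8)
\subsection*{Proof proposal}

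The plan is to realize $G_f$ as the automorphism group of a tautological section, to produce the morphism $\Phi\colon BG_f\arr \cX\times_S T$ by descent along the presentation $T\arr BG_f$, and then to verify the monomorphism property by comparing groupoid presentations. First I would recast the definition of $G_f$. Set $\tau=(f,\id_T)\colon T\arr \cX\times_S T$, a section of the second projection $p_2\colon \cX\times_S T\arr T$. Unwinding the fiber product defining $G_f$ and using that $T$ is an algebraic space --- so that the only automorphism of the $T$\dash component of $\tau|_U$ is the identity --- one gets, for every $g\colon U\arr T$, that the automorphisms of $\tau\circ g$ in $(\cX\times_S T)(U)$ coincide with $\aut_{\cX(U)}(f\circ g)$; hence $G_f=\aut_{(\cX\times_S T)(T)}(\tau)$ as group algebraic spaces over $T$. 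In particular there is a universal automorphism $\theta\colon \tau|_{G_f}\iso\tau|_{G_f}$ in $(\cX\times_S T)(G_f)$, the one corresponding to $\id_{G_f}$ under this identification. (Equivalently, one may work with $f\colon T\arr\cX$ itself, using that for a stack $\cZ$ over $T$ the groupoid of $T$\dash morphisms $\cZ\arr\cX\times_S T$ is that of $S$\dash morphisms $\cZ\arr\cX$.)

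Next I would construct $\Phi$. Writing $BG_f=[T/G_f]=[G_f\rightrightarrows T]$, where $G_f$ acts trivially on $T$ so that both legs of the presentation groupoid are the structure map $G_f\arr T$, a morphism from this groupoid to the stack $\cX\times_S T$ amounts to a morphism $T\arr \cX\times_S T$ together with a $2$\dash isomorphism over $G_f$ between its two pullbacks, satisfying the cocycle condition over $G_f\times_T G_f$. I take $\tau$ together with $\theta$; the cocycle condition is precisely the statement that $\theta$ is multiplicative for the group law of $G_f$, which holds by the very definitions of $\theta$ and of the group structure on $G_f$. Since $\cX\times_S T$ is a stack for the fppf topology and $T\arr BG_f$ is an fppf epimorphism, this descends to the desired $\Phi\colon BG_f\arr\cX\times_S T$ with $\Phi|_T\cong\tau$; the construction is manifestly compatible with base change in $(T,f)$, which is the sense of ``natural.''

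For the monomorphism claim I would compare fiber products over $T$. On one hand $T\times_{BG_f}T\cong G_f$, this being the presentation groupoid of $[T/G_f]$ (with its two structure maps and group multiplication). On the other hand $T\times_{\cX\times_S T}T$, with both maps equal to $\tau$, classifies pairs $u_1,u_2\colon U\arr T$ together with an isomorphism $\tau\circ u_1\cong\tau\circ u_2$; comparing $T$\dash components forces $u_1=u_2$, leaving an automorphism of $f\circ u_1$, so this fiber product is again $G_f$, and one checks directly that under $\Phi$ its source, target and composition maps match those of the presentation groupoid. Thus $\Phi$ exhibits $BG_f$ as the fppf stackification of the relation groupoid $T\times_{\cX\times_S T}T\rightrightarrows T$ inside $\cX\times_S T$, which is exactly the assertion that $\Phi$ is a monomorphism, i.e.\ that $\Delta_{BG_f/\cX\times_S T}$ is an isomorphism.

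Finally, when $G_f\arr T$ is an fppf group algebraic space, $BG_f=[T/G_f]$ is an Artin stack --- this is the observation recorded in the Notation section, applied with base $T$ in place of $S$ --- and $\cX\times_S T$ is an Artin stack, being the fiber product of an Artin stack with a scheme over $S$; hence $\Phi$ is a morphism of Artin stacks. The one step that genuinely requires care is the verification in the third paragraph that the relation groupoid $T\times_{\cX\times_S T}T\rightrightarrows T$ coincides, as a groupoid object, with the action groupoid $G_f\rightrightarrows T$; this is where one uses that $T$ is an algebraic space (so that $\tau$ is a \emph{section} of $p_2$, killing all data beyond the pure automorphism), and everything else is formal descent.
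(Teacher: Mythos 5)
Your proposal is correct and is essentially the paper's own proof in different packaging: the paper defines a morphism from the prestack $BG_f^{\pre}$ (objects: maps to the base; automorphisms: maps to $G_f$) to $\cX$ using the identification $G_f \cong \Aut_{\cX}(f)$ and then stackifies, which is precisely your descent datum $(\tau,\theta)$ on the presentation $G_f \rightrightarrows T$, and its step ``$F$ is a monomorphism of prestacks, hence so is $I$'' is exactly the content of your comparison of the relation groupoids $T\times_{BG_f}T$ and $T\times_{\cX\times_S T}T$. The only cosmetic differences are that the paper first reduces to $T=S$ by noting that the stabilizer of $(f,\id_T)\colon T\arr\cX\times_S T$ is $G_f$, and that it leaves the Artin-stack clause implicit, whereas you treat it explicitly.
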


\bpf Since the stabilizer of $(f, id): T \arr \cX \times_S T$ is $G_f$, we may assume $f: S \arr \cX$.  Let $BG_f^{\pre} \arr (\Sch/S)$ be the prestack defined as the category with objects $(Y \arr S)$ and morphisms $(Y \arr S) \arr (Y' \arr S)$ consisting of the data of morphisms $Y \arr Y'$ and $Y \arr G_f$.  Define a morphism of prestacks 
$$\begin{aligned}
	F: BG_f^{\pre} \arr & \cX 
\end{aligned}$$
by $F(g) = f \circ g \in \cX(Y)$ for $(\stackrel{g}{Y \arr S}) \in \Ob BG_f^{\pre}(Y)$.  It suffices to define the image of morphisms over the identity.  If $\alpha \in \Aut_{BG_f^{\pre}(Y)}(Y \stackrel{g}{\arr} S)$ corresponds to a morphism $\tilde \alpha: Y \arr G_f$, then since $ \Aut_{\cX(Y)} (f \circ g) \cong G_f \times_S Y$, we can define $F(\alpha) = (\tilde \alpha, \id) \in G_f \times_S Y (Y)$.  Since $BG_f$ is the stackification of $BG_f^{\pre}$, $F$ induces a natural map $I: BG_f \arr \cX$.   Since $F$ is a monomorphism, so is $I$. \epf

If $f: T \arr \cX$ is a morphism with $T$ a scheme and $X \arr \cX$ is an fppf presentation, we define the orbit of $f$ in $X$, denoted $o_X(f)$, set-theoretically as the image of $X \times_{\cX} T \arr X \times_S T$.  If $G_f \arr T$ is an fppf group scheme, then the orbit inherits the scheme structure given by the cartesian diagram
$$\xymatrix{
o_X(f) \ar[r] \ar[d]	&	X \times_S T \ar[d] \\
BG_f	 \ar[r]		&	\cX \times_S T
}$$

\subsection{Points and residual gerbes}

There is a topological space associated to an Artin stack $\cX$ denoted by $|\cX|$ which is the set of equivalence classes of field valued points endowed with the Zariski topology (see \cite[Ch. 5]{lmb}).  Given a point $\xi \in |\cX|$, there is a canonical substack $\cG_{\xi}$ called the \emph{residual gerbe} and a monomorphism $\cG_{\xi} \arr \cX$.  Let $\bar \xi$ be sheaf attached to $\cG_{\xi}$ (ie. the sheafification of the presheaf of isomorphism classes $T \mapsto [\cG_{\xi}(T)]$) so that $\cG_{\xi} \arr \bar \xi$ is an fppf gerbe.

\begin{prop} (\cite[Thm. 11.3]{lmb}) \label{gerbe_map} 
If $\cX$ is locally noetherian Artin stack over $S$, then any point $\xi \in |\cX|$ is \emph{algebraic}.  That is,
\begin{enumeratei}
\item $\bar \xi \cong \spec k(\xi)$, for some field $k(\xi)$ called the \emph{residue field} of $\xi$.
\item $\cG_{\xi} \arr \cX$ is representable and, in particular, $\cG_{\xi}$ is an Artin stack.
\item $\cG_{\xi} \arr \spec k(\xi)$ is finite type. \epf
\end{enumeratei} 
\end{prop}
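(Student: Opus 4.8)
The plan is to prove this as in \cite[Thm.~11.3]{lmb}. The residual gerbe, if it exists as an algebraic stack with the asserted properties, is unique --- it is terminal among reduced algebraic stacks $\cZ$ carrying a monomorphism $\cZ \arr \cX$ with image $\{\xi\}$ --- so I would reduce to \emph{constructing} one such $\cZ$ that is moreover a gerbe over the spectrum of a field and of finite type over it, and then set $\cG_\xi = \cZ$ with $k(\xi)$ its residue field. First I would cut the problem down: residual gerbes depend only on $\cX\red$, and their formation is compatible with passage to an open substack meeting $\xi$ and to the reduced closed substack $\overline{\{\xi\}}\red$ (a closed immersion being a monomorphism); so I may assume $\cX$ is integral, quasi-compact and locally Noetherian with $\xi$ its generic point.

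The technical heart, which I would establish first, is the finiteness of the inertia: for locally Noetherian $\cX$ (under the standing hypotheses on $\Delta_{\cX/S}$) the morphism $\cI_\cX \arr \cX$ is of finite type. On an affine chart $U \arr \cX$ this comes down to the stabilizer group scheme of the groupoid $U \times_\cX U \rrarrows U$ being of finite type over $U$: it is the equalizer of the two projections to the (separated, since affine) scheme $U$, hence a closed subscheme of $U \times_\cX U$, and $U \times_\cX U$ is locally Noetherian, being smooth over the Noetherian scheme $U$, so the inclusion is of finite type. It follows that for any representative $x \colon \Spec K \arr \cX$ of a point, the stabilizer $G_x = \cI_\cX \times_\cX \Spec K$ is a separated, finite-type --- hence (being automatically flat over the field $K$) fppf --- group algebraic space over $K$; thus $BG_x$ is an Artin stack, equipped by Proposition \ref{BG_map} with a natural morphism $BG_x \arr \cX$.

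To construct $\cG_\xi$, I would take $x$ to be induced by the generic point $\eta$ of a component of $U$ dominating $\cX$. The expectation is that, because $\xi$ is the generic point of the integral stack $\cX$, the morphism $BG_x \arr \cX$ factors through a monomorphism $\cG_\xi \arr \cX$ with image $\{\xi\}$, where $\cG_\xi$ is obtained from $BG_x$ by descent along $\Spec k(\eta) \arr \Spec k(\xi)$ for an appropriate subfield $k(\xi) \subseteq k(\eta)$ --- equivalently, $\cG_\xi$ is the reduced \emph{image} of $x$, which genericity makes into a gerbe over a field. Granting this, I would verify the three conclusions: (i) $\bar\xi$, the fppf sheafification of $\cG_\xi$, is a reduced one-point algebraic space, hence $\Spec k(\xi)$; (ii) pulling back along $U$, $\cG_\xi \times_\cX U$ is the orbit $o_U(x)$, which is a scheme since $G_x$ is an fppf group scheme, whence $\cG_\xi \arr \cX$ is representable and $\cG_\xi$ is an Artin stack; (iii) $\cG_\xi \arr \Spec k(\xi)$ is an fppf gerbe banded by a form of the finite-type group $G_x$, hence of finite type.

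The hard part will be exactly the two points that the locally Noetherian hypothesis is there for. Finiteness is indispensable: residue fields of points of a locally Noetherian scheme need not be of finite type over the base, so one cannot just take $\Spec$ of a residue field, and conclusion (iii) rests entirely on the finiteness of $\cI_\cX \arr \cX$. And even granting that, pinning down the \emph{correct} residue field $k(\xi)$ --- i.e.\ producing the descent and the monomorphism rather than a mere morphism to $\cX$ --- is the delicate step, and is what forces the reductions to a quasi-compact open, to $\overline{\{\xi\}}\red$, and to the generic point of an integral stack. The remaining verifications (the map is a monomorphism onto $\{\xi\}$; representability via the orbit; the gerbe and finite-type statements) I expect to be formal, following from Proposition \ref{BG_map}, faithfully flat descent, and the discussion of orbits preceding the statement.
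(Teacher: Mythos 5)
The paper gives no argument for this proposition at all: it is quoted from \cite[Thm.~11.3]{lmb} and closed with an empty proof, so the only thing your sketch can be measured against is the argument of Laumon--Moret-Bailly that you announce you will follow. Your preliminary reductions (to $\cX$ reduced and quasi-compact with $|\cX|$ irreducible and $\xi$ its generic point) and the finiteness of the inertia are correct and are indeed the first steps of that proof; note only that the quasi-compactness part of ``$\cI_{\cX} \arr \cX$ is of finite type'' comes from the assumed quasi-compactness of $\Delta_{\cX/S}$, not from your affine chart, which by itself gives only ``locally of finite type.''

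The genuine gap is precisely the step you defer with ``the expectation is that \ldots'' and ``granting this'': you have no construction of the monomorphism $\cG_{\xi} \arr \cX$, of the field $k(\xi)$, or of a descent datum on $BG_x$ along $\Spec k(\eta) \arr \Spec k(\xi)$ --- there is no a priori candidate subfield of $k(\eta)$, and the mere morphism $BG_x \arr \cX$ of Proposition \ref{BG_map} need not factor through a gerbe over a field without further input. The missing idea in \cite{lmb} is the following mechanism: over the reduced stack with irreducible space, the finite-type inertia $\cI_{\cX} \arr \cX$ is flat over a dense open substack $\cU$ containing $\xi$ (generic flatness, using the locally noetherian hypothesis), and a stack whose inertia is flat and of finite presentation is a gerbe over its fppf sheafification, which is an algebraic space; the residual gerbe is then the fiber of $\cU \arr U$ over the point of $U$ lying below $\xi$, and $k(\xi)$ is the residue field of that point. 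This one construction simultaneously yields the monomorphism, the gerbe structure over $\Spec k(\xi)$, representability of $\cG_{\xi} \arr \cX$, and finite-typeness of $\cG_{\xi} \arr \Spec k(\xi)$; without it your verifications (i)--(iii) concern an object you have not produced. Two smaller points: a reduced one-point algebraic space is $\Spec$ of a field only by a nontrivial criterion (quasi-separatedness is used), and in (ii) you cannot appeal to the orbit $o_U(x)$ to prove representability, since that orbit is defined as the very fiber product whose representability is in question.
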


If $\cX$ is locally noetherian, $\xi \in |\cX|$ is locally closed (ie. it is closed in $|\cU|$ for some open substack $\cU \subseteq \cX$) if and only if $\cG_{\xi} \arr \cX$ is a locally closed immersion, and $\xi \in |\cX|$ is closed if and only if $\cG_{\xi} \arr \cX$ is a closed immersion.

If $\xi \in |\cX|$ is algebraic, then for any representative $x: \spec k \arr \cX$ of $\xi$, there is a factorization
\begin{equation} \label{BGgerbe}
\xymatrix{
\spec k \ar[r]	& BG_x \ar[r] \ar[d]	& \cG_{\xi} \ar[r] \ar[d]	&\cX\\
			& \spec k \ar[r]		& \spec k(\xi)
}
\end{equation}
where the square is cartesian.  Furthermore, there exists a representative $x: \spec k \arr \cX$ with $k(\xi) \hookarr k$ a finite extension.

Given an fppf presentation $X \arr \cX$, we define the \emph{orbit} of $\xi \in |\cX|$ in $X$, denoted by $O_X(\xi)$, as the fiber product
$$\xymatrix{
O_X(\xi) \ar[r] \ar[d]	& X \ar[d] \\
\cG_{\xi} \ar[r]		& \cX
}$$
Given a representative $x: \spec k \arr \cX$ of $\xi$, set-theoretically $O_X(\xi)$ is the image of $\spec k  \times_{\cX} X \arr X$. 
Let $R = X \times_{\cX} X \stackrel{ s,t}{\rrarrows} X$ be the groupoid representation. If $\tilde  x \in |X|$ is a lift of $x$, then $O_X(\xi) = s (t^{-1}(\tilde x))$ set-theoretically.

If $x: \Spec k \arr \cX$ is a geometric point, let $\xi: \Spec k \arr \cX \times_S k$.  Then $\cG_{\xi} = BG_x$, $k(\xi) = k$, and $o_X(x) = O_{X \times_S k}(x)$, which is the fiber product 
$$\xymatrix{
o_X(x) \ar[r] \ar[d]		& X \times_S k \ar[d]\\
BG_x \ar[r]			& \cX \times_S k
}$$

\begin{defn}  A geometric point $x: \Spec k \arr \cX$ has a \emph{closed orbit} if $BG_x \arr \cX \times_S k$ is a closed immersion. We will say that an Artin stack $\cX \arr S$ has \emph{closed orbits} if every geometric point has a closed orbit. \end{defn}

\begin{remark} If $p:X \arr \cX$ is an fppf presentation and $\cX$ is locally noetherian, then $x: \Spec k \arr \cX$ has closed orbit if and only if $o_X(x) \subseteq X \times_S k$ is closed and $\cX$ has closed orbits if and only if for every geometric point $x: \Spec k \arr X$, the orbit $o_X(p \circ x) \subseteq X \times_S k$ is closed.
\end{remark}

\section{Cohomologically affine morphisms}

In this section, we introduce a notion characterizing affineness for \emph{non-representable} morphisms of Artin stacks in terms of Serre's cohomological criterion.  Cohomologically affineness will be an essential property of the morphisms that we would like to study from Artin stacks to their good moduli spaces. 

\begin{defn}  A morphism $f: \cX \arr \cY$ of Artin stacks is \emph{cohomologically affine} if $f$ is quasi-compact and the functor
$$f_*: \qcoh(\cX) \arr \qcoh(\cY) $$
is exact.
\end{defn}

\begin{remark} Recall that we are assuming all morphisms to be quasi-separated.  If $f$ is quasi-compact, then by \cite[Lem. 6.5(i)]{Olsson-sheaves} $f_*$ preserves quasi-coherence. \end{remark}

\begin{prop} (Serre's criterion) \label{serre_crit}
A quasi-compact morphism $f: X \arr Y$ of algebraic spaces is affine if and only if it is cohomologically affine.
\end{prop}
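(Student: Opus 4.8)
The plan is to treat the two implications separately. That an affine morphism is cohomologically affine is essentially formal: affineness is local on the target, so after pulling back along an \'etale surjection $Y' \arr Y$ with $Y'$ affine — which preserves both conditions, since $f_*$ commutes with the flat base change $Y' \arr Y$ — we reduce to $Y = \Spec A$, in which case $X = \Spec B$ is an affine scheme and $f_* \colon \qcoh(X) \arr \qcoh(Y)$ is restriction of scalars along $A \arr B$, manifestly exact. The real content is the converse, which is a version of Serre's classical cohomological criterion for affineness.

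So suppose $f_*$ is exact. Again affineness descends along an \'etale surjection $Y' \arr Y$ with $Y'$ affine, and cohomological affineness passes to the base change $f' \colon X \times_Y Y' \arr Y'$ by flat base change for the higher direct images (which vanish because $f_*$ is exact); hence we may assume $Y = \Spec A$. Writing $B = \Gamma(X, \oh_X)$ and letting $g \colon X \arr \Spec B$ be the canonical morphism, the hypothesis says exactly that $\Gamma(X, -)$ is exact on $\qcoh(X)$; since $X$ is quasi-compact and quasi-separated, $\Gamma$ commutes with the localizations $X_h := g^{-1}(D(h))$ for $h \in B$, so $g_* \oh_X = \oh_{\Spec B}$. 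Consequently it suffices to show that $g$ is an affine morphism, for then $X \cong \sSpec_{\Spec B}(g_* \oh_X) = \Spec B$. As affineness of $g$ is local on $\Spec B$, it is enough to produce $h_1, \dots, h_n \in B$ with each $X_{h_i}$ affine and $(h_1, \dots, h_n) = B$; and once the $X_{h_i}$ are known to cover $X$, the equality of ideals is automatic, because $\Gamma$ (being exact) carries the surjection $\oh_X^{\oplus n} \xrightarrow{(h_i)} \oh_X$ to the surjection $B^{\oplus n} \arr (h_1, \dots, h_n)$, forcing the target to be all of $B$.

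Thus everything reduces to the following: for every closed point $x \in |X|$ there is $h \in \Gamma(X, \oh_X)$ with $x \in X_h$ and $X_h$ affine; the resulting $X_h$'s then cover $X$, since an open subset of a quasi-compact algebraic space containing all of its closed points is everything, and one extracts a finite subcover. For schemes this is Serre's argument: pick an affine open $U \ni x$, let $Z = X \setminus U$ with the reduced structure, and combine the exact sequences $0 \arr \mathcal I_Z \arr \oh_X \arr \oh_Z \arr 0$ and $0 \arr \mathcal I_{Z \cup \overline{\{x\}}} \arr \mathcal I_Z \arr \kappa(x) \arr 0$ with the vanishing of $H^1$ of every quasi-coherent ideal sheaf (a consequence of the exactness of $\Gamma$) to obtain $h \in \mathcal I_Z \subseteq \Gamma(X, \oh_X)$ with $h(x) \neq 0$, whence $X_h$ is a basic affine open of $U$.

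The main obstacle is making this last step work for an algebraic space rather than a scheme: a closed point of a quasi-separated algebraic space need not admit an affine — or even schematic — open neighborhood, so ``choose an affine open $U \ni x$'' is not available a priori (and cannot be, since the hypothesis must genuinely be used — a non-schematic quasi-separated algebraic space will fail to have exact $\Gamma$). One way around this is to first prove, from the same cohomological input, that $X$ is in fact a scheme — and indeed separated — arguing by induction along the schematic stratification of $X$; another is to invoke the known extension of Serre's criterion to algebraic spaces in the literature. Either way, once $X$ is known to be a scheme the cohomological bookkeeping is verbatim Serre's, so I would expect the write-up to isolate and dispatch the ``$X$ is a scheme'' step and then quote the classical argument.
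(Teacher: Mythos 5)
Your proposal is correct and, at the decisive point, coincides with the paper's proof: the paper handles this proposition purely by citation --- EGA (II.5.2.1, IV.1.7.17--18) for schemes, Knutson (III.2.5) for separated morphisms with $X$ locally noetherian (remarking that the separatedness hypothesis is not essential to his argument), and Rydh to remove the noetherian hypothesis --- which is exactly your second option of invoking the known extension of Serre's criterion to algebraic spaces. Your preliminary reductions (working \'etale-locally on $Y$, factoring through $\Spec \Gamma(X,\oh_X)$, Serre's classical ideal-sheaf argument) and your identification of the real obstacle --- a closed point of a quasi-separated algebraic space need not lie in a schematic open, so one must first know $X$ is a scheme --- are accurate; that scheme-ness step is precisely what the cited results supply, and the paper does not prove it either.
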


\bpf
\cite[II.5.2.1, IV1.7.17-18]{ega} handles the case of schemes.  In \cite[III.2.5]{Knutson}, Serre's criterion is proved for separated morphisms of algebraic spaces with $X$ locally noetherian.  It is straightforward to check that the separated hypothesis is not essential in Knutson's argument.  The noetherian hypothesis is removed in \cite{rydh_approx}. \epf

\begin{remark} Clearly, a morphism is cohomologically affine if and only if the higher direct images of quasi-coherent sheaves vanish.  However, this is not equivalent to the vanishing of the higher direct images of quasi-coherent sheaves of ideals.  For instance, let $G$ be a non-trivial semi-direct product $\AA^1 \rtimes \GG_m$ over a field $k$.  Since $G$ is not linearly reductive (see section \ref{sect_lin_red}), $BG \arr \Spec k$ is not cohomologically affine.  However,  one can compute that $H^i(BG, \oh_{BG}) = 0$ for $i > 0$. \end{remark}

The following proposition states that it is enough to check cohomologically affineness on coherent sheaves.

\begin{prop}  \label{coh_aff_coherence}
If $\cX$ is locally noetherian, then a quasi-compact morphism $f: \cX \arr \cY$ is cohomologically affine if and only if the functor $f_*: \coh(\cX) \arr \qcoh(\cY)$ is exact. \end{prop}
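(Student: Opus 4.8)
The plan is to reduce the exactness of $f_*$ on $\qcoh(\cX)$ to its exactness on $\coh(\cX)$ by writing every quasi-coherent sheaf as a filtered colimit of coherent subsheaves and showing that higher pushforward commutes with such colimits. Concretely, cohomological affineness of a quasi-compact morphism is equivalent (as noted in the remark following the definition) to the vanishing of $R^i f_*$ on quasi-coherent sheaves for $i>0$, so it suffices to prove that if $R^i f_* \cG = 0$ for every $\cG \in \coh(\cX)$ and every $i>0$, then $R^i f_* \cF = 0$ for every $\cF \in \qcoh(\cX)$.

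First I would recall the standard fact that on a locally noetherian Artin stack $\cX$ every quasi-coherent sheaf $\cF$ is the filtered colimit $\cF = \varinjlim_\lambda \cF_\lambda$ of its coherent subsheaves (this is \cite[Prop. 15.4]{lmb}, or can be deduced from the corresponding statement on a smooth presentation together with descent). Next I would argue that the higher direct image functors $R^i f_*$ commute with filtered colimits of quasi-coherent sheaves. Since $f$ is quasi-compact and (being a morphism of Artin stacks in the sense fixed in the Notation section) quasi-separated, this is the expected cohomological-commutation statement; one can check it smooth-locally on $\cY$, pull back to an fppf — in fact smooth — presentation of $\cX$, and use that on a quasi-compact quasi-separated algebraic space Čech cohomology with respect to a finite affine (étale-local) cover computes quasi-coherent cohomology and filtered colimits are exact and commute with finite limits, hence with the Čech complex. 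Granting these two facts, for $i>0$ we get
$$ R^i f_* \cF \;=\; R^i f_* \bigl(\varinjlim_\lambda \cF_\lambda\bigr) \;=\; \varinjlim_\lambda R^i f_* \cF_\lambda \;=\; 0, $$
since each $\cF_\lambda$ is coherent. The converse implication is trivial: if $f_*$ is exact on $\qcoh(\cX)$ then in particular $R^i f_*$ kills every coherent sheaf, and exactness of $f_*: \coh(\cX) \arr \qcoh(\cX)$ follows because a short exact sequence of coherent sheaves is a short exact sequence of quasi-coherent sheaves and the connecting map into $R^1 f_*$ of the subsheaf vanishes.

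The main obstacle is the second ingredient: establishing that $R^i f_*$ commutes with filtered colimits of quasi-coherent sheaves for a quasi-compact, quasi-separated morphism of Artin stacks. On schemes this is classical (\cite[III.1.4.12]{ega} or the quasi-compact quasi-separated case in EGA), and the reduction to the scheme case requires some care: one should first reduce to $\cY$ an affine scheme, then choose a smooth presentation $X \to \cX$ with $X$ a quasi-compact algebraic space (possible since $\cX$ is quasi-compact over the affine $\cY$, as $f$ is quasi-compact), form the associated simplicial algebraic space $X_\bullet$, and compute $R f_* \cF$ via the cohomological descent spectral sequence whose terms involve $R^q (X_p \to \cY)_* \cF|_{X_p}$; here $X_p$ is again quasi-compact and quasi-separated, so the scheme/algebraic-space case applies, and filtered colimits, being exact, pass through the spectral sequence. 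I would cite \cite[Lem. 6.5, 6.20]{Olsson-sheaves} or the relevant statements in \cite{lmb} to handle the descent bookkeeping rather than reproving it. One subtlety worth flagging is quasi-separatedness of the presentation: since the diagonal of $\cX$ is quasi-compact and separated by hypothesis and $\cY$ is separated, $X$ and all the $X_p$ are quasi-separated, so the needed algebraic-space input is available.
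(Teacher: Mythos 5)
The weak point is your very first reduction. The hypothesis of the proposition is that $f_*$ is exact on $\coh(\cX)$, but you immediately replace it by the assertion that $R^i f_* \cG = 0$ for every coherent $\cG$ and every $i>0$, and everything afterwards uses only the latter. These are not interchangeable as stated: from exactness of $f_*$ on coherent sheaves and the long exact sequence one only extracts that $R^1 f_* \cF_1 \arr R^1 f_* \cF_2$ is injective for every injection of coherent sheaves, not that $R^1 f_*$ kills coherent sheaves. Computing $R^i f_* \cG$ requires resolving $\cG$ by quasi-coherent injective (or flasque) sheaves, which are not coherent, so the coherent-level exactness hypothesis never gets a chance to act; the only natural way to obtain the vanishing you take as input is to first prove exactness of $f_*$ on all of $\qcoh(\cX)$ and then invoke the remark you cite -- that is, to prove the proposition first. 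As structured, the nontrivial direction of your argument is therefore circular, and the heavy ingredient you spend most of the proposal on (commutation of the higher direct images with filtered colimits, via cohomological descent) is being mobilized for a reduction that is not available.

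The repair is to run the filtered-colimit argument at the level of $f_*$ itself, which is the paper's route (its proof is exactly that of \cite[Prop. 2.5]{tame} combined with \cite[Prop. 15.4]{lmb}). Left exactness of $f_*$ is automatic, so one must show that a surjection $\cF \arr \cG$ of quasi-coherent sheaves pushes forward to a surjection. Write $\cG$ as the filtered union of its coherent subsheaves $\cG_\lambda$; by \cite[Prop. 15.4]{lmb} applied to $\cF$, each $\cG_\lambda$ is the image of a coherent subsheaf $\cF_\lambda \subseteq \cF$; exactness of $f_*$ on coherent sheaves gives $f_* \cF_\lambda \twoheadrightarrow f_* \cG_\lambda$; and since $f$ is quasi-compact and quasi-separated, $f_*$ (not its derived functors) commutes with filtered colimits of quasi-coherent sheaves, so $f_* \cG$ is the filtered union of the $f_* \cG_\lambda$, each of which lies in the image of $f_* \cF$. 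This uses only the zeroth pushforward's compatibility with colimits, so the descent bookkeeping you flagged as the main obstacle is not needed. Your converse direction is fine as written.
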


\bpf The proof of \cite[Prop. 2.5]{tame} generalizes using \cite[Prop. 15.4]{lmb}.\epf

\begin{defn} An Artin stack $\cX$ is \emph{cohomologically affine} if $\cX \arr \Spec \ZZ$ is cohomologically affine.\end{defn}

\begin{remark}  An Artin stack $\cX$ is cohomologically affine if and only if $\cX$ is quasi-compact and the global sections functor $\Gamma: \qcoh(\cX) \arr \Ab $ is exact.  It is also equivalent to $\cX \arr \Spec \Gamma(\cX, \oh_{\cX})$ being cohomologically affine. \end{remark}

\begin{remark}  By Proposition \ref{serre_crit}, if $\cX$ is a quasi-compact algebraic space, $\cX$ is cohomologically affine if and only if it is an affine scheme.  \end{remark}

\begin{prop} \label{coh_aff_prop} \quad 
\begin{enumeratei}
\item \label{coh_aff_comp} 
Cohomologically affine morphisms are stable under composition. 
\item \label{coh_aff_affine}
Affine morphisms are cohomologically affine.
\item \label{coh_aff_red}
If $f: \cX \arr \cY$ is cohomologically affine, then $f_{\red}: \cX_{\red} \arr \cY_{\red}$ is cohomologically affine.  If $\cX$ is locally noetherian, the converse is true.
\item \label{coh_aff_base}
If $f: \cX \arr \cY$ is cohomologically affine and $S' \arr S$ is any morphism of schemes, then $f_{S'} = \cX_{S'} \arr \cY_{S'}$ is cohomologically affine. 
\end{enumeratei}
Consider a 2-cartesian diagram of Artin stacks:
$$\xymatrix{
\cX' \ar[r]^{f'} \ar[d]^{g'}	& \cY' \ar[d]^g\\
\cX \ar[r]^f				& \cY
}$$
\begin{enumeratei} \setcounter{enumi}{4}
\item \label{coh_aff_descent}
If $g$ is faithfully flat and $f'$ is cohomologically affine, then $f$ is cohomologically affine.
\item \label{coh_aff_qabc} 
If $f$ is cohomologically affine and $g$ is a quasi-affine morphism, then $f'$ is cohomologically affine.
\item \label{coh_aff_bc}
 If $f$ is cohomologically affine and $\cY$ has quasi-affine diagonal over $S$, then $f'$ is cohomologically affine. In particular, if $\cY$ is a Deligne-Mumford stack, then $f$ cohomologically affine implies $f'$ cohomologically affine.\\
\end{enumeratei}
\end{prop}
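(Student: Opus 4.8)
I would organize the argument so that the easy parts feed the hard ones; the serious content is in the base-change statements (iv), (vi), (vii), whose common difficulty is that ``cohomologically affine'' is a condition on \emph{all} quasi-coherent sheaves of the source, so a naive appeal to flat base change is not enough.

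\textbf{The easy parts.} Part (i) is immediate: $(g\circ f)_*=g_*\circ f_*$ is a composite of exact functors and quasi-compactness composes. For (v) I would argue directly with flat base change: for $\cF\in\qcoh(\cX)$ one has $g^*R^if_*\cF\cong R^if'_*((g')^*\cF)$ (valid since $f$ is quasi-compact and quasi-separated and $g$ is flat), which vanishes for $i>0$ as $f'$ is cohomologically affine; since $g$ is faithfully flat, $g^*$ is conservative, hence $R^if_*\cF=0$ and $f$ is cohomologically affine. Part (ii) then follows by choosing a smooth surjection $V\arr\cY$ from a scheme: $\cX\times_\cY V\arr V$ is an affine morphism of algebraic spaces, hence cohomologically affine by Serre's criterion (Proposition~\ref{serre_crit}), and (v) applies. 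For (iii), $i_\cX\colon\cX\red\hookrightarrow\cX$ is a closed immersion, hence affine, hence cohomologically affine by (ii), so $\cX\red\arr\cY$ is cohomologically affine by (i); this morphism factors through the closed immersion $i_\cY\colon\cY\red\hookrightarrow\cY$, and since $(i_\cY)_*$ is exact and conservative one concludes $\cX\red\arr\cY\red$ is cohomologically affine. For the converse (with $\cX$ locally noetherian), Proposition~\ref{coh_aff_coherence} reduces us to checking $R^1f_*\cF=0$ for $\cF\in\coh(\cX)$; working locally on $\cY$ (so $\cX$ is quasi-compact and the nilradical $\cN$ nilpotent), the finite filtration of $\cF$ by the $\cN^k\cF$ and the long exact sequence reduce further to $\cF=(i_\cX)_*\cF_0$, and comparing the Leray spectral sequences of $\cX\red\arr\cX\arr\cY$ and $\cX\red\arr\cY\red\arr\cY$ (all outer maps affine) gives $R^1f_*((i_\cX)_*\cF_0)\cong(i_\cY)_*R^1(f\red)_*\cF_0=0$.

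\textbf{The base-change parts.} I would first prove the case of (vi) in which $g$ is \emph{affine}. Then $g'\colon\cX'\arr\cX$ is affine too; for a surjection $\cF'\twoheadrightarrow\cG'$ in $\qcoh(\cX')$, apply the exact conservative functor $g_*$ and use $g_*f'_*=f_*g'_*$ together with the exactness of $g'_*$ and of $f_*$ to see that $g_*(f'_*\cF'\to f'_*\cG')$ is surjective, whence so is $f'_*\cF'\to f'_*\cG'$; thus $f'$ is cohomologically affine. From this I deduce (iv): cohomological affineness is local on the target (by (v), applied to an open cover), so one reduces to $S=\Spec A$ and $S'=\Spec B$ affine; then $\cY\times_SS'\arr\cY$ is an affine morphism and $f_{S'}$ is the base change of $f$ along it, so the affine case of (vi) applies. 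Running the same reduction through a smooth presentation $V\arr\cS'$ and then (v) upgrades (iv) to allow $\cS'$ an arbitrary Artin stack over $S$. For general (vi): factor the quasi-affine $g$ as a quasi-compact open immersion $j\colon\cY'\hookrightarrow\cY''$ followed by an affine $h\colon\cY''\arr\cY$; the affine case gives $f''\colon\cX\times_\cY\cY''\arr\cY''$ cohomologically affine, and $f'$ is the base change of $f''$ along $j$. Given a surjection $\cF'\twoheadrightarrow\cG'$ on $\cX'$, extend it to $j'_*\cF'\to j'_*\cG'$ on $\cX\times_\cY\cY''$ (quasi-coherence is preserved, $j'$ being quasi-compact and separated); its cokernel is supported off the open substack $\cX'$, so applying the exact functor $f''_*$ and then restriction to $\cY'$ shows $f'_*\cF'\to f'_*\cG'$ is surjective. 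Finally, for (vii): write $g=\mathrm{pr}_1\circ\Gamma_g$ with $\Gamma_g\colon\cY'\arr\cY\times_S\cY'$ the graph and $\mathrm{pr}_1$ the first projection; $\Gamma_g$ is the base change of $\Delta_{\cY/S}$ along $\id_\cY\times g$, hence quasi-affine. Base changing $f$ along $\mathrm{pr}_1$ gives $f\times\id_{\cY'}\colon\cX\times_S\cY'\arr\cY\times_S\cY'$, cohomologically affine by the stacky form of (iv); base changing \emph{that} along the quasi-affine $\Gamma_g$ produces exactly $f'\colon\cX'\arr\cY'$, cohomologically affine by (vi). The Deligne--Mumford case is subsumed because $\Delta_{\cY/S}$ is then unramified, hence (being quasi-compact and separated) quasi-affine by Zariski's main theorem.

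\textbf{Main obstacle.} I expect the genuine difficulty to be precisely the point flagged above: reconciling cohomological affineness, which quantifies over \emph{all} quasi-coherent sheaves of the source, with base change. This is why the argument cannot merely invoke flat base change but must route through the affine case of (vi) --- where conservativity of $g_*$ does the work --- and then bootstrap (iv) and the quasi-affine and quasi-affine-diagonal cases from it.
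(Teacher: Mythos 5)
Your proposal is correct in substance and, for parts (i), (ii), (iii), (v) and (vi), it follows essentially the same path as the paper: composition of exact functors for (i); flat base change plus faithful flatness for (v) (you phrase it via vanishing of $R^if_*$ and conservativity of $g^*$, the paper via exactness of $g^*f_*\cong f'_*g'^*$ — equivalent); reduction to a presentation for (ii); the affine-plus-cancellation argument and the nilpotent filtration for (iii); and for (vi) the same two-step scheme (affine case via the sublemma that $g_*$ detects exactness, open-immersion case via $j'^*j'_*\cong\id$ and flat base change — your "cokernel supported off the open" phrasing is just a repackaging of the paper's argument). The genuine divergence is (vii): the paper base changes along a smooth presentation $p:Y\arr\cY$ with $Y$ affine (quasi-affine because $\Delta_{\cY/S}$ is), takes a further affine presentation $Z\arr\cY'_Y$, and descends via (v); you instead factor $g=\mathrm{pr}_1\circ\Gamma_g$, note $\Gamma_g$ is a base change of $\Delta_{\cY/S}$ and hence quasi-affine, apply a stacky form of (iv) to $\mathrm{pr}_1$ (justified, as you say, by a presentation and (v)) and then (vi) to $\Gamma_g$. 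Both work; the graph trick is arguably the cleaner of the two and buys you (vii) without the double-presentation diagram.

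There is one small but real gap, in your derivation of (iv) from the affine case of (vi) alone. The appeal to (v) lets you shrink $S'$ (cover $S'$ by affines $V$ and descend along the faithfully flat $\coprod_V\cY\times_S V\arr\cY_{S'}$), but it does not let you shrink $S$: knowing $f$ is cohomologically affine does not yet give that $f_{S_i}$ is, for an affine open $S_i\subseteq S$ — that is exactly base change along the quasi-compact open immersion $\cY\times_S S_i\hookrightarrow\cY$, i.e.\ the non-affine part of (vi). Since the paper assumes only quasi-separated schemes, an affine open $V\subseteq S'$ maps to $S$ by a morphism that is quasi-affine but in general not affine, so the affine case of (vi) by itself does not cover the step you need. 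The repair is immediate: your proof of the full quasi-affine case of (vi) nowhere uses (iv), so prove (vi) first and then deduce (iv) as the paper does (cover $S'$ by affines $S'_{ij}$ lying over affines $S_i\subseteq S$, apply (vi) twice and conclude by (v)). Relatedly, the assertion that $g_*$ is "exact and conservative" for $g$ affine is exactly the paper's sublemma; it is true, but it requires the short fppf-descent argument to an affine chart rather than being automatic.
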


\noindent \emph{Proof of (\ref{coh_aff_comp})}: If $f: \cX \arr \cY$, $g: \cY \arr \cZ$ are cohomologically affine, then $g \circ f$ is quasi-compact and $(g \circ f)_* = g_*  f_*$ is exact as it is the composition of two exact functors.  
\\
\\
\noindent \emph{Proof of (\ref{coh_aff_descent})}:  Since $g$ is flat, by flat base change the functors $g^* f_*$ and $f'_* g'^*$ are isomorphic.  Since $g'$ is flat, $g'^*$ is exact so the composition $f'_* g'^*$ is exact.  But since $g$ is faithfully flat, we have that $f_*$ is also exact.  Since the property of quasi-compactness satisfies faithfully flat descent, $f$ is cohomologically affine. 
\\
\\
\noindent \emph{Proof of (\ref{coh_aff_affine})}:  Let $f: \cX \arr \cY$ is an affine morphism.  Since the question is Zariski-local on $\cY$, we may assume there exists an fppf cover by an affine scheme $\Spec B \arr \cY$.  By (\ref{coh_aff_descent}), it suffices to show that $\cX \times_{\cY} \Spec B \arr \Spec B$ is cohomologically affine which is clear since the source is an affine scheme.
\\
\\
\noindent \emph{Proof of (\ref{coh_aff_qabc})}:  Suppose first that $g: \cY' \arr \cY$ is a quasi-compact open immersion.  We claim that the adjunction morphism of functors (from $\qcoh(\cY')$ to $\qcoh(\cY')$) $g^* g_* \arr \id$ is an isomorphism.  For any open immersion $i: Y' \hookrightarrow Y$ of schemes and a sheaf $\cF$ of $\oh_{Y'}$-modules, the natural map $i^* i_* \cF \arr \cF$ is an isomorphism.  Indeed, $i^{-1} i_* \cF \cong \cF$ and $i^{-1} \oh_Y  = \oh_{Y'}$ so that $i^* i_* \cF = (i^{-1} i_* \cF) \tensor_{i^{-1} \oh_Y} \oh_{Y'} \cong \cF$.  Let $p: Y \arr \cY$ be a flat presentation with $Y$ a scheme and consider the fiber square
$$\xymatrix{
Y' \ar[r]^i \ar[d]^{p'}		&  Y \ar[d]^p \\
\cY' \ar[r]^g			& \cY
}$$

Let $\cF$ be a quasi-coherent sheaf of $\oh_{\cY'}$-modules.  The morphism $g^* g_* \cF \arr \cF$ is an isomorphism if and only if $p'^* g^* g_* \cF \arr p'^* \cF$ is an isomorphism.  But $p'^* g^* g_* \cF \cong i^* p^* g_* \cF \cong i^* i_* p'^* \cF$ where the last isomorphism follows from flat base change.  The morphisms are canonical so that the composition $ i^* i_* p'^* \cF \arr p'^* \cF$ corresponds to the adjunction morphism which we know is an isomorphism.

Let $0 \arr \cF_1' \arr \cF_2' \arr \cF_3' \arr 0$ be an exact sequence of quasi-coherent $\oh_{\cX'}$-modules.  Let $\cF_3 = g'_* \cF_2 / g'_* \cF_1$ so that $0 \arr g'_* \cF_1' \arr g'_* \cF_2' \arr \cF_3 \arr 0$ is exact.  Note that $g'^*\cF_3 \cong \cF_3'$ since $g'^* g'_* \arr \id$ is an isomorphism.  Since $f$ is cohomologically affine,
$$ 0 \arr f_*g'_* \cF_1' \arr f_* g'_* \cF_2' \arr f_* \cF_3 \arr 0$$
is exact which implies that
$$ 0 \arr g_*f'_* \cF_1' \arr g_* f'_* \cF_2' \arr f_* \cF_3 \arr 0$$
is exact.  Since $g$ is an open immersion and therefore flat,
$$ 0 \arr f'_* \cF_1' \arr  f'_* \cF_2' \arr g^* f_* \cF_3 \arr 0$$
is exact.  But $g^*f_*$ and $f'_* g'^*$ are isomorphic functors so
$$ 0 \arr f'_* \cF_1' \arr  f'_* \cF_2' \arr f'_* \cF_3' \arr 0$$
is exact.  

Suppose now that $g$ is an affine morphism.  We will use the easy fact:

{\it Sublemma:}  If $g: \cY' \arr \cY$ is an affine morphism and $\cF_1, \cF_2, \cF_3$ are quasi-coherent $\oh_{\cY'}$-Modules, then $\cF_1 \arr \cF_2 \arr \cF_3$ is exact if and only if $g_* \cF_1 \arr g_* \cF_2 \arr g_* \cF_3$ is exact.

{\it Proof of sublemma:}  The question is Zariski-local on $\cY$ so we may assume $\cY$ is quasi-compact.  Let $h: \Spec B \arr \cY$ be an fppf presentation.  There is 2-cartesian square
$$\xymatrix{
\Spec A \ar[r]^{g'} \ar[d]^{h'}		& \Spec B \ar[d]^h \\
\cY' \ar[r]^g						& \cY
}$$
and 
$$\begin{aligned}
\cF_1 \arr \cF_2 \arr \cF_3  \textrm{ exact } & \iff  h'^* \cF_1 \arr h'^* \cF_2 \arr h'^* \cF_3  \textrm{ exact }   \\
 & \iff  g'_* h'^* \cF_1 \arr  g'_* h'^* \cF_2 \arr g'_* h'^* \cF_3   \textrm{ exact } \\
 & \iff  h^* g_* \cF_1 \arr h^* g_* \cF_2 \arr h^* g_* \cF_3  \textrm{ exact } \\
  & \iff  g_* \cF_1 \arr g_* \cF_2 \arr g_* \cF_3  \textrm{ exact }
\end{aligned}$$
where we have used the corresponding fact for morphisms of affine schemes, the faithful flatness of $h$ and $h'$, and flat base change. \epf

Since $g$ is affine, both $g$ and $g'$ are cohomologically affine so that the functors $g_*, g'_*,$ and $f_*$ are exact.  Since $f_* g'_* = g_* f'_*$ is exact, by the above sublemma $f'_*$ is exact.  This establishes (\ref{coh_aff_qabc}).
\\
\\
\noindent \emph{Proof of (\ref{coh_aff_base})}:  If $h: S' \arr S$ is any morphism, let $\{S_i\}$ be an affine cover of $S$ and $\{S'_{ij}\}$ an affine cover of $h^{-1}(S_i)$.  Since $f$ is cohomologically affine, by (\ref{coh_aff_qabc}) that $f_{S_i}$ is cohomologically affine and therefore $f_{S'_{ij}}$ is cohomologically affine.  The property of cohomologically affine is Zariski-local so $f_{S'}$ is cohomologically affine.
\\
\\
\noindent \emph{Proof of (\ref{coh_aff_bc})}:  The question is Zariski-local on $S$ so we may assume $S$ is affine.  The question is also Zariski-local on $\cY$ and $\cY'$ so we may assume that they are quasi-compact.  Let $p: Y  \arr \cY$ be a smooth presentation with $Y$ affine.  Since $\Delta_{\cY/S}$ is quasi-affine, $Y \times_{\cY} Y \cong \cY \times_{\cY \times_S \cY} (Y \times_S Y)$ is quasi-affine and $p$ is a quasi-affine morphism.  After base changing by $p: Y \arr \cY$ and choosing a smooth presentation $Z \arr \cY'_Y$ with $Z$ an affine scheme, we have the 2-cartesian diagram:
$${\def\objectstyle{\scriptstyle}
\def\labelstyle{\scriptstyle}
\xymatrix@=20pt{
				& \cZ \ar[d] \ar[rr]^{h''}			&				& Z \ar[d] \\
				&\cX'_Y \ar[rr]^{h'} \ar[dd] \ar[dl]			&				& \cY'_{Y} \ar[dd] \ar[dl] \\
\cX' \ar[rr]^{\qquad f'} \ar[dd]_{g'}&						& \cY' \ar[dd]_{g}	& \\
				& \cX_Y \ar[rr]^{\qquad h} \ar[dl]				& 				& Y \ar[dl]^p \\
\cX  \ar[rr]^{f}	&							& \cY 		&
}}$$
Since $f$ is cohomologically affine and $p$ is a quasi-affine morphism, by (\ref{coh_aff_qabc}) $h$ is cohomologically affine.   The morphism $Z \arr Y$ is affine which implies that $h''$ is cohomologically affine.  Since the composition $Z \arr \cY'_Y \arr \cY'$ is smooth and surjective, by descent $f'$ is cohomologically affine.  

For the last statement, $\Delta_{\cY/S}: \cY \arr \cY \times_S \cY$ is separated, quasi-finite and finite type so by Zariski's Main Theorem for algebraic spaces, 
$\Delta_{\cY/S}$ is quasi-affine. 
\\
\\
\noindent \emph{Proof of (\ref{coh_aff_red})}:  Since $\cX_{\red} \arr \cX$ is affine, the composition $\cX_{\red} \arr \cX \arr \cY$ is cohomologically affine.  Using that $\cY_{\red} \arr \cY$ is a closed immersion, it follows that $\cX_{\red} \arr \cY_{\red}$ is cohomologically affine from the standard property P argument (see Proposition \ref{coh_aff_propP}).  For the converse, it is clear that $f$ is quasi-compact.  We may suppose that $\cX$ is noetherian.  If $\cI$ be the sheaf of ideals of nilpotents in $\oh_{\cX}$, there exists an $N$ such that $\cI^N = 0$.  We will show that for any quasi-coherent sheaf $\cF$, $R^1 f_* \cF = 0$.  By considering the exact sequence,
$$0 \arr \cI^{n+1} \cF \arr \cI^n  \cF \arr \cI^n \cF / \cI^{n+1} \cF \arr 0,$$
and the segment of the long exact sequence of cohomology sheaves
$$R^1f_* \cI^{n+1} \cF \arr R^1f_* \cI^n \cF \arr R^1f_* (\cI^n \cF/ \cI^{n+1} \cF).$$
By induction on $n$, it suffices to show that $R^1 f_* \cI^n \cF/ \cI^{n+1} \cF = 0$.

If $i: \cX_{\red} \hookarr \cX$ and $j: \cY_{\red} \hookarr \cY$, then for each $n$, $\cI^n \cF / \cI^{n+1} \cF = i_* \cG_n$ for a sheaf $\cG_n$ on $\cX_{\red}$ and
$$R^i f_* (\cI^n \cF / \cI^{n+1} \cF) = R^i (f \circ i)_* \cG_n$$
which vanishes if $i > 0$ since 
$f \circ i \simeq j \circ f_{\red}$ is cohomologically affine.  This establishes (\ref{coh_aff_red}).
\epf

\begin{remark} Cohomologically affine morphisms are not stable under arbitrary base change.  For instance, if $A$ is an abelian variety over an algebraically closed field $k$, then $p: \Spec k \arr BA$ is cohomologically affine but base changing by $p$ gives $A \arr \Spec k$ which is not cohomologically affine.  This remark was pointed out to us by David Rydh.
\end{remark}

\begin{remark} It is not true that the property of being cohomologically affine can be checked on fibers.  For instance, $\AA^2 \setminus \{0\} \arr \AA^2$ is not affine but has affine fibers.  While finite morphisms of stacks are necessarily representable morphisms, proper and quasi-finite morphisms need not be.  For a representable morphism, proper and quasi-finite morphisms are finite and thus affine.  However, proper and quasi-finite non-representable morphisms are not necessarily cohomologically affine.  For instance, if $G \arr S$ is a non-linearly reductive finite fppf group scheme (see section \ref{sect_lin_red}), then $BG \arr S$ is proper and quasi-finite but not cohomologically affine.
 \end{remark}

\begin{cor}  \label{coh_aff4} Suppose $\cY$ is an Artin stack with quasi-affine diagonal over $S$.  A morphism $f: \cX \arr \cY$ is cohomologically affine if and only if for all affine schemes $Y$ and morphisms $Y \arr \cY$, the fiber product $\cX \times_{\cY} Y$ is a cohomologically affine stack.
\end{cor}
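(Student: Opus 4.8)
The plan is to extract both directions from the properties collected in Proposition \ref{coh_aff_prop}, the only substantial ingredient being part (\ref{coh_aff_bc}). The key elementary remark is that \emph{for a morphism $h\colon \cW \arr Y$ to an affine scheme $Y$, $h$ is cohomologically affine if and only if $\cW$ is a cohomologically affine stack}: since $Y$ is affine, $h$ is quasi-compact exactly when $\cW$ is, so the two quasi-compactness conditions coincide; and, $h_*$ being quasi-coherence-preserving, we have $\Gamma(\cW,-)=\Gamma(Y,-)\circ h_*$ as functors on $\qcoh(\cW)$, where $\Gamma(Y,-)$ is an equivalence onto $\Gamma(Y,\oh_Y)\Mod$ and is in particular exact and conservative, so exactness of $h_*$ is equivalent to exactness of $\Gamma(\cW,-)$, i.e.\ (recalling that a quasi-compact stack is cohomologically affine iff its global sections functor is exact) to $\cW$ being a cohomologically affine stack.

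For the forward implication, suppose $f\colon \cX \arr \cY$ is cohomologically affine and let $Y \arr \cY$ be a morphism from an affine scheme. Since $\cY$ has quasi-affine diagonal over $S$, Proposition \ref{coh_aff_prop}(\ref{coh_aff_bc}) gives that the base change $\cX \times_\cY Y \arr Y$ is cohomologically affine, whence $\cX \times_\cY Y$ is a cohomologically affine stack by the remark. (Equivalently, one may compose $\cX \times_\cY Y \arr Y$ with the affine morphism $Y \arr \Spec\ZZ$, which is cohomologically affine by Proposition \ref{coh_aff_prop}(\ref{coh_aff_affine}), and invoke Proposition \ref{coh_aff_prop}(\ref{coh_aff_comp}).)

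For the converse, suppose $\cX \times_\cY Y$ is a cohomologically affine stack for every affine scheme $Y$ over $\cY$. Because cohomological affineness is Zariski-local on the target, we may assume $\cY$ is quasi-compact. Choose a smooth surjection $V_0 \arr \cY$ from a scheme, cover $V_0$ by finitely many affine opens $Y_1,\dots,Y_n$, and set $V = \coprod_{i=1}^n Y_i$; then $V \arr \cY$ is smooth, surjective and quasi-compact, in particular faithfully flat. By hypothesis each $\cX \times_\cY Y_i$ is a cohomologically affine stack, so by the remark each projection $\cX \times_\cY Y_i \arr Y_i$ is cohomologically affine; since $\qcoh(V) = \prod_i \qcoh(Y_i)$, it follows that $f'\colon \cX \times_\cY V \arr V$ is cohomologically affine. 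Proposition \ref{coh_aff_prop}(\ref{coh_aff_descent}), applied to the cartesian square formed by base change along the faithfully flat morphism $V \arr \cY$, then shows $f$ is cohomologically affine.

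The only input beyond formal manipulation is Proposition \ref{coh_aff_prop}(\ref{coh_aff_bc}) in the forward direction, and that is the sole place where the quasi-affine diagonal hypothesis on $\cY$ is used. I anticipate the only real care needed is in the reduction to $\cY$ quasi-compact, so that the faithfully flat cover used in the descent step of Proposition \ref{coh_aff_prop}(\ref{coh_aff_descent}) can be taken quasi-compact, together with the (routine) componentwise check over $V = \coprod_i Y_i$.
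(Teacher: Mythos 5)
Your proof is correct and follows essentially the same route as the paper: the forward direction via Proposition \ref{coh_aff_prop}(\ref{coh_aff_bc}) using the quasi-affine diagonal, and the converse by reducing to $\cY$ quasi-compact, taking an affine smooth presentation, and descending via Proposition \ref{coh_aff_prop}(\ref{coh_aff_descent}). Your explicit remark identifying ``cohomologically affine over an affine base'' with ``cohomologically affine stack'' is exactly the (tacit) ingredient in the paper's argument, just spelled out.
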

\bpf If $f$ is cohomologically affine, then for any morphism $Y \arr \cY$, $\cX \times_{\cY} Y \arr Y$ is cohomologically affine.  If $Y$ is affine, $\cX \times_{\cY} Y$ is a cohomologically affine stack.  Conversely, we can assume $\cY$ is quasi-compact so there exists $Y \arr \cY$ a smooth presentation with $Y$ an affine scheme.  Then $\cX \times_{\cY} Y$ being cohomologically affine implies $\cX \times_{\cY} Y \arr Y$ is cohomologically affine which by descent implies $f$ is cohomologically affine. 
\epf

\begin{prop} 
\label{Leray}
If $f: \cX \arr \cY$ is a cohomologically affine morphism of Artin stacks over $S$ and $\cF \in D^+(\cX)$, there is a natural isomorphism $\RR (g \circ f)_* \cF \cong \RR g_*  (f_* \cF)$, where $g: \cY \arr S$ is the structure morphism. 
\end{prop}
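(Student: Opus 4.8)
The plan is to obtain the identity as a degeneration of the composite-functor (Grothendieck) spectral sequence for $g_*\circ f_*=(g\circ f)_*$, worked out in the categories of $\oh$-modules on the lisse-\'etale sites, together with the fact that cohomological affineness lets one replace $\RR f_*$ by $f_*$.

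First I would establish the input needed for the composite-functor spectral sequence: $f_*$ carries injective $\oh_{\cX}$-modules to $g_*$-acyclic $\oh_{\cY}$-modules. In fact an injective $\oh_{\cX}$-module $\cI$ is acyclic for pushforward along \emph{any} morphism of ringed topoi. Indeed, for an object $V$ of the site the restriction $\cI|_{V}$ is again an injective $\oh_{\cX}|_{V}$-module (restriction has the exact left adjoint ``extension by zero''), so $H^q(V,\cI|_{V})=0$ for $q>0$ (module cohomology agrees with sheaf cohomology); hence $R^qf_*\cI$, being the sheafification of $V\mapsto H^q(f^{-1}V,\cI)$, vanishes for $q>0$, and in particular $\cI$ is $g_*$-acyclic. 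This is the one place where one must genuinely work with all $\oh$-modules and not merely with quasi-coherent sheaves: since $f^*$ is not exact, $f_*$ has no reason to preserve quasi-coherent injectives, and recognizing that the acyclicity statement has to be proved at the level of arbitrary $\oh$-modules is the main subtlety of the argument.

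Granting the acyclicity above, the composite-functor spectral sequence supplies a natural isomorphism $\RR(g\circ f)_*\cF\cong\RR g_*\bigl(\RR f_*\cF\bigr)$ in $D^+$ for every $\cF\in D^+(\cX)$. It then remains to identify $\RR f_*\cF$ with $f_*\cF$. For this I would invoke cohomological affineness of $f$: by the remark following the definition, it means precisely that $R^qf_*\cG=0$ for every quasi-coherent $\cG$ and every $q>0$, i.e.\ quasi-coherent sheaves are $f_*$-acyclic. Representing $\cF$ by a bounded-below complex $\cF^{\bullet}$ of quasi-coherent $\oh_{\cX}$-modules, this complex computes $\RR f_*\cF$, so $\RR f_*\cF\cong f_*\cF^{\bullet}$, which is a bounded-below complex of quasi-coherent $\oh_{\cY}$-modules by \cite[Lem.\ 6.5(i)]{Olsson-sheaves}; in particular $\RR f_*\cF\cong f_*\cF$ in $D^+(\cY)$. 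Substituting this into the isomorphism of the previous sentence yields $\RR(g\circ f)_*\cF\cong\RR g_*(f_*\cF)$, and since every isomorphism used is natural in $\cF$ the resulting isomorphism is natural as well. Equivalently, the whole argument can be packaged as: the composition formula $\RR(g\circ f)_*\cong\RR g_*\circ\RR f_*$ on $D^+$ (valid for $\oh$-modules because injective $\oh$-modules are acyclic for every pushforward) combined with $\RR f_*|_{D^+(\cX)}=f_*$ (cohomological affineness). Everything apart from the acyclicity assertion of the second paragraph is routine.
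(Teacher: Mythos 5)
Your overall strategy coincides with the paper's, whose proof is exactly your two steps: the natural isomorphism $\RR(g\circ f)_*\cF\cong \RR g_*\RR f_*\cF$ (which the paper simply invokes) followed by $\RR f_*\cF\cong f_*\cF$ from exactness of $f_*$. Your treatment of the second step is fine: representing $\cF$ by a bounded-below complex of quasi-coherent sheaves, each term is $f_*$-acyclic by the remark after the definition of cohomological affineness, so the complex computes $\RR f_*\cF$.

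The problem is in the step you yourself single out as the crux. You correctly state the input needed for the composite-functor spectral sequence: $f_*$ carries injective $\oh_{\cX}$-modules to $g_*$-acyclic $\oh_{\cY}$-modules. But the argument you give establishes a different statement, namely that an injective $\oh_{\cX}$-module $\cI$ has vanishing higher cohomology over every object of the site of $\cX$, and hence vanishing higher direct images under any morphism \emph{out of} $\cX$. The concluding clause ``in particular $\cI$ is $g_*$-acyclic'' is a non sequitur (indeed $\cI$ is not even a sheaf on $\cY$): what must be shown is that $f_*\cI$ is $g_*$-acyclic, i.e. that $f_*\cI$ has vanishing higher cohomology over the objects of the site of $\cY$, and this does not follow from acyclicity of $\cI$ for pushforwards out of $\cX$ --- relating $H^q(V,f_*\cI)$ for $V$ in the site of $\cY$ to the cohomology of $\cI$ on $\cX$ is precisely the spectral sequence you are trying to construct, so as written the justification is circular/incomplete. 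The missing (standard) ingredient is the lemma chain for ``limp'' (totally acyclic) modules: injective $\oh_{\cX}$-modules are limp (your restriction argument shows exactly this), pushforward of a limp module is limp, and limp modules are acyclic for any pushforward; the middle assertion is the one you skipped. With that lemma supplied --- or by simply citing the composition formula $\RR(g\circ f)_*\cong\RR g_*\circ\RR f_*$ for morphisms of ringed topoi, as the paper implicitly does --- your proof goes through.
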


\bpf There is a natural isomorphism $\RR (g \circ f)_* \cF \cong \RR g_* \RR f_* \cF$.   Since $f_*$ is exact, $\RR f_* \cF \cong f_* \cF$ in $D^+(\cY)$.  \epf

\begin{prop} \label{coh_aff_propP} 
Let $f: \cX \arr \cY$, $g: \cY \arr \cZ$ be morphisms of Artin stacks over $S$ where either $g$ is quasi-affine or $\cZ$ has quasi-affine diagonal over $S$.  Suppose $g \circ f$ is cohomologically affine and $g$ has affine diagonal.  Then $f$ is cohomologically affine.
\end{prop}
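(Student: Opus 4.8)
The plan is to run the standard ``property $P$'' factorization, expressing $f$ as the composite of its graph with a projection and checking that each factor is cohomologically affine. Write $\Gamma_f: \cX \arr \cX \times_{\cZ} \cY$ for the graph morphism $(\id_{\cX}, f)$ and $p: \cX \times_{\cZ} \cY \arr \cY$ for the second projection, so that $f = p \circ \Gamma_f$. By Proposition \ref{coh_aff_prop}(\ref{coh_aff_comp}) it then suffices to show that $\Gamma_f$ and $p$ are each cohomologically affine.

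First I would treat $p$. The projection $p: \cX \times_{\cZ} \cY \arr \cY$ is the base change of $g \circ f: \cX \arr \cZ$ along $g: \cY \arr \cZ$. Since $g \circ f$ is cohomologically affine by hypothesis, one concludes that $p$ is cohomologically affine by invoking Proposition \ref{coh_aff_prop}(\ref{coh_aff_qabc}) when $g$ is quasi-affine, and Proposition \ref{coh_aff_prop}(\ref{coh_aff_bc}) when $\cZ$ has quasi-affine diagonal over $S$. This is precisely the place where the dichotomy in the hypotheses is needed.

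Next I would treat $\Gamma_f$. Here the point is the usual identification of the graph of a morphism with a base change of a diagonal: the square
$$\xymatrix{
\cX \ar[r]^{\Gamma_f} \ar[d]_f	& \cX \times_{\cZ} \cY \ar[d]^{f \times \id} \\
\cY \ar[r]^{\Delta_{\cY/\cZ}}		& \cY \times_{\cZ} \cY
}$$
is 2-cartesian. Since $g$ has affine diagonal, $\Delta_{\cY/\cZ}$ is affine, and affine morphisms are stable under base change, so $\Gamma_f$ is affine, hence cohomologically affine by Proposition \ref{coh_aff_prop}(\ref{coh_aff_affine}). Composing, $f = p \circ \Gamma_f$ is cohomologically affine; the quasi-compactness built into the definition is automatic, since $\Gamma_f$ is affine and $p$ is a base change of the quasi-compact morphism $g \circ f$.

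I expect the only genuinely delicate step to be the base-change handling of $p$: cohomologically affineness is \emph{not} preserved under arbitrary base change --- witness $\Spec k \arr BA$ for $A$ an abelian variety --- so one cannot merely ``base change $g \circ f$'' and must use exactly the hypotheses on $g$ or on $\cZ$ to land in the scope of parts (\ref{coh_aff_qabc}) or (\ref{coh_aff_bc}) of Proposition \ref{coh_aff_prop}. The rest of the argument is formal.
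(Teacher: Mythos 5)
Your proposal is correct and is essentially the paper's own argument: the paper's proof consists of the same 2-cartesian diagram factoring $f$ as the graph $(\id,f)$ (a base change of the affine $\Delta_{\cY/\cZ}$) followed by the projection $\cX \times_{\cZ} \cY \arr \cY$ (a base change of $g \circ f$ along $g$, handled by Proposition \ref{coh_aff_prop}(\ref{coh_aff_qabc}) or (\ref{coh_aff_bc}) according to the two hypotheses), then Proposition \ref{coh_aff_prop}(\ref{coh_aff_comp}). You have merely spelled out the details that the paper leaves implicit, including correctly identifying where the quasi-affineness hypotheses are genuinely needed.
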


\bpf This is clear from the 2-cartesian diagram
$$\xymatrix{
				& \cX \ar[r]^{(id,f)} \ar[dl]	& \cX \times_{\cZ} \cY \ar[r]^{p_2}	 \ar[dl] \ar[dr] & \cY \ar[dr] \\
\cY \ar[r]^{\Delta}	& \cY \times_{\cZ} \cY	&	&  \cX \ar[r]				& \cZ 
}$$
and Proposition \ref{coh_aff_prop}.
\epf

\subsection{Cohomologically ample and projective}

Let $\cX$ be a quasi-compact Artin stack over $S$ and $\cL$ a line bundle on $\cX$.

\begin{defn} $\cL$ is \emph{cohomologically ample} if there exists a collection of sections $s_i \in \Gamma(\cX, \cL^{N_i})$ for $N_i > 0$ such that the open substacks $\cX_{s_i}$ are cohomologically affine and cover $\cX$.
\end{defn}

\begin{defn} $\cL$ is \emph{relatively cohomologically ample over $S$} if there exists an affine cover $\{S_j\}$ of $S$ such that $\cL|_{\cX_j}$ is cohomologically ample on $\cX_j = \cX \times_S S_j$.
\end{defn}

\begin{remark} Is this equivalent to other notions of ampleness?  The analogue of (a') $\Leftrightarrow$ (c) in \cite[II.4.5.2]{ega} is not true by considering $\oh_{BG}$ on the classifying stack of a linearly reductive group scheme $G$.  The analogue of (a) $\Leftrightarrow$ (a') in \cite[II.4.5.2]{ega} does not hold since for a cohomologically affine stack $\cX$, the open substacks $\cX_f$ for $f \in \Gamma(\cX, \oh_{\cX})$ do not form a base for the topology.    
\end{remark}

\begin{defn} A morphism of $p: \cX \arr S$ is \emph{cohomologically projective} if $p$ is universally closed and finite type, and there exists an $S$-cohomologically ample line bundle $\cL$ on $\cX$.
\end{defn}

\section{Good moduli spaces} \label{good_sect}

We introduce the notion of a good moduli space and then prove its basic properties.  The reader is encouraged to look ahead at some examples in Section \ref{good_examples}.

Let $\phi: \cX \arr Y$ be a morphism where $\cX$ is an Artin stack and $Y$ is an algebraic space.

\begin{defn} \label{defn_good}
 We say that $\phi: \cX \arr Y$ is a \emph{good moduli space} if the following properties are satisfied:
\begin{enumeratei}
\item $\phi$ is cohomologically affine.
\item The natural map $\oh_Y \iso \phi_* \oh_{\cX}$ is an isomorphism. 
\end{enumeratei}
\end{defn}

\begin{remark}  If $\cX$ is an Artin stack over $S$ with finite inertia stack $I_{\cX} \arr \cX$ then by the Keel-Mori Theorem (\cite{keel-mori}) and its generalizations (\cite{conrad}, \cite{rydh_quotients}), there exists a coarse moduli space $\phi: \cX \arr Y$.  Abramovich, Olsson and Vistoli in \cite{tame} define $\cX$ to be a \emph{tame stack} if $\phi$ is cohomologically affine.  Of those Artin stacks with finite inertia, only tame stacks admit good moduli spaces.
\end{remark}

\begin{remark}  A morphism $p: \cX \arr S$ is cohomologically affine if and only if the natural map $\cX \arr \sSpec p_* \oh_{\cX}$ is a good moduli space. \end{remark}

\begin{remark} \label{relative_remark}
 One could also consider the class of arbitrary quasi-compact morphisms of Artin stacks $\phi: \cX \arr \cY$ satisfying the two conditions in Definition \ref{defn_good}.  We call such morphisms \emph{good moduli space morphisms}. Most of the properties below will hold for these more general morphisms.  Precisely, if the target has quasi-affine diagonal, then the analogues of \ref{good_prop1}, \ref{good_base_change}, \ref{ideal_prop1}, \ref{ideal_prop2}, \ref{good_affine_prop}  and \ref{good_thm} (\ref{good_surjective}-\ref{good_separate}, \ref{good_submersive}, \ref{good_geom_conn}-\ref{good_finite_type}) hold.  However, one can only expect uniqueness properties in $\phi$ after requiring $\cY$ to be an algebraic space, or more generally after requiring $\cY$ to be representable over some fixed Artin stack.
\end{remark} 

\begin{prop} \label{good_prop1}  
Suppose $\phi: \cX \arr Y$ is a good moduli space.  Then for any quasi-coherent sheaf $\cF$ of $\oh_Y$-Modules, the adjunction morphism $\cF \arr \phi_* \phi^* \cF$ is an isomorphism.  
\end{prop}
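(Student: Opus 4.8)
The plan is to use the projection formula together with the hypothesis $\oh_Y \iso \phi_*\oh_{\cX}$. Since the assertion is local on $Y$ (both $\phi_*$ and $\phi^*$ commute with restriction to open subschemes, and an isomorphism of sheaves can be checked locally), I would first reduce to the case where $Y = \Spec A$ is affine. In that case $\cX \arr Y$ is a cohomologically affine morphism to an affine scheme, so $\cX$ is a cohomologically affine stack, and $\phi_*$ is computed by the global sections functor $\Gamma(\cX, -)$, which is exact.

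The second step is to prove the claim for $\cF = \oh_Y$, which is precisely hypothesis (ii) of Definition \ref{defn_good}: the adjunction morphism $\oh_Y \arr \phi_*\phi^*\oh_Y = \phi_*\oh_{\cX}$ is an isomorphism by assumption. Next I would pass to free modules: for $\cF = \oh_Y^{\oplus I}$ a (possibly infinite) direct sum, the functor $\phi^*$ commutes with arbitrary direct sums (it has a right adjoint), and $\phi_*$ commutes with arbitrary direct sums because $\phi_*$ is exact and $\phi$ is quasi-compact and quasi-separated, so $\Gamma(\cX,-)$ commutes with filtered colimits and in particular with arbitrary coproducts of quasi-coherent sheaves. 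Hence $\cF \arr \phi_*\phi^*\cF$ is an isomorphism for all free $\oh_Y$-modules. Finally, for an arbitrary quasi-coherent $\cF$, choose a presentation $\oh_Y^{\oplus J} \arr \oh_Y^{\oplus I} \arr \cF \arr 0$ by free modules; applying the right-exact functor $\phi^*$ and then the exact functor $\phi_*$ gives an exact sequence $\phi_*\phi^*\oh_Y^{\oplus J} \arr \phi_*\phi^*\oh_Y^{\oplus I} \arr \phi_*\phi^*\cF \arr 0$, and comparing with the original presentation via the natural adjunction maps and the five lemma (or a direct diagram chase using that the first two vertical maps are isomorphisms) yields that $\cF \arr \phi_*\phi^*\cF$ is an isomorphism.

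The main point requiring care is the commutation of $\phi_*$ with arbitrary direct sums; this is where exactness of $\phi_*$ (not merely left-exactness) is essential, and it is also where I would invoke quasi-compactness of $\phi$ so that $\Gamma(\cX, -)$ behaves well with respect to colimits. Everything else is the standard "check on free modules, then use a presentation" argument, which works because $\phi^*$ is right exact and $\phi_*$ is exact for a good moduli space. I do not anticipate any serious obstacle beyond bookkeeping with the adjunction morphisms; the key inputs are exactness of $\phi_*$, right-exactness of $\phi^*$, the hypothesis for $\oh_Y$, and compatibility of both functors with direct sums.
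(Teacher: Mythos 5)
Your argument for the affine case is exactly the paper's: verify the adjunction for $\oh_Y$, pass to free modules using compatibility with direct sums, then use a free presentation together with right-exactness of $\phi^*$ and exactness of $\phi_*$. (In fact you are more careful than the paper on one point: you justify that $\phi_*$ itself commutes with arbitrary direct sums, via quasi-compactness and quasi-separatedness, whereas the paper only mentions that $\phi^*$ preserves coproducts.)

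The gap is in your reduction to $Y=\Spec A$. In Definition \ref{defn_good} the target $Y$ is an \emph{algebraic space}, so "restrict to open subschemes and check the isomorphism locally" does not suffice: a general algebraic space has no Zariski cover by affine schemes, only an \'etale cover $g\colon Y'\arr Y$ with $Y'$ affine. To localize along such a cover you must check two things that your proposal omits: first, that $\phi'\colon \cX\times_Y Y'\arr Y'$ is again a good moduli space (cohomological affineness is preserved because algebraic spaces have quasi-affine diagonal, so Proposition \ref{coh_aff_prop}(\ref{coh_aff_bc}) applies, and $\oh_{Y'}\iso\phi'_*\oh_{\cX'}$ follows from flat base change); and second, that the pullback $g^*$ of the adjunction morphism $\cF\arr\phi_*\phi^*\cF$ is identified, under the canonical isomorphisms $g^*\phi_*\cong\phi'_*g'^*$ and $g'^*\phi^*\cong\phi'^*g^*$, with the adjunction morphism $g^*\cF\arr\phi'_*\phi'^*g^*\cF$, so that being an isomorphism can indeed be checked after the \'etale base change. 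This flat base change compatibility is precisely what the first paragraph of the paper's proof establishes before running the free-presentation argument; with that inserted, your proof goes through and coincides with the paper's.
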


\bpf  If $g: Y' \arr Y$ is a flat morphism, then $\phi': \cX' = \cX \times_Y Y' \arr Y'$ is a good moduli space.  Indeed, Proposition \ref{coh_aff_prop}(\ref{coh_aff_bc}) implies that $\phi'$ is cohomologically affine.  Let $\phi^{\#}: \oh_Y \arr \phi_* \oh_{\cX}$.  By flat base change, $\lambda: g^* \phi_* \oh_{\cX} \arr \phi'_* \oh_{\cX'}$ is an isomorphism.  Since $\phi'^{\#}: \oh_{Y'} \arr \phi'_* \oh_{\cX'}$ is the composition 
$$\oh_{Y'} \cong g^* \oh_Y \stackrel { g^* \phi^{\#}} {\arr} g^* \phi_* \oh_{\cX} \stackrel{\lambda}{\arr} \phi'_* \oh_{\cX'}$$
it follows that $\phi'$ is a good moduli space.  Let $g': \cX' \arr \cX$.  The composition of the pullback via $g$ of the adjunction morphism $\alpha: \cF \arr \phi_* \phi^* \cF$ with the canonical isomorphisms $g^* \phi_* \cong \phi'_* g'^*$ arising from flat base change and $g'^* \phi^* \cong \phi'^* g^*$, 
$$ g^* \cF \stackrel{g^*\alpha}{\arr} g^* \phi_* \phi^* \cF \cong \phi'_* g'^* \phi^* \cF \cong \phi'_* \phi'^* g^* \cF$$
corresponds to the adjunction morphism $g^* \cF \arr \phi'_* \phi'^* g^* \cF$.  Therefore the question is \'etale local in $Y$ so we may assume $Y$ is an affine scheme.

Then any quasi-coherent sheaf $\cF$ on $Y$ has a free resolution $\cG_2 \arr \cG_1 \arr \cF \arr 0$.  Since the adjunction map $\oh_Y \arr \phi_* \phi^* \oh_Y$ is an isomorphism and $\phi^*$ preserves coproducts, $\cG_i \arr \phi_* \phi^* \cG_i$ is an isomorphism.  We have the diagram
$$\xymatrix{
\cG_2 \ar[r] \ar[d]		& \cG_1 \ar[r] \ar[d]		& \cF \ar[r]	\ar[d]			& 0 \\
\phi_* \phi^* \cG_2 \ar[r]	& \phi_* \phi^* \cG_1 \ar[r]		& \phi_* \phi^* \cF \ar[r]	& 0
}$$
where the bottom row is exact because $\phi^*$ is right exact and $\phi_*$ is exact.  Since the left two vertical arrows are isomorphisms, $\cF \arr \phi_* \phi^* \cF$ is an isomorphism. \epf

\begin{remark} The functor $\phi_*$ is not in general faithful.  For example, $\phi: [\AA^1 / \cG_m] \arr \Spec k$ is a good moduli space (see Example \ref{basic_example}) and if $\cI$ is the sheaf of ideals corresponding to the origin, then $\phi_* \cI = 0$.   

For a quasi-coherent sheaf $\cG$ of $\oh_{\cX}$-modules, the adjunction morphism $\phi^* \phi_* \cG \arr \cG$ is not an isomorphism (unless $\phi$ is an isomorphism).  Indeed for any quasi-coherent sheaf $\cF$ on $Y$, $\phi^* \cF$ restricts to trivial representations for all geometric points of $\cX$ (ie. any geometric point $\Spec k \arr \cX$ induces a morphism $i: BG_x \arr \cX$ such that $i^* \phi^* \cF$ corresponds to a trivial representation).  See Section  \ref{vector_bundle_section} for conditions on $\cG$ implying that the adjunction is an isomorphism.
\end{remark}

\begin{prop}  \label{good_base_change}
Suppose
$$\xymatrix{
\cX' \ar[d]^{\phi'} \ar[r]^{g'}		& \cX \ar[d]^{\phi} \\
Y' \ar[r]^{g}				& Y 
}$$
is a cartesian diagram of Artin stacks with $Y$ and $Y'$ algebraic spaces.  Then
\begin{enumeratei}
\item If $\phi: \cX \arr Y$ is a good moduli space, then $\phi': \cX' \arr Y'$ is a good moduli space.
\item If $g$ is fpqc and $\phi': \cX' \arr Y'$ is a good moduli space, then $\phi: \cX \arr Y$ is a good moduli space.
\end{enumeratei}
\end{prop}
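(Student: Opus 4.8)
The plan is to handle the two parts separately; part (ii) is a short descent argument, while part (i) requires genuine work for the structure-sheaf condition.

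For (ii), note that an fpqc morphism is in particular faithfully flat and quasi-compact. Cohomological affineness of $\phi$ then follows at once from Proposition \ref{coh_aff_prop}(\ref{coh_aff_descent}) applied to the given cartesian square. For the structure sheaf, flat base change supplies a canonical isomorphism $g^*\phi_*\oh_\cX \iso \phi'_*\oh_{\cX'}$ under which the pullback of $\oh_Y \arr \phi_*\oh_\cX$ is identified with $\oh_{Y'} \arr \phi'_*\oh_{\cX'}$ (this is the same compatibility of flat base change with adjunction maps used in the proof of Proposition \ref{good_prop1}). The latter is an isomorphism by hypothesis, so by fpqc descent of isomorphisms $\oh_Y \arr \phi_*\oh_\cX$ is an isomorphism, and $\phi$ is a good moduli space.

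For (i), cohomological affineness of $\phi'$ is immediate: $Y$ is an algebraic space, hence a Deligne--Mumford stack, so Proposition \ref{coh_aff_prop}(\ref{coh_aff_bc}) applies to the base change of $\phi$ along $g$. It remains to prove $\oh_{Y'} \arr \phi'_*\oh_{\cX'}$ is an isomorphism. This is Zariski-local on $Y'$, and since $g$ is a morphism of algebraic spaces we may cover $Y'$ by affine opens $\Spec A'$ each mapping into an affine open $\Spec A\subseteq Y$; restricting $\phi$ to the preimage of $\Spec A$ is a flat (open) base change and so again yields a good moduli space by the argument in the proof of Proposition \ref{good_prop1}. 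We are thus reduced to $\phi\colon\cX\arr\Spec A$ a good moduli space and $g\colon\Spec A'\arr\Spec A$ an arbitrary morphism of affine schemes.

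Now factor the structure map $A\arr A'$ as $A\arr A[x_s:s\in S]\twoheadrightarrow A'$, i.e. write $\Spec A'\hookarr\Spec A[x_s]\arr\Spec A$ as a closed immersion followed by a flat morphism. The flat morphism is handled by flat base change exactly as above, so renaming we may assume $g\colon Y'=\Spec A/I\hookarr Y=\Spec A$ is a closed immersion; then $\phi'$ is cohomologically affine by Proposition \ref{coh_aff_prop}(\ref{coh_aff_qabc}). Let $\cJ\subseteq\oh_\cX$ be the ideal sheaf of the closed substack $\cX'=\cX\times_Y Y'\hookarr\cX$, namely the image of $\phi^*\widetilde I\arr\oh_\cX$. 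Applying the exact functor $\phi_*$ to $0\arr\cJ\arr\oh_\cX\arr\oh_{\cX'}\arr 0$ (and using that an exact functor preserves images) yields an exact sequence $0\arr\phi_*\cJ\arr\phi_*\oh_\cX\arr\phi_*\oh_{\cX'}\arr 0$ with $\phi_*\oh_\cX=\oh_Y=\widetilde A$ and $\phi_*\cJ$ the image of $\phi_*\phi^*\widetilde I\arr\phi_*\oh_\cX$. By Proposition \ref{good_prop1} the adjunction $\widetilde I\arr\phi_*\phi^*\widetilde I$ is an isomorphism, and by naturality the composite $\widetilde I\arr\phi_*\oh_\cX=\oh_Y$ is the inclusion; hence $\phi_*\cJ=\widetilde I$ and $\phi'_*\oh_{\cX'}=\widetilde{A/I}=\oh_{Y'}$, the identification being the structure map. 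The main obstacle is precisely this last point: the condition $\oh_Y\cong\phi_*\oh_\cX$ does not propagate under $g$ by any general cohomology-and-base-change principle (as $\phi$ need not be flat), and the device is to peel off the flat part of $g$ and reduce the closed-immersion part to a pushforward-of-an-ideal computation that the exactness of $\phi_*$ and Proposition \ref{good_prop1} render transparent.
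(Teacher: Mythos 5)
Your part (ii) coincides with the paper's proof (one small quibble: with the paper's definition, fpqc does not imply quasi-compact, but your argument only uses faithful flatness, so nothing is harmed). For part (i), however, there is a genuine gap in the reduction step: you declare the statement ``Zariski-local on $Y'$'' and cover $Y'$ by affine open subschemes mapping into affine opens of $Y$. But $Y$ and $Y'$ are only assumed to be algebraic spaces, and an algebraic space need not have any Zariski open cover by affine schemes (nor need $Y$ contain affine opens receiving such charts); so as written this step fails exactly in the cases the proposition is meant to cover beyond schemes. The repair is the one the paper uses: the property of being a good moduli space is local in the fppf (indeed \'etale) topology on the target. This follows from your own part (ii), since an \'etale surjection from a disjoint union of affine schemes is faithfully flat and open, hence fpqc in the paper's sense, combined with the fact (first paragraph of the proof of Proposition \ref{good_prop1}) that flat base change preserves good moduli spaces. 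Choosing affine \'etale charts $\Spec A_j \arr Y$ and then affine \'etale charts over $Y' \times_Y \Spec A_j$, one reduces to $Y = \Spec A$ and $Y' = \Spec A'$ as you intend.

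Once in the affine case your argument is correct, but it takes a longer route than the paper. You factor $A \arr A'$ through a polynomial ring and treat the closed-immersion part by pushing forward the ideal sheaf; this is in effect a self-contained proof of Lemma \ref{ideal_prop2} (which in the paper is deduced from the present proposition, so re-proving it rather than citing it is the right instinct), with Proposition \ref{good_prop1} applied to $\widetilde{I}$ as the key input, and the image computation is sound since the exact functor $\phi_*$ preserves images. The paper instead applies Proposition \ref{good_prop1} directly to the quasi-coherent sheaf $\widetilde{A'}$: since $\cX \times_A A' = \sSpec_{\cX} \phi^*\widetilde{A'}$, one has $\Gamma(\cX \times_A A', \oh_{\cX \times_A A'}) = \Gamma(\cX, \phi^*\widetilde{A'}) \cong A'$, which settles the structure-sheaf condition for an arbitrary ring map $A \arr A'$ in one stroke, with no factorization into flat and surjective parts. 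So your closing remark that no base-change principle applies is true for flatness-based arguments, but Proposition \ref{good_prop1} for arbitrary quasi-coherent sheaves already plays the role of the base-change statement you need.
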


\bpf For (ii), Proposition \ref{coh_aff_prop}(\ref{coh_aff_descent}) implies that $\phi$ is cohomologically affine.  The morphism of quasi-coherent $\oh_{\cX}$-modules $\phi^{\#}: \oh_{Y} \arr \phi_* \oh_{\cX}$ pulls back under the fpqc morphism $g$ to an isomorphism so by descent, $\phi^{\#}$ is an isomorphism.

For (i), the property of being a good moduli space is preserved by flat base change as seen in proof of Proposition \ref{good_prop1} and is local in the fppf topology.  Therefore, we may assume $Y= \Spec A$ and $Y' = \Spec A'$ are affine.  There is a canonical identification of $A$-modules $\Gamma(\cX, \phi^* \widetilde{A'}) = \Gamma(\cX \times_A A', \oh_{\cX \times_A A'})$.  By Proposition \ref{good_prop1}, the natural map $A' \arr \Gamma(\cX, \phi^* \widetilde{A'})$ is an isomorphism of $A$-modules.  It follows that $\cX \times_A A' \arr \Spec A'$ is a good moduli space. \epf

\begin{remark} \label{git_remark} Let $S$ be an affine scheme and $\cX = [\Spec A / G]$ with $G$ a linearly reductive group scheme over $S$ (see Section \ref{sect_lin_red}).  Then $\phi: \cX \arr \Spec A^G$ is a good moduli space.  If $g: \Spec B \arr \Spec A^G$.  Then (i) implies that $[\Spec (A \tensor_{A^G} B) / G] \arr \Spec B$ is a good moduli space and in particular $B \cong (A \tensor_{A^G} B)^G$.  If $S = \Spec k$, this is \cite[Fact (1) in Section 1.2]{git}.
\end{remark}

\begin{lem} (Analogue of Nagata's fundamental lemmas) \label{ideal_prop1}
If $\phi: \cX \arr Y$ is a cohomologically affine morphism, then
\begin{enumeratei}
\item For any quasi-coherent sheaf of ideals $\cI$ on $\cX$,
$$\phi_* \oh_{\cX}  / \phi_* \cI \iso \phi_* (\oh_{\cX} / \cI)$$
\item For any pair of quasi-coherent sheaves of ideals $\cI_1, \cI_2$ on $\cX$,
$$\phi_* \cI_1 + \phi_* \cI_2 \iso \phi_*(\cI_1 + \cI_2)$$
\end{enumeratei}
\end{lem}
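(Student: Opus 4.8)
The plan is to derive both identities purely formally from exactness of $\phi_*$ — which, since $\phi$ is quasi-compact, is precisely the content of cohomological affineness — together with the elementary facts that $\phi_*$ is additive, left exact, and (as $\phi$ is quasi-compact, hence preserves quasi-coherence) lands in $\qcoh(Y)$. For (i), I would apply $\phi_*$ to the short exact sequence $0 \arr \cI \arr \oh_{\cX} \arr \oh_{\cX}/\cI \arr 0$ of quasi-coherent sheaves on $\cX$. Exactness of $\phi_*$ yields a short exact sequence $0 \arr \phi_* \cI \arr \phi_* \oh_{\cX} \arr \phi_*(\oh_{\cX}/\cI) \arr 0$ of quasi-coherent $\oh_Y$-modules; reading off the cokernel gives the asserted isomorphism $\phi_* \oh_{\cX}/\phi_* \cI \iso \phi_*(\oh_{\cX}/\cI)$, once one observes that the map $\phi_*\cI \arr \phi_*\oh_{\cX}$ here is the one induced by the inclusion, so that $\phi_*\cI$ is literally the subsheaf of $\phi_*\oh_{\cX}$ appearing on the left-hand side.

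For (ii), I would factor the addition map $\cI_1 \oplus \cI_2 \arr \oh_{\cX}$, $(a,b) \mapsto a+b$, through its image as $\cI_1 \oplus \cI_2 \twoheadrightarrow \cI_1 + \cI_2 \hookrightarrow \oh_{\cX}$, and apply $\phi_*$. Since $\phi_*$ is additive, $\phi_*(\cI_1 \oplus \cI_2) = \phi_*\cI_1 \oplus \phi_*\cI_2$; since it is exact, the surjection stays surjective, giving $\phi_*\cI_1 \oplus \phi_*\cI_2 \twoheadrightarrow \phi_*(\cI_1+\cI_2)$; and since it is left exact, $\phi_*(\cI_1+\cI_2) \hookrightarrow \phi_*\oh_{\cX}$. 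Composing, the image of $\phi_*\cI_1 \oplus \phi_*\cI_2 \arr \phi_*\oh_{\cX}$ is on the one hand $\phi_*(\cI_1 + \cI_2)$ and on the other hand, by definition, $\phi_*\cI_1 + \phi_*\cI_2$; matching up the natural maps — i.e. checking that $\phi_*$ of $\cI_j \hookrightarrow \cI_1 + \cI_2 \hookrightarrow \oh_{\cX}$ is the inclusion $\phi_*\cI_j \hookrightarrow \phi_*\oh_{\cX}$, which is immediate from functoriality — identifies the two as subsheaves of $\phi_*\oh_{\cX}$.

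Since the argument is entirely formal, I do not expect a genuine obstacle; the only thing demanding care is the bookkeeping with canonical identifications of subsheaves of $\phi_*\oh_{\cX}$ in both parts. If one prefers to sidestep even that, both statements are \'etale-local on $Y$, so one may assume $Y = \Spec A$ is affine, in which case $\phi_*$ corresponds to the exact functor $\Gamma(\cX, -)$; then (i) and (ii) become the assertions that $\Gamma(\cX, \oh_{\cX}/\cI) = \Gamma(\cX,\oh_{\cX})/\Gamma(\cX,\cI)$ and $\Gamma(\cX, \cI_1 + \cI_2) = \Gamma(\cX, \cI_1) + \Gamma(\cX, \cI_2)$, which again fall out of exactness of $\Gamma(\cX,-)$. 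This is exactly the mechanism behind Nagata's original lemmas, where $\cX = [\Spec B/G]$ and $\Gamma(\cX,-) = (-)^G$ for $G$ linearly reductive.
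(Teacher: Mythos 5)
Your proof is correct and takes essentially the same route as the paper: both parts are read off from the exactness of $\phi_*$ on quasi-coherent sheaves, and your argument for (i) is literally the paper's. The only (cosmetic) difference is in (ii), where you push forward the presentation $\cI_1\oplus\cI_2 \twoheadrightarrow \cI_1+\cI_2 \hookrightarrow \oh_{\cX}$, while the paper instead applies $\phi_*$ to $0 \arr \cI_1 \arr \cI_1+\cI_2 \arr \cI_2/(\cI_1\cap\cI_2) \arr 0$ and uses the surjection $\phi_*\cI_2 \twoheadrightarrow \phi_*\bigl(\cI_2/(\cI_1\cap\cI_2)\bigr)$; your version requires, if anything, slightly less bookkeeping.
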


\bpf Part (i) follows directly from exactness of $\phi$ and the exact sequence $0 \arr \cI \arr \oh_{\cX} \arr \oh_{\cX}/\cI \arr 0$.  For (ii), by applying $\phi_*$ to the exact sequence $0 \arr \cI_1 \arr \cI_1 + \cI_2 \arr \cI_2 / \cI_1 \cap \cI_2 \arr 0$, we have a commutative diagram
$$\xymatrix{
		&				& \phi_* \cI_2 \ar[d] \ar[rd] \\
0 \ar[r]	& \phi_* \cI_1 \ar[r]	& \phi_* (\cI_1 + \cI_2) \ar[r]	& \phi_* \cI_2 / \phi_*(\cI_1+\cI_2) \ar[r] &0
}$$
where the row is exact.  The result follows. \epf

\begin{remark}  \label{infinite_sum}
Part (ii) above implies that for any set of quasi-coherent sheaves of ideals $\cI_{\alpha}$ that
$$\sum_{\alpha} \phi_* \cI_{\alpha} \iso \phi_* \big( \sum_{\alpha} \cI_{\alpha} \big)$$
The statement certainly holds by induction for finite sums and for the general case we may assume that $Y$ is an affine scheme.  For any element $f \in \Gamma(\cX,  \sum_{\alpha} \cI_{\alpha} )$, there exists $\alpha_1, \ldots, \alpha_n$ such that $f \in \Gamma(\cX, \cI_{\alpha_1} + \cdots \cI_{\alpha_n})$ under the natural inclusion so that the statement follows from the finite case.
\end{remark}

\begin{remark} With the notation of Remark \ref{git_remark}, (i) translates into the natural inclusion $A^G/ (I \cap A^G) \hookrightarrow (A/I)^G$ being an isomorphism for any invariant ideal $I \subseteq A$.  Property (ii) translates into the inclusion of ideals  $(I_1 \cap A^G) + (I_2 \cap A^G) \hookrightarrow (I_1 + I_2) \cap A^G$ being an isomorphism for any pair of invariant ideals $I_1,I_2 \subseteq A$.    If $S=\Spec k$, this is precisely \cite[Lemma 5.1.A, 5.2.A]{nagata_invariants-affine} or  \cite[Facts (2) and (3) in Section 1.2]{git}.
\end{remark}

\begin{lem} \label{ideal_prop2}
Suppose $\phi: \cX \arr Y$ is a good moduli space and $\cJ$ is a quasi-coherent sheaf of ideals in $\oh_Y$ defining a closed sub-algebraic space $Y' \hookrightarrow Y$.  Let $\cI$ be the quasi-coherent sheaf of ideals in $\oh_{\cX}$ defining the closed substack $\cX' = Y' \times_Y \cX \hookrightarrow \cX$.  Then the natural map
$$\cJ \arr \phi_* \cI$$
is an isomorphism.
\end{lem}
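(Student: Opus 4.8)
The plan is to push the defining short exact sequence of $\cX'\hookrightarrow\cX$ forward along $\phi$ and match it with the defining sequence of $Y'\hookrightarrow Y$. Write $i\colon Y'\hookrightarrow Y$ for the closed immersion, so $\cJ=\ker(\oh_Y\to i_*\oh_{Y'})$, and write $g'\colon\cX'\hookrightarrow\cX$ for its base change, which is again a closed immersion. Applying the right-exact functor $\phi^*$ to $0\to\cJ\to\oh_Y\to i_*\oh_{Y'}\to 0$ identifies $\cI$ with $\im(\phi^*\cJ\to\oh_{\cX})=\ker(\oh_{\cX}\to\oh_{\cX}/\cI)$, where I write $\oh_{\cX}/\cI=g'_*\oh_{\cX'}$ as an $\oh_{\cX}$-module. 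By Proposition \ref{good_base_change}(i), $\phi'\colon\cX'\to Y'$ is again a good moduli space.

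Next I would compute $\phi_*(\oh_{\cX}/\cI)$. Using $\phi\circ g'=i\circ\phi'$ and the good moduli space property of $\phi'$,
\[
\phi_*(\oh_{\cX}/\cI)=\phi_*g'_*\oh_{\cX'}=i_*\phi'_*\oh_{\cX'}=i_*\oh_{Y'}=\oh_Y/\cJ .
\]
Moreover, functoriality of the structure-sheaf comparison maps, namely $\phi_*(g'^{\#})\circ\phi^{\#}=(\phi g')^{\#}=(i\phi')^{\#}=i_*(\phi'^{\#})\circ i^{\#}$, in which $\phi^{\#}$ and $i_*(\phi'^{\#})$ are isomorphisms (the latter since $\phi'$ is a good moduli space), shows that under these isomorphisms the map $\phi_*(\oh_{\cX}\to\oh_{\cX}/\cI)$ is carried to the quotient map $\oh_Y\to\oh_Y/\cJ$.

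Now I apply $\phi_*$ to $0\to\cI\to\oh_{\cX}\to\oh_{\cX}/\cI\to 0$; since $\phi$ is cohomologically affine (or directly by Lemma \ref{ideal_prop1}(i) together with left-exactness of $\phi_*$) this stays exact, and by the previous paragraph it is identified with $0\to\phi_*\cI\to\oh_Y\to\oh_Y/\cJ\to 0$. Hence the subsheaf $\phi_*\cI\subseteq\phi_*\oh_{\cX}=\oh_Y$ is exactly $\cJ$. Finally, the natural map $\cJ\to\phi_*\cI$ of the statement is the composite of the adjunction unit $\cJ\to\phi_*\phi^*\cJ$ with $\phi_*$ applied to the surjection $\phi^*\cJ\twoheadrightarrow\cI$; composing it further with the inclusion $\phi_*\cI\hookrightarrow\phi_*\oh_{\cX}=\oh_Y$ yields, by naturality of the unit, the inclusion $\cJ\hookrightarrow\oh_Y$, whose image we have just shown is $\phi_*\cI$. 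Therefore $\cJ\to\phi_*\cI$ is an isomorphism.

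The one mildly delicate point is the compatibility in the second paragraph identifying $\phi_*$ of $\oh_{\cX}\to\oh_{\cX}/\cI$ with the quotient $\oh_Y\to\oh_Y/\cJ$; the rest is formal. An alternative that sidesteps this is to reduce first to the affine case $Y=\Spec A$: the formations of $\cX'$, of $\cI$, and of the natural map all commute with flat base change on $Y$, and good moduli spaces are stable under flat base change (as in the proof of Proposition \ref{good_prop1}), so one may assume $\cJ=\widetilde{J}$ for an ideal $J\subseteq A$, in which case all the maps in sight are manifestly the expected ones.
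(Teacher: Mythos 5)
Your proof is correct and follows essentially the same route as the paper: both construct the map $\cJ \to \phi__*\cI$ via the adjunction unit composed with $\phi_*$ of the natural surjection $\phi^*\cJ \to \cI$, use that $\phi'\colon \cX' \to Y'$ is again a good moduli space, and compare the pushed-forward ideal sequence of $\cX'$ with the defining sequence of $Y'$ using exactness of $\phi_*$ and the isomorphisms $\oh_Y \cong \phi_*\oh_{\cX}$, $\oh_{Y'} \cong \phi'_*\oh_{\cX'}$. The paper packages this as a commutative diagram of two short exact sequences with the right two vertical arrows isomorphisms, which is exactly the identification you carry out explicitly.
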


\bpf Since the property of a good moduli space is preserved under arbitrary base change, $\phi': \cX' \arr Y'$ is a good moduli space.  By pulling back the exact sequence defining $\cJ$, we have an exact sequence $\phi^*\cJ \arr \phi^* \oh_Y \arr \phi^*\oh_{Y'}  \arr 0$. Since the sequence $0 \arr \cI \arr \phi^* \oh_Y \arr \phi^*\oh_{Y'}  \arr 0$ is exact, there is a natural map $\alpha: \phi^* \cJ \arr \cI$.  By composing the adjunction morphism $\cJ \arr \phi_* \phi^* \cJ$ with $\phi_* \alpha$, we have a natural map $\cJ \arr \phi_* \cI$ such that the diagram
$$\xymatrix{
0 \ar[r]	& \cJ \ar[r] \ar[d]		& \oh_Y \ar[r] \ar[d]		& \oh_{Y'} \ar[r] \ar[d]		& 0 \\
0 \ar[r]	& \phi_* \cI \ar[r]	& \phi_* \oh_{\cX} \ar[r]	& \phi_* \oh_{\cX'} \ar[r]	& 0
}$$
commutes and the bottom row is exact (since $\phi_*$ is exact).  Since the two right vertical arrows are isomorphism, $\cJ \arr \phi_* \cI$ is an isomorphism.
\epf

\begin{remark}  With the notation of \ref{git_remark}, this states that for all ideals $I \subseteq A^G$, then $IA \cap A^G = I$.  This fact is used in \cite{git} to prove that if $A$ is noetherian then $A^G$ is noetherian.  We will use this lemma to prove the analogous result for good moduli spaces.
\end{remark}

\begin{lem} \label{good_affine_prop}
Suppose $\phi: \cX \arr Y$ is a good moduli space and $\cA$ is a quasi-coherent sheaf of $\oh_{\cX}$-algebras.  Then $\sSpec_{\cX} \cA  \arr \sSpec_Y \phi_* \cA$ is a good moduli space.  In particular, if $\cZ \subseteq \cX$ is a closed substack and $\im \cZ$ denotes its scheme-theoretic image the morphism $\cZ \arr \im \cZ$ is a good moduli space.
\end{lem}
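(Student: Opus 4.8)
The plan is to reduce to the affine case on $Y$ and then run a relative version of the argument used in Proposition \ref{good_prop1}. First I would observe that $\sSpec_{\cX}\cA \arr \cX$ and $\sSpec_Y \phi_*\cA \arr Y$ are affine morphisms, so there is a natural morphism $\psi: \sSpec_{\cX}\cA \arr \sSpec_Y\phi_*\cA$ over $\phi$; call the base algebraic space $Y' = \sSpec_Y\phi_*\cA$ and note $Y'\arr Y$ is affine (in particular, separated and flat-locally easy to handle). Since the formation of $\sSpec$ and of $\phi_*$ both commute with flat base change on $Y$ (here using that $\phi_*$ commutes with flat base change, which holds because $\phi$ is quasi-compact and quasi-separated, and that $\phi$ stays a good moduli space after flat base change as shown in the proof of Proposition \ref{good_prop1}), I may check the assertion flat-locally, hence \'etale-locally, on $Y$, and so assume $Y = \Spec B$ is affine. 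Then $\cX$ is cohomologically affine, $\cA$ corresponds to a quasi-coherent sheaf of algebras with $A := \Gamma(\cX,\cA)$, and $Y' = \Spec A$; I must show $\sSpec_{\cX}\cA \arr \Spec A$ is a good moduli space.

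The two conditions of Definition \ref{defn_good} are then checked directly. For cohomological affineness of $\psi$: since $Y'\arr Y$ is affine and $\sSpec_{\cX}\cA\arr\cX$ is affine, the composite $\sSpec_{\cX}\cA\arr\cX\arr Y = Y$ is cohomologically affine (affine $\circ$ cohomologically affine, Proposition \ref{coh_aff_prop}(\ref{coh_aff_comp}) and (\ref{coh_aff_affine})), and $Y'\arr Y$ has affine diagonal (it is a morphism of schemes, even affine), so by Proposition \ref{coh_aff_propP} the morphism $\psi$ is cohomologically affine — here I use that $Y$, being an algebraic space, has quasi-affine (indeed, for an affine scheme, affine) diagonal, so the hypothesis of \ref{coh_aff_propP} is met. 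For the structure-sheaf condition, I need $\oh_{Y'}\iso\psi_*\oh_{\sSpec_{\cX}\cA}$. Pushing forward to $Y$, which is faithful on quasi-coherent sheaves for the affine morphism $Y'\arr Y$, it suffices to show $(Y'\arr Y)_*\oh_{Y'} \iso (Y'\arr Y)_*\psi_*\oh_{\sSpec_{\cX}\cA}$, i.e.\ $\widetilde A \iso \phi_*\cA$ as $\oh_Y$-algebras; but $\phi_*\cA$ is quasi-coherent with global sections $\Gamma(Y,\phi_*\cA) = \Gamma(\cX,\cA) = A$, and $Y$ affine means a quasi-coherent sheaf is determined by its global sections, giving the isomorphism. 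This is exactly where cohomological affineness of $\phi$ is not needed for condition (ii) — only quasi-compactness of $\phi$ to ensure $\phi_*\cA$ is quasi-coherent — while it \emph{is} needed, via \ref{coh_aff_propP}, for condition (i).

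For the ``in particular'' statement, apply the first part to $\cA = \oh_{\cZ}$ viewed as a quasi-coherent $\oh_{\cX}$-algebra via the closed immersion $\iota:\cZ\hookarr\cX$ (so $\cA = \iota_*\oh_{\cZ}$ and $\sSpec_{\cX}\cA = \cZ$): we get that $\cZ\arr\sSpec_Y\phi_*\iota_*\oh_{\cZ}$ is a good moduli space, and it remains to identify $\sSpec_Y\phi_*\iota_*\oh_{\cZ}$ with the scheme-theoretic image $\im\cZ$ of $\cZ$ in $Y$. The scheme-theoretic image is by definition the closed subspace of $Y$ cut out by $\ker(\oh_Y\arr(\phi\circ\iota)_*\oh_{\cZ})$, i.e.\ by $\ker(\phi_*\oh_{\cX}\arr\phi_*\iota_*\oh_{\cZ})$ after using $\oh_Y\iso\phi_*\oh_{\cX}$; since $\phi_*$ is exact, this is $\phi_*$ applied to $\ker(\oh_{\cX}\arr\iota_*\oh_{\cZ}) = \cI_{\cZ}$, so $\im\cZ = \sSpec_Y(\phi_*\oh_{\cX}/\phi_*\cI_{\cZ})$, and Lemma \ref{ideal_prop1}(i) identifies $\phi_*\oh_{\cX}/\phi_*\cI_{\cZ}$ with $\phi_*(\oh_{\cX}/\cI_{\cZ}) = \phi_*\iota_*\oh_{\cZ}$.

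The main obstacle I anticipate is bookkeeping rather than a genuine difficulty: one must be careful that the various $\sSpec$ constructions and the pushforward $\phi_*$ are compatible with the flat (or \'etale) localization on $Y$ used to reduce to the affine case, and that $\phi_*\cA$ is genuinely a sheaf of algebras (not just modules) — this is automatic since $\phi_*$ is lax monoidal, but it should be noted. The one place requiring the actual good moduli space hypotheses is applying Proposition \ref{coh_aff_propP} (which needs the affine diagonal of $Y'\arr Y$) and Lemma \ref{ideal_prop1}(i) (which needs exactness of $\phi_*$) for the scheme-theoretic image identification.
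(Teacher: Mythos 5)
Your proposal is correct and follows essentially the same route as the paper: cohomological affineness of $\sSpec_{\cX}\cA \arr \sSpec_Y\phi_*\cA$ via the property P argument of Proposition \ref{coh_aff_propP}, the structure-sheaf condition by pushing forward along the affine morphism $\sSpec_Y\phi_*\cA \arr Y$ and identifying $\phi_* i_*\oh_{\sSpec\cA}\cong\phi_*\cA$, and the scheme-theoretic image identification via exactness of $\phi_*$ and Lemma \ref{ideal_prop1}(i). The only difference is your preliminary \'etale-local reduction to affine $Y$, which is harmless but not needed, since the pushforward comparison along the affine morphism already works globally.
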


\bpf By considering the commutative diagram
$$\xymatrix{
\sSpec \cA \ar[r]^i \ar[d]^{\phi'}	& \cX \ar[d]^{\phi}\\
\sSpec \phi_* \cA \ar[r]^j		& Y
}$$
the property P argument of \ref{coh_aff_propP} implies that $\phi'$ is cohomologically affine.  Since $\phi_* i_* \oh_{\Spec \cA} \cong \phi_* \cA$, it follows that  $\oh_{\sSpec \phi_* \cA} \arr \phi'_* \oh_{\sSpec \cA}$ is an isomorphism so that $\phi'$ is a good moduli space.  Let $\cI$ be a quasi-coherent sheaf of ideals in $\oh_{\cX}$ defining $\cZ$.  Then $\cZ \cong \sSpec \oh_{\cX} / \cI$, $\phi_* (\oh_{\cX}/\cI) \cong \phi_* \oh_{\cX} / \phi_* \cI$ and $\phi_* \cI$ is the kernel of $\oh_Y \arr \phi_* i_* \oh_{\cZ}$.
\epf



\begin{lem} \label{product_lem}
If $\phi_1: \cX_1 \arr Y_1$ and $\phi_2: \cX_2 \arr Y_2$ are good moduli spaces, then $\phi_1 \times \phi_2: \cX_1 \times_S \cX_2 \arr Y_1 \times_S Y_2$ is a good moduli space.
\end{lem}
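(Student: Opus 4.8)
The plan is to realize $\phi_1\times\phi_2$ as a composition of two base changes,
$$\cX_1\times_S\cX_2 \xarr{\ \psi\ } \cX_1\times_S Y_2 \xarr{\ \rho\ } Y_1\times_S Y_2,$$
with $\psi=\id_{\cX_1}\times\phi_2$ and $\rho=\phi_1\times\id_{Y_2}$, to check that each factor satisfies the two defining conditions of a good moduli space (morphism in the sense of Remark \ref{relative_remark} for $\psi$), and then to observe that both conditions are stable under composition.

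The factor $\rho$ is immediate: it is the base change of $\phi_1\colon\cX_1\arr Y_1$ along the projection $Y_1\times_S Y_2\arr Y_1$, a morphism of \emph{algebraic spaces}, so Proposition \ref{good_base_change}(i) applies verbatim and $\rho$ is a good moduli space. The only real work is with $\psi$, whose target $\cX_1\times_S Y_2$ is a genuine Artin stack, so neither Proposition \ref{good_base_change} nor Remark \ref{relative_remark} applies on the nose. To handle $\psi$ I would pass to a smooth presentation: choose a smooth surjection $p\colon U\arr\cX_1$ with $U$ an algebraic space. Then $U\times_S Y_2$ is again an algebraic space, the induced map $p_{Y_2}\colon U\times_S Y_2\arr\cX_1\times_S Y_2$ is smooth and surjective, and the base change of $\psi$ along $p_{Y_2}$ is canonically identified with $\id_U\times\phi_2\colon U\times_S\cX_2\arr U\times_S Y_2$ — which is the base change of the good moduli space $\phi_2$ along the morphism of algebraic spaces $U\times_S Y_2\arr Y_2$, hence a good moduli space by Proposition \ref{good_base_change}(i) yet again. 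From this I recover the two conditions for $\psi$ itself: cohomological affineness descends along the faithfully flat $p_{Y_2}$ by Proposition \ref{coh_aff_prop}(\ref{coh_aff_descent}), and the natural map $\oh_{\cX_1\times_S Y_2}\iso\psi_*\oh_{\cX_1\times_S\cX_2}$ is an isomorphism because, by flat base change, $p_{Y_2}^*$ of it is (up to the canonical flat-base-change and descent identifications) the adjunction map for $\id_U\times\phi_2$, which is an isomorphism, and $p_{Y_2}$ is faithfully flat.

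Finally I would assemble the pieces. By the above, $\phi_1\times\phi_2=\rho\circ\psi$ is a composition of cohomologically affine morphisms, hence cohomologically affine by Proposition \ref{coh_aff_prop}(\ref{coh_aff_comp}); and on structure sheaves $(\phi_1\times\phi_2)_*\oh_{\cX_1\times_S\cX_2}=\rho_*\bigl(\psi_*\oh_{\cX_1\times_S\cX_2}\bigr)\cong\rho_*\oh_{\cX_1\times_S Y_2}\cong\oh_{Y_1\times_S Y_2}$, the two isomorphisms coming from the $\psi$-step and the $\rho$-step, with a routine check that the composite is the natural adjunction map. I expect the main obstacle to be purely bookkeeping in the $\psi$-step — confirming that the flat-base-change and faithfully-flat-descent isomorphisms are compatible with the adjunction morphism $\oh\arr\psi_*\oh$ rather than merely abstractly isomorphic, exactly as in the proof of Proposition \ref{good_prop1}; there is no hard geometric content, since everything reduces via one smooth presentation to the already-established base-change stability of good moduli spaces over algebraic spaces.
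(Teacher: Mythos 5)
Your proof is correct and follows essentially the same route as the paper: both factor $\phi_1\times\phi_2$ through $\cX_1\times_S Y_2$ and recognize each leg as a base change of $\phi_2$ (resp.\ $\phi_1$), then compose. The only difference is that where the paper simply cites the relative notion of a good moduli space morphism (Remark \ref{relative_remark}) for the leg with stack target, you verify it directly by a smooth presentation and faithfully flat descent, which is exactly the content that remark encodes.
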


\begin{proof}
The cartesian squares
$$\xymatrix{
				& \cX_1 \times_S \cX_2 \ar[r]^{(\id, \phi_2)} \ar[dl]	& \cX_1 \times_{S} Y_2 \ar[r]^{(\phi_1, \id)}	 \ar[dl] \ar[dr] & Y_1 \times_S Y_2 \ar[dr] \\
\cX_2 \ar[r]	& Y_2 	&	&  \cX_1 \ar[r]				& Y_1
}$$
imply that $(\id, \phi_2)$ and $(\phi_1, \id)$ are good moduli space morphisms (ie. cohomologically affine morphisms $f: \cX \arr \cY$ which induce isomorphisms $\oh_{\cY} \arr f_* \oh_{\cX}$; see Remark \ref{relative_remark}) so the composition $\phi_1 \times \phi_2$ is a good moduli space.
\end{proof}

\begin{thm} \label{good_thm}
If $\phi: \cX \arr Y$ is a good moduli space, then 
\begin{enumeratei}
\item \label{good_surjective} 
	$\phi$ is surjective.
\item \label{good_univ_closed}
	$\phi$ is universally closed.
\item \label{good_separate}
 	If $Z_1, Z_2$ are closed substacks of $\cX$, then
$$  \im Z_1 \cap \im Z_2 = \im (Z_1 \cap Z_2) $$
where the intersections and images are scheme-theoretic.
\item \label{good_orbit_closure_equiv}
For an algebraically closed $\oh_S$-field $k$, there is an equivalence relation defined on $[\cX(k)]$ by $x_1 \sim x_2 \in [\cX(k)]$ if $\overline{ \{x_1\}} \cap \overline{ \{x_2\} } \ne \emptyset$ in $\cX \times_S k$ which induces a bijective map $[\cX(k)]/\kern-4pt\sim \, \, \arr Y(k)$.  That is, $k$-valued points of $Y$ are $k$-valued points of $\cX$ up to closure equivalence.
\item \label{good_submersive}
	$\phi$ is universally submersive (that is, $\phi$ is surjective and $Y$, as well as any base change, has the quotient topology).
\item \label{good_univ_schemes}
	$\phi$ is universal for maps to \underline{schemes} (that is, for any morphism to a scheme $\psi: \cX \arr Z$, there exists a unique map $\xi: Y \arr Z$ such that $\xi \circ \phi = \psi$).
\item \label{good_geom_conn}
	$\phi$ has geometrically connected fibers.
\item \label{good_red}
	$\phi_{\red}: \cX_{\red} \arr Y_{\red}$ is a good moduli space.  If $\cX$ is reduced (resp. quasi-compact, connected, irreducible), then $Y$ is also.  If $\cX$ is locally noetherian and normal, then $Y$ is also.
\item \label{good_flat}
	If $\cX \arr S$ is flat (resp. faithfully flat), then $Y \arr S$ is flat (resp. faithfully flat).
\item \label{good_noeth}
	If $\cX$ is locally noetherian, then $Y$ is locally noetherian and $\phi_*$ preserves coherence.
\item \label{good_finite_type}
	If $S$ is an excellent scheme (see \cite[IV.7.8]{ega}) and $\cX$ is finite type over $S$, then $Y$ is finite type over $S$.
\end{enumeratei}
\end{thm}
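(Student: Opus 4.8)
The plan is to obtain parts (\ref{good_surjective})--(\ref{good_noeth}) as essentially formal consequences of cohomological affineness together with Propositions \ref{good_prop1}, \ref{good_base_change}, \ref{good_affine_prop} and Lemmas \ref{ideal_prop1}, \ref{ideal_prop2}, and then to isolate (\ref{good_finite_type}) as the one substantive point. For (\ref{good_surjective}), the statement is local, and for a geometric point $y\colon\Spec k\arr Y$ the base change $\cX\times_Y\Spec k\arr\Spec k$ is again a good moduli space by Proposition \ref{good_base_change}(i); were this stack empty its ring of global sections would be $0$ rather than $k$, so $y$ lies in the image. For (\ref{good_univ_closed}), by Proposition \ref{good_base_change}(i) it suffices to show $\phi$ is closed: for a closed substack $\cZ\subseteq\cX$, Lemma \ref{good_affine_prop} shows $\cZ\arr\im\cZ$ is a good moduli space and hence surjective by (\ref{good_surjective}), so $\phi(\cZ)$ is the (closed) underlying space of $\im\cZ$. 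Part (\ref{good_separate}) is immediate from Lemma \ref{ideal_prop1}: writing $\cI_1,\cI_2$ for the ideals of $Z_1,Z_2$, part (i) of that lemma identifies the ideal of $\im Z_j$ with $\phi_*\cI_j$, the ideal of $Z_1\cap Z_2$ is $\cI_1+\cI_2$, and part (ii) gives $\phi_*\cI_1+\phi_*\cI_2=\phi_*(\cI_1+\cI_2)$.

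For (\ref{good_orbit_closure_equiv}) I would show that the fibres of $[\cX(k)]\arr Y(k)$ are exactly the $\sim$-classes, so that $\sim$ is automatically an equivalence relation. If $\phi(x_1)=\phi(x_2)=y$, then $x_1,x_2$ are $k$-points of the good moduli space $\cX_y:=\cX\times_{Y,y}\Spec k\arr\Spec k$; if their reduced closures $Z_1,Z_2$ in $\cX_y$ were disjoint, (\ref{good_separate}) would force the nonempty closed subspaces $\im Z_1,\im Z_2$ of $\Spec k$ (each $=\Spec k$) to be disjoint, a contradiction, so the closures meet; conversely, if the closures of $x_1,x_2$ meet, (\ref{good_separate}) together with the closedness of $\phi$ gives $\phi(x_1)=\phi(x_2)$. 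Surjectivity of $[\cX(k)]/\!\sim\,\arr Y(k)$ follows from (\ref{good_surjective}). Then (\ref{good_submersive}) is formal, a surjective universally closed morphism being universally submersive; and for (\ref{good_univ_schemes}), given $\psi\colon\cX\arr Z$ with $Z$ a scheme, (\ref{good_orbit_closure_equiv}) (over geometric points) shows $|\psi|$ is constant on the fibres of the topological quotient $|\phi|$, hence factors through a continuous $|\xi|\colon|Y|\arr|Z|$; over an affine $Z_i=\Spec B_i\subseteq Z$ one has $\phi^{-1}|\xi|^{-1}(Z_i)=\psi^{-1}(Z_i)$, so the ring map defining $\psi$ there, combined with $\phi_*\oh_\cX=\oh_Y$, yields $|\xi|^{-1}(Z_i)\arr Z_i$; these glue to $\xi$, and uniqueness is clear since $|\phi|$ is a quotient map and $\oh_Y=\phi_*\oh_\cX$.

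For (\ref{good_geom_conn}), a fibre over a geometric point is a nonempty stack whose ring of global sections is a field, hence has no nontrivial idempotents and is connected. For (\ref{good_red}), if $\cN\subseteq\oh_\cX$ is the nilradical and $j\colon Y\red\hookarr Y$, then exactness of $\phi_*$ and Lemma \ref{ideal_prop1}(i) give $j_*(\phi\red)_*\oh_{\cX\red}\cong\phi_*(\oh_\cX/\cN)\cong\oh_Y/\phi_*\cN$, and $\phi_*\cN$ is the nilradical of $\oh_Y$ since $\oh_Y\arr\phi_*\oh_\cX$ is a ring isomorphism; hence $\phi\red$ is a good moduli space and $Y$ is reduced when $\cX$ is, while quasi-compactness, connectedness and irreducibility of $Y$ follow from surjectivity, and normality of $Y$ (for $\cX$ locally noetherian and normal), once $Y$ is known locally noetherian by part (\ref{good_noeth}), follows from the classical argument that invariants of a normal ring under a linearly reductive action are normal. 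For (\ref{good_flat}) I would work affine-locally over $S=\Spec R$ and $Y=\Spec A$: since $\oh_\cX\otimes_{\oh_S}\wt M\cong\phi^*(\oh_Y\otimes_{\oh_S}\wt M)$, an injection $M'\hookarr M$ of $R$-modules gives an injection $\oh_\cX\otimes M'\hookarr\oh_\cX\otimes M$ by flatness of $\cX/S$, and applying the exact functor $\phi_*$ together with Proposition \ref{good_prop1} gives $A\otimes_R M'\hookarr A\otimes_R M$; faithful flatness follows from surjectivity of $Y\arr S$.

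For (\ref{good_noeth}) reduce to $Y=\Spec A$, so $\cX$ is noetherian: for an ascending chain of ideals $I_n\subseteq A$, Lemma \ref{ideal_prop2} gives $I_n\iso\Gamma(\cX,I_n\oh_\cX)$, and $I_n\oh_\cX$ stabilizes in $\cX$, so $A$ is noetherian. That $\phi_*$ preserves coherence I would prove by a square-zero trick: for coherent $\cF$ the $\oh_\cX$-algebra $\cB=\oh_\cX\oplus\cF$ (with $\cF\cdot\cF=0$) is of finite type, so $\sSpec_\cX\cB$ is noetherian; Lemma \ref{good_affine_prop} makes $\sSpec_\cX\cB\arr\sSpec_Y\phi_*\cB$ a good moduli space, so by the case just proved $\sSpec_Y\phi_*\cB=\Spec\bigl(A\oplus\Gamma(\cX,\cF)\bigr)$ is locally noetherian, forcing $\Gamma(\cX,\cF)$ to be a finite $A$-module. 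Finally, for (\ref{good_finite_type}): $Y\arr S$ is quasi-compact since it is surjective with $\cX$ finite type over $S$, so we may take $S=\Spec R$ with $R$ excellent and $Y=\Spec A$. Choosing an affine scheme $X_0=\Spec B_0$ that is smooth and surjective over $\cX$ and of finite type over $R$, the ring $A=\Gamma(\cX,\oh_\cX)$ is an $R$-subalgebra of $B_0$, it is noetherian by (\ref{good_noeth}), and $\Spec B_0\arr\Spec A$ is surjective. \textbf{The crux is to conclude that $A$ is finitely generated over $R$}, which I would do by invoking the promised mild generalization of Fogarty's theorem \cite{fogarty2}: a noetherian $R$-subalgebra of a finite type $R$-algebra over an excellent ring, through which the spectrum of the larger algebra surjects, is itself of finite type over $R$. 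Everything in the theorem other than this finite-generation statement is formal, and it is the only place the excellence hypothesis is used.
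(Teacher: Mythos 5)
Your treatment of parts (\ref{good_surjective})--(\ref{good_noeth}) is essentially sound and largely parallels the paper, with two worthwhile deviations: for (\ref{good_univ_schemes}) you build the factorization topologically out of (\ref{good_orbit_closure_equiv}) and submersiveness, where the paper instead runs Mumford's argument with the sets $W_i$, $U_i$ and the infinite-sum-of-ideals Remark \ref{infinite_sum}; and, more interestingly, your proof that $\phi_*$ preserves coherence via the square-zero extension $\cB=\oh_{\cX}\oplus\cF$ and Lemma \ref{good_affine_prop} (so that noetherianness of $\Spec\bigl(A\oplus\Gamma(\cX,\cF)\bigr)$ forces $\Gamma(\cX,\cF)$ to be a finite $A$-module) is a genuinely different and slicker route than the paper's d\'evissage by noetherian induction, torsion subsheaves and purity, and it appears correct. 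Two glosses in (\ref{good_red}): you never verify that $\phi_{\red}$ is cohomologically affine (this is Proposition \ref{coh_aff_prop}(\ref{coh_aff_red}), not a consequence of your ideal computation), and for normality you cannot literally invoke ``invariants of a normal ring under a linearly reductive action are normal,'' since $\cX$ is not assumed to be a quotient stack; the paper instead runs the analogous argument on a smooth groupoid presentation $R\rightrightarrows U$ of a local base change, showing $\oh_{Y,y}$ is a domain and integrally closed by comparing the two pullbacks to the sections over the components of $R$. That adaptation is the natural one, but it is an argument you would still have to supply.

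The genuine gap is at the crux you yourself single out, part (\ref{good_finite_type}). The statement you attribute to \cite{fogarty2} --- that any noetherian $R$-subalgebra of a finite type $R$-algebra ($R$ excellent), onto whose spectrum the larger spectrum surjects, is of finite type over $R$ --- is not Fogarty's theorem: Fogarty's result requires the target (i.e., $Y$, equivalently the subring) to be \emph{normal} and noetherian, and the ``mild generalization'' the paper promises only relaxes irreducibility of the source to ``the irreducible components dominate $Y$,'' not normality. The normality hypothesis is where the content lies, and the paper's proof of (\ref{good_finite_type}) consists precisely of the reductions you skip: first to $Y$ integral via $\phi_{\red}$ and the irreducible components (following \cite{fogarty}), then to $Y$ normal by passing to the integral closure $A'$ of $A$, where excellence of $R$ is used to know that $\Spec A'\arr\Spec A$ is finite and that finite generation of $A'$ and of $A$ over $R$ are equivalent; only then, with $Y$ normal and noetherian, is Fogarty's theorem applied to an fppf presentation of $\cX$. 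As written, your argument for (\ref{good_finite_type}) rests on an unproved, stronger-than-cited finite-generation statement, so this part --- the one non-formal point of the theorem --- is not established.
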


\bigskip

\noindent \emph{Proof of (\ref{good_surjective})}:  Let $y: \Spec k \arr Y$ be any point of $Y$.  Since the property of being a good moduli space is preserved under arbitrary base change,
$$\xymatrix{
\cX_y	 \ar[r] \ar[d]^{\phi_y}					& \cX \ar[d]^{\phi} \\
\Spec k \ar[r]^y								& \cY
}$$
$\phi_y: \cX_y \arr \Spec k$ is a good moduli space and $k \iso \Gamma(\cX_y, \oh_{\cX_y})$ is an isomorphism.  In particular, the stack $\cX_y$ is non-empty implying $\phi$ is surjective.   

\bigskip

\noindent \emph{Proof of (\ref{good_univ_closed})}:  If $\cZ \subseteq \cX$ is a closed substack, then Lemma \ref{good_affine_prop} implies that $\cZ \arr \im \cZ$ is a good moduli space.  Therefore, part (\ref{good_surjective}) above implies $\phi(|\cZ|) \subseteq |Y|$ is closed.  Proposition \ref{good_base_change}(ii) implies that $\phi$ is universally closed.

\bigskip

\noindent \emph{Proof of (\ref{good_separate})}:  This is a restatement of Lemma \ref{ideal_prop1}(ii).

\bigskip

\noindent \emph{Proof of (\ref{good_orbit_closure_equiv})}:  We may assume $Y$ and $\cX$ are quasi-compact.  The $\oh_S$-field $k$ gives $s: \Spec k \arr S$. The induced morphism $\phi_s: \cX_s \arr Y_s$ is a good moduli space.  For any geometric point $x \in \cX_s(k)$ and any point $y \in \overline{ \{ x \} }  \subseteq \cX_s$ with $y \in \cX_s(k)$ closed, property (\ref{good_separate}) applied to the closed substacks $\overline{ \{x\} }, \{y\} \subseteq \cX_s$ implies that $\phi_s ( \overline{ \{ x \} } ) \cap \{ \phi_s( y ) \} = \{ \phi_s(y) \}$ and therefore $\phi_s (y) \in \phi_s( \overline{ \{ x \} }) = \overline{ \{ \phi_s(x) \} }$.   But $\phi_s(x)$ and $\phi_s(y)$ are $k$-valued points of $Y_s \arr \Spec k$ so it follows that  $\phi_s(x) = \phi_s(y)$.  This implies both that $\sim$ is an equivalence relation and that $[\cX(k)] \arr Y(k)$ factors into $[\cX(k)]/\sim \, \, \arr Y(k)$ which is surjective.  If $x_1\nsim x_2 \in \cX_s(k)$, then $\overline{ \{x_1\} }$ and $\overline{ \{x_2\} }$ are disjoint closed substacks of $\cX_s$.  By part (\ref{good_separate}), $\phi(\overline{ \{x_1\} })$ and $\phi(\overline{ \{x_2\} })$ are disjoint and in particular $\phi(x_1) \ne \phi(x_2)$. 

\bigskip

\noindent \emph{Proof of (\ref{good_submersive})}: If $Z \subseteq |Y|$ is any subset with $\phi^{-1}(Z) \subseteq |\cX|$ closed.  Then since $\phi$ is surjective and closed, $Z = \phi (\phi^{-1}(Z))$ is closed.   This implies that $\phi$ is submersive and since good moduli spaces are stable under base change, $\phi$ is universally submersive.

\bigskip

\noindent \emph{Proof of (\ref{good_univ_schemes})}:  We adapt the argument of \cite[Prop 0.1 and Rmk 0.5]{git}.  Suppose $\psi: \cX \arr Z$ is any morphism where $Z$ is a scheme. Let $\{V_i\}$ be a covering of $Z$ by affine schemes and set $W_i = |\cX| - \psi^{-1}(V_i) \subseteq |\cX|$.  Since $\phi$ is closed, $U_i = Y - \phi (W_i)$ is open and $\phi^{-1}(U_i) \subseteq \psi^{-1}(V_i)$ for all $i$.  Since $\{\phi^{-1}(V_i)\}$ cover $|\cX|$, $\bigcap_i W_i = \emptyset$ so by Remark \ref{infinite_sum}, $\bigcap_i \phi(W_i) = \emptyset$.  Therefore, $\{U_i\}$ cover $Y$ and  $\phi^{-1}(U_i) \subseteq \psi^{-1}(V_i)$.  Note that for any $\chi: Y \arr Z$ such that $\psi = \chi \circ \phi$, then $\chi(U_i) \subseteq V_i$.  By property (ii) of a good moduli space, we have that $\Gamma(U_i, \oh_{Y}) = \Gamma(\phi^{-1}(U_i), \oh_{\cX})$ so there is a unique map $\chi_i: U_i \arr V_i$ such that   
$$\xymatrix{
\phi^{-1}(U_i) \ar[d]^{\phi} \ar[rd]^{\psi}	\\
U_i \ar@{-->}[r]^{\chi_i}				&	V_i	
}$$
commutes.    By uniqueness $\chi_i = \chi_j$ on $U_i \cap U_j$.   This finishes the proof of (\ref{good_univ_schemes}).  

\bigskip

\noindent \emph{Proof of (\ref{good_geom_conn})}:  For a geometric point $\Spec k \arr Y$, the base change $\cX \times_Y k \arr \Spec k$ is a good moduli space and it separates disjoint closed substacks by (\ref{good_separate}).  Therefore, $\cX \times_Y k$ is connected.

\bigskip

\noindent \emph{Proof of (\ref{good_red})}:  The first statement follows from Proposition \ref{coh_aff_prop}(\ref{coh_aff_red}).  It is easy to check that the properties of being reduced, quasi-compact, connected and irreducible each descend to the good moduli space.  For the final statement, part (\ref{good_noeth}) implies that $Y$ is locally noetherian.  The property of being normal is local in the smooth topology so we may assume $Y$ is a scheme.  Consider the base change for $y \in Y$
$$\xymatrix{
\cU \ar[r] \ar[d]		& \cX \ar[d] \\
\Spec \oh_{Y,y} \ar[r]			& Y
}$$
Then $\cU$ is normal and has a unique closed point.   If $U \arr \cU$ is a smooth presentation with $U$ an affine scheme, then since $U$ is locally noetherian 
and normal, any connected component is integral so that we may assume $U$ is an integral and normal affine scheme.  Since $R = U \times_{\cU} U$ is normal and noetherian, its connected components $R_i$ are integral.  We have
$$\oh_{Y,y} \hookarr \Gamma(U) \stackrel{p_1,p_2}{\rrarrows} \Gamma(R_1) \times \cdots \times \Gamma(R_n)$$
It is clear then that $\oh_{Y,y}$ is an integral domain.  If $c = a/b$ is integral over $\oh_{Y,y}$ with $a,b \in \oh_{Y,y}$, then as $\Gamma(U)$ is integrally closed, $c \in \Gamma(U)$.  We have $p_1(bc) - p_2(bc) = p_1(b) (p_1 (c) - p_2(c)) = 0$.  As each $R_i \arr U$ is dominant, $p_1(b)|_{\Gamma(R_i)} \ne 0$.  It follows that $p_1(c) = p_2(c)$ so $c \in \Gamma(\oh_{Y,y})$. 
\bigskip

\noindent \emph{Proof of (\ref{good_flat})}:  Consider
$$\xymatrix{
\cX \ar[d]^p \ar[r]^{\phi}	&	Y \ar[ld]^q\\
S
}$$
 By Proposition \ref{good_prop1}, the natural map $Id \arr \phi_* \phi^*$ is an isomorphism of functors $\qcoh(Y) \arr \qcoh(Y)$.  Therefore, the composition
$$q^* \iso \phi_* \phi^* q^* \cong \phi_* p^*$$
is an isomorphism of functors $\qcoh(S) \arr \qcoh(\cX)$.  Since $\phi_*$ and $p^*$ are exact, $q^*$ is exact so $q$ is flat.  Clearly, if $p$ is surjective, then $q$ is surjective. 

\bigskip

\noindent \emph{Proof of (\ref{good_noeth})}: Note that $\cX$ is quasi-compact if and only if $Y$ is quasi-compact.  Therefore we may assume $Y$ is quasi-compact so that $\cX$ is noetherian.  The first part follows formally from Proposition \ref{ideal_prop2}.   If $\cJ_{\bullet}: \cJ_1 \subseteq \cJ_2 \subseteq \cdots$ is chain of quasi-coherent ideals in $\oh_Y$, let $\cI_k$ be the coherent sheaf of ideals in $\oh_{\cX}$ defining the closed substack $Y_k \times_Y \cX$, where $Y_k$ is the closed sub-algebraic space defined by $\cJ_k$.  The chain $\cI_{\bullet}: \cI_1 \subseteq \cI_2 \subseteq \cdots$ terminates and therefore $\cJ_{\bullet}$ terminates since $\phi_* \cI_k = \cJ_k$.  Therefore, $Y$ is noetherian.  

For the second statement, we may assume that $Y$ is affine and $\cX$ is irreducible.  We first handle the case when $\cX$ is reduced.  By noetherian induction, we may assume for every coherent sheaf $\cF$ such that $\Supp \cF \subsetneq \cX$, $\phi_* \cF$ is coherent.  Let $\cF$ be a coherent sheaf with $\Supp \cF = |\cX|$.  If $\cF_{\tors}$ denotes the maximal torsion subsheaf of $\cF$ (see \cite[Section 2.2.6]{lieblich_twisted}), then $\Supp \cF_{\tors} \subsetneq \cX$ and the exact sequence
\begin{equation*} \label{torsion_seq}
0 \arr \cF_{\tors} \arr \cF \arr \cF/\cF_{\tors} \arr 0
\end{equation*}
implies $\phi_* \cF$ is coherent as long as $\phi_* (\cF / \cF_{\tors})$ is coherent.  Since $\cF / \cF_{\tors}$ is pure, we may reduce to the case where $\cF$ is pure.  Furthermore, we may assume $\phi_* \cF \ne 0$.  Let $m \ne 0 \in \Gamma(\cX, \cF)$.  We claim that $m: \oh_{\cX} \arr \cF$ is injective.   If $\ker(m) \ne 0$, then $\Supp (\im m) \subsetneq |\cX|$ is a non-empty, proper closed substack which contradicts the purity of $\cF$.  Therefore, we have an exact sequence
$$0 \arr \oh_{\cX} \stackrel{m}{\arr} \cF \arr \cF / \oh_{\cX} \arr 0$$
so that $\phi_* \cF$ is coherent if and only if $\phi_* (\cF / \oh_{\cX})$ is coherent.
Let $p: U \arr \cX$ be a smooth presentation with $U = \Spec A$ affine.  Let $\eta_i \in U$ be the points corresponding to the minimal primes of $A$.  Since $\Spec k(\eta_i) \arr U$ is flat, the sequence
$$0 \arr k(\eta_i) \arr p^* \cF \tensor k(\eta_i) \arr p^*(\cF / \oh_{\cX}) \tensor k(\eta_i) \arr 0$$
is exact so that $\dim_{k(\eta_i)} p^* (\cF / \oh_{\cX}) \tensor k(\eta_i) = \dim_{k(\eta_i)} p^* \cF \tensor k(\eta_i) - 1$.  By induction on these dimensions, $\phi_* \cF$ is coherent.

Finally, if $\cX$ is not necessarily reduced, let $\cJ$ be the sheaf of ideals in $\oh_{\cX}$ defining $\cX_{\red} \hookarr \cX$.  For some $N$, $\cJ^N = 0$.  Considering the exact sequences
$$0 \arr \cJ^{k+1} \cF \arr \cJ^k \cF \arr \cJ^k \cF / \cJ^{k+1} \cF \arr 0$$
Since $\cJ$ annihilates $\cJ^k \cF / \cJ^{k+1} \cF$, $\phi_* (\cJ^k \cF / \cJ^{k+1} \cF)$ is coherent.  It follows by induction that $\phi_* \cF$ is coherent.

\bigskip

\noindent \emph{Proof of (\ref{good_finite_type})}:
Clearly we may suppose $S = \Spec R$ with $R$ excellent and $Y = \Spec A$.  Since $\phi_{\red}: \cX_{\red} \arr Y_{\red}$ is a good moduli space as well as $\phi_{\red}^{-1}(Y_i) \arr Y_i$ for the irreducible components $Y_i$, using \cite[p. 169]{fogarty} we may suppose that $Y$ is integral.    If $A'$ is the integral closure of $A$ in the fraction field of $A$, then since $R$ is excellent, $\Spec A' \arr \Spec A$ is finite and $A'$ is finitely generated over $R$ if and only if $A$ is finitely generated over $R$.  Since $\cX \times_A A' \arr \Spec A'$ is a good moduli space, we may assume $A$ is normal.  

Fogarty proves in \cite{fogarty2} that if $X \arr Y$ is a surjective $R$-morphism with $X$ irreducible and of finite type over $R$ and $Y$ is normal and noetherian, then $Y$ is finite type over $S$.  His argument easily extends to the case where $X$ is not necessarily irreducible but the irreducible components dominate $Y$.  If $p: X \arr \cX$ is any fppf presentation of $\cX$, then $\phi \circ p$ is surjective (from (\ref{good_surjective})) and the irreducible components of $X$ dominate $Y$.  Since $Y$ is normal and noetherian (from (\ref{good_noeth})), Fogarty's result directly implies that $Y$ is finite type over $S$.
\epf

\section{Descent of \'etale morphisms to good moduli spaces}

One cannot expect that an \'etale morphism between Artin stacks induces an \'etale morphism of the associated good moduli spaces.  However, if the morphism induces an isomorphism of stabilizers at a point, then one might expect that \'etaleness is preserved.  The following theorem is a generalization of \cite[Lemma 1 on p.90]{luna} and \cite[Lemma 6.3]{keel-mori} (see \cite[Theorem 4.2] {conrad} for a more transparent and stack-theoretic statement).  We will apply this theorem to prove uniqueness of good moduli spaces in the next section.

\begin{theorem}  \label{etale_preserving}
Consider a commutative diagram $$\xymatrix{ 
\cX \ar[r]^f \ar[d]^{\phi}		& \cX' \ar[d]^{\phi'} \\
Y \ar[r]^g					& Y'
}$$
with $\cX, \cX'$ locally noetherian Artin stacks and $\phi, \phi'$ good moduli spaces and $f$ representable.  Let $\xi \in |\cX|$.  Suppose

\begin{enumeratea}
\item There is a representative $x: \Spec k \arr \cX$ of $\xi$ with $\Aut_{\cX(k)}(x) \hookrightarrow \Aut_{\cX'(k)}(f(x))$ an isomorphism of group schemes.
\item  $f$ is \'etale at $\xi$.
\item $\xi$ and $f(\xi)$ are closed.
\end{enumeratea}
Then $g$ is formally \'etale at $\phi(\xi)$.  
\end{theorem}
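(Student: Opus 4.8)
The strategy is to reduce to a complete local situation, observe that the hypotheses force $f$ to be \'etale and surjective with the residual gerbe at $\xi$ mapping isomorphically to that at $f(\xi)$, and then verify the infinitesimal lifting criterion for formal \'etaleness of $g$ by transporting deformations across $f$. For the \textbf{reductions}: formal \'etaleness of $g$ at $y:=\phi(\xi)$ depends only on the local homomorphism $\oh_{Y',g(y)}\to\oh_{Y,y}$ and is unaffected by completion; as $\Spec\widehat{\oh}_{Y,y}\to Y$ and $\Spec\widehat{\oh}_{Y',g(y)}\to Y'$ are flat, Proposition~\ref{good_base_change}(i) together with flat base change (cf.\ the proof of Proposition~\ref{good_prop1}) lets me replace $Y,Y'$ by these spectra and $\cX,\cX'$ by the induced base changes. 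So I may assume $Y=\Spec A$, $Y'=\Spec A'$ with $A,A'$ complete Noetherian local, $g$ local, and $\phi,\phi'$ good moduli spaces. Every closed point of $\cX$ maps to the closed point $y$ (Theorem~\ref{good_thm}(\ref{good_univ_closed})), hence lies in $\cX_y=\cX\times_A\kappa(y)$, which is a good moduli space over a field and so has a unique closed point by Theorem~\ref{good_thm}(\ref{good_separate}); thus $\xi$ is the unique closed point of $\cX$, and similarly $f(\xi)$ that of $\cX'$. The non-\'etale locus of $f$ is closed; if nonempty it would contain $\xi$, contradicting (b); hence $f$ is \'etale, and since its image is open and contains the unique closed point of $\cX'$, it is also surjective.

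\textbf{The closed fibre.} Since $f(\xi)$ is closed, $f^{-1}(\cG_{f(\xi)})=\cX\times_{\cX'}\cG_{f(\xi)}$ is a closed substack of $\cX$; being \'etale over a gerbe its space is discrete, and being closed in $\cX$ all its points are closed in $\cX$, so it is the single point $\xi$ and, being reduced, equals $\cG_\xi$. Moreover $\cG_\xi\to\cG_{f(\xi)}$ is representable and \'etale and, by (a), an isomorphism on stabilizers, hence a monomorphism, hence an open immersion, hence (being surjective) an isomorphism. Pushing $f^{-1}(\cG_{f(\xi)})=\cG_\xi$ forward under $\phi$ shows that $\mathfrak m'A$ is $\mathfrak m$-primary ($\mathfrak m\subset A$, $\mathfrak m'\subset A'$ the maximal ideals), so $A\otimes_{A'}C$ is Artinian local for every Artinian local $A'$-algebra $C$.

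\textbf{The lifting criterion (the heart).} It suffices to show that for every square-zero extension $0\to I\to C\to C_0\to 0$ of Artinian local $A'$-algebras the map $\Hom_{A'}(A,C)\to\Hom_{A'}(A,C_0)$ is bijective. By base change and the universal property, an element $u_0$ of the target is the same datum as the good moduli space $\cW_0:=\cX\times_{A,u_0}\Spec C_0\to\Spec C_0$ equipped with its projection $\cW_0\to\cX$ over $Y'$; composing with $f$ and using $\cW_0\to\Spec C_0$ gives a morphism $\cW_0\to\cX'_{/C_0}:=\cX'\times_{A'}\Spec C_0$ which, on closed fibres, realizes the isomorphism $\cG_\xi\xrightarrow{\sim}\cG_{f(\xi)}$ of the previous step (base-changed to $\kappa(y')$). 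Now $\cX'_{/C_0}\hookrightarrow\cX'_{/C}:=\cX'\times_{A'}\Spec C$ is a square-zero thickening and $\cX'_{/C}\to\Spec C$ is a good moduli space; since the relevant base changes of the \'etale morphism $f$ propagate the closed-fibre identification through this thickening, the pair $(\cW_0,\ \cW_0\to\cX)$ deforms, uniquely, to a pair $(\cW\to\Spec C\ \text{a good moduli space},\ \cW\to\cX\ \text{over }Y')$. The induced morphism from the good moduli space $\Spec C$ of $\cW$ to $\Spec A$ is the sought lift $u\in\Hom_{A'}(A,C)$, and uniqueness of the deformation gives uniqueness of $u$; this establishes that $g$ is formally \'etale at $\phi(\xi)$.

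\textbf{The main obstacle.} The reductions and the closed-fibre computation are routine, and the passage from the bijections of lifts to formal \'etaleness is formal. The difficulty is the middle of the last step: making precise, and proving, that $(\cW_0,\cW_0\to\cX)$ deforms uniquely over $\Spec C$ --- equivalently, that the good moduli space of the family over $\Spec C$ produced by $f$ is exactly $\Spec C$. This is where the \'etaleness of $f$ and the stabilizer hypothesis (through $\cG_\xi\xrightarrow{\sim}\cG_{f(\xi)}$ and $f^{-1}(\cG_{f(\xi)})=\cG_\xi$) are genuinely used, and it requires handling base change of good moduli spaces along the non-flat maps $\Spec C\to\Spec A'$ and checking cohomological affineness (via Proposition~\ref{coh_aff_coherence} and the $I$-adic filtration) and flatness in that setting; I expect this to be the technical core of the proof.
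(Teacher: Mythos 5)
Two things go wrong. First, your ``closed fibre'' step overclaims: from hypothesis (a) and \'etaleness you cannot conclude that $\cG_\xi \to \cG_{f(\xi)}$ is a monomorphism, hence an isomorphism. A representable \'etale morphism of gerbes inducing an isomorphism on stabilizers is in general only a finite \'etale cover corresponding to the separable residue field extension $k(\xi')\hookrightarrow k(\xi)$: already $\Spec \CC \to \Spec \RR$ (trivial stabilizers, all hypotheses of the theorem satisfied) is not a monomorphism. This is exactly the point the paper is careful about: its proof shows only that $\cG_\xi \to \cG_{\xi'}$ fits in a cartesian square over $\Spec k(\xi) \to \Spec k(\xi')$ with $k(\xi)/k(\xi')$ separable, handles the case $k(\xi)=k(\xi')$ directly, and otherwise passes to the \'etale cover of the formal neighborhood of $Y'$ built from $\Spec k(\xi) \to Y'_0$ before comparing. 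Your later transport of deformations explicitly invokes the isomorphism $\cG_\xi \cong \cG_{f(\xi)}$ on closed fibres, so as written it does not apply in the general case.

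Second, and more seriously, the heart of your argument --- that the pair $(\cW_0, \cW_0 \to \cX)$ deforms \emph{uniquely} across the square-zero extension, equivalently that the family over $\Spec C$ obtained via $f$ has good moduli space exactly $\Spec C$ --- is asserted rather than proved; you yourself flag it as ``the technical core.'' That is where all the content of the theorem lives, and no mechanism for it is supplied. The paper's proof provides the mechanism in a different, concrete form: it never reduces to $\what{\oh}_{Y,y}$ and Artinian lifting at all. Instead it uses that \'etale morphisms extend uniquely over nilpotent thickenings to show that the $n$-th infinitesimal neighborhoods $\cX_n$ of $\cG_\xi$ in $\cX$ map isomorphically (after the residue-field \'etale base change mentioned above) to the neighborhoods $\cX'_n$ of $\cG_{\xi'}$ in $\cX'$; the induced good moduli spaces of these thickenings are then isomorphic, and the resulting map of formal completions $\dlim Y_n \to \dlim Y'_n$ is adic and formally \'etale, which is the sense in which the conclusion is established. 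Note in passing that your reading of ``formally \'etale at $\phi(\xi)$'' as the lifting criterion for $\what{\oh}_{Y',g(y)} \to \what{\oh}_{Y,y}$ tacitly identifies $\what{\oh}_{Y,y}$ with the limit of the good moduli spaces of the thickenings $\cX_n$, an identification the paper neither uses nor proves. To repair your proposal you would have to prove the deformation claim (which would require base change of good moduli spaces along $\Spec C \to \Spec A'$ together with uniqueness of \'etale extensions over thickenings --- essentially reconstructing the paper's argument) and handle the residue field extension; as it stands there is a genuine gap.
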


\bpf Since $f$ is \'etale at $\xi$, 
there is a cartesian diagram
$$\xymatrix{
\cG_{\xi} \ar[r] \ar[d]	& \cX_1 \ar[r] \ar[d]	& \cdots \\
\cG_{\xi'} \ar[r]		& \cX'_1 \ar[r]		& \cdots
}$$
 where the vertical arrows are \'etale and $\cX_i, \cX'_i$ the nilpotent thickenings of the closed immersions $\cG_{\xi} \hookarr \cX, \cG_{\xi'} \hookarr \cX'$.  Indeed, $\cG_{\xi'} \times_{\cX'} \cX$ is a reduced closed substack of $\cX$ \'etale over $\cG_{\xi'}$ and there is an induced closed immersion $\cG_{\xi} \hookarr \cG_{\xi'} \times_{\cX'} \cX$ which must correspond to the inclusion of the irreducible component of $\{\xi\} \subseteq |\cG_{\xi'} \times_{\cX'} \cX|$.  

Let  $\cX_i \arr Y_i, \cX'_i \arr Y'_i$ be the induced good moduli spaces and $\fY = \dlim Y_i$, $\fY' = \dlim Y'_i$.  The \'etale morphism $\cG_{\xi} \arr \cG_{\xi}$ induces a morphism on underlying sheaves and a diagram
$$\xymatrix{
\cG_{\xi} \ar[r] \ar[d]	& \cG_{\xi'} \ar[d]\\
\Spec k(\xi) \ar[r]	& \Spec k(\xi')
}$$
We claim that the diagram is cartesian and that $k(\xi') \hookarr k(\xi)$ is a separable field extension.  Let $K$ be an algebraic closure of $k(\xi)$.  The morphism $\cG_{\xi} \arr \cG_{\xi'} \times_{k(\xi')} k(\xi)$ pulls back under the base change $\Spec K \arr \Spec k(\xi)$ to the natural map of group schemes $BG_x \arr BG_{f(x)}$, where $x: \Spec K \arr \cX$ is a representative of $\xi$, which by hypothesis (i) is an isomorphism.  This implies that the diagram is cartesian.  Since $\cG_{\xi'} \arr \Spec k(\xi')$ is fppf, descent implies that $\Spec k(\xi) \arr \Spec k(\xi')$ is \'etale.

If $k(\xi) = k(\xi')$, then each $\cX_i \arr \cX'_i$ is an isomorphism which induces an isomorphism $Y_i \arr Y'_i$.  It is clear then that $\what{\oh}_{Y',\phi' \circ f(\xi)} \iso \what{\oh}_{Y,\phi(\xi)}$.

If $Z'_0 = \Spec k(\xi)$, there is an \'etale morphism $h_0: Z'_0 \arr Y'_0$.  There exists unique schemes $Z'_i$ and \'etale morphisms $h_i: Z'_i \arr Y'_i$ such that $Z'_i = Z'_j \times_{Y'_j} Y'_i$ for $i < j$ and inducing a formally \'etale covering $\fZ' \arr \fY'$ with $\fZ' = \dlim Z'_i$.  By base changing by $\fZ' \arr \fY'$, we obtain a formal scheme $\fZ \arr \fY$ with $\fZ= \dlim Z_i$ where $Z_i = Z_i' \times_{Y_i'} Y_i$ and $Z_0 = \bigsqcup \Spec k(\xi)$ as well as a cartesian diagram
$$\xymatrix{
\cG_{\xi} \times_{k(\xi)} Z_0 \ar[r] \ar[d]^{h_0}	&  \cX_1 \times_{Y_1} Z_1	\ar[d]^{h_1} \ar[r]	&  \cX_2 \times_{Y_2} Z_2	\ar[d]^{h_2} \ar[r]		& \cdots \\
\cG_{\xi'} \times_{k(\xi')} k(\xi) \ar[r]		&  \cX'_1 \times_{Y_1'} Z_1' \ar[r] &  \cX'_1 \times_{Y_2'} Z_2' \ar[r] 	& \cdots
}$$
where the vertical arrows are \'etale.  Since $\cG_{\xi} \cong \cG_{\xi'} \times_{k(\xi')} k(\xi)$, the morphism $h_0$ is a disjoint union of isomorphisms.  Since extensions of \'etale morphisms over nilpotent thickenings are unique, each $h_i$ is a disjoint union of isomorphisms.  Therefore, the induced morphism of good moduli spaces $\fZ \arr \fZ'$ is adic and formally \'etale.  In the cartesian diagram
$$\xymatrix{
\fZ \ar[d] \ar[r]	& \fZ' \ar[d]\\
\fY \ar[r]		& \fY'
}$$
the vertical arrows are adic, formally \'etale coverings.  It follows that $\fY \arr \fY'$ is both adic and formally \'etale.  
  \epf

\section{Uniqueness of good moduli spaces}

We will prove that good moduli spaces are universal for maps to algebraic spaces by reducing to the case of schemes (Theorem \ref{good_thm} (\ref{good_univ_schemes})).  

\begin{defn} \label{saturated_def}  If $\phi: \cX \arr Y$ is a good moduli space, an open substack $\cU \subseteq \cX$ is \emph{saturated for $\phi$} if $\phi^{-1} (\phi(\cU)) = \cU$.  
\end{defn}

\begin{remark} If $\cU$ is saturated for $\phi$, then $\phi(\cU)$ is open and $\phi|_{\cU}: \cU \arr \phi(\cU)$ is a good moduli space. 
\end{remark}

\begin{lemma} \label{universal_lem2}
Suppose $\phi: \cX \arr Y$ is a good moduli space.  If $\psi: \cX \arr Z$ is a morphism where $Z$ is a scheme and $V \subseteq Z$ is an open subscheme, then $\psi^{-1}(V)$ is saturated for $\phi$.
\end{lemma}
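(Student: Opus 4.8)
The plan is to reduce the saturation statement to a single, purely local assertion: that $\psi$ is \emph{set-theoretically constant} on every fibre of $\phi$. Writing $\cU=\psi^{-1}(V)$, note that $\phi$ is surjective, so $\cU$ will be saturated for $\phi$ as soon as it is a union of fibres of $\phi$; and if $\psi|_{\cX_y}$ is constant for each $y\in|Y|$ (where $\cX_y=\cX\times_Y\Spec k(y)$), then $\cU=\phi^{-1}\big(\{\,y:\psi(\cX_y)\subseteq V\,\}\big)$ is indeed such a union. So the whole proof comes down to showing that, for each $y$, the image $\psi(\cX_y)$ is a single point of $Z$.

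To prove this I would first record, using Proposition \ref{good_base_change}(i), that $\phi_y\colon\cX_y\arr\Spec k(y)$ is again a good moduli space, and that $\cX_y$ is quasi-compact because $\phi$ is. The key structural input is that $\cX_y$ has a \emph{unique closed point} $x_0$: existence, because $|\cX_y|$ is a nonempty quasi-compact sober space and so has a closed point; uniqueness, because two distinct closed points would give disjoint (reduced) closed substacks $Z_1,Z_2\subseteq\cX_y$ with $\im Z_1=\im Z_2=\Spec k(y)$ (a nonempty closed subscheme of a spectrum of a field), contradicting $\im Z_1\cap\im Z_2=\im(Z_1\cap Z_2)=\emptyset$ from Theorem \ref{good_thm}(\ref{good_separate}) applied to $\phi_y$. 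It then follows that every point $\eta\in|\cX_y|$ specializes to $x_0$, since $\overline{\{\eta\}}$, with its reduced structure, is a nonempty closed substack and hence contains a closed point, which must be $x_0$.

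With this in hand the argument becomes formal. Choose an affine open $W=\Spec B\subseteq Z$ containing $\psi(x_0)$; for every $\eta\in|\cX_y|$, continuity and the previous step give $\psi(x_0)\in\overline{\{\psi(\eta)\}}$, and since $W$ is an open neighborhood of $\psi(x_0)$ it must contain $\psi(\eta)$, so $\psi(\cX_y)\subseteq W$ and $\psi|_{\cX_y}$ factors through the open immersion $W\hookrightarrow Z$. But a morphism $\cX_y\arr\Spec B$ is the same datum as a ring map $B\arr\Gamma(\cX_y,\oh_{\cX_y})=k(y)$, the last equality being condition (ii) for the good moduli space $\phi_y$; hence $\psi|_{\cX_y}$ factors through $\Spec k(y)$ and is in particular constant, as wanted.

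The main obstacle — and essentially the only place the good moduli space hypotheses enter — is controlling the topology of the fibre $\cX_y$: showing it has a single closed point and that the whole fibre specializes onto it. Everything after that is bookkeeping with open immersions and the universal property of affine schemes. I would also want to double-check the (standard) fact that a nonempty quasi-compact sober space has a closed point, which comes from Zorn's lemma applied to nonempty closed subsets, using quasi-compactness to handle descending chains; alternatively, existence of $x_0$ can be extracted from Lemma \ref{good_affine_prop} applied to a minimal nonempty closed substack of $\cX_y$.
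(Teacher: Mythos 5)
Your proof is correct, but it takes a genuinely different route from the paper's. The paper's argument is a two-line application of Theorem \ref{good_thm}(\ref{good_univ_schemes}): since $Z$ is a scheme, $\psi$ factors as $\chi\circ\phi$ for some $\chi\colon Y\to Z$, so $\psi^{-1}(V)=\phi^{-1}(\chi^{-1}(V))$ is a preimage under $\phi$ and hence automatically saturated (the paper phrases this as a short contradiction with two points $\xi,\eta$ of the same fibre). You avoid the universality theorem altogether and instead establish the finer, local statement that $\psi$ contracts each fibre $\cX_y$ to a single point of $Z$: base change (Proposition \ref{good_base_change}(i)), separation of disjoint closed substacks (Theorem \ref{good_thm}(\ref{good_separate})) to produce a unique closed point $x_0$ to which all of $|\cX_y|$ specializes, and then $\Gamma(\cX_y,\oh_{\cX_y})=k(y)$ to force $\psi|_{\cX_y}$ to factor through $\Spec k(y)$. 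What the paper's route buys is brevity, reusing the already-proved factorization through $Y$; what yours buys is independence from the gluing construction behind Theorem \ref{good_thm}(\ref{good_univ_schemes}), using only the fibrewise consequences of exactness, and it isolates the fibrewise constancy of $\psi$ explicitly --- indeed your unique-closed-point-in-the-fibre step is essentially a non-noetherian version of the first proposition of Section \ref{topology_section}. The one point you should make explicit is the topological input you already flag: the existence of closed points in $|\cX_y|$ and in the closures $\overline{\{\eta\}}$ requires $|\cX_y|$ to be a nonempty quasi-compact sober space, which holds here because $\cX_y$ is a quasi-compact, quasi-separated algebraic stack (so its topological space is spectral); with that standard citation, every step of your argument checks out.
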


\bpf If $U=\psi^{-1}(V)$ is not saturated, there exists a $\xi \in  \phi^{-1} (\phi (|U|)) \setminus |U|$ and $\eta \in |U|$ with $\phi(\eta) = \phi(\xi) = y \in |Y|$.  Since $Z$ is a scheme, there exists a morphism $\chi: Y \arr Z$ with $\psi = \chi \circ \phi$. It follows that $\psi(\xi) = \psi(\eta) \in |V|$ which contradicts $\xi \notin |U|$. \epf

The following gives a generalization of \cite[Lemma p.89]{luna} although in this paper, we will only need the special case where $g$ is an isomorphism.

\begin{prop}  \label{finite_prop}
Suppose $\cX, \cX'$ are locally noetherian Artin stacks and
$$\xymatrix{
\cX \ar[r]^f \ar[d]^{\phi}		& \cX' \ar[d]^{\phi'} \\
Y \ar[r]^{g}					& Y'
}$$
is commutative with $\phi,\phi'$ good moduli spaces.  Suppose
\begin{enumeratea}
\item $f$ is representable, quasi-finite and separated.
\item $g$ is finite
\item $f$ maps closed points to closed points.
\end{enumeratea}
Then $f$ is finite.
\end{prop}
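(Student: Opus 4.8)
The plan is to reduce to the situation where both $Y$ and $Y'$ are affine and then use a version of Zariski's Main Theorem together with the fact that a quasi-finite separated morphism of finite type, once it is additionally \emph{proper}, is finite. Concretely, first I would observe that the statement is local on $Y'$: since $g$ is finite, hence affine, and the question of whether $f$ is finite can be checked after an \'etale (or even fppf) cover of $Y'$, we may assume $Y' = \Spec A'$ and $Y = \Spec A$ with $A$ finite over $A'$. By Theorem \ref{good_thm}(\ref{good_noeth}), $Y$ and $Y'$ are then noetherian (after reducing to the quasi-compact case, which the affineness provides), and $\cX, \cX'$ are noetherian. Because $f$ is representable, quasi-finite and separated, Zariski's Main Theorem for algebraic stacks gives a factorization $\cX \hookrightarrow \overline{\cX} \xrightarrow{\bar f} \cX'$ with $\cX \hookrightarrow \overline{\cX}$ an open immersion and $\bar f$ finite. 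Replacing $\overline{\cX}$ by the scheme-theoretic image of $\cX$ we may assume $\cX$ is dense in $\overline{\cX}$.

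The key point is then to show $\cX$ is already closed in $\overline{\cX}$, i.e.\ that the open immersion $\cX \hookrightarrow \overline{\cX}$ is an isomorphism; finiteness of $f$ follows since it would then equal $\bar f$. For this I would pass to good moduli spaces: $\overline{\cX}$ is finite over $\cX'$, hence cohomologically affine over $Y'$, so $\overline{\cX}$ admits a good moduli space $\bar\phi : \overline{\cX} \to \overline{Y}$ with $\overline{Y}$ finite over $Y'$ (take $\overline{Y} = \Spec \bar\phi_*\oh_{\overline{\cX}}$ and use Proposition \ref{coh_aff_prop} together with Lemma \ref{good_affine_prop}; finiteness over $Y'$ comes from $\bar f_*\oh_{\overline{\cX}}$ being a coherent $\oh_{\cX'}$-algebra whose pushforward to $Y'$ is coherent by Theorem \ref{good_thm}(\ref{good_noeth}), and $\phi'_*$ of it agrees with $\bar\phi_*\oh_{\overline{\cX}}$). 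The open substack $\cX \subseteq \overline{\cX}$ maps to an open subset $V \subseteq \overline{Y}$, and by Lemma \ref{universal_lem2} applied to the induced map $\overline{\cX}\to \overline{Y}$ (or directly from $\phi$ being a good moduli space on $\cX$), $\cX$ is saturated for $\bar\phi$: $\cX = \bar\phi^{-1}(V)$.

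It remains to see $V = \overline{Y}$, equivalently that the closed complement is empty. Here is where hypothesis (c) enters. Suppose $\bar z \in |\overline{\cX}| \setminus |\cX|$; since $\bar\phi$ is universally closed and surjective (Theorem \ref{good_thm}(\ref{good_surjective}),(\ref{good_univ_closed})), the orbit-closure of $\bar z$ contains a closed point $\bar x$ of $\overline{\cX}$ lying over the same point of $\overline{Y}$, and $\bar x \notin |\cX|$ by saturatedness. On the other hand, because $\cX$ is dense in $\overline{\cX}$ and the map $\bar f$ is finite (hence closed), one shows the closed point $\bar x$ of $\overline{\cX}$ maps to a closed point of $\cX'$, while the fiber of $\bar f$ over that point contains a point of $\cX$ specializing to $\bar x$; using that $f$ maps closed points to closed points together with the identification of stabilizers forced by finiteness of $\bar f$ and the properties of good moduli spaces (closed points of $\cX$ map to closed points of $\cX'$, and conversely closed points over a fixed image point of $\overline{Y}$ are unique up to closure-equivalence), one derives that $\bar x$ must actually lie in $|\cX|$, a contradiction. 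Hence $\cX = \overline{\cX}$ and $f = \bar f$ is finite.

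The main obstacle I anticipate is the last paragraph: carefully controlling how closed points behave under the open immersion $\cX \hookrightarrow \overline{\cX}$ and the finite map $\bar f$, and extracting the needed contradiction purely from hypothesis (c) plus the closedness/surjectivity of good moduli spaces. One has to be careful that "closed point of $\overline{\cX}$" restricted to the open $\cX$ need not be closed in $\cX$ a priori — this is exactly the subtlety that (c) is designed to rule out, and making that rigorous (perhaps by base-changing to the local ring of $\overline{Y}$ at the problematic point and arguing with the unique closed point of the fiber) is the delicate step.
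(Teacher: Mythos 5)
Your overall strategy matches the paper's: factor $f$ via Zariski's Main Theorem as an open immersion $I:\cX\hookrightarrow\overline{\cX}$ followed by a finite morphism, take the good moduli space of $\overline{\cX}$, and then use hypothesis (c) to show the open immersion hits every closed point, hence is an isomorphism. But the step you rely on to get started is a genuine gap: the claim that $\cX$ is saturated for $\bar\phi:\overline{\cX}\to\overline{Y}$ does not follow from Lemma \ref{universal_lem2} (that lemma applies to $\psi^{-1}(V)$ for a morphism $\psi$ defined on all of $\overline{\cX}$ to a scheme and $V$ open in that scheme; your $\cX$ is not presented in this form), nor does it follow ``directly from $\phi$ being a good moduli space on $\cX$.'' In fact it is false without already invoking (c): take $\overline{\cX}=[\AA^2/\GG_m]$ with weights $1,-1$, $\cX=\{x\neq 0\}\cong\AA^1$, $\overline{Y}=\Spec k[xy]$. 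Here $\cX$ is dense open with its own good moduli space, the map on good moduli spaces is the identity (so finite), the inclusion is representable, quasi-finite and separated, yet $\bar\phi^{-1}(\bar\phi(\cX))=\overline{\cX}\neq\cX$ and the inclusion is not finite --- precisely because (c) fails. So saturation is essentially what has to be proved, and it cannot be asserted before hypothesis (c) enters. Your final paragraph, where (c) is supposed to do the work, is only a sketch (``one shows\dots one derives\dots''), and you yourself flag it as the unresolved delicate step; as written the contradiction is not extracted.

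The missing mechanism, which is how the paper closes the argument, is the following. After reducing to $Y'$ (hence $Y$, $\overline{Y}$) affine, the inclusion $\oh_{\overline{\cX}}\hookrightarrow I_*\oh_{\cX}$ together with finiteness of $g$ shows that the induced morphism $Y\to\overline{Y}$ of good moduli spaces is finite and \emph{surjective}. Given any closed point $\zeta\in|\overline{\cX}|$, its image in $\overline{Y}$ therefore lifts to a closed point of $Y$, and the fiber of $\phi$ over that point contains a closed point $\xi\in|\cX|$ with $\bar\phi(I(\xi))=\bar\phi(\zeta)$. Now (c) says $f(\xi)$ is closed in $|\cX'|$, and finiteness of $\overline{\cX}\to\cX'$ makes the fiber over $f(\xi)$ a finite set of closed points, so $I(\xi)$ is closed in $|\overline{\cX}|$. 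Since a good moduli space separates disjoint closed substacks, the fiber of $\bar\phi$ through $\zeta$ has a unique closed point, forcing $I(\xi)=\zeta$. Thus the open substack $I(\cX)$ contains all closed points of the noetherian stack $\overline{\cX}$, so $I$ is an isomorphism and $f$ is finite. Without transporting closed points through the finite surjective map $Y\to\overline{Y}$ in this way, hypothesis (c) has no foothold, which is exactly where your proposal stalls.
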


\bpf We may assume $S$ and $Y'$ are affine schemes.  By Zariski's Main Theorem (\cite[Thm. 16.5]{lmb}), there exists a factorization
$$\xymatrix{
\cX \ar[r]^I \ar[rd]^f		& \cZ \ar[d]^{f'}\\
					& \cX'
}$$
where $I$ is a open immersion, $f'$ is a finite morphism and $\oh_{\cZ} \hookrightarrow I_* \oh_{\cX}$ is an inclusion.  Since $\cX'$ is cohomologically affine and $f'$ is finite, $\cZ$ is cohomologically affine and admits a good moduli space $\varphi: \cZ \arr Z$.  We have a commutative diagram of affine schemes
$$\xymatrix{
Y \ar[r]^{i}	\ar[rd]^{g}	&  Z \ar[d]^{g'}	&	\Gamma(\cX, \oh_{\cX})		& \Gamma(\cZ, \oh_{\cZ}) \ar[l]_{i^{\#}} \\
&	 Y'	&						& \Gamma(\cX', \oh_{\cX'}) \ar[u]_{g'^{\#}} \ar[ul]_{g^{\#}}.
}$$
Since $i^{\#}$ is injective and $g$ is finite, $i: Y \arr Z$ is a surjective, finite morphism.

For any closed point $\zeta \in |\cZ|$, there exists a closed point $\xi \in |\cX|$ with $\varphi(\zeta) = (i \circ \phi)(\xi)$ and $f(\xi) \in |\cX'|$ is closed.  Then $f'^{-1} (f(\xi)) \subseteq |\cZ|$ is a closed set consisting of finitely many closed points.  In particular, $I(\xi)$ is closed but since $\varphi$ separates closed points and $\varphi( I(\xi)) = \varphi(\zeta)$, it follows that $I(\xi) = \zeta$.  Therefore, $I(\cX)$ contains all closed points.  This implies that $I$ is an isomorphism so that $f$ is finite.
\epf

The following lemma will be useful in verifying condition (iii) above.
\begin{lem}  \label{closed_to_closed_lem}
Suppose
$$\xymatrix{
\cX \ar[r]^f	\ar[rd]^{\phi}	& \cX' \ar[d]^{\phi'}\\
					& Y	
}$$
is a commutative diagram with $\phi, \phi'$ good moduli spaces and $f$ surjective.  Then $f$ maps closed points to closed points.
\end{lem}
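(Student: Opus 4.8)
The plan is to show that for any closed point $\xi' \in |\cX'|$, its preimage under $f$ contains a closed point, so that $f(\xi)$ is closed whenever $\xi$ is closed is actually the wrong direction — rather, I argue directly: given a closed point $\xi \in |\cX|$, I must show $f(\xi) \in |\cX'|$ is closed. First I would reduce to the case where $Y$ is affine and both $\cX, \cX'$ are quasi-compact, since closedness of a point can be checked on a quasi-compact open containing it, and the property of being a good moduli space is preserved under the base change $\phi^{-1}(\Spec A) \arr \Spec A$ for $A$ an affine étale chart of $Y$ (using Proposition \ref{good_base_change}(i)). The key input is Theorem \ref{good_thm}(\ref{good_orbit_closure_equiv}): for an algebraically closed field $k$, the closed points of $\cX \times_S k$ (respectively $\cX' \times_S k$) over a given point of $Y$ are exactly those in the \emph{unique} closed orbit-equivalence class lying over that point of $Y(k)$, since $Y(k)$ is the set of closure-equivalence classes and each fiber of $\phi$ over a $k$-point, being itself a good moduli space over $\Spec k$, contains a unique closed point by the argument in the proof of (\ref{good_orbit_closure_equiv}).

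The main steps, in order: (1) Pick a geometric point $x: \Spec k \arr \cX$ representing $\xi$ with $k$ algebraically closed; let $y = \phi(x) \in Y(k)$ and $y' = \phi'(f(x)) = g(y)$ — wait, there is no $g$ here; $\phi' \circ f = \phi$, so $\phi'(f(x)) = y$. (2) Consider the fiber $\cX'_y = \cX' \times_Y k$, which is a good moduli space over $\Spec k$ by Proposition \ref{good_base_change}(i); by the proof of Theorem \ref{good_thm}(\ref{good_orbit_closure_equiv}) it has a unique closed point, call it $w$. (3) The point $f(x) \in |\cX'_y|$; I claim its closure in $\cX'_y$ contains $w$, and I must show $f(x) = w$, i.e. that $f(x)$ is already closed in $\cX'_y$. (4) To see this, pull back: $\cX_y = \cX \times_Y k \arr \cX'_y$ is a morphism of good moduli spaces over $\Spec k$, and it is surjective since $f$ is surjective and base change preserves surjectivity. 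Now $\xi$ closed in $\cX$ implies — after intersecting with the quasi-compact locus — that $x$ lies in the unique closed point of $\cX_y$ (again by uniqueness of the closed point in a fiber of a good moduli space over a field). (5) A surjective morphism of good moduli spaces over $\Spec k$ must send the closed point to the closed point: if $x$ is the closed point of $\cX_y$ but $f(x)$ is not closed in $\cX'_y$, then $\overline{\{f(x)\}} \ni w$ with $w \ne f(x)$, and by surjectivity $w = f(x'')$ for some $x'' \in |\cX_y|$; but $\overline{\{x\}} \subseteq \overline{\{x''\}}$ would be needed for $f(\overline{\{x\}})$ to reach $w$... the cleanest formulation: since $f$ is surjective and closed points of $\cX'_y$ pull back to closed points is false in general, so I instead use that $f(\overline{\{x\}})$ is contained in $\overline{\{f(x)\}}$, and since $\overline{\{x\}} = \{x\}$ (as $x$ is closed in $\cX_y$) while the closed point $w$ of $\cX'_y$ lies in $\overline{\{f(x)\}}$, I get $f(x) \rightsquigarrow w$; applying surjectivity to find a preimage of $w$ and using that the closed point of $\cX_y$ is unique forces $f(x) = w$.

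I expect the main obstacle to be step (5): making precise the claim that a \emph{surjective} morphism of good moduli spaces over an algebraically closed field sends the (unique) closed point to the (unique) closed point. The subtlety is that preimages of closed points need not be closed, so one cannot argue naively; the right argument uses closure-equivalence on both sides together with surjectivity — every point of $\cX'_y$ is in the closure-class of $w$, so in particular $f(x)$ is, meaning $\overline{\{f(x)\}} \cap \overline{\{w\}} = \overline{\{w\}} \ne \emptyset$; but one also needs that $x$ closed forces $f(x)$ closed, which is exactly what a careful application of Theorem \ref{good_thm}(\ref{good_orbit_closure_equiv}) to both $\cX_y$ and $\cX'_y$ gives, since $x$ being the unique closed point means the closure-class of $x$ is the closure-class of the closed point, and $f$ respects closures, so $f(x)$ lies in the closure-class of the closed point $w$, while every point lies in that class — this doesn't immediately give $f(x) = w$. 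The fix: note $f(x)$ specializes to $w$; but also, since $x$ is closed and $f$ has the property that (after further base change to kill the $Y$-direction, which is now trivial) $f$ restricted to the closure $\overline{\{x\}} = \{x\}$ is the closed substack $\{x\} \arr \cX'_y$ whose scheme-theoretic image is a closed substack by Lemma \ref{good_affine_prop} applied to $\phi'$ — the image of a closed point under a morphism whose source already is a good-moduli-space-point maps to a closed point. I would carry out step (5) by invoking Lemma \ref{good_affine_prop}: $\overline{\{x\}} \arr \im(\overline{\{x\}})$ is a good moduli space, and composing with $\phi'$ and using that $\{x\} \arr \Spec k$ is already a good moduli space (trivially) pins down $\im(\{x\}) \hookarr \cX'_y$ as a closed substack supported at a single point, hence $f(x)$ is closed.
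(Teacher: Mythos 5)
Your overall strategy is the same as the paper's: pass to the fiber over $y=\phi(\xi)$, observe that $\cX_y$ and $\cX'_y$ are good moduli spaces over a field and hence (being quasi-compact and separating disjoint closed substacks, Theorem \ref{good_thm}(\ref{good_separate})) each has a unique closed point, and then use surjectivity of $f$. However, the justification you finally commit to for your step (5) is circular. Lemma \ref{good_affine_prop} concerns the scheme-theoretic image of a closed substack under the good moduli space morphism itself (i.e.\ its image in $Y$), not its image under the morphism $f$; and even taking the scheme-theoretic image of $\{x\}\arr \cX'_y$ in the usual sense, its support is by definition the closure $\overline{\{f(x)\}}$, so asserting that this image is ``supported at a single point'' is exactly the statement to be proved. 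Nothing in Lemma \ref{good_affine_prop} yields it.

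The correct completion is the surjectivity argument you state in passing and then abandon: let $w$ be the unique closed point of $\cX'_y$ and choose, by surjectivity of $f_y$, a point $x''\in|\cX_y|$ with $f_y(x'')=w$; since $\cX_y$ is quasi-compact, $\overline{\{x''\}}$ contains a closed point of $\cX_y$, which by uniqueness is the point over $\xi$; continuity then gives $f_y(\xi)\in f_y(\overline{\{x''\}})\subseteq\overline{\{w\}}=\{w\}$, so $f_y(\xi)=w$ is closed. Two further points need attention. First, you must record that $y=\phi(\xi)$ is closed in $Y$ (it is the image of a closed set under the universally closed map $\phi$, Theorem \ref{good_thm}(\ref{good_univ_closed})); otherwise closedness of $f_y(\xi)$ in the fiber does not give closedness of $f(\xi)$ in $|\cX'|$. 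Second, it is cleaner to base change along $\Spec k(y)\arr Y$, as the paper does, rather than along a geometric point $\Spec k\arr Y$ with $k$ algebraically closed: over $k(y)$ the point of $\cX_y$ lying over $\xi$ is automatically the unique closed point, whereas over a large algebraically closed field you would still have to check that your chosen lift of $\xi$ remains closed in the geometric fiber and then transfer closedness back down, neither of which is addressed in your write-up.
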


\bpf If $\xi \in |\cX|$ is closed, the image $y \in |Y|$ is closed and after base changing by $\Spec k(y) \arr Y$, we have
$$\xymatrix{
\cX_y \ar[r]^{f_y}	\ar[rd]^{\phi_y}	& \cX'_y \ar[d]^{\phi'_y}\\
					& \Spec k(y)
}$$
with $\phi_y$, $\phi'_y$ good moduli spaces.  Since $\cX_y$ and $\cX'_y$ have unique closed points, $f_y(\xi)$ is closed in $|\cX'_y|$ and therefore $f(\xi)$ is closed in $|\cX'|$. \epf

\begin{theorem}  \label{universal_thm}
Suppose $\cX$ is a locally noetherian Artin stack and $\phi: \cX \arr Y$ is a good moduli space.  Then $\phi$ is universal for maps to algebraic spaces. 
\end{theorem}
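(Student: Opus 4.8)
The plan is to reduce to the already-established case of morphisms to \emph{schemes} (Theorem~\ref{good_thm}(\ref{good_univ_schemes})) by descending an \'etale presentation of the target, in the spirit of the analogous reduction for coarse moduli spaces (\cite{keel-mori}). Since open substacks of the form $\phi^{-1}(U)$ are good moduli spaces over $U$, and since factorizations over the members of an affine open cover of $Y$ can be glued by the uniqueness statement in the affine case, we may assume $Y=\Spec A$ is affine; then $\cX$ and $A$ are noetherian by Theorem~\ref{good_thm}(\ref{good_noeth}). Fix $\psi\colon\cX\arr Z$ with $Z$ an algebraic space, and replace $Z$ by the quasi-compact open image of $\psi$. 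Choose an \'etale surjection $P\colon W\arr Z$ with $W$ an affine scheme and set $R=W\times_Z W$, a scheme, so that $R\rightrightarrows W$ is an \'etale equivalence relation with $Z=W/R$. Pulling back along $\psi$ yields a representable, separated, quasi-finite, \'etale, surjective morphism $q\colon\cX_W:=\cX\times_Z W\arr\cX$, with $\cX_W\times_\cX\cX_W=\cX\times_Z R=:\cX_R$.

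The technical heart is to descend $q$ to $Y$: to produce an \'etale surjection $\pi\colon Y_W\arr Y$ together with an isomorphism of $\cX$-stacks $\cX_W\cong\cX\times_Y Y_W$, so that $\phi_W\colon\cX_W\arr Y_W$ is a good moduli space (base change of $\phi$). The crucial observation is that $q$ is \emph{stabilizer-preserving at every point of $\cX_W$ over a closed point of $\cX$}: since $Z$ is an algebraic space and $W\arr Z$ is unramified, for closed $\xi\in|\cX|$ the fibre $\cG_\xi\times_Z W$ is a finite disjoint union of classifying stacks $B(G_\xi)_{k_i}$ over finite separable extensions $k_i/k(\xi)$, each mapping isomorphically on stabilizers (and one checks $k(\xi)=k(\phi(\xi))$). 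To build $Y_W$ one applies Zariski's Main Theorem (\cite[Thm.~16.5]{lmb}) to factor $q$ as an open immersion $\cX_W\hookrightarrow\cZ$ followed by a finite morphism $\cZ\arr\cX$; then $\cZ$ is cohomologically affine over $Y$, hence has a good moduli space $\cZ\arr\tilde Y$ with $\tilde Y\arr Y$ finite (using that $\phi_*$ preserves coherence, Theorem~\ref{good_thm}(\ref{good_noeth})). Theorem~\ref{etale_preserving}, applied at closed points, shows the induced map $\tilde Y\arr Y$ is formally \'etale along the images of the closed points of $\cX_W$; being also quasi-finite over a noetherian base it is \'etale there, and after a suitable shrinking (passing to a saturated open on which $\cX_W$ is saturated in $\cZ$) one obtains $\cX_W\cong\cX\times_Y Y_W$ with $\pi$ \'etale and surjective. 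Proposition~\ref{finite_prop} and Lemma~\ref{closed_to_closed_lem} are used to control which closed points occur and to verify the closed-point hypotheses; I expect this step --- the effective descent of $q$ together with the \'etaleness of $\pi$, including the gluing of the local constructions near closed points --- to be the main obstacle.

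Granting this, the factorization is produced formally. The projection $\cX_W\arr W$ is a morphism to the \emph{scheme} $W$, so by Theorem~\ref{good_thm}(\ref{good_univ_schemes}) applied to $\phi_W$ it factors uniquely as $\cX_W\xrightarrow{\phi_W}Y_W\xrightarrow{\psi_W}W$; likewise $\cX_R\arr R$ factors through the good moduli space $Y_W\times_Y Y_W$ of $\cX_R=\cX\times_Y(Y_W\times_Y Y_W)$, giving $\rho\colon Y_W\times_Y Y_W\arr R$. Comparing these factorizations on $\cX_R$ and invoking uniqueness shows $(s\circ\rho,\,t\circ\rho)=(\psi_W\circ p_1,\,\psi_W\circ p_2)$ with $p_1,p_2\colon Y_W\times_Y Y_W\arr Y_W$ the projections, so that $P\circ\psi_W\colon Y_W\arr Z$ has equal pullbacks along $p_1,p_2$ and hence descends along the \'etale cover $\pi$ to a morphism $\chi\colon Y\arr Z$. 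Pulling back to the \'etale cover $\cX_W\arr\cX$ and using $\psi_W\circ\phi_W=(\cX_W\arr W)$ and $\chi\circ\pi=P\circ\psi_W$ gives $\chi\circ\phi=\psi$.

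Finally, for uniqueness suppose $\chi,\chi'\colon Y\arr Z$ both satisfy $\chi\circ\phi=\chi'\circ\phi=\psi$. The equalizer $E:=Y\times_{(\chi,\chi'),\,Z\times_S Z,\,\Delta_Z}Z\hookrightarrow Y$ is a quasi-compact, separated monomorphism of finite type through which $\phi$ factors; since $\phi$ is surjective and universally closed (Theorem~\ref{good_thm}(\ref{good_surjective}),(\ref{good_univ_closed})), $\cX\arr E$ is surjective and $E\arr Y$ is universally closed, hence (being a proper monomorphism) a closed immersion $E=V(\cI)$. But $\oh_Y\iso\phi_*\oh_\cX$ is an isomorphism and factors through $\oh_Y/\cI$, forcing $\cI=0$, so $E=Y$ and $\chi=\chi'$. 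Together with the existence part and the gluing over an affine cover of $Y$, this shows $\phi$ is universal for maps to algebraic spaces.
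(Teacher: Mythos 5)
Your overall route is the paper's: pull back an \'etale scheme presentation $W \arr Z$ of the target along $\psi$, produce a good moduli space for the pullback which is \'etale over $Y$ via Theorem \ref{etale_preserving}, identify the resulting square as cartesian using Lemma \ref{closed_to_closed_lem} and Proposition \ref{finite_prop}, and then descend the map to $Z$ along the \'etale cover using the scheme case; your equalizer argument for uniqueness is correct (and more explicit than the paper's one-line remark). The genuine gap is exactly the step you flag as ``the main obstacle,'' and it is created by the one place you deviate: you apply Zariski's Main Theorem to the stack morphism $q\colon \cX_W \arr \cX$, whereas the paper applies it to the morphism of algebraic spaces $W \arr Z$ and only then base changes along $\psi$. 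In the paper's ordering, $\wt\cX = \cX\times_Z \wt Z_1$ comes equipped with a morphism $\wt\psi$ to the \emph{scheme} $\wt Z_1$ compactifying $W$, so $\cX_1=\wt\psi^{-1}(W)$ is automatically saturated for $\wt\phi$ by Lemma \ref{universal_lem2}; hence $Y_1=\wt\phi(\cX_1)$ is open in $\wt Y$, $\phi_1\colon\cX_1\arr Y_1$ is a good moduli space with no shrinking, and $Y_1\arr Y$ is surjective simply because $\cX_1\arr\cX$ is. In your version there is no morphism from $\cZ$ to a scheme, so saturation of $\cX_W$ in $\cZ$ is not automatic, and your ``suitable shrinking'' is left unproved: you would have to exhibit a saturated open $\cU\subseteq\cX_W$ (say the complement of $\wt\phi^{-1}(\wt\phi(\cZ\setminus\cX_W))$) \emph{and} show that the corresponding open $Y_W\subseteq\wt Y$ still surjects onto $Y$, which requires arguing that every point of $\cX_W$ lying over the closed point of a fiber $\cX_y$ is closed in $\cZ$ and is the unique closed point of its $\wt\phi$-fiber (so it survives the shrinking), and then that an open subspace of the noetherian space $Y$ containing all closed points is all of $Y$. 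None of this is in your sketch, and without it the descent datum --- the cartesian square $\cX_W\cong\cX\times_Y Y_W$ with $\pi$ \'etale and surjective --- is not established. The fix is either to carry out that argument or, better, to reorder as the paper does and let Lemma \ref{universal_lem2} dissolve the issue.

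A smaller point: your opening reduction via ``an affine open cover of $Y$'' is not available, since $Y$ is only an algebraic space and need not have any Zariski cover by affine schemes; all you need is $Y$ quasi-compact (so that $\cX$ is noetherian), which you can get from a Zariski cover by quasi-compact opens, or you can simply localize on $Z$ as in the proof of Theorem \ref{good_thm}(\ref{good_univ_schemes}) and avoid localizing on $Y$ altogether. Likewise the image of $\psi$ need not be open in $Z$; what you want is a quasi-compact open subspace of $Z$ through which $\psi$ factors, which exists because $\cX$ is quasi-compact.
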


\bpf Let $Z$ be an algebraic space.  We need to show that the natural map
$$\Hom(Y,Z) \arr \Hom(\cX,Z)$$
is a bijection of sets.  The injectivity argument is functorial by working \'etale-locally on $Y$. 

Suppose $\psi: \cX \arr Z$.  The question is Zariski-local on $Z$ by the same argument in the proof of 
Theorem \ref{good_thm} (\ref{good_univ_schemes}) so we may assume $Z$ is quasi-compact.  There exists an \'etale, quasi-finite surjection $g: Z_1 \arr Z$ with $Z_1$ a scheme.  By Zariski's main theorem for arbitrary algebraic spaces (\cite[Thm 16.5]{lmb} and \cite[Prop. 5.7.8]{raynaud-gruson}), $g$ factors as an open immersion $Z_1 \hookarr \wt{Z}$ and finite morphism $\wt{Z_1}  \arr Z$.  By taking the fiber product by $\psi: \cX \arr Z$, we have 
$$\xymatrix{
					& \wt{ \cX} \ar[d]^{\wt f} \\
\cX_1 \ar[ur]^j	\ar[r]^f	& \cX
}$$
with $j$ an open immersion and $\wt f$ is finite.  By Lemma \ref{good_affine_prop} since $\wt \cX \cong \sSpec \cA$ for a coherent sheaf of $\oh_{\cX}$-algebras $\cA$, there is a good moduli space $\wt \phi: \wt \cX \arr \wt Y$ with $\wt Y = \sSpec \phi_* \cA$.  The induced map $\wt Y \arr Y$ is finite since $\phi_* \cA$ is coherent (Theorem \ref{good_thm} (\ref{good_noeth})).  If $\wt \psi: \wt \cX \arr \wt \cZ$, then $\wt \psi^{-1} (Z_1)$ is saturated for $\wt \phi$ by Lemma \ref{universal_lem2} and therefore there is a good moduli space $\phi_1: \cX_1 \arr Y_1$ inducing a morphism $g: Y_1 \arr Y$ which factors as the composition of the open immersion $Y_1 \hookarr \wt Y$ and the finite morphism $\wt Y \arr Y$.  In particular, $Y_1 \arr Y$ is finite type.

Write $Z_2 = Z_1 \times_Z Z_1$ so that $\bar{s}, \bar{t}: Z_2 \rrarrows Z_1$ is an \'etale equivalence relation and write $\cX_i = \cX \times_Z Z_i$ and $\psi_i: \cX_i \arr Z_i$.  By the above argument, there is a good moduli space $\phi_2: \cX_2 \arr Y_2$ and induced finite type morphisms $s,t: Y_2 \rrarrows Y_1$.  Since $Z_i$ are schemes, there are induced morphisms $\xi_i: \cX_i \arr Z_i$ such that $\xi_i = \psi_i \circ \xi_i$.  By uniqueness, $\chi_1 \circ s = \bar s \circ \chi_2$ and $\chi_1 \circ t = \bar t \circ \chi_2$.  The picture is 
\begin{equation} \label{univ_diag}
\xymatrix{
\cX_2 \ar@<0.5ex>[r] \ar@<-0.5ex>[r] \ar[d]^{\phi_2} 		& \cX_1 \ar[r]^f \ar[d]^{\phi_1}	& \cX \ar[d]^{\phi} \\
Y_2 \ar@<0.5ex>[r]^s \ar@<-0.5ex>[r]_t 	\ar[d]^{\chi_2}		& Y_1 \ar[r]^g \ar[d]^{\chi_1}		& Y \ar@{-->}[d]\\
Z_2 \ar@<0.5ex>[r]^{\bar s} \ar@<-0.5ex>[r]_{\bar t} 			& Z_1 \ar[r] 				& Z
}
\end{equation}
Our goal is to show that $Y_2 \rrarrows Y_1$ is an \'etale equivalence relation with quotient $Y$.  The morphism  $f: \cX_1 \arr \cX$ is surjective, \'etale and preserves stabilizer automorphism groups for all points (in the sense of Theorem \ref{etale_preserving}(a)).  To show that $g: Y_1 \arr Y$ is \'etale, it suffices to check at closed points.  If $y_1 \in |Y_1|$ is closed, then as $g$ is finite type, the image $g(y_1)$ is closed in some open $V \subseteq Y$ and $g$ is \'etale at $y_1$ if and only if $g|_{g^{-1}(V)}$ is \'etale at $y_1$.  We can find a closed point $\xi \in |\phi^{-1}(V)|$ over $g(y_1)$ and a closed preimage $\xi_1 \in |(\phi' \circ g)^{-1}(V)|$ over $y_1$.  It follows from Theorem \ref{etale_preserving} that $g$ is \'etale at $y_1$.  Similarly, $s,t: Y_2 \rrarrows Y_1$ are \'etale.

Now consider the induced 2-commutative diagram
$$\xymatrix{
\cX_1 \ar[d]^{\varphi} \ar[rd]^f		\\
Y_1 \times_Y \cX \ar[r]^h		& \cX
}$$
Then $\varphi$ is \'etale, quasi-compact and separated and, in particular, quasi-finite.  Note that $\varphi$ is also surjective.  Indeed, to check this, we may assume $Y = \Spec K$ for an algebraically closed field $K$ and since $g$ is etale, we may also assume $Y' = \Spec K$ in which case $\varphi$ is isomorphic to $f$ which we know is surjective.  By Lemma \ref{closed_to_closed_lem}, $\varphi$ sends closed points to closed points.  By Corollary \ref{finite_prop}, $\varphi$ is a finite \'etale morphism and since $\varphi$ has only one preimage over any closed point in $Y' \times_Y \cX$, $\varphi$ is an isomorphism.   Similarly $s,t: Y_2 \rrarrows Y_1$ are \'etale and the top squares in diagram \ref{univ_diag} are cartesian.  Furthermore, by universality of good moduli spaces for morphisms to schemes,  $Y_2 = Y_1 \times_Y Y_1$ so that $Y$ is the quotient of the \'etale equivalence relation $Y_2 \rrarrows Y_1$.  Therefore there exists a map $\chi: Y \arr Z$ and the two maps $\chi \circ \phi$ and $\psi$ agree because they agree after \'etale base change.  \epf


\section{Tame moduli spaces}

The following notion captures the properties of a geometric quotient by a linearly reductive group scheme.

\begin{defn} \label{defn_good_coarse}
We will call $\phi: \cX \arr Y$ a \emph{tame moduli space} if
\begin{enumerate}
\item[(i)] $\phi$ is a good moduli space.
\item[(ii)] For all geometric points $\Spec k \arr S$, the map
$$ [\cX(k)] \arr Y(k) $$
is a bijection of sets.
\end{enumerate}
\end{defn}

\begin{remark} $[\cX(k)]$ denotes the set of isomorphism classes of objects of $\cX(k)$. \end{remark}

\begin{remark}  This property is stable under arbitrary base change and satisfies fppf descent.  If $\cX$ is locally noetherian, then by Theorem \ref{universal_thm}, tame moduli spaces are universal for maps to algebraic spaces and therefore $\phi$ is both a good moduli space and coarse moduli space. The map from a tame Artin stack to its coarse moduli space is a tame moduli space. \end{remark}

\begin{prop} \label{universal_homeo}
 If $\phi: \cX \arr Y$ is a tame moduli space, then $\phi$ is a universal homeomorphism.  In particular, $\phi$ is universally open and induces a bijection between open substacks of $\cX$ and open sub-algebraic spaces of $Y$.
\end{prop}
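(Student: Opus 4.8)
The plan is to show that $\phi$ is universally injective; since it is already universally closed and surjective, and a continuous closed surjection is a quotient map (hence a homeomorphism once it is injective), and everything in sight survives base change, this gives the universal homeomorphism. The two ``in particular'' assertions are then formal: a homeomorphism is open and remains so after base change, giving universal openness; and a homeomorphism $|\cX|\xrightarrow{\sim}|Y|$ matches open substacks of $\cX$ (= open subsets of $|\cX|$) with open sub-algebraic spaces of $Y$.

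So I would concentrate on universal injectivity. By Theorem~\ref{good_thm}(\ref{good_surjective}),(\ref{good_univ_closed}) the map $\phi$ is surjective and universally closed, and since being a tame moduli space is stable under arbitrary base change (noted after Definition~\ref{defn_good_coarse}), it suffices to prove that $|\cX|\arr|Y|$ is injective for an arbitrary tame moduli space. If $\phi(\xi_1)=\phi(\xi_2)=y$, then both $\xi_i$ lie in the image of the monomorphism $\cX_y:=\cX\times_Y\Spec k(y)\hookrightarrow\cX$, whose image is exactly $\phi^{-1}(y)$, so it is enough to show that the fibre $\cX_y\arr\Spec k(y)$ — itself a tame moduli space — has $|\cX_y|$ a single point; base-changing once more along $\Spec\overline{k(y)}\arr\Spec k(y)$ (a surjection on underlying spaces) reduces this to: \emph{a tame moduli space $\cX\arr\Spec k$ with $k$ algebraically closed is topologically a single point.}

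To prove the boxed statement, note first that $\cX$ is quasi-compact (because $\phi$, being cohomologically affine, is quasi-compact) and that a good moduli space over a field has a unique closed point $z$ (as used, e.g., in the proof of Lemma~\ref{closed_to_closed_lem}). Since $z$ is closed, its residual gerbe $\cG_z$ sits in $\cX$ as a closed substack and is a gerbe over $\Spec k(z)$; in the setting of interest $\cX$ is of finite type over $k$, so $k(z)$ is a field finitely generated as a $k$-algebra, hence $k(z)=k$ by the Nullstellensatz, and a gerbe over the separably closed field $k$ is neutral, so $z$ carries a $k$-point $x_0:\Spec k\arr\cX$. By condition (ii) of a tame moduli space $[\cX(k)]$ is a single isomorphism class, so \emph{every} $k$-point of $\cX$ is isomorphic to $x_0$ and therefore has image $z$ in $|\cX|$. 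Now take an arbitrary $\xi\in|\cX|$ and a representative $x:\Spec K\arr\cX$ with $K$ algebraically closed; let $\sigma:k\hookrightarrow K$ be the field homomorphism underlying $\Spec K\xrightarrow{x}\cX\arr\Spec k$, and form the tame moduli space $\cX_\sigma:=\cX\times_{\Spec k,\sigma}\Spec K\arr\Spec K$. Then $(x,\id)$ is a $K$-point of $\cX_\sigma$ lifting $\xi$, so by the case just treated its image in $|\cX_\sigma|$ is the unique closed point $z'$ of $\cX_\sigma$; on the other hand the fibre of the projection $\cX_\sigma\arr\cX$ over $z$ is $\cG_z\times_{\Spec k,\sigma}\Spec K$, a gerbe over $\Spec K$ and hence a single point, which is closed in $\cX_\sigma$ (being the preimage of the closed point $z$) and therefore equal to $z'$. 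Chasing the two projections: $\xi$ is the image of $(x,\id)=z'$ under $\cX_\sigma\arr\cX$, while $z'\mapsto z$, so $\xi=z$. Hence $|\cX|=\{z\}$.

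The main obstacle is this last paragraph — establishing that a tame moduli space over an algebraically closed field is a single point. Within it the delicate steps are (a) promoting the unique closed point $z$ to a $k$-rational point, which relies on the structure theory of residual gerbes (Proposition~\ref{gerbe_map}), on a finiteness input to get $k(z)=k$, and on neutrality of gerbes over separably closed fields; and (b) keeping straight the two morphisms out of the base change $\cX_\sigma$ — the good moduli space $\cX_\sigma\arr\Spec K$ versus the projection $\cX_\sigma\arr\cX$ — so as to transport the conclusion about $K$-points back to the point $\xi$ of $|\cX|$. The remaining reductions (closedness, surjectivity, descent of the tame-moduli-space hypothesis under base change, and the formal passages to universal openness and to the bijection on open subsets) are routine.
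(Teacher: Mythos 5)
Your global reduction (universal injectivity together with surjectivity and universal closedness gives a universal homeomorphism, and the two ``in particular'' statements are formal) is fine, and so is the reduction of injectivity to the claim that a tame moduli space over an algebraically closed field $k$ has $|\cX|$ a single point. The genuine gap is in your proof of that claim, step (a): you promote the unique closed point $z$ to a $k$-rational point by asserting ``in the setting of interest $\cX$ is of finite type over $k$,'' invoking the Nullstellensatz to get $k(z)=k$, the residual-gerbe structure theory, and neutrality of gerbes over separably closed fields. None of these inputs is available here: Proposition \ref{universal_homeo} carries no finite-type or noetherian hypotheses (tame moduli spaces are defined for arbitrary Artin stacks), Proposition \ref{gerbe_map} on residual gerbes requires $\cX$ locally noetherian, residue fields of points of $\cX$ need not be algebraic or even finitely generated over $k$, and without finiteness hypotheses a nonempty stack over an algebraically closed field need not admit a $k$-point (already $\Spec k(t)$ has none). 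The same unproved input is then reused implicitly for $\cX_\sigma$ over $K$. So the central lemma, as you prove it, does not apply in the stated generality.

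The lemma itself is true, and the fix is much shorter and bypasses closed points and rational points entirely: if $\xi_1,\xi_2\in|\cX|$ lie over the same point, choose representatives over a common algebraically closed field $K$ (which exists since any two fields embed into a common algebraically closed field); these are two $K$-points of $\cX$ with the same image in $Y(K)$, hence isomorphic by condition (ii) of Definition \ref{defn_good_coarse} (applied after base change, which preserves tameness), hence they define the same point of $|\cX|$. This is essentially the paper's route, which is even more economical: for an open substack $\cU$ with closed complement $\cZ$, condition (ii) forces $\phi(\cU)\cap\phi(\cZ)=\emptyset$ set-theoretically, and since $\phi$ is closed and surjective, $\phi(\cU)=Y\setminus\phi(\cZ)$ is open; openness, closedness and the fiberwise identification of points then give the universal homeomorphism and the bijection on open sets without ever touching residual gerbes.
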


\bpf If $\cU \subset \cX$ is an open substack, let $\cZ$ be the complement.  Since $\phi$ is closed, $\phi(\cZ)$ is closed sub-algebraic space.  Set-theoretically $\phi(\cZ) \cap \phi(\cU) = \emptyset$ because of property (ii) of a tame moduli space.  Therefore, $\phi(\cU)$ is open.
\epf

\begin{prop} \label{coarse_square}
If $\phi: \cX \arr Y$ is a tame moduli space and $x: \Spec k \arr \cX$ is a geometric point, then the natural map $BG_x \arr \cX \times_Y \Spec k$ is a surjective closed immersion.
\end{prop}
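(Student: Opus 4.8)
The plan is to base-change everything to $\Spec k$ and identify the resulting fibre, up to nilpotents, with $BG_x$. First I would set $\cX_k := \cX \times_Y \Spec k$, the fibre product formed along $\phi\circ x$, and observe that $\phi_k : \cX_k \to \Spec k$ is again a tame moduli space (tame moduli spaces are stable under arbitrary base change, by the remark following Definition~\ref{defn_good_coarse}, using Proposition~\ref{good_base_change}(i) for the good moduli space part). The point $x$ lifts canonically to $\tilde x : \Spec k \to \cX_k$, and since the $\Spec k$-factor contributes no automorphisms one has $G_{\tilde x}=G_x$, hence $BG_{\tilde x}=BG_x$ canonically. Moreover $\cX_k\to\cX\times_S\Spec k$ is a monomorphism, being the base change of the graph of $\phi\circ x$, which is itself a base change of the monomorphism $\Delta_{Y/S}$; composing with the monomorphism $BG_x\to\cX\times_S\Spec k$ of Proposition~\ref{BG_map}, the natural map $BG_x\to\cX_k$ is a monomorphism. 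So it remains to show this monomorphism is a surjective closed immersion.

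Next I would show $|\cX_k|$ is a single point. Let $\xi_k\in|\cX_k|$ be the image of $\tilde x$, let $\eta\in|\cX_k|$ be arbitrary, choose a representative $\Spec K\to\cX_k$ of $\eta$, and embed $K$ into an algebraically closed field $\Omega$, so that $\Spec\Omega\to\cX_k$ still has image $\eta$. Base-changing $\phi_k$ along $\Spec\Omega\to\Spec k$ yields a tame moduli space over $\Spec\Omega$; condition~(ii) of Definition~\ref{defn_good_coarse}, applied to the tautological point $\Spec\Omega\to\Spec\Omega$, forces all $\Omega$-points of $\cX_k\times_{\Spec k}\Spec\Omega$ to be isomorphic. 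In particular the $\Omega$-points induced by $\Spec\Omega\to\cX_k$ and by $\tilde x$ are isomorphic, hence have the same image in $|\cX_k|$, so $\eta=\xi_k$. Thus $|\cX_k|=\{\xi_k\}$ and $BG_x\to\cX_k$ is surjective on underlying topological spaces.

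For the closed-immersion assertion I would pass to the residual gerbe (here assuming $\cX$, hence $\cX_k$, locally noetherian so that Proposition~\ref{gerbe_map} applies). Since $\xi_k$ is the unique, hence closed, point, the residual gerbe $\cG_{\xi_k}\hookrightarrow\cX_k$ is a closed immersion, and any morphism from $\cG_{\xi_k}$ to a scheme factors through $\bar\xi_k=\Spec k(\xi_k)$, so $k(\xi_k)$ is an extension of $k$. By Theorem~\ref{good_thm}(\ref{good_univ_closed}), $\cX_k\to\Spec k$ is universally closed, hence so is the closed substack $\cG_{\xi_k}\to\Spec k$; as $\cG_{\xi_k}\to\Spec k(\xi_k)$ is an fppf gerbe (in particular surjective and universally open), this forces $\Spec k(\xi_k)\to\Spec k$ to be universally closed, whence $k(\xi_k)/k$ is algebraic and therefore trivial. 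The cartesian square of diagram~\eqref{BGgerbe}, applied to the representative $\tilde x$, then identifies $BG_x=BG_{\tilde x}$ with $\cG_{\xi_k}\times_{\Spec k(\xi_k)}\Spec k=\cG_{\xi_k}$; composing the isomorphism $BG_x\cong\cG_{\xi_k}$ with the closed immersion $\cG_{\xi_k}\hookrightarrow\cX_k$ and invoking surjectivity from the previous step gives the result. (Equivalently, since $BG_x$ is reduced, $\Spec k$ being a smooth cover, one has identified $BG_x$ with the reduced closed substack of $\cX_k$ supported at $\xi_k$.)

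The main obstacle is the middle step: forcing the entire fibre $\cX_k$ to be topologically a single point. This is precisely where tameness, condition~(ii), is indispensable rather than just the good moduli space axioms---for a mere good moduli space $[\cX_k(k)]$ can be strictly larger, as with $[\AA^1/\GG_m]\to\Spec k$---and the base-change-to-a-large-field device is the natural way to exploit it. The secondary technical point, matching $\cG_{\xi_k}$ with $BG_x$ and checking $k(\xi_k)=k$, is where the locally noetherian hypothesis enters (through Proposition~\ref{gerbe_map}).
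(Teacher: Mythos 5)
Your reduction to the fibre $\cX_k=\cX\times_Y\Spec k$, the monomorphism argument (cancelling the mono of Proposition \ref{BG_map} against the graph monomorphism $\cX_k\arr\cX\times_S\Spec k$), and the surjectivity step via base change to a large algebraically closed field and condition (ii) of Definition \ref{defn_good_coarse} are all fine, and the last of these is if anything more careful than the paper's one-line appeal to bijectivity of $[\cX(k)]\arr Y(k)$. The genuine gap is in the closedness step. You route it through residual gerbes, invoking Proposition \ref{gerbe_map}, the criterion ``$\xi$ closed $\iff$ $\cG_\xi\arr\cX$ a closed immersion,'' and diagram (\ref{BGgerbe}); all of this is only available in the paper under a locally noetherian hypothesis, which the proposition does not assume. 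Worse, your parenthetical ``hence $\cX_k$ locally noetherian'' is not a valid inference even if one adds the hypothesis on $\cX$: $\cX_k$ is obtained from the fibre over $\phi(x)$ by the base field extension $k(\phi(x))\hookarr k$, and local noetherianity is not preserved by arbitrary field extensions (already $\overline{\QQ}\otimes_{\QQ}\overline{\QQ}$ is non-noetherian), so the residual-gerbe machinery you quote does not directly apply to $\cX_k$. Your fallback remark that, since $BG_x$ is reduced, a surjective monomorphism onto the one-point stack $\cX_k$ must be the reduced closed substack is also not an argument: surjective monomorphisms need not be closed immersions (e.g.\ the partial normalization of a nodal curve with one preimage of the node removed is a bijective monomorphism that is not an immersion), so reducedness of the source plus surjectivity does not by itself yield a closed immersion.

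For contrast, the paper's proof never leaves the ambient stack and needs no noetherian or gerbe input: the representative $\Spec k\arr\cX\times_S\Spec k$ is of finite type, which makes $BG_x\arr\cX\times_S\Spec k$ a \emph{locally closed} immersion; since $\Spec k\arr Y\times_S\Spec k$ (the graph) is a monomorphism/separated, this locally closed immersion cancels to a locally closed immersion $BG_x\arr\cX\times_Y\Spec k$; finally tameness gives surjectivity, and a surjective locally closed immersion is a closed immersion. If you want to salvage your route, you would either have to prove the locally-closed-immersion statement you implicitly need for $BG_x\arr\cX_k$ without noetherian hypotheses (which is essentially the paper's step), or restrict the proposition to the locally noetherian case and additionally justify the residual-gerbe formalism on $\cX_k$ (for instance by first working over the residue field $k(\phi(x))$ and only then extending scalars), neither of which your write-up currently does.
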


\bpf  The morphism $\Spec k \arr \cX \times_S \Spec k$ is finite type so that $BG_x \arr \cX \times_S \Spec k$ is a locally closed immersion.  By considering the cartesian square
$$\xymatrix{
\cX \times_Y k \ar[r] \ar[d]		& \cX \times_S k \ar[d] \\
\Spec k \ar[r]				& Y \times_S k
}$$
it follows since $\Spec k \arr Y \times_S k$ is separated that the induced morphism $BG_x \arr \cX \times_Y k$ is a locally closed immersion.  But it also surjective since $[\cX (k)] \arr Y (k)$ is bijective. \epf

\begin{remark}  It is not true that $BG_x \arr \cX \times_Y \Spec k$ is an isomorphism.  For instance over $S = \Spec k$, if $\cI$ is the ideal sheaf defining $B\GG_m \arr [\AA^1 / \GG_m]$ and $\cX_n \hookarr [\AA^1 / \GG_m]$ is defined by $\cI^{n+1}$ with $n>0$, then $\cX_n \arr \Spec k$ is a good moduli space but the induced map $B \GG_m \arr \cX_n$ is not an isomorphism.
\end{remark}

\begin{prop} (Analogue of \cite[Proposition 0.6 and Amplification 1.3]{git}) \label{good_is_coarse}
Suppose $\phi: \cX \arr Y$ is a good moduli space.  Then $\phi: \cX \arr Y$ is a tame moduli space if and only if $\cX$ has closed orbits.  If this holds and if $Y$ is locally separated, then $Y$ is separated if and only if the image of $\Delta_{\cX/S}: \cX \arr \cX \times_S \cX$ is closed.  
\end{prop}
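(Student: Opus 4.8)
The plan is to prove the two assertions separately, each time exploiting the description of $Y(k)$ as the set of closure-equivalence classes of geometric points of $\cX$ supplied by Theorem \ref{good_thm}(\ref{good_orbit_closure_equiv}). For the first equivalence I would fix an algebraically closed $\oh_S$-field $k$; that result says $[\cX(k)] \arr Y(k)$ is surjective with fibers the classes of the relation $x_1 \sim x_2 \iff \overline{\{x_1\}} \cap \overline{\{x_2\}} \ne \emptyset$ in $|\cX \times_S k|$, so $\phi$ satisfies condition (ii) of Definition \ref{defn_good_coarse} for this $k$ precisely when $\sim$ is trivial. If $\cX$ has closed orbits, the closed immersion $BG_x \hookrightarrow \cX \times_S k$ makes each geometric point $x$ a closed point of $|\cX \times_S k|$, so $\overline{\{x\}} = \{x\}$, $\sim$ is trivial, and $\phi$ is a tame moduli space. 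Conversely, if $\phi$ is a tame moduli space then $\sim$ is trivial over every such $k$, so after base change along $\Spec k \arr S$ no geometric point besides $x$ lies in $\overline{\{x\}}$, i.e.\ $x$ is closed in $|\cX \times_S k|$; I would then apply Proposition \ref{coarse_square} to the base-changed tame moduli space $\cX \times_S k \arr Y \times_S k$ to conclude that $BG_x \hookrightarrow \cX \times_S k$ is a closed immersion, so $x$ has a closed orbit.

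For the second assertion, assume $\cX$ has closed orbits (equivalently, by the first part, that $\phi$ is a tame moduli space) and that $Y$ is locally separated, so $\Delta_{Y/S}$ is an immersion and $Y$ is separated exactly when $|\im \Delta_{Y/S}|$ is closed in $|Y \times_S Y|$. By Lemma \ref{product_lem}, $\phi \times \phi \colon \cX \times_S \cX \arr Y \times_S Y$ is again a good moduli space, hence surjective and universally submersive by Theorem \ref{good_thm}(\ref{good_surjective}) and (\ref{good_submersive}); so a subset of $|Y \times_S Y|$ is closed if and only if its preimage in $|\cX \times_S \cX|$ is. The key point will be the set-theoretic identity
$$(\phi \times \phi)^{-1}\big(|\im \Delta_{Y/S}|\big) \;=\; |\im \Delta_{\cX/S}| .$$
To establish it, observe that a point represented by a geometric point $(x_1, x_2)$ of $\cX \times_S \cX$ lies in the left-hand side iff $\phi(x_1) = \phi(x_2)$ in $Y$ (an honest equality of points, since $Y$ is an algebraic space), iff $\overline{\{x_1\}} \cap \overline{\{x_2\}} \ne \emptyset$ by Theorem \ref{good_thm}(\ref{good_orbit_closure_equiv}), iff $x_1 \cong x_2$ since $\cX$ has closed orbits, iff $(x_1,x_2)$ lies in the image of $\Delta_{\cX/S}$. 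Granting the identity, $Y$ is separated $\iff |\im \Delta_{Y/S}|$ is closed $\iff |\im \Delta_{\cX/S}|$ is closed, the last step using that an immersion with closed image is a closed immersion.

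The hard part will be the translation, in the first paragraph, between the definition of ``$x$ has a closed orbit'' as $BG_x \hookrightarrow \cX \times_S k$ being a closed immersion and the topological statement that $x$ is a closed point of $|\cX \times_S k|$; the forward implication (closed immersion $\Rightarrow$ closed point) is immediate, but the reverse one --- needed in the ``tame $\Rightarrow$ closed orbits'' direction --- is cleanest when $\cX$ is locally noetherian (via the residual-gerbe description in the preliminaries) and otherwise has to be extracted from Proposition \ref{coarse_square}. A secondary but routine care point is that the displayed identity must be checked on the topological spaces $|\cdot|$, not as stacks, using that $Y$ is a sheaf of sets so that $\phi(x_1) = \phi(x_2)$ is unambiguous; once these two issues are dealt with, the rest is formal from the properties of good moduli spaces already proven.
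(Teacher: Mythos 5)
Your proposal is correct and follows essentially the same route as the paper: the closed-orbits $\Rightarrow$ tame direction via the separation/closure-equivalence properties of Theorem \ref{good_thm} (you invoke part (\ref{good_orbit_closure_equiv}), the paper uses part (\ref{good_separate}) directly, which is what (\ref{good_orbit_closure_equiv}) rests on), the converse via Proposition \ref{coarse_square}, and the separatedness statement via Lemma \ref{product_lem} together with submersiveness of $\phi \times \phi$ and the identification of $\im \Delta_{\cX/S}$ with the preimage of $\Delta(Y)$. The fine points you flag at the end (closed immersion versus closed point, and descending an isomorphism over an extension field back to $k$) are treated at the same level of detail in the paper itself, so nothing further is needed.
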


\bpf  The only if implication is implied by the previous proposition.  Conversely, suppose $\cX$ has closed orbits and suppose $\phi$ is not a tame moduli space.  Let $x_1, x_2 \in \cX(k)$ be two geometric points mapping to $y \in Y(k)$ and $s \in S(k)$.  Since $\phi_s: \cX_s \arr Y_s$ is a good moduli space and $BG_{x_1}, BG_{x_2} \subseteq \cX_s$ are closed substacks with the property that $\phi_s (BG_{x_1}) = \phi_s  (BG_{x_2}) = \{y\} \subseteq |Y|$, it follows that $x_1$ is isomorphic to $x_2$.

Since $\phi: \cX \arr Y$ is a good moduli space, the image of $\Delta_{\cX/S}$ is precisely the image of $\cX \times_Y \cX \arr \cX \times_S \cX$.  Since
$$\xymatrix{
\cX \times_Y \cX \ar[r] \ar[d]	& \cX \times_S \cX \ar[d]^{\phi \times \phi} \\
Y \ar[r]^{\Delta}					&  Y \times_S Y
}$$
is cartesian and $\phi \times \phi$ is submersive, $\Delta(Y)$ is closed if and only if $(\phi \times \phi)^{-1} (\Delta(Y))$ is closed, which is true if and only if $\im(\Delta_{\cX/S})$ is closed. \epf


\subsection{Gluing good moduli spaces}

It is convenient to know when good moduli spaces can be glued together.  Certainly one cannot always expect to glue good moduli spaces (see Example \ref{good_ex_p1}).  Given a cover of an Artin stack by open substacks admitting a good moduli space, one would like criteria guaranteeing the existence of a global good moduli space.

\begin{prop}  \label{good_gluing}
Suppose $\cX$ is an Artin stack (resp. locally noetherian Artin stack) over $S$ containing open substacks $\{\cU_i\}_{i \in I}$ such that for each $i$,  there exists a good moduli space $\phi_i: \cU_i \arr Y_i$ with $Y_i$ a scheme (resp. algebraic space).  Let $\cU = \bigcup_i \cU_i$.  Then there exists a good moduli space $\phi: \cU \arr Y$ and open sub-algebraic spaces $\tilde Y_i \subseteq Y$ such that $\tilde Y_i \cong Y_i$ and $\phi^{-1}(\tilde Y_i) = \cU_i$ if and only if for each $i,j \in I$, $\cU_i \cap \cU_j$ is saturated for $\phi_i: \cU_i \arr Y_i$  (see Definition \ref{saturated_def}).
\end{prop}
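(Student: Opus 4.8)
The plan is to dispatch the ``only if'' direction immediately and to spend the bulk of the argument, in the ``if'' direction, gluing the $Y_i$ into a single $Y$ while using uniqueness of good moduli spaces to produce all the transition data.

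For ``only if'', suppose $\phi:\cU\arr Y$ is a good moduli space with open $\tilde Y_i\subseteq Y$, $\tilde Y_i\cong Y_i$, and $\phi^{-1}(\tilde Y_i)=\cU_i$; identify $\phi_i$ with the restriction $\phi|_{\cU_i}$ under the given isomorphism (legitimate by uniqueness of good moduli spaces --- Theorem~\ref{good_thm}(\ref{good_univ_schemes}) in the scheme case, Theorem~\ref{universal_thm} in the locally noetherian case). Then $\cU_i\cap\cU_j=\phi^{-1}(\tilde Y_i)\cap\phi^{-1}(\tilde Y_j)=\phi^{-1}(\tilde Y_i\cap\tilde Y_j)$ is the preimage of an open of $Y$, hence saturated for $\phi$, hence for $\phi_i$.

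For ``if'', I first produce the transition isomorphisms. For each ordered pair $(i,j)$, the hypothesis together with the remark following Definition~\ref{saturated_def} gives an open $Y_{ij}:=\phi_i(\cU_i\cap\cU_j)\subseteq Y_i$ with $\phi_i|:\cU_i\cap\cU_j\arr Y_{ij}$ a good moduli space; applying this to $(j,i)$ as well, $\phi_j|:\cU_i\cap\cU_j\arr Y_{ji}$ is also a good moduli space. Since the source is the same stack, uniqueness of good moduli spaces (Theorem~\ref{good_thm}(\ref{good_univ_schemes}) resp. Theorem~\ref{universal_thm}, the source being locally noetherian in the second case) furnishes a canonical isomorphism $\varphi_{ij}:Y_{ij}\iso Y_{ji}$ with $\varphi_{ij}\circ\phi_i|=\phi_j|$, satisfying $\varphi_{ii}=\id$ and $\varphi_{ji}=\varphi_{ij}^{-1}$. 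Next I verify the cocycle condition on triple overlaps. The key observation is that an intersection of two open substacks of $\cU_i$ each saturated for $\phi_i$ is again saturated for $\phi_i$: each is the preimage of an open of $Y_i$ (using that $\phi_i$ is submersive, Theorem~\ref{good_thm}(\ref{good_submersive})), so their intersection is the preimage of the intersection. Hence $\cU_i\cap\cU_j\cap\cU_k=(\cU_i\cap\cU_j)\cap(\cU_i\cap\cU_k)$ is saturated for $\phi_i$, its image is $Y_{ij}\cap Y_{ik}$ (by surjectivity of $\phi_i$), and it carries a good moduli space. Since $\varphi_{ij}$ carries $\phi_i(\cU_i\cap\cU_j\cap\cU_k)$ onto $\phi_j(\cU_i\cap\cU_j\cap\cU_k)$ compatibly with these good moduli space structures, the two morphisms $\varphi_{jk}\circ\varphi_{ij}$ and $\varphi_{ik}$ both exhibit the good moduli space of $\cU_i\cap\cU_j\cap\cU_k$ as factoring through its target, so by uniqueness they agree on $Y_{ij}\cap Y_{ik}$. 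Therefore the $Y_i$ glue along the $Y_{ij}$ via the $\varphi_{ij}$ to a scheme (resp. algebraic space) $Y$, with open $\tilde Y_i\cong Y_i$ and $\tilde Y_i\cap\tilde Y_j=Y_{ij}$.

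Finally, because the identifications used in the gluing are precisely the $\varphi_{ij}$, the morphisms $\phi_i:\cU_i\arr\tilde Y_i\subseteq Y$ agree on overlaps, and as $Y$ is a sheaf of sets they glue to a morphism $\phi:\cU\arr Y$ with $\phi|_{\cU_i}=\phi_i$; one checks $\phi^{-1}(\tilde Y_i)=\bigcup_j(\cU_i\cap\cU_j)=\cU_i$ using saturation of $\cU_i\cap\cU_j$ for $\phi_j$. That $\phi$ is a good moduli space is local on $Y$ --- cohomological affineness is Zariski-local on the target, and $\oh_Y\arr\phi_*\oh_{\cU}$ being an isomorphism is local --- and locally $\phi$ restricts to $\phi_i$. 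I expect the main obstacle to be precisely the saturation bookkeeping on double and triple overlaps needed to make all the relevant restrictions into good moduli spaces, combined with invoking the correct uniqueness statement (Theorem~\ref{good_thm}(\ref{good_univ_schemes}) versus Theorem~\ref{universal_thm}) in each of the two cases; after that, the gluing and the verification that $\phi$ is a good moduli space are routine. A minor technical point to address is that the glued $Y$ inherits the quasi-separatedness / quasi-compact-diagonal property required of a scheme (resp. algebraic space).
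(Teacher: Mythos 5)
Your proposal is correct and follows essentially the same route as the paper: form $Y_{ij}=\phi_i(\cU_i\cap\cU_j)$, use uniqueness of good moduli spaces (Theorem \ref{good_thm}(\ref{good_univ_schemes}) resp.\ Theorem \ref{universal_thm}) to obtain the transition isomorphisms, verify the cocycle condition on triple overlaps via the fact that intersections of saturated open substacks are saturated together with uniqueness again, and then glue the $Y_i$ and the $\phi_i$. The extra details you supply (the explicit ``only if'' argument, $\phi^{-1}(\tilde Y_i)=\cU_i$, and locality of the good moduli space property) are all correct fillings-in of steps the paper leaves implicit.
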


\bpf The only if direction is clear.  For the converse, set $\cU_{ij} = \cU_i \cap \cU_j$ and $Y_{ij} = \phi_i(\cU_{ij}) \subseteq Y_i$.  The hypotheses imply that $\phi_i|_{\cU_{ij}}: \cU_{ij} \arr Y_{ij}$ is a good moduli space.  Since good moduli spaces are unique (Theorem \ref{good_thm}(\ref{good_univ_schemes}) and Theorem \ref{universal_thm}), there are unique isomorphisms $\varphi_{ij}: Y_{ij} \iso Y_{ji}$ such that $\varphi_{ij} \circ \phi_i|_{\cU_{ij}} = \phi_j|_{\cU_{ij}}$ and $\varphi_{ij} = \varphi_{ji}^{-1}$.  Set $\cU_{ijk} = \cU_i \cap \cU_j \cap \cU_k$ so that $Y_{ij} \cap Y_{ik} = \phi_i(\cU_{ijk})$.  Since the intersection of saturated sets remains saturated, $\phi_i|_{\cU_{ijk}}: \cU_{ijk} \arr Y_{ij} \cap Y_{ik}$ is a good moduli space and there is a unique isomorphism $\varphi_{ijk}: Y_{ij} \cap Y_{ik} \iso Y_{ji} \cap Y_{jk}$ such that $\varphi_{ijk} \circ \phi_i|_{\cU_{ijk}} = \phi_j|_{\cU_{ijk}}$.  We have $\varphi_{ij}|_{Y_{ij} \cap Y_{ik}} = \varphi_{ijk}$.  The composition
$$\alpha: Y_{ik} \cap Y_{ij} \stackrel{\varphi_{ikj}}{\arr} Y_{ki} \cap Y_{ji} \stackrel{\varphi_{kji}}{\arr} Y_{jk} \cap Y_{ji}$$
satisfies $\alpha \circ \phi_i|_{\cU_{ijk}} = \phi_j|_{\cU_{ijk}}$ so by uniqueness $\varphi_{ijk} = \varphi_{kji} \circ \varphi_{ikj}$.  Therefore, we may glue the $Y_i$ to form a scheme (resp. algebraic space) $Y$.  The morphisms $\phi_i$ agree on the intersection $\cU_{ij}$ and therefore glue to form a morphism $\phi: \cU \arr Y$ with the desired properties. \epf

There is no issue with gluing tame moduli spaces.

\begin{prop}  \label{almost_coarse_glue}
Suppose $\cX$ is an Artin stack (resp. locally noetherian Artin stack) over $S$ containing open substacks $\{\cU_i\}_{i \in I}$ such that for each $i$,  there exists a tame moduli space $\phi_i: \cU_i \arr Y_i$ with $Y_i$ a scheme (resp. algebraic space).  Let $\cU = \bigcup_i \cU_i$.  Then there exists a tame moduli space $\phi: \cU \arr Y$ and open sub-algebraic spaces $\tilde Y_i \subseteq Y$ such that $\tilde Y_i \cong Y_i$ and $\phi^{-1}(\tilde Y_i) = \cU_i$. 
\end{prop}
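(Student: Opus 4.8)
The plan is to deduce the existence of the glued \emph{good} moduli space from Proposition \ref{good_gluing}, and then to upgrade it to a \emph{tame} moduli space by checking condition (ii) of Definition \ref{defn_good_coarse} locally on an open cover.

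First I would verify the saturation hypothesis of Proposition \ref{good_gluing}. For each pair $i,j \in I$ set $\cU_{ij} = \cU_i \cap \cU_j$, an open substack of $\cU_i$. Since $\phi_i : \cU_i \arr Y_i$ is a tame moduli space, Proposition \ref{universal_homeo} says it is a universal homeomorphism inducing a bijection between open substacks of $\cU_i$ and open sub-algebraic spaces of $Y_i$; concretely $\phi_i^{-1}(\phi_i(\cU_{ij})) = \cU_{ij}$, i.e.\ $\cU_{ij}$ is saturated for $\phi_i$. Thus the hypothesis of Proposition \ref{good_gluing} holds, and it produces a good moduli space $\phi : \cU \arr Y$ together with open sub-algebraic spaces $\tilde Y_i \subseteq Y$ with $\tilde Y_i \cong Y_i$, $\phi^{-1}(\tilde Y_i) = \cU_i$, and $\phi|_{\cU_i} = \phi_i$ under this identification; in particular $\{\tilde Y_i\}$ is an open cover of $Y$.

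It remains to check condition (ii): for every geometric point $\Spec k \arr S$ the map $[\cU(k)] \arr Y(k)$ is a bijection. For surjectivity, a point $y \in Y(k)$ has topological image a single point of $|Y|$, hence lies in some $\tilde Y_i$ and factors through the open immersion $\tilde Y_i \hookarr Y$; since $\phi_i$ is a tame moduli space there is $x \in \cU_i(k)$ with $\phi_i(x) = y$, and viewing $x$ as an object of $\cU(k)$ gives $\phi(x) = y$. For injectivity, suppose $x_1, x_2 \in \cU(k)$ satisfy $\phi(x_1) = \phi(x_2) = y$. Then $y$ factors through some $\tilde Y_i$, so each composite $\Spec k \xrightarrow{x_\ell} \cU \xrightarrow{\phi} Y$ factors through $\tilde Y_i$, whence $x_1, x_2$ both factor through the open substack $\phi^{-1}(\tilde Y_i) = \cU_i$; since $[\cU_i(k)] \arr Y_i(k)$ is injective, $x_1 \cong x_2$ in $\cU_i(k)$ and hence in $\cU(k)$. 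Therefore $\phi : \cU \arr Y$ is a tame moduli space with the asserted description of the $\tilde Y_i$.

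I do not expect a genuine obstacle here beyond bookkeeping: the one substantive input is Proposition \ref{universal_homeo}, which guarantees that \emph{every} open substack of a tame moduli space is automatically saturated. This is precisely what removes the compatibility hypothesis that Proposition \ref{good_gluing} requires for merely good moduli spaces (and which genuinely fails there, cf.\ Example \ref{good_ex_p1}), so no condition is needed in the present statement.
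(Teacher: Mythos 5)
Your proposal is correct and follows essentially the same route as the paper: the paper's (one-line) proof likewise invokes Proposition \ref{universal_homeo} to conclude that every open substack of each $\cU_i$ is automatically saturated, and then applies Proposition \ref{good_gluing}. The only difference is that you additionally spell out the verification of condition (ii) of Definition \ref{defn_good_coarse} for the glued space, which the paper leaves implicit (it follows, as you show, since the condition can be checked over the open cover $\{\tilde Y_i\}$).
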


\bpf By Proposition \ref{universal_homeo}, each $\phi_i$ induces a bijection between open sets of $\cX_i$ and $Y_i$ and therefore every open substack of $\cX_i$ is saturated. \epf

\section{Examples} \label{good_examples}
\begin{example} \label{ex_coarse_space}
If $\cX$ is a tame Artin stack (see \cite{tame}) and $\phi: \cX \arr Y$ is its coarse moduli space, then $\phi$ is a good moduli space.
\end{example}

Let $S = \Spec k$. 

\begin{example} \label{good_ex_p1} \label{basic_example}
$\phi: [\AA^1 / \GG_m] \arr \Spec k$ is a good moduli space.  Similarly, $\phi: [\AA^2 / \GG_m] \arr \Spec k$ is a good moduli space.  The open substack $[\AA^2 \setminus \{0\} / \GG_m]$ is isomorphic to $\PP^1$.  This example illustrates that good moduli spaces may vary greatly as one varies the open substack.
\end{example}

\begin{example}
If $G$ is a linearly reductive group scheme over $k$ (see Section \ref{sect_lin_red}) acting a scheme $X = \Spec A$, then $\phi: [X / G] \arr \Spec A^G$ is a good moduli space (see Theorem \ref{git_affine_thm}).
\end{example}

\begin{example}
$\phi: [\PP^1 / \GG_m] \arr k$ is not a good moduli space.  Although condition (ii) of the definition is satisfied, $\phi$ is not cohomologically affine.  There are two closed points in $[\PP^1 / \GG_m]$ which have the same image under $\phi$ contradicting property (\ref{good_separate}) of Theorem \ref{good_thm}.
\end{example}

\begin{example}
$\phi: [\PP^1 / PGL_2] \arr \Spec k$ is not a good moduli space.  Indeed, there is an isomorphism of stacks $[\PP^1 / PGL_2] \cong B (\UT_2)$ where $\UT_2 \subset \GL_2$ is the subgroup of upper triangular matrices.  Since $\UT_2$ is not linearly reductive (see Section \ref{sect_lin_red}), $\phi$ is not cohomologically affine.
\end{example}

\begin{example}
We recall Mumford's example (\cite[Example 0.4]{git}) of a geometric quotient that is not universal for maps to algebraic spaces over $S = \Spec \CC$.  The example is:  $\SL_2$ acts naturally on the quasi-affine scheme
$$\begin{array}{ll}
X = \{ (L, Q^2) | & L \textrm{ is nonzero linear form}, \\ 
			& Q \textrm{ is a quadratic form with discriminant 1} \} 	
\end{array}$$
The action is set-theoretically free (ie. $\SL_2(k)$ acts freely on $X(k)$) but the action is not even proper (ie. $\SL_2 \times X \arr X \times X$ is not proper).  If we write $\cX = [X/\SL_2]$, then $\cX$ is the non-locally separated affine line which is an algebraic space but not a scheme.  The morphism
$$\begin{aligned}
\phi: 		X	& \arr \AA^1 \\
		(\alpha x + \beta y, Q^2)	& \mapsto Q^2(-\beta, \alpha)
\end{aligned}$$
is a geometric quotient.  Koll\'ar shows in \cite[Example 2.18]{kollar_quotients} that $\phi$ is not universal for maps to arbitrary algebraic spaces.  The induced map $\cX \arr \AA^1$ is not a good moduli space (as one can check directly that $\Gamma(\cX, \oh_{\cX}) \arr \Gamma(\cX, \oh_{\cX}/\cI)$ is not surjective where $\cI$ defines a nilpotent thickening of the origin) but obviously the identity morphism $\cX \arr \cX$ is a good moduli space. 
\end{example}

In the following examples, let $S = \Spec k$ with $k$ an algebraically closed field of characteristic 0.  The characteristic 0 hypothesis is certainly necessarily while the algebraically closed assumption can presumably be removed.

\begin{example} {\it Moduli of semi-stable sheaves} \label{example_semistable_sheaves}\\
Let $X$ be a connected projective scheme over $k$.  Fix an ample line bundle $\oh_X(1)$ on $X$ and a polynomial $P \in \QQ[z]$.  For a coherent sheaf $E$ on $X$ of dimension $d$, the \emph{reduced Hilbert polynomial} $p(E,m) = P(E,m) / \alpha_d(E)$ where $P$ is the Hilbert polynomial of $E$ and $\alpha_d / d!$ is the leading term.  A coherent sheaf $E$ on $X$ of dimension $d$ is called \emph{semi-stable} (resp. \emph{stable}) if $E$ is pure and for any proper subsheaf $F \subset E$, $p(F) \leq p(E)$ (resp. $p(F) < p(E)$).  A \emph{family of semi-stable sheaves over $T$ with Hilbert polynomial P} is a coherent sheaf $\cE$ on $X \times_S T$ flat over $T$ such that for all geometric points $t: \Spec K \arr T$, $\cE_t$ is semi-stable on $X_t$ with Hilbert polynomial $P$.

Let $\cM_{X,P}^{\ss}$ be the stack whose objects over $T$ are families of semi-stable sheaves over $T$ with Hilbert polynomial $P$ and a morphism from $\cE_1$ on $X \times_S T_1$ to $\cE_2$ on $X \times_S T_2$ is the data of a morphism $g: T_1 \arr T_2$ and an isomorphism $\phi: \cE_1 \arr (\id \times g)^*\cE_2$.  $\cM_{X,P}^{\ss}$ is an Artin stack finite type over $k$.  Let $\cM_{X,P}^{\s} \subseteq \cM_{X,P}^{\ss}$ be the open substack consisting of families of stables sheaves.  While every pure sheaf of dimension $d$ has a unique Harder-Narasimhan filtration where the factors are semi-stable, every semi-stable sheaf $E$ has a Jordan-H\"older filtration $0 = E_0 \subset E_1 \subset \cdots \subset E_l = E$ where the factors $\gr_i = E_i /E_{i-1}$ are stable with reduced Hilbert polynomial $p(E)$.  The graded object $\gr(E) = \bigoplus_i \gr_i(E)$ does not depend on the choice of Jordan-H\"older filtration.  Two semi-stable sheaves $E_1$ and $E_2$ with the same reduced Hilbert polynomial are called \emph{S-equivalent} if $\gr(E_1) \cong \gr(E_2)$.  A semi-stable sheaf is \emph{polystable} if can be written as the direct sum of stable sheaves.

The family of semi-stable sheaves on $X$ with Hilbert polynomial $P$ is bounded (see \cite[Theorem 3.3.7]{huybrechts-lehn}).  Therefore, there is an integer $m$ such that for any semi-stable sheaf $F$ with Hilbert polynomial $P$, $F(m)$ is globally generated and $h^0(F(m)) = P(m)$.  There is surjection $\oh_X(-m)^{P(m)} \arr F$ which depends on a choice of basis of $\Gamma(X,F(m))$.  There is an open subscheme $U$ of the Quot scheme $\Quot_{X,P}(\oh_X(-m)^{P(m)})$ parameterizing semi-stable sheaves and inducing an isomorphism on $H^0$ which is invariant under the natural action of $\GL_{P(m)}$ on $\Quot_{X,P}(\oh_X(-m)^{P(m)})$.   One can show that $\cM_{X,P}^{\ss} = [U/\GL_{P(m)}]$.  The arguments given by Gieseker and Maruyama and also later by Simpson (see \cite[Ch. 4]{huybrechts-lehn}) imply that there is a good moduli space $\phi: \cM_{X,P}^{\ss} \arr M_{X,P}^{\ss}$ where $M_{X,P}^{\ss}$ is projective.  Moreover, there is an open subscheme $M_{X,P}^{\s}$ such that $\phi^{-1}(M_{X,P}^{\s}) = \cM_{X,P}^{\s}$ and $\phi|_{\cM_{X,P}^{\s}}$ is a tame moduli space.  To summarize, we have
$$\xymatrix{
\cM_{X,P}^{\s} \ar@{^(->}[r] \ar[d]		& \cM_{X,P}^{\ss} \ar[d]^{\phi} \\
M_{X,P}^{\s} \ar@{^(->}[r]			& M_{X,P}^{\ss}
}$$
We stress that $\phi$ is not a coarse moduli space and two $k$-valued points of $\cM_{X,P}^{\ss}$ have the same image under $\phi$ if and only if the corresponding semi-stable sheaves are $S$-equivalent.
\end{example}

\begin{example} {\it Compactification of the universal Picard variety}\\
Let $g \ge 2$.  Recall that a \emph{semi-stable} (resp. \emph{stable}) \emph{curve of genus $g$ over $T$} is a proper, flat morphism $\pi: C \arr T$ whose geometric fibers are reduced, connected, nodal 1-dimensional schemes $C_t$ with arithmetic genus $g$ such that any non-singular rational component meets the other components in at least two (resp. three) points.  For a semi-stable curve $C \arr \Spec k$, the non-singular rational components meeting other components at precisely two points are called \emph{exceptional}. A \emph{quasi-stable curve of genus $g$ over $T$} is a semi-stable curve such that in any geometric fiber, no two exceptional components meet.  A line bundle $L$ of degree $d$ on a semi-stable curve $C \arr \Spec k$ of genus $g$ is said to be \emph{semi-stable} (or \emph{balanced}) if for every exceptional component $E$ of $C$, $\deg_E L = 1$, and if for every connected projective sub-curve $Y$ of genus $g_Y$ meeting the complement in $k_Y$ points, the degree $d_Y$ of $Y$ satisfies:
$$\big| d_Y - \frac{d}{g-1} (g_Y-1+k_Y/2) \big| \leq k_Y/2$$
It is shown in \cite{melo} that the stack $\bar{\cG}_{d,g}$ parameterizing quasi-stable curves of genus $g$ with semi-stable line bundles of degree $d$ is Artin.  There is an open substack $\cG_{d,g} \subseteq \bar{\cG}_{d,g}$ consisting of stable curves and the morphism $\cG_{d,g} \arr \cM_g$ is the \emph{universal Picard variety}.  Lucia 
Caporaso in \cite{caporaso} showed that there exists a good moduli space $\phi: \bar{\cG}_{d,g} \arr \bar{P}_{d,g}$ (which is not a coarse moduli space) where $\bar{P}_{d,g}$ is a projective scheme which maps onto $\bar{M}_g$.  Furthermore, there is an open subscheme $P_{d,g} \subseteq \bar{P}_{d,g}$ such that $\phi^{-1}(P_{d,g}) = \cG_{d,g}$ and $\phi|_{\cG_{d,g}}$ is a coarse moduli space.
\end{example}

\begin{example}
In \cite{schubert}, Schubert introduced an alternative compactification of $\cM_g$ parameterizing pseudo-stable curves.  A \emph{pseudo-stable} curve of genus $g$ is a connected, reduced curve with at worst nodes and cusps as singularities where every subcurve of genus 1 (resp. 0) meets the rest of the curve at least 2 (resp. 3) points.  For $g \ge 3$,the stack $\bar{\cM}_g^{\ps}$ of pseudo-stable curves is a separated, Deligne-Mumford stack admitting a coarse moduli space $\bar{M}_g^{\ps}$.  For $g = 2$, $\bar{\cM}_g^{\ps}$ is a non-separated Artin stack and admits a good moduli space $\phi: \bar{\cM}_g^{\ps} \arr \bar{M}_g^{\ps}$ which identifies all cuspidal curves (cuspidal curves whose normalization are elliptic curves, the cuspidal nodal curve whose normalization is $\PP^1$, and the bicuspidal curve whose normalization is $\PP^1$) to a point (see \cite{hassett_genus2}, \cite{hyeon-lee_genus2}).  The bicuspidal curve is the unique closed point in the fiber and has as stabilizer the the linearly reductive group $\GG_m \rtimes \ZZ_2$.  For $g \ge 2$, the schemes $\bar{M}_g^{\ps}$ are isomorphic to the log-canonical models $\bar{M}_g(\alpha) = \Proj \bigoplus_d(\bar{\cM}_g, d (K_{\bar{\cM_g}} + \alpha \Delta)$ for $7/10 < \alpha \leq 9/11$, where $\delta$ is the boundary divisor, and the morphism $\bar{M}_g \arr \bar{M}^{\ps}_g$ contracts $\Delta_1$, the locus of elliptic tails (see \cite{hassett-hyeon_contraction}).

Hassett and Hyeon show in \cite{hassett-hyeon_flip} for $g \ge 4$ (the $g=3$ case is handled in \cite{hyeon-lee_genus3}) that a flip occurs at the next step in the log minimal model program at $\alpha = 7/10$. Furthermore, they give modular interpretations for $\bar{M}_g(7/10)$ and $\bar{M}_g (7/10 -\epsilon)$ as the good moduli spaces (but not coarse moduli spaces) for the stack of \emph{Chow semi-stable curves} (where curves are allowed as singularities nodes, cusps, and tacnodes do not admit elliptic tails) and \emph{Hilbert semi-stable curves} (which are Chow semi-stable curves not admitting elliptic bridges), respectively.

\end{example}

\section{The topology of stacks admitting good moduli spaces} \label{topology_section}

\begin{prop}
Let $\cX$ be a locally noetherian Artin stack and $\phi: \cX \arr Y$ a good moduli space.  Given a closed point $y \in |Y|$, there is a unique closed point $x \in |\phi^{-1}(y)|$.  The dimension of the stabilizer of $x$ is strictly larger than the dimension of any other stabilizer in $\phi^{-1}(y)$.
\end{prop}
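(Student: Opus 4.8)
The plan is to base change to the fibre over $y$, extract the unique closed point from the separation property of good moduli spaces, and then combine upper semicontinuity of stabiliser dimension with a structural argument about stacks of constant stabiliser dimension.

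First I would base change along $\Spec k(y)\to Y$. By Proposition~\ref{good_base_change}(i), $\phi_y\colon \cX_y:=\cX\times_Y\Spec k(y)\to\Spec k(y)$ is again a good moduli space, and since $\phi$ is cohomologically affine (hence quasi-compact) $\cX_y$ is a quasi-compact, locally noetherian Artin stack whose underlying space is $\phi^{-1}(y)$; it is non-empty by Theorem~\ref{good_thm}(\ref{good_surjective}). A non-empty quasi-compact Artin stack has a closed point, so $\cX_y$ has one, say $x$. If $x_1\ne x_2$ were two closed points, their residual gerbes $\cG_{x_1},\cG_{x_2}\hookrightarrow\cX_y$ would be disjoint closed substacks, so Theorem~\ref{good_thm}(\ref{good_separate}) would give $\emptyset=\im(\cG_{x_1}\cap\cG_{x_2})=\im\cG_{x_1}\cap\im\cG_{x_2}=\Spec k(y)$, a contradiction; hence $x$ is unique. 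For the same reason every non-empty closed subset of $|\cX_y|$ meets the closed point, so $x\in\overline{\{x'\}}$ for every $x'\in|\cX_y|$.

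For the dimension statement, fix $x'\ne x$ in $|\cX_y|$ and write $\cX_0:=\cX_y$, $k:=k(y)$. Since $x\in\overline{\{x'\}}$, upper semicontinuity of the fibre dimension of the inertia $I_{\cX_0}\to\cX_0$ gives $\dim G_{x'}\leq\dim G_x$; the point is to make this strict. Let $\cW\hookrightarrow\cX_0$ be the reduced closed substack with $|\cW|=\overline{\{x'\}}$: it is integral with generic point $x'$ and, by the previous paragraph, with unique closed point $x$. Since $\cX_0\to\Spec k$ is cohomologically affine, pushing forward the surjection $\oh_{\cX_0}\twoheadrightarrow\oh_\cW$ gives a surjection $k=\Gamma(\cX_0,\oh_{\cX_0})\twoheadrightarrow\Gamma(\cW,\oh_\cW)\ne 0$, so $\Gamma(\cW,\oh_\cW)=k$ and $\cW\to\Spec k$ is again a good moduli space. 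Assume for contradiction $\dim G_x=\dim G_{x'}=:D$. Then upper semicontinuity forces the stabiliser dimension to be constant equal to $D$ on $\cW$: a point of stabiliser dimension $>D$ would specialise to $x$ and force $\dim G_x>D$, whereas $\{\,z:\dim G_z\geq D\,\}$ is closed and contains the generic point $x'$.

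It remains to prove that a good moduli space $\cW\to\Spec k$ with $\cW$ integral of constant stabiliser dimension $D$ has a one-point underlying space — contradicting $x\ne x'$ — and this is where I expect the real work. When $D=0$ the stack $\cW$ has quasi-finite, in fact finite, relative inertia and is cohomologically affine over $k$, hence is tame, so it has a coarse moduli space; by uniqueness of good moduli spaces (Theorem~\ref{universal_thm}) this coarse space must be $\Spec k$, and since the coarse-space map is a bijection on points, $|\cW|$ is a single point. When $D>0$ I would reduce to this case by rigidifying $\cW$ along the relative identity component $I^\circ_\cW\to\cW$ of its inertia: this is an $I^\circ_\cW$-gerbe onto a stack with the same underlying space but with $0$-dimensional stabilisers. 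The main obstacle is precisely this last reduction: one must verify that $I^\circ_\cW\to\cW$ is flat over the reduced base $\cW$ (delicate in positive characteristic, where the identity component need not be smooth), that the rigidification is again a good moduli space over $\Spec k$, and — already when $D=0$ — that the quasi-finite relative inertia of a good moduli space is in fact finite, so that a coarse moduli space exists and the tame-stack machinery applies. Everything preceding this step is immediate from Theorem~\ref{good_thm}, quasi-compactness, and semicontinuity.
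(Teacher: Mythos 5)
Your first half is correct and coincides with the paper's: base change to $\cX_y$, use Theorem \ref{good_thm}(\ref{good_separate}) to see that two distinct closed points would have disjoint closed residual gerbes with equal (nonempty) images in $\Spec k(y)$, and conclude that there is a unique closed point $x$, which consequently lies in the closure of every point of the fibre. The reduction of the dimension statement to the integral closed substack $\cW=\overline{\{x'\}}$ with $\Gamma(\cW,\oh_{\cW})=k(y)$ and constant stabiliser dimension $D$ is also fine. But the proof is not complete, and the missing piece is exactly the step you flag: the claim that an integral stack of constant stabiliser dimension admitting a good moduli space over a field has a one-point underlying space is never established. For $D=0$ you assert that the quasi-finite inertia is ``in fact finite''; quasi-finite group algebraic spaces need not be finite, nothing in the paper lets you deduce finiteness of inertia from cohomological affineness, and without it the Keel--Mori/tame-stack machinery you invoke does not apply. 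For $D>0$ the rigidification route needs $I^{\circ}_{\cW}\arr\cW$ to be an fppf subgroup space (flatness over a merely reduced, possibly positive-characteristic base is not automatic, even with constant fibre dimension) and needs the gerbe $\cW\arr\cW^{\mathrm{rig}}$ to push forward exactly, which would require the band to be linearly reductive -- known at the closed point by Proposition \ref{closed_orbits_lin_red} but not at the generic point. So the strict inequality $\dim G_{x'}<\dim G_x$ is not proved as written.

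The paper gets strictness much more cheaply and you could repair your argument the same way. Using upper semicontinuity it places $x$ as a closed point of the closed locus $\cZ\subseteq\phi^{-1}(y)$ of maximal stabiliser dimension $r$ (so $x$ is the unique closed point of the fibre), and then for any other point $x'$, chosen closed in the complement $\phi^{-1}(y)\setminus\{x\}$, the fact that $x\in\overline{\{x'\}}$ yields a \emph{proper} closed immersion $\cG_{x}\hookarr\overline{\cG_{x'}}$ into an irreducible stack whose dense open substack is $\cG_{x'}$; comparing dimensions ($\dim\cG_{x}=-\dim G_{x}$, $\dim\cG_{x'}=-\dim G_{x'}$) forces $\dim G_{x'}<\dim G_{x}$, and semicontinuity handles the remaining points. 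In your setup this says: on $\cW$, the proper closed immersion $\cG_{x}\hookarr\cW$ together with constancy of the stabiliser dimension already gives the contradiction, with no rigidification, no tame-stack input, and no finiteness of inertia needed.
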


\bpf  The first statement follows directly from the fact that $\cX_y \arr \Spec k(y)$ is a good moduli space and therefore separates closed disjoint substacks.  Let $r$ be maximal among the dimensions of the stabilizers of points of $\phi^{-1} (y)$.  By upper semi-continuity (\cite[IV.13.1.3]{ega}),
$\cZ = \{z \in |\phi^{-1} (y)| \, | \,\dim G_z = r\} \subset \phi^{-1}(y)$ is a closed substack (given the reduced induced stack structure).  Let $x \in |\cZ|$ be a closed point.  If $\phi^{-1}(y) \setminus \{x\}$ is non-empty, there exists a point $x'$ closed in the complement.  Since there is an induced closed immersion $\cG_{x} \hookarr \overline{\cG_{x'}}$, $\dim \cG_{x} < \dim \cG_{x'}$ contradicting $\dim G_x = \dim G_{x'}$.  \epf

This unique closed point has linearly reductive stabilizer (see Proposition \ref{closed_orbits_lin_red}).  Conversely, it is natural to ask when a point of an Artin stack $\cX$ is in the closure of another point with lower dimensional stabilizer.  This question was motivated by discussions with Jason Starr and Ravi Vakil.  If $\cX$ admits a good moduli space, then the answer has a satisfactory answer:

\begin{prop}  Suppose $\cX$ is a noetherian Artin stack finite type over $S$ and $\phi: \cX \arr Y$ is a good moduli space.  Let $d$ be minimal among the dimensions of stabilizers of points of $\cX$.  Assume that the open substack $\cU = \{x \in |\cX| \, | \, \dim G_x = d\}$ is dense (for instance, if $\cX$ is irreducible).  Then any closed point $z \in |\cX|$ is in the closure of a point in $\cU$. 
\end{prop}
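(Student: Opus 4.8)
The plan is to reduce the statement to an elementary topological fact about $|\cX|$. First observe that $\cU$ is genuinely an open substack: by upper semicontinuity of $x \mapsto \dim G_x$ (\cite[IV.13.1.3]{ega}, exactly as in the preceding proposition) the locus $\{x \in |\cX| : \dim G_x \le d\}$ is open, and it coincides with $\cU$ since $d$ is minimal. It therefore suffices to produce, for the given closed point $z$, an irreducible component $W$ of $|\cX|$ with $z \in W$ whose generic point $\eta_W$ lies in $\cU$: then $u := \eta_W$ satisfies $\dim G_u = d$ while $z \in W = \overline{\{\eta_W\}}$, as wanted.

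The one thing needing justification is that every irreducible component of the noetherian topological space $|\cX|$ is the closure of a single point. I would deduce this from a smooth presentation $p : X \arr \cX$ with $X$ a noetherian scheme, which exists because $\cX$ is noetherian. Letting $\xi_1, \dots, \xi_m$ be the generic points of the (finitely many) irreducible components of $X$, continuity and surjectivity of $p$ give $|\cX| = \bigcup_{i} \overline{\{p(\xi_i)\}}$, a finite union of irreducible closed subsets; hence the irreducible components of $|\cX|$ are exactly the maximal members of $\{\overline{\{p(\xi_i)\}}\}_i$, each of which is the closure of a point. (Alternatively one may simply invoke that $|\cX|$ is sober.)

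With this in hand the conclusion is immediate. Given the closed point $z$, choose an irreducible component $W$ of $|\cX|$ containing $z$; by the previous paragraph $W = \overline{\{\eta_W\}}$ for some point $\eta_W$. Since $\cU$ is dense, $\cU \cap W \ne \emptyset$ --- otherwise $\cU$ would be contained in the union of the remaining irreducible components of $|\cX|$, forcing $\overline{\cU}$ to be a proper closed subset. Being a nonempty open subset of the irreducible space $W$, $\cU \cap W$ is dense in $W$ and hence contains $\eta_W$, so $\dim G_{\eta_W} = d$. As $z \in \overline{\{\eta_W\}}$, the closed point $z$ lies in the closure of $\eta_W \in |\cU|$, which proves the claim. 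I expect the only real obstacle to be the topological input of the middle paragraph; the rest is formal, and in fact the argument applies to an arbitrary point $z$.
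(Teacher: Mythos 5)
Your argument is correct, and it takes a genuinely different (and more elementary) route than the paper's. You never use the good moduli space $\phi$, nor the finite-type hypothesis: once $\cU$ is known to be open (upper semicontinuity of $x \mapsto \dim G_x$, which the paper invokes in the same way) and every irreducible component of $|\cX|$ is known to be the closure of a single point (your presentation argument is sound: $|\cX| = \bigcup_i \overline{\{p(\xi_i)\}}$ exhibits $|\cX|$ as a finite union of irreducible closed subsets, so the components are the maximal members and each has a generic point), the conclusion is the purely topological fact that a dense open set meets every component and hence contains each component's generic point. (One small wording slip: "dense in $W$ and hence contains $\eta_W$" is not a valid implication on its own -- what you actually use, and what is true, is that a \emph{nonempty open} subset of the irreducible $W$ must contain its generic point.) The paper argues quite differently, by contradiction inside a fiber of $\phi$: it introduces the second upper semicontinuous function $\tau(x) = \dim_x \phi^{-1}(\phi(x))$, shows that for a closed point $z$ violating the conclusion the residual gerbe $\cG_z$ exhausts $\phi^{-1}(\phi(z))$ topologically, so $\dim \cG_z = \dim_z \phi^{-1}(\phi(z))$, proves the general inequality $\dim \cG_x \leq \dim_x \phi^{-1}(\phi(x))$, and then contradicts the density of $\cU$ on the open locus where $\sigma \leq r$ and $\tau \leq -r$. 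What your approach buys is brevity and generality: as you note, the statement as literally formulated holds for an arbitrary point $z$, not just a closed one, and needs no good moduli space at all. What the paper's approach buys is quantitative, fiberwise information tied to $\phi$ (the interplay of $\dim G_x$, $\dim \cG_x$ and the fiber dimension), which is closer to the finer question motivating the section -- relating a closed point to smaller-stabilizer points within its own fiber -- and which the topological argument does not see.
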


\bpf Define 
\begin{equation} 
\begin{aligned}
\sigma: |\cX| \arr \ZZ, \quad & x \mapsto \dim G_x\\
\tau: |\cX| \arr \ZZ, \quad & x \mapsto \dim_x \phi^{-1}(\phi(x))
\end{aligned}
\end{equation}

By applying  \cite[IV.13.1.3]{ega}, $\sigma$ is  upper semi-continuous 
and since $\phi: \cX \arr Y$ is finite type, $\tau$ is also upper semi-continuous.  In particular, $\cU$ is an open substack.  

Suppose $z \in |\cX| \setminus |\cU|$ is a closed point not contained in the closure of any point in $\cU$.  In particular $z \notin \cU$ so $\dim G_z > d$.  Set $y = \phi(z)$.  There is an induced closed immersion $\cG_z \hookarr \phi^{-1}(y)$ and a diagram
$$\xymatrix{
\cG_z \ar@{^(->}[r] \ar[rd] 	& \phi^{-1}(y) \ar[r] \ar[d]		& \cX \ar[d]^{\phi} \\
				& \Spec k(y) \ar[r]			& Y
}$$
where both $\cG_z \arr \Spec k(y)$ and $\phi^{-1}(y) \arr \Spec k(y)$ are good moduli spaces.  We claim that $\cG_z \hookarr \phi^{-1}(y)$ is surjective.  If not, there would exist a locally closed point $w \in \phi^{-1}(y)$ distinct from $z$ but containing $z$ in its closure.  But since $|\cG_z|$ is a proper closed subset of $\bar{|\cG_w|}$, $\dim G_w < \dim G_z$ contradicting our assumptions on $z$.  Therefore $\dim \cG_z = \dim_z \phi^{-1}(\phi(z))$. 

For any $x \in |\cX|$, we will show that $\dim \cG_x \leq \dim_x \phi^{-1} \phi(x)$.  Let $\cZ =  \{z \in \phi^{-1} \phi(x) | \dim G_x \ge \dim G_z \}$ which is a closed substack (with the induced reduced stack structure) of $\phi^{-1}(\phi(x))$.  Let $x' \in |\cZ|$ be a closed point.  The composition of the closed immersions $\cG_{x'} \hookarr \cZ \hookarr \phi^{-1}(\phi(x))$ induces the inequalities $\dim G_x \leq \dim G_{x'} \leq \dim_x \phi^{-1} \phi(x)$.  

For any point $x \in |\cX|$,
$$0 = \dim \cG_x + \dim G_x \leq \dim_x \phi^{-1} (\phi(x)) + \dim G_x$$

Set $r = \dim G_z > d$.  Let $\cW \subseteq \cX$ be the open substack consisting of points $x \in |\cX|$ such that $\dim G_w \leq r$ and $\dim_w \phi^{-1}(\phi(w)) \leq -r$.  Since  $\dim_w \phi^{-1} (\phi(w)) + \dim G_w \ge 0$, it follows that for all $w \in |\cW|$, $\dim G_w = r$ and $\dim \phi^{-1}(\phi(w)) = -r$ which contradicts that $\cU \subseteq \cX$ is dense. \epf

\section{Characterization of vector bundles} \label{vector_bundle_section}

If $\phi: \cX \arr Y$ is a good moduli space and $\cG$ is a vector bundle on $Y$, then $\phi^* \cG$ is a vector bundle on $\cX$ with the property that the stabilizers act trivially on the fibers.  It is natural to ask when a vector bundle $\cF$ on $\cX$ descends to $Y$ (that is, when there exists a vector bundle $\cG$ on $Y$ such that $\phi^* \cG \cong \cF$).  In this section, we prove that if $\cX$ is locally noetherian, there is an equivalence of categories between vector bundles on $Y$ and vector bundles on $\cX$ with the property that at closed points the stabilizer acts trivially on the fiber.  This result provides a generalization of the corresponding statement for good GIT quotients proved by Knop, Kraft and Vust in \cite{kkv} and \cite{kraft}.  We thank Andrew Kresch for pointing out the following argument.

\begin{defn} A vector bundle $\cF$ on a locally noetherian Artin stack $\cX$ has \emph{trivial stabilizer action at closed points} if for all geometric points $x: \Spec k \arr \cX$ with closed image, the representation of $G_x$ on $\cF \tensor k$ is trivial.  
\end{defn}

\begin{remark} This is equivalent to requiring that for all closed points $\xi \in |\cX|$, inducing a closed immersion $i: \cG_{\xi} \hookarr \cX$, there is an isomorphism $i^*\cF \cong \oh_{\cG_{\xi}}^n$ for some $n$.
\end{remark} 

\begin{thm} \label{vector_bundles}  If $\phi: \cX \arr Y$ is a good moduli space with $\cX$ locally noetherian, the pullback functor $\phi^*$ induces an equivalence of categories between vector bundles on $Y$ and the full subcategory of vector bundles on $\cX$ with trivial stabilizer action at closed points.  The inverse is provided by the push-forward functor $\phi_*$.
\end{thm}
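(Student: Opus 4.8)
The plan is to establish the three ingredients of the statement in turn: that $\phi^{*}$ is fully faithful, that $\phi^{*}$ does land in the full subcategory of vector bundles with trivial stabilizer action at closed points, and that every such bundle $\cF$ is recovered from $Y$ via $\phi_{*}$ (so that $\phi_{*}$ is quasi-inverse). The first two are short. For full faithfulness, given vector bundles $\cG_{1},\cG_{2}$ on $Y$ I would combine adjunction with Proposition \ref{good_prop1} to get $\Hom_{\cX}(\phi^{*}\cG_{1},\phi^{*}\cG_{2})\cong\Hom_{Y}(\cG_{1},\phi_{*}\phi^{*}\cG_{2})\cong\Hom_{Y}(\cG_{1},\cG_{2})$. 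For the second point, if $x\colon\Spec k\arr\cX$ is a geometric point inducing $i\colon BG_{x}\arr\cX$, then $\phi\circ i$ factors through $\Spec\Gamma(BG_{x},\oh_{BG_{x}})=\Spec k$, so $i^{*}\phi^{*}\cG$ is a trivial $G_{x}$-representation.

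The content is essential surjectivity: given $\cF$ with trivial stabilizer action at closed points I must show $\phi_{*}\cF$ is a vector bundle on $Y$ and the adjunction $\phi^{*}\phi_{*}\cF\arr\cF$ is an isomorphism. Both assertions are Zariski-local on $Y$, so I would reduce to $Y=\Spec A$ with $A$ noetherian; then $\phi_{*}\cF=\widetilde{M}$ with $M$ finitely generated (Theorem \ref{good_thm}(\ref{good_noeth})). Local freeness of $M$ and bijectivity of the adjunction map can be checked after the faithfully flat base change $\Spec\widehat{A_{\mathfrak{m}}}\arr\Spec A$ at each maximal ideal $\mathfrak{m}$ — this remains a good moduli space by Proposition \ref{good_base_change}(i), and $\cF$ keeps its hypothesis because the closed points of the base change are the closed points of the fibres of $\phi$ over closed points of $Y$. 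So I may assume $A$ is complete local noetherian with residue field $k$; then $\cX$ is connected and noetherian with a unique closed point $x$ (Theorem \ref{good_thm}(\ref{good_geom_conn}),(\ref{good_separate})), and since $\phi$ is universally closed (Theorem \ref{good_thm}(\ref{good_univ_closed})) and every prime of $A$ specialises to $\mathfrak{m}$, \emph{every} point of $|\cX|$ specialises to $x$.

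Now I would run a formal argument along the thickenings $\cX_{j}=\cX\times_{Y}\Spec(A/\mathfrak{m}^{j+1})$, each of which is again cohomologically affine. The base step is a ``field case'': on $\cX_{0}$ one has $\Gamma(\cX_{0},\oh_{\cX_{0}})=k$, connectedness, unique closed point $x$, and cohomological affineness, and from the sequence $0\arr\cI\cF_{0}\arr\cF_{0}\arr i_{*}i^{*}\cF_{0}\arr0$ (with $\Gamma$ exact) one gets a surjection $\Gamma(\cX_{0},\cF_{0})\arr\Gamma(\cG_{x},i^{*}\cF_{0})=k(x)^{n}$; lifting a basis produces $\oh_{\cX_{0}}^{n}\arr\cF_{0}$ which is an isomorphism along $\cG_{x}$, hence an isomorphism (its cokernel is coherent, killed by $i^{*}$, so would be supported at the unique closed point; its kernel is locally free of rank $0$ on the connected $\cX_{0}$). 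Inductively I then lift the trivialisation over each square-zero step $\cX_{j}\hookarr\cX_{j+1}$: the obstruction lies in $H^{2}(\cX_{0},\sHom(\cF_{0},\cF_{0})\otimes\cJ_{j})=H^{2}(\cX_{0},\cJ_{j}^{\,n^{2}})$, which vanishes since $\cX_{0}$ is cohomologically affine, and one lifts the chosen sections along the surjection $\Gamma(\cX_{j+1},\cF_{j+1})\arr\Gamma(\cX_{j},\cF_{j})$; this yields compatible $\theta_{j}\colon\oh_{\cX_{j}}^{n}\iso\cF_{j}$ and hence, applying the exact functor $\phi_{*}$ and $\phi_{*}\oh_{\cX_{j}}=\oh_{Y_{j}}$, compatible isomorphisms $\phi_{*}\cF_{j}\cong(A/\mathfrak{m}^{j+1})^{n}$.

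The final and hardest step is to descend these formal trivialisations back to $\cX$ itself. Since each restriction $M=\phi_{*}\cF\arr\phi_{*}\cF_{j}=(A/\mathfrak{m}^{j+1})^{n}$ is surjective, the induced map $\mu\colon M\arr\varprojlim_{j}(A/\mathfrak{m}^{j+1})^{n}=A^{n}$ has image $M'$ with $M'+\mathfrak{m}^{j+1}A^{n}=A^{n}$ for all $j$, so $M'=A^{n}$ by Krull's intersection theorem; $\mu$ thus splits, $M\cong A^{n}\oplus K$ with $K=\bigcap_{j}\phi_{*}(\mathfrak{m}^{j+1}\cF)=\phi_{*}\bigl(\bigcap_{j}\mathfrak{m}^{j+1}\cF\bigr)$ because $\phi_{*}$ commutes with limits. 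On a smooth presentation $\Spec B\arr\cX$ this last sheaf is $\bigl(\bigcap_{j}\mathfrak{m}^{j+1}B\bigr)^{n}$ (as $\cF$ is locally free), and by Artin--Rees that ideal is annihilated by some $1-m$ with $m\in\mathfrak{m}B$; since every point of $\cX$ specialises to $x$, which lies over $\mathfrak{m}$, no associated prime of $B$ contains $1-m$, so $1-m$ is a non-zero-divisor and the intersection vanishes. Hence $K=0$, $M\cong A^{n}$ compatibly with the $\theta_{j}$, and a basis of $M=\Gamma(\cX,\cF)$ gives $\Theta\colon\oh_{\cX}^{n}\arr\cF$ reducing to $\theta_{j}$ on each $\cX_{j}$; its cokernel is coherent and vanishes on $\cX_{0}\ni x$ hence is $0$, and its kernel is locally free of rank $0$ on the connected $\cX$ hence is $0$. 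Thus $\cF\cong\oh_{\cX}^{n}=\phi^{*}\oh_{Y}^{n}=\phi^{*}\phi_{*}\cF$ via the adjunction. I expect this formal-to-global passage to be the main obstacle: $\cX$ need not be proper over $Y$, so ordinary formal GAGA is unavailable, and one must instead lean on cohomological affineness (for unobstructed lifting and exactness of $\phi_{*}$), on coherence of $\phi_{*}\cF$, and on the topological input that universal closedness forces every point of $\cX$ to specialise to the unique closed point over $\mathfrak{m}$.
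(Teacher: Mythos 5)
Your reduction to a complete local base $A$ and your analysis of the fibre $\cX_0$ are fine, but the final descent step contains a genuine gap. You need $K=\bigcap_j\Gamma(\cX,\mathfrak{m}^{j+1}\cF)=0$, and you argue this on a smooth presentation $\Spec B\arr\cX$: Artin--Rees gives $m\in\mathfrak{m}B$ with $(1-m)\cdot\bigl(\bigcap_j\mathfrak{m}^{j+1}F_B\bigr)=0$, and you claim no associated prime of $B$ contains $1-m$ ``since every point of $\cX$ specialises to $x$.'' That inference fails: specializations in $|\cX|$ do not lift along a smooth presentation, which is open but not closed, so associated primes of $B$ need not meet the closed fibre $V(\mathfrak{m}B)$. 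Already for $\cX=Y=\Spec k[[t]]$ the \'etale presentation $\Spec\bigl(k[[t]]\times k[[t]][1/t]\bigr)$ has a component on which $t$ is a unit; there $1-m$ lies in an associated prime and $\bigcap_j t^{j+1}B\neq 0$, so the chart-level intersection does not vanish. The statement you actually need, $\bigcap_j\phi_*(\mathfrak{m}^{j+1}\cF)=0$, is a separatedness/formal-functions assertion comparing the filtration $\phi_*(\mathfrak{m}^{j}\cF)$ with the $\mathfrak{m}$-adic filtration on the coherent module $\phi_*\cF$; nothing in your argument, and nothing in the paper beyond Lemma \ref{ideal_prop2} (which treats only $\cF=\oh_{\cX}$), delivers it. As written, essential surjectivity is therefore not established.

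The gap is avoidable, and in fact most of your machinery is unnecessary: your base step already suffices. Having shown $\cF_0\cong\oh_{\cX_0}^{n}$ via sections $\bar s_1,\dots,\bar s_n$, lift them directly to $\Gamma(\cX,\cF)$ (possible because $\phi_*$ is exact, so $\Gamma(\cX,\cF)\arr\Gamma(\cX_0,\cF_0)$ is surjective); the resulting $\Theta\colon\oh_{\cX}^{n}\arr\cF$ has coherent cokernel vanishing on $\cX_0$, whose support is a closed set missing the unique closed point $x$ and hence empty, and a surjection of vector bundles of equal rank is an isomorphism. No higher thickenings $\cX_j$, inverse limits, or Krull argument are needed (and the $H^2$ obstruction you mention is beside the point; your section-lifting mechanism is the right one). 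Note finally that the paper's own proof avoids completion altogether: it proves surjectivity of the adjunction $\phi^*\phi_*\cF\arr\cF$ by restricting to residual gerbes at closed points, using $\phi_*\cI\,\phi_*\cF\subseteq\phi_*(\cI\cF)$ together with the trivial-stabilizer hypothesis, and then for each closed point chooses $n$ global sections giving $\beta\colon\oh_{\cX}^{n}\arr\cF$ surjective near that point and passes to the saturated open $\phi^{-1}\bigl(Y\setminus\phi(\Supp\coker\beta)\bigr)$, where the equal-rank trick trivializes $\cF$ and shows $\phi_*\cF$ is locally free.
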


\bpf We will show that if $\cF$ is a vector bundle on $\cX$ with trivial stabilizer action at closed points, the adjunction morphism $\lambda: \phi^* \phi_* \cF \arr \cF$ is an isomorphism and $\phi_* \cF$ is locally free.  These statements imply the desired result since the adjunction morphism $\cG \arr \phi_* \phi^* \cG$ is an isomorphism for any quasi-coherent $\oh_Y$-module (see Proposition \ref{good_prop1}).  

We may assume that $Y = \Spec A$ and $\cF$ is locally free of rank $n$.  We begin by showing that $\lambda$ is surjective.  Let $\xi \in |\cX|$ be a closed point which induces a closed immersion $i: \cG_{\xi} \hookarr \cX$ defined by a sheaf of ideals $\cI$, a closed point $y=\phi(\xi) \in Y$, and a commutative diagram
$$\xymatrix{
\cG_{\xi} \ar[r]^i \ar[d]^{\phi'}	& \cX \ar[d]^{\phi} \\
\Spec k(y) \ar[r]^{j}			& Y
}$$
It suffices to show that $i^* \lambda$ is surjective for any such $\xi$.  First, the adjunction morphism $\alpha: j^*\phi_* \cF \arr \phi'_*i^*\cF$ is surjective.  Indeed, $j_*\alpha$ corresponds under the natural identifications to $\phi_* \cF / ( \phi_*\cI  \phi_* \cF) \arr \phi_* (\cF / \cI \cF) \cong \phi_* \cF / \phi_* (\cI \cF)$ which is surjective since $\phi_* \cI \phi_* \cF \subseteq \phi_*(\cI \cF)$.  Now $i^* \lambda$ is the composition
$$i^*\phi^*\phi_*\cF \cong \phi'^* j^* \phi_* \cF \stackrel{\phi'^* \alpha}{\mapsonto} \phi'^* \phi'_* i^* \cF \iso i^* \cF$$
where the last adjunction morphism is an isomorphism precisely because $\cF$ has trivial stabilizer action at closed points.  Therefore, $\lambda$ is surjective.

Since $Y$ is affine, $\bigoplus_{s \in \Gamma(\cX, \cF)} \arr \phi_* \cF$ is surjective and it follows that the composition $\bigoplus_{s \in \Gamma(\cX, \cF)} \oh_{\cX} \arr \phi^* \phi_* \cF \arr \cF$ is surjective.  Let $\xi \in |\cX|$ be a closed point.  There exists $n$ sections of $\Gamma(\cX, \cF)$ inducing $\beta: \oh_{\cX}^n \arr \cF$ such that $\xi \notin \Supp(\coker \beta)$.  Let $V = Y \setminus \phi( \Supp(\coker \beta))$ and $\cU = \phi^{-1}(V)$.  Then $\xi \in \cU$ and $\beta|_{\cU}: \oh_{\cU}^n \arr \cF|_{\cU}$ is surjective morphism of vector bundles of the same rank and therefore an isomorphism.  It follows that $\phi_* \beta|_V: \oh_V^n \arr \phi_* \cF|_{\cU}$ and $\lambda|_{\cU}:\phi^* \phi_* \cF|_{\cU} \arr \cF|_{\cU}$ are isomorphisms.  This shows both that $\lambda$ is an isomorphism and that $\phi_* \cF$ is a vector bundle. \epf

\begin{remark}  The corresponding statement for coherent sheaves is not true.  Let $k$ be a field with $\charr(k) \ne 2$ and let $\ZZ_2$ act on $\AA^1 = \Spec k[x]$ by $x \mapsto -x$.  Then $[\AA^1 / \ZZ_2] \mapsto \Spec k[x^2]$ is a good moduli space.  If $i: B \ZZ_2 \hookarr [\AA^1/\ZZ_2]$ is the closed immersion corresponding to the origin, then $i_* \oh_{B \ZZ_2}$ does not descend.
\end{remark}

\section{Stability} \label{stability_section}

Artin stacks do not in general admit good moduli spaces just as linearly reductive group actions on arbitrary schemes do not necessarily admit good quotients.  Mumford studied linearized line bundles as a means to parameterize open invariant subschemes that do admit quotients.  In this section, we study the analogue for Artin stacks.  Namely, a line bundle on an Artin stack determines a (semi-)stability condition.  The locus of semi-stable points will admit a good moduli space and will contain the stable locus which admits a tame moduli space.  In particular, we obtain an answer to \cite[Question 19.2.3]{lmb}.

Let $\cX$ be an Artin stack with $p: \cX \arr S$ quasi-compact and $\cL$ be a line bundle on $\cX$.
 
\begin{defn}(Analogue of \cite[Definition 1.7]{git})  Let $x: \Spec k \arr \cX$ be a geometric point with image $s \in S$.
\begin{enumeratea}
\item $x$ is \emph{pre-stable} if there exists an open substack $\cU \subseteq \cX$ containing $x$ which is cohomologically affine over $S$ and has closed orbits.
\item $x$ is \emph{semi-stable} with respect to $\cL$ if there is an open $U \subseteq S$ containing $s$ and a section $t \in \Gamma(p^{-1}(U), \cL^n)$ for some $n > 0$ such that $t(x) \ne 0$ and
$p^{-1}(U)_t \arr U$ is cohomologically affine.
\item $x$ is \emph{stable} with respect to $\cL$ if there is an open $U \subseteq S$ containing $s$ and a section $t \in \Gamma(p^{-1}(U), \cL^n)$ for some $n > 0$ such that $t(x) \ne 0$, $p^{-1}(U)_t \arr U$ is cohomologically affine, and $p^{-1}(U)_t$ has closed orbits.
\end{enumeratea}
We will denote $\cX^{\s}_{\pre}$, $\cX^{\ss}_{\cL}$, and $\cX^{\s}_{\cL}$ as the corresponding open substacks.
\end{defn}

\begin{remark}
If $S = \Spec A$ is affine, then $x$ is semi-stable with respect to $\cL$ if and only if there exists a section $t \in \Gamma(\cX,\cL^n)$ for some $n > 0$ such that $t(x) \ne 0$ and $\cX_t$ cohomologically affine.  See Proposition \ref{stable_equiv} for equivalences of stability.
\end{remark}

\begin{remark} 
The $\Gamma(\cX, \oh_{\cX})$-module $\bigoplus_{n \ge 0} \Gamma(\cX, \cL^n)$ is a graded ring and will be called the \emph{projective ring of invariants}.  More generally, the $\oh_{S}$-module $\bigoplus_{n \ge 0} p_* \cL^n$ is a quasi-coherent sheaf of graded rings and is called the \emph{projective sheaf of invariants}. 
\end{remark}

\begin{prop} (Analogue of \cite[Proposition 1.9]{git}) If $\cX$ is an Artin stack quasi-compact over $S$, there is a tame moduli space $\phi: \cX^{\s}_{\pre} \arr Y$, where $Y$ is a scheme.  Furthermore, if $\cU \subseteq \cX$ is an open substack such that $\cU \arr Z$ is a tame moduli space, then $\cU \subseteq \cX^{\s}_{\pre}$.
\end{prop}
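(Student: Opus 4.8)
The plan is to establish the existence and the maximality assertions separately; in both cases one reduces to the local structure of $\cX^{\s}_{\pre}$ and applies the gluing result for tame moduli spaces, Proposition \ref{almost_coarse_glue}.

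\emph{Existence.} By definition, each point of $\cX^{\s}_{\pre}$ has an open neighbourhood $\cU\subseteq\cX$ that is cohomologically affine over $S$ and has closed orbits, and every point of such a $\cU$ is again pre-stable; hence these $\cU$'s are open substacks of $\cX^{\s}_{\pre}$ covering it. Intersecting them with the preimages of affine opens of $S$ (cohomological affineness being preserved under base change along the quasi-affine inclusions $S_0\hookarr S$, cf.\ Proposition \ref{coh_aff_prop}) yields a cover of $\cX^{\s}_{\pre}$ by open substacks $\cV$ each of which is a cohomologically affine stack with closed orbits, so that $\cV\arr\Spec\Gamma(\cV,\oh_{\cV})$ is a good moduli space with affine target, which by Proposition \ref{good_is_coarse} is even a tame moduli space. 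Applying Proposition \ref{almost_coarse_glue} to this cover glues the affine schemes $\Spec\Gamma(\cV,\oh_{\cV})$ into a scheme $Y$ and the morphisms into a tame moduli space $\phi\colon\cX^{\s}_{\pre}\arr Y$; this is the first assertion.

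\emph{Maximality.} Let $\psi\colon\cU\arr Z$ be a tame moduli space with $\cU\subseteq\cX$ open, and fix a point $x$ of $\cU$; it suffices to produce an open neighbourhood of $x$ in $\cX$ exhibiting $x$ as pre-stable. Set $z=\psi(x)$, choose an affine open $S_0\subseteq S$ containing the image of $x$, and choose an affine open $Z'\ni z$ inside the preimage of $S_0$ in $Z$, so that $Z'\arr S_0$ is a morphism of affine schemes, hence affine, hence cohomologically affine. Put $\cV=\psi^{-1}(Z')$, an open substack of $\cU$ (hence of $\cX$) containing $x$. Because $\psi$ is surjective (Theorem \ref{good_thm}(\ref{good_surjective})), the preimage $\cV$ is saturated for $\psi$, so $\psi|_{\cV}\colon\cV\arr Z'$ is again a good moduli space, in particular cohomologically affine; composing with $Z'\arr S_0$ shows $\cV$ is cohomologically affine over $S$. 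By Proposition \ref{good_is_coarse}, $\cU$ has closed orbits---this is the only place the hypothesis that $\psi$ be \emph{tame}, rather than merely a good moduli space, is used---and closed orbits pass to any open substack: for a geometric point $\Spec k\arr\cV$ with representative $x$, the closed immersion $BG_x\arr\cU\times_S k$ factors through the open $\cV\times_S k$, hence $BG_x\arr\cV\times_S k$ is a closed immersion. Thus $\cV$ witnesses pre-stability of $x$, and $\cU\subseteq\cX^{\s}_{\pre}$ follows.

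Each individual step is short. The one point requiring care is the meaning of ``cohomologically affine over $S$'' when $S$ is not affine: I handle this by building the local tame moduli spaces over affine opens of $S$ in the existence argument and by arranging $Z'$ to lie over an affine open of $S$ in the maximality argument, so that cohomological affineness is only ever invoked over an affine base; this is harmless because pre-stability, cohomological affineness over $S$, and the property of being a tame moduli space are all local on $S$. Beyond this bookkeeping I do not anticipate a real obstacle: the conceptual content is that pre-stability is a local condition, so the existence direction is a gluing argument and the maximality direction a restriction argument, the latter resting on the fact that the preimage of an open set under a surjective good moduli space is automatically saturated, together with Proposition \ref{good_is_coarse} to transport closed orbits down to the affine-over-$S$ neighbourhood.
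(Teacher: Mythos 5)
Your route is the paper's own: its entire proof of this proposition is the pair of citations to Propositions \ref{good_is_coarse} and \ref{almost_coarse_glue}, and your existence half spells out exactly how they are meant to be used (pre-stability gives a cover of $\cX^{\s}_{\pre}$ by opens that are cohomologically affine over $S$ with closed orbits; each such open, after restricting over an affine of $S$ -- or, more directly, via $\cU \arr \sSpec_S p_*\oh_{\cU}$ without any restriction -- admits a good moduli space with scheme target, which is tame by \ref{good_is_coarse} since closed orbits pass to open substacks; then glue by \ref{almost_coarse_glue}). That half is correct.

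The maximality half has a genuine gap at the words ``choose an affine open $Z'\ni z$''. By Definition \ref{defn_good_coarse} the target $Z$ of a tame moduli space is only an algebraic space, and a point of an algebraic space need not have any affine (or even scheme) Zariski neighbourhood, so for such $Z$ your construction of $\cV$ cannot start. Nor can this be patched by passing to an \'etale affine chart, since that destroys the openness in $\cU$ that pre-stability requires; in fact the literal statement fails in this generality: the identity map on a proper algebraic space over a field possessing a point with no affine open neighbourhood (e.g.\ the quotient of Hironaka's example by its free involution) is a tame moduli space, yet that point is not pre-stable, because for a quasi-compact open subspace of an algebraic space ``cohomologically affine over $\Spec k$'' means affine by Serre's criterion (Proposition \ref{serre_crit}). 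So the ``furthermore'' assertion really needs $Z$ to be a scheme (as in the GIT analogue and as in the first half of the proposition), and what you have written is a proof of that version. A smaller point of the same flavour: at the end you only establish that $\cV \arr S_0$ is cohomologically affine; composing with the open immersion $S_0 \hookarr S$ does not give cohomological affineness over $S$ unless, say, $S$ has affine diagonal, and the remark that cohomological affineness is local on $S$ does not repair this, since $\cV$ lies over the single member $S_0$ of the cover and its structure map over the other members is not controlled. This matches the way semi-stability and stability are defined (relative to an open of $S$) and is surely the intended reading of pre-stability, but as written it is a second place where your argument, like the paper's one-line proof, establishes a slightly adjusted statement rather than the literal one.
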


\bpf This follows from Propositions \ref{good_is_coarse} and \ref{almost_coarse_glue}. \epf

There is no guarantee that $\cX_{\pre}^{\s}$ is non-empty.  Furthermore, the scheme $Y$ in the preceding proposition may be very non-separated.  For instance, if $\cX = [(\PP^1)^4 / \PGL_2]$, $\cX_{\pre}$ is the open substack consisting of tuples of points such that three are distinct.  There is a good moduli space $\cX_{\pre} \arr Y$ where $Y$ is the non-separated projective line with three double points.

\begin{thm} (Analogue of \cite[Theorem 1.10]{git}) \label{general_good_thm}
Let $p: \cX \arr S$ be quasi-compact with $\cX$ an Artin stack and $\cL$ be a line bundle on $\cX$.  Then
\begin{enumeratei}
\item There is a good moduli space $\phi: \cX^{\ss}_{\cL} \arr  Y$ with $Y$ an open subscheme of $\sProj \bigoplus_{n \ge 0} p_* \cL^n$ and there is an open subscheme $V \subseteq Y$ such that $\phi^{-1}(V) = \cX^{\s}_{\cL}$ and $\phi |_{\cX^{\s}_{\cL}}: \cX^{\s}_{\cL} \arr V$ is a tame moduli space. 
\item If $\cX^{\ss}_{\cL}$ and $S$ are quasi-compact, then there exists an $S$-ample line bundle $\cM$ on $Y$ such that $\phi^* \cM \cong \cL^N$ for some $N >0$.  
\item If $S$ is an excellent quasi-compact scheme and $\cX$ is finite type over $S$, then $Y \arr S$ is quasi-projective.  

\end{enumeratei}
\end{thm}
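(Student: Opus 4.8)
\emph{Plan.} First I would reduce to $S=\Spec A$ affine; the construction below is compatible with flat (hence open) base change on $S$, so the pieces obtained over an affine cover of a general $S$ glue. Over $\Spec A$ put $R=\bigoplus_{n\ge0}\Gamma(\cX,\cL^n)$, so that $\sProj\bigoplus_{n\ge0}p_*\cL^n=\Proj R$. By the remark following the definition of stability, $\cX^{\ss}_{\cL}=\bigcup_t\cX_t$ over homogeneous $t\in R_n$, $n>0$, with $\cX_t$ cohomologically affine; for such $t$ the stack $\cX_t$ is cohomologically affine, so $\phi_t\colon\cX_t\to Y_t\Defeq\Spec\Gamma(\cX_t,\oh_{\cX_t})$ is a good moduli space. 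A standard computation identifies $\Gamma(\cX_t,\oh_{\cX_t})$ with $\dlim_m\Gamma(\cX,\cL^{mn})=(R_t)_0$ ($\cX_t$ being quasi-compact and quasi-separated), giving $Y_t\cong D_+(t)\subseteq\Proj R$; and if $\cX_t$ moreover has closed orbits then $\phi_t$ is a tame moduli space by Proposition \ref{good_is_coarse}.

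Next I would glue the $\phi_t$. For two such $s,t$ the intersection $\cX_s\cap\cX_t=\cX_{st}$ is the non-vanishing locus in $\cX_t$ of the degree-zero function $s^{\deg t}/t^{\deg s}\in(R_t)_0=\Gamma(\cX_t,\oh_{\cX_t})$, hence equals $\phi_t^{-1}(D_+(s)\cap D_+(t))$ and is saturated for $\phi_t$. Proposition \ref{good_gluing} then yields a good moduli space $\phi\colon\cX^{\ss}_{\cL}\to Y$, and by uniqueness of good moduli spaces the gluing data agrees with that of $\Proj R$, so $Y$ is an open subscheme of $\Proj R$. Letting $V\subseteq Y$ be the union of the opens corresponding to those $t$ for which $\cX_t$ also has closed orbits, one has $\phi^{-1}(V)=\cX^{\s}_{\cL}$ (this is just how $\cX^{\s}_{\cL}$ is defined) and $\phi|_V$ is a tame moduli space because tameness glues (Proposition \ref{almost_coarse_glue}). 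This proves (i).

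For (ii), with $\cX^{\ss}_{\cL}$ and $S$ quasi-compact, I would pick a finite affine cover of $S$ and finitely many $t_1,\dots,t_r$ as above covering $\cX^{\ss}_{\cL}$, and let $N$ be a common multiple of the $n_i=\deg t_i$. Glue a line bundle $\cM$ on $Y$ from free rank-one pieces over the $\tilde Y_{t_i}$ on generators ``$t_i^{N/n_i}$'', the transition from $\tilde Y_{t_i}$ to $\tilde Y_{t_j}$ being multiplication by the unit $t_i^{N/n_i}/t_j^{N/n_j}\in\Gamma(\tilde Y_{t_it_j},\oh)^{\times}$, a degree-zero element of $R_{t_it_j}$; the cocycle identity is immediate (equivalently, $\cM=\oh_{\Proj R}(N)|_Y$, which is invertible on $Y$ since $Y$ is covered by the $D_+(t_i)$ with $n_i\mid N$). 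Pulling back, $\phi^*\cM|_{\cX_{t_i}}$ is free on $t_i^{N/n_i}$, matching $\cL^N|_{\cX_{t_i}}$ compatibly, so $\phi^*\cM\cong\cL^N$; and since $\tilde Y_{t_i}$ is exactly the non-vanishing locus in $Y$ of $t_i^{N/n_i}\in\Gamma(Y,\cM)$ and is the affine scheme $\Spec(R_{t_i})_0$, the classical criterion \cite[II.4.5.2]{ega}, which does hold for schemes, shows $\cM$ is $S$-ample.

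For (iii), excellence forces $S$, hence $\cX$ and its open substack $\cX^{\ss}_{\cL}$, to be noetherian, so (ii) gives the $S$-ample $\cM$ with $\phi^*\cM\cong\cL^N$. To finish I need $Y\to S$ of finite type: cover $S$ by finitely many affines and $Y$ by the finitely many $\tilde Y_{t_i}\cong\Spec\Gamma(\cX_{t_i},\oh_{\cX_{t_i}})$; each $\cX_{t_i}\to\tilde Y_{t_i}$ is a good moduli space with $\cX_{t_i}$ finite type over the excellent $S$, so $\tilde Y_{t_i}$ is finite type over $S$ by Theorem \ref{good_thm}(\ref{good_finite_type}), whence so is $Y$. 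As $Y\to S$ is also separated (open in the separated $\sProj$) and carries the relatively ample $\cM$ over the noetherian base $S$, it is quasi-projective. The main obstacles are the two inputs I have swept under ``standard'': the identification $\Gamma(\cX_t,\oh_{\cX_t})\cong(R_t)_0$ with the attendant $\Proj$-bookkeeping needed for (i), and Theorem \ref{good_thm}(\ref{good_finite_type}) for (iii), which is proved elsewhere and merely localized here; the one place where a classical ampleness fact must be invoked with care — since its stacky analogue fails — is the $S$-ampleness in (ii).
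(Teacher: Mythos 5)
Your argument is correct and rests on the same ingredients as the paper's: the cover of $\cX^{\ss}_{\cL}$ by cohomologically affine loci $\cX_t$, the identification $\Gamma(\cX_t,\oh_{\cX_t})\cong (R_t)_0$, Proposition \ref{good_is_coarse} for the stable locus, the twist $\oh(N)$ for (ii), and Theorem \ref{good_thm}(\ref{good_finite_type}) for (iii). The organizational difference is that the paper works top-down: the universal property of sheafy Proj produces a global morphism $\phi:\cX^{\ss}_{\cL}\arr\sProj\bigoplus_{n\ge 0}p_*\cL^n$ with open image, and one only observes that it is Zariski-locally a good moduli space; you work bottom-up, building the local good moduli spaces $\cX_t\arr D_+(t)$ and gluing them via Proposition \ref{good_gluing}, which is why you must check that $\cX_s\cap\cX_t=\cX_{st}$ is saturated for $\phi_t$ and then invoke uniqueness to recognize the glued space inside $\Proj R$. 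Your route buys explicitness (no appeal to the universal property as a black box, and the key computation $\Gamma(\cX_t,\oh_{\cX_t})\cong(R_t)_0$ is isolated rather than left implicit), at the cost of the saturation and uniqueness bookkeeping. One small inaccuracy to repair: over affine $S$ the equality $\phi^{-1}(V)=\cX^{\s}_{\cL}$ is not ``just the definition,'' because the definition of stability allows a section over a proper open $U\subsetneq S$; you need the same quasi-compact, quasi-separated localization trick that underlies the paper's remark on semistability --- shrink $U$ to a basic open $S_g$, use $\Gamma(p^{-1}(S_g),\cL^n)=\Gamma(\cX,\cL^n)_g$ to extend $g^m t$ to a global section $t'$, and replace $t$ by $p^*(g)\,t'$, whose nonvanishing locus is exactly $p^{-1}(S_g)_t$ and hence is cohomologically affine with closed orbits --- after which every stable point does lie in some global $\cX_{t''}$ of your kind. (The paper sidesteps this by defining $V$ using sections over all affine opens $U\subseteq S$.)
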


\bpf By the universal property of sheafy proj, there exists a morphism $\phi: \cX^{\ss} \arr \sProj \bigoplus_{n \ge 0} p_* \cL^n$.  The set-theoretic image $Y$ is open and by the definition of semi-stability, $\phi: \cX^{\ss} \arr Y$ is Zariski-locally a good moduli space.  Let $V \subseteq Y$ be the union of open sets of the form $(\Proj \bigoplus_{n \ge 0} \Gamma(p^{-1}(U), \cL^n))_t$ where $U \subseteq S$ is affine, $t \in \Gamma(p^{-1}(U), \cL^n)$ for some $n > 0$ such that $p^{-1}(U)_t$ is cohomologically affine and has closed orbits.  It is clear that $\phi^{-1}(V) = \cX^{\s}_{\cL}$ and  Proposition \ref{good_is_coarse} implies $\phi |_{\cX^{\s}_{\cL}}: \cX^{\s}_{\cL} \arr V$ is a tame moduli space.  

The quasi-compactness of $\cX_{\cL}^{\ss}$ and $S$ implies that $Y$ is quasi-compact and there exists $N > 0$, a finite affine cover $\{S_i\}$ of $S$, and finitely many sections $t_{ij} \in \Gamma(p^{-1}(S_i), \cL^N)$ such that $Y$ is the union of open affines of the form $(\Proj \bigoplus_{n \ge 0} \Gamma(p^{-1}(S_i), \cL^n))_{t_{ij}}$.  It follows that $\cM = \oh(N)$ on $\sProj \bigoplus_{n \ge 0} p_* \cL^n$ is an $S$-ample line bundle and there is a canonical isomorphism $\phi^* \cM|_{Y} \cong \cL^N|_{\cX_{\cL}^{\ss}}$.

If in addition $S$ is excellent and $\cX$ is finite type, then Theorem \ref{good_thm}(\ref{good_finite_type}) implies that $Y \arr S$ is quasi-projective.  \epf

\begin{cor}  \label{coh_proj_equiv}
Let $\cX$ be an Artin stack finite type over $S$.  If $\cX$ admits a good moduli space projective over $S$ then $\cX \arr S$ is cohomologically projective.  If $S$ is excellent, the converse holds. \end{cor}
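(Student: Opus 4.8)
\textbf{Proof proposal for Corollary \ref{coh_proj_equiv}.}
The plan is to extract both implications from Theorem \ref{general_good_thm} together with the basic properties of good moduli spaces, with no new ingredient. For the forward direction, suppose $\phi\colon\cX\arr Y$ is a good moduli space with $\pi\colon Y\arr S$ projective, and put $p=\pi\circ\phi$. Then $p$ is of finite type by hypothesis and universally closed, being the composite of the universally closed morphism $\phi$ (Theorem \ref{good_thm}(\ref{good_univ_closed})) with the proper morphism $\pi$. To produce an $S$-cohomologically ample line bundle I would fix an $S$-ample line bundle $\cM$ on $Y$, set $\cL=\phi^{*}\cM$, and choose an affine open cover $\{S_{j}\}$ of $S$; by Proposition \ref{good_base_change}(i) the base change $\phi_{j}\colon\cX_{j}=\cX\times_{S}S_{j}\arr Y_{j}=Y\times_{S}S_{j}$ is again a good moduli space. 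Since $S_{j}$ is affine, $\cM|_{Y_{j}}$ is ample, so there are sections $t_{i}\in\Gamma(Y_{j},\cM^{N_{i}})$, $N_{i}>0$, with $(Y_{j})_{t_{i}}$ affine and covering $Y_{j}$; using Proposition \ref{good_prop1} to identify $\Gamma(Y_{j},\cM^{N_{i}})\cong\Gamma(\cX_{j},\cL^{N_{i}})$, the $t_{i}$ pull back to sections $s_{i}$ with $(\cX_{j})_{s_{i}}=\phi_{j}^{-1}\big((Y_{j})_{t_{i}}\big)$. Each $(\cX_{j})_{s_{i}}\arr(Y_{j})_{t_{i}}$ is a good moduli space (base change of $\phi_{j}$ along an open immersion), hence cohomologically affine, and composing with the affine scheme $(Y_{j})_{t_{i}}$ shows $(\cX_{j})_{s_{i}}$ is a cohomologically affine stack (Proposition \ref{coh_aff_prop}(\ref{coh_aff_comp}),(\ref{coh_aff_affine})); surjectivity of $\phi_{j}$ shows these substacks cover $\cX_{j}$. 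Hence $\cL$ is relatively cohomologically ample over $S$ and $p$ is cohomologically projective.

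For the converse, assume $S$ is excellent; we may also assume $S$ is quasi-compact (whence so is $\cX$). Let $p\colon\cX\arr S$ be universally closed and of finite type and let $\cL$ be an $S$-cohomologically ample line bundle, so there is a finite affine cover $\{S_{j}\}$ of $S$ and sections $s_{ji}\in\Gamma(\cX_{j},\cL^{N_{ji}})$, $N_{ji}>0$, with the $(\cX_{j})_{s_{ji}}$ cohomologically affine and covering $\cX_{j}$. The first step is to check that $\cX^{\ss}_{\cL}=\cX$: for a geometric point $x$ whose image lies in $S_{j}$, pick $i$ with $s_{ji}(x)\ne0$; since $(\cX_{j})_{s_{ji}}$ is a cohomologically affine stack and $S_{j}$ is affine, the morphism $p^{-1}(S_{j})_{s_{ji}}=(\cX_{j})_{s_{ji}}\arr S_{j}$ is cohomologically affine (Proposition \ref{coh_aff_propP}, applied with the affine morphism $S_{j}\arr\Spec\ZZ$), so $x$ is semi-stable with respect to $\cL$. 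Then Theorem \ref{general_good_thm}(i) provides a good moduli space $\phi\colon\cX=\cX^{\ss}_{\cL}\arr Y$ with $Y$ an open subscheme of $\sProj\bigoplus_{n\ge0}p_{*}\cL^{n}$, and Theorem \ref{general_good_thm}(iii) shows that $Y\arr S$ is quasi-projective.

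It remains to upgrade quasi-projectivity to projectivity, and for this I would show that $Y\arr S$ is universally closed: for any $T\arr S$ and any closed $Z\subseteq Y_{T}$, the preimage $\phi_{T}^{-1}(Z)\subseteq\cX_{T}$ is closed, its image in $T$ is closed because $\cX\arr S$ is universally closed, and this image equals the image of $Z$ since $\phi_{T}$ is surjective (Theorem \ref{good_thm}(\ref{good_surjective}) and Proposition \ref{good_base_change}(i)); a quasi-projective, universally closed morphism is projective, so $\cX$ admits a good moduli space projective over $S$. Both implications are essentially bookkeeping on top of Theorems \ref{good_thm} and \ref{general_good_thm}; the two points that genuinely need care are the identification $\cX^{\ss}_{\cL}=\cX$ in the converse — reconciling the affine cover of $S$ occurring in the definition of relative cohomological ampleness with the open neighbourhood of the image point occurring in the definition of semi-stability — and the observation that universal closedness of $\cX\arr S$ descends through the surjection $\phi$ to $Y\arr S$, which is precisely what converts the quasi-projectivity of Theorem \ref{general_good_thm}(iii) into projectivity.
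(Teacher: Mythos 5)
Your proof is correct and follows essentially the same route as the paper: pull back an $S$-ample line bundle and use universal closedness of $\phi$ for the forward direction, and for the converse observe that cohomological ampleness forces $\cX^{\ss}_{\cL}=\cX$, invoke Theorem \ref{general_good_thm} for quasi-projectivity of $Y\arr S$, and upgrade to projectivity via universal closedness of $Y\arr S$. The paper's proof is just a terser version of this (its ``it is easy to see'' and its unproved claims are exactly the steps you spell out), and your implicit quasi-compactness assumption on $S$ matches what the paper itself needs to cite Theorem \ref{general_good_thm}(iii).
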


\bpf  Suppose $\phi: \cX \arr Y$ is a good moduli space with $Y$ projective over $S$.  Let $\cM$ be an ample line bundle on $Y$.  It is easy to see that $\phi^* \cM$ is cohomologically ample and since $\phi$ is universally closed, it follows that $\cX$ is cohomologically projective over $S$.  For the converse, there exists an $S$-cohomologically ample line bundle $\cL$ such that $\cX_{\cL}^{\ss} = \cX$ and $Y \arr S$ is quasi-projective.  Since $Y \arr S$ is also universally closed, the result follows. \epf

\begin{example} Over $\Spec \QQ$, the moduli stack, $\bar{\cM}_g$, of stable genus $g$ curves  and the moduli stack, $\cM_{X,P}^{\ss}$, of semi-stable sheaves on a connected projective scheme $X$ with Hilbert polynomial $P$, are cohomologically projective. \end{example}

\subsection{Equivalences for stability}

Suppose $\cX$ is a locally noetherian Artin stack and $\phi: \cX \arr Y$ is a good moduli space.  Recall the upper semi-continuous functions: 
\begin{equation} 
\begin{aligned}
\sigma: |\cX| \arr \ZZ, \quad & x \mapsto \dim G_x\\
\tau: |\cX| \arr \ZZ, \quad & x \mapsto \dim_x \phi^{-1}(\phi(x))
\end{aligned}
\end{equation}

If in addition $\phi: \cX \arr Y$ is a tame moduli space, then for all geometric points $x$, $\dim_x\phi^{-1} (\phi(x)) =  \dim BG_x$ by Proposition \ref{coarse_square}, which implies that
$$\sigma + \tau = 0.$$
so that $\sigma$ and $\tau$ are locally constant.  

\begin{defn}  $x \in |\cX|$ is \emph{regular} if $\sigma$ is constant in a neighborhood of $x$.  Denote $\cX^{\reg}$ the open substack consisting of regular points.
\end{defn}

\begin{lemma} \label{const_stab_closed_orbits}
 If $\cX$ is locally noetherian and $\sigma$ is locally constant in the geometric fibers of $S$, then $\cX$ has closed orbits.  In particular if $\cX = \cX^{\reg}$, $\cX$ has closed orbits.
\end{lemma}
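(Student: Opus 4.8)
\noindent The plan is to reduce to a single geometric fibre and then argue by contradiction: a non-closed geometric point would produce a point in its closure with strictly larger-dimensional stabilizer, contradicting local constancy of $\sigma$.

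First I would observe that both the hypothesis and the conclusion are insensitive to base change on $S$. Stabilizer dimensions are unchanged by base change, so $\sigma$ stays locally constant on geometric fibres; and whether a geometric point $x\colon \Spec K \to \cX$ has closed orbit depends only on $\cX \times_S \Spec K$. Hence I may replace $S$ by a geometric point $\Spec k$ of it, and then replace $\cX$ by any further extension $\cX \times_{\Spec k} \Spec K$, without losing the hypothesis. So it suffices to prove: if $\cX$ is a locally noetherian Artin stack over an algebraically closed field $k$ with $\sigma$ locally constant on $|\cX|$, then every $k$-point $x\colon \Spec k \to \cX$ has closed image $\xi \in |\cX|$ --- equivalently (see the discussion following Proposition \ref{gerbe_map}) that $\cG_\xi \to \cX$ is a closed immersion. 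Since this closed-immersion condition can be tested on each quasi-compact open substack containing $\xi$, I may additionally assume $\cX$ quasi-compact.

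Now suppose for contradiction that $\xi$ is not closed. Then $\cZ := \overline{\{\xi\}}_{\mathrm{red}}$ is an integral, quasi-compact, locally noetherian Artin stack with generic point $\xi$ and with $|\cZ| \supsetneq \{\xi\}$, so (being quasi-compact) it has a closed point $\eta \ne \xi$, and $\cG_\eta \hookrightarrow \cZ$ is a closed immersion onto a proper irreducible closed substack. Since $x$ is a $k$-point we have $k(\xi) = k$ and $\cG_\xi \cong BG_x$, so $\dim \cG_\xi = -\dim G_x = -\sigma(\xi)$, and likewise $\dim \cG_\eta = -\sigma(\eta)$. The crucial step is the dimension comparison inside $\cZ$, which should give $\dim \cG_\eta < \dim \cG_\xi$, i.e. $\sigma(\eta) > \sigma(\xi)$. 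Granting this we are finished: because $\eta \in \overline{\{\xi\}}$, every open substack of $\cX$ containing $\eta$ also contains $\xi$, so $\sigma$ takes two distinct values arbitrarily near $\eta$, contradicting local constancy. The final assertion is then immediate, since $\cX = \cX^{\reg}$ says precisely that $\sigma$ is locally constant on all of $|\cX|$, hence a fortiori on the geometric fibres of $S$.

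The main obstacle is the dimension comparison $\dim \cG_\eta < \dim \cG_\xi$. It is \emph{not} enough that $\cG_\eta$ is a proper closed substack of $\cZ$, since that only compares $\dim \cG_\eta$ with $\dim \cZ$ and $\cG_\xi$ is in general \emph{not} open in $\cZ$; what makes it work is that $\xi$ is the image of a $k$-point, so the residual gerbe $\cG_\xi$ accounts for the full dimension of $\cZ$ at its generic point via the identity $\dim \cG_z + \dim G_z = 0$. This is exactly the mechanism already used in the proofs in Section \ref{topology_section} to show that a closed point lying in the closure of another point has strictly larger-dimensional stabilizer, and I would carry it out the same way: choose a smooth presentation $V \to \cZ$, pull back $\cG_\xi$ and $\cG_\eta$, and apply orbit--stabilizer for the presentation together with the fact that the preimage of $\xi$ is dense in $V$. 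Alternatively one can use the good moduli space $\phi\colon \cX \to Y$ standing in this subsection, taking $\eta$ to be the unique closed point of an appropriate fibre of $\phi$ and running the same comparison there.
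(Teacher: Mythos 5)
Your overall skeleton matches the paper's proof: reduce to $S=\Spec k$ algebraically closed and $x$ a $k$-point, pass to the closure $\cZ$ of $\xi$, find a second point of $\cZ$ and derive a contradiction with local constancy of $\sigma$ from a jump in stabilizer dimension. The reduction steps (base change, restriction to quasi-compact opens, existence of a closed point $\eta\in|\cZ|$, and the bookkeeping $\dim\cG_\eta=-\sigma(\eta)$) are fine. The genuine gap is exactly the step you flag as ``the main obstacle'': the inequality $\sigma(\eta)>\sigma(\xi)$, equivalently $\dim\cG_\eta<\dim\cG_\xi$, is never proved, and your guiding remark that ``$\cG_\xi$ is in general not open in $\cZ$'' is backwards in the situation at hand. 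Since $x$ is a $k$-point of a locally noetherian stack, $\xi$ is a locally closed point of $|\cX|$, so by the discussion following Proposition \ref{gerbe_map} the monomorphism $\cG_\xi=BG_x\arr\cX$ is a \emph{locally closed immersion}; this is precisely what the paper uses: $BG_x$ factors as an open immersion into the closed substack $\cZ$, hence is open and dense in the irreducible $\cZ$, giving $\dim BG_x=\dim\cZ$, while the residual gerbe of any other point with closed orbit is a proper closed substack and so has strictly smaller dimension. The identity $\dim\cG_z+\dim G_z=0$ by itself says nothing about $\dim\cZ$, and in a merely locally noetherian (non--finite-type) setting density of the preimage of $\xi$ in a smooth presentation does not control local dimensions (a dense subset of a locally noetherian scheme can have strictly smaller dimension than its closure, as for the generic point of a discrete valuation ring); what rules this out here is exactly the finite-typeness/local closedness of the orbit that you set aside.

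Your two suggested ways of filling the step do not repair this. The ``alternative'' via a good moduli space is not available: Lemma \ref{const_stab_closed_orbits} does not assume one exists (indeed it is invoked in the proof of Proposition \ref{stable_equiv} to \emph{produce} closed orbits on the locus $(\cX_t)_f$, and more generally for $\cX=\cX^{\reg}$ with no $\phi$ in sight), so you cannot use the $\phi\colon\cX\arr Y$ ``standing in this subsection.'' And the ``mechanism from Section \ref{topology_section}'' you appeal to itself rests on the residual gerbe of the generalizing point being open and dense in its closure (the paper there writes $\cG_x\hookarr\overline{\cG_{x'}}$ and concludes $\dim\cG_x<\dim\cG_{x'}$), i.e.\ on the same locally-closed-immersion fact. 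To complete your argument, insert that fact: $BG_x\arr\cX$ is a locally closed immersion, so $BG_x$ is an open dense substack of $\cZ$, hence $\dim BG_x=\dim\cZ$ and, for your closed point $\eta\ne\xi$, $\cG_\eta\subseteq\cZ$ is a proper closed substack with $\dim\cG_\eta<\dim\cZ=\dim\cG_\xi$; the contradiction with local constancy of $\sigma$ at $\eta$ then goes through as you wrote, which is exactly the paper's proof.
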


\bpf  It suffices to consider $S = \Spec k$ with $k$ algebraically closed.  Suppose $x: \Spec \Omega \arr \cX$ is a geometric point such that $BG_x \arr \cX \times_k \Omega$ is not a closed immersion.  Since the dimension of the stabilizers of points of $\cX \times_k \Omega$ is also locally constant, we may assume $\Omega = k$.  The morphism $BG_x \arr \cX$ is locally closed so it factors as $BG_x \arr \cZ \arr \cX$, an open immersion followed by a closed immersion.  Let $y$ be a $k$-valued point in $\cZ$ with closed orbit.  Since $\cZ$ is irreducible (as $BG_x$ is irreducible), $\dim BG_y < \dim \cZ$ but $\dim BG_x = \dim \cZ$.  It follows that $\sigma$ is not locally constant at $y$.
\epf

\begin{prop}(Analogue of \cite[Amplification 1.11]{git})  \label{stable_equiv}
Let $\cX$ be a noetherian Artin stack which is finite type over an affine scheme $S$ and $\cL$ a line bundle on $\cX$.  Let $x$ be a geometric point of $\cX^{\ss}_{\cL}$.  Then the following are equivalent:
\begin{enumeratei}
\item $x$ is a point of $\cX^{\s}_{\cL}$.
\item $x$ is regular and has closed orbit in $\cX^{\ss}_{\cL}$
\item $x$ is regular and there is a section $t \in \Gamma(\cX, \cL^N)$ for $N > 0$ with $t(x) \ne 0$ and such that  $\cX_t$ is cohomologically affine and $x$ has closed orbit in $\cX_t$.
\end{enumeratei}
\end{prop}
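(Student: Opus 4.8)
The plan is to prove the cycle of implications (i) $\Rightarrow$ (ii) $\Rightarrow$ (iii) $\Rightarrow$ (i). Since $S$ is affine I will use that a geometric point is semi-stable with respect to $\cL$ exactly when some \emph{global} section $t\in\Gamma(\cX,\cL^n)$, $n>0$, has $t(x)\ne 0$ and $\cX_t\arr S$ cohomologically affine (this is the case $U=S$ of the definition, the globalization using quasi-compactness of $\cX$); conditions (iii) and the definition of $\cX^{\s}_{\cL}$ already provide global sections. I will also repeatedly invoke two elementary facts: a closed immersion $Z\hookarr\cW$ whose image lies in an open substack $\cW'\subseteq\cW$ is again a closed immersion $Z\hookarr\cW'$ (because $Z\times_{\cW}\cW'=Z$); and a base change of a section of a separated morphism is a closed immersion, so that $\cW\times_{\cY}\Spec k\hookarr\cW\times_S k$ is a closed immersion whenever $\cY\arr S$ is separated and $\Spec k\arr\cY$ is a morphism (applied below with $\cY$ an affine scheme, and with $\cY$ an open subscheme of an $\sProj$, which is separated over $S$).

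\noindent\emph{(i) $\Rightarrow$ (ii).} By Theorem \ref{general_good_thm}(i), $\phi$ restricts to a tame moduli space $\cX^{\s}_{\cL}\arr V$ with $\cX^{\s}_{\cL}=\phi^{-1}(V)$ for the good moduli space $\phi\colon\cX^{\ss}_{\cL}\arr Y$. On a tame moduli space $\sigma+\tau=0$ (as recorded just before the definition of \emph{regular}), so $\sigma$ is locally constant on the open substack $\cX^{\s}_{\cL}$ of $\cX$; hence $x$ is regular. Base changing $\phi$ along $\phi\circ x\colon\Spec k\arr Y$ (which factors through $V$), Proposition \ref{coarse_square} applied to $\cX^{\s}_{\cL}\arr V$ gives a surjective closed immersion $BG_x\hookarr\cX^{\ss}_{\cL}\times_Y\Spec k$, and the target is a closed substack of $\cX^{\ss}_{\cL}\times_S k$; composing, $BG_x\hookarr\cX^{\ss}_{\cL}\times_S k$ is a closed immersion, i.e.\ $x$ has closed orbit in $\cX^{\ss}_{\cL}$.
\noindent\emph{(ii) $\Rightarrow$ (iii).} Choose a global $t\in\Gamma(\cX,\cL^N)$ with $t(x)\ne 0$ and $\cX_t\arr S$ cohomologically affine, so $\cX_t\subseteq\cX^{\ss}_{\cL}$. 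Since $t$ is nonvanishing on the orbit of $x$, the closed immersion $BG_x\hookarr\cX^{\ss}_{\cL}\times_S k$ of (ii) has image in the open substack $\cX_t\times_S k$ and hence restricts to a closed immersion $BG_x\hookarr\cX_t\times_S k$: thus $x$ has closed orbit in $\cX_t$, which together with the regularity in (ii) is exactly (iii).

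\noindent\emph{(iii) $\Rightarrow$ (i).} This is the substantive step. Let $t\in\Gamma(\cX,\cL^N)$ be as in (iii); then $\cX_t$ is a cohomologically affine noetherian stack, so $\phi_t\colon\cX_t\arr Y_t:=\Spec\Gamma(\cX_t,\oh_{\cX_t})$ is a good moduli space onto a noetherian affine scheme, and $\phi_t(\cB)\subseteq Y_t$ is closed for the reduced closed substack $\cB$ of non-regular points of $\cX_t$ (by closedness of $\phi_t$, Theorem \ref{good_thm}). The crux is the claim that $\phi_t(x)\notin\phi_t(\cB)$, i.e.\ that every point of $\cX_t$ over $\phi_t(x)$ is regular. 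To see this, form $F:=\cX_t\times_{Y_t}\Spec k$, the fibre of $\phi_t$ over the geometric point $\phi_t\circ x\colon\Spec k\arr Y_t$; then $F\hookarr\cX_t\times_S k$ is a closed immersion, $F\arr\Spec k$ is a good moduli space, and by (iii) the orbit $\cG_{\xi}$ of $x$ is a closed substack of $F$, hence its unique closed point (Theorem \ref{good_thm}(\ref{good_separate})), to which every point of $F$ specializes. If $z\in|F|$ with $z\ne\xi$ --- which, exactly as in the arguments of Section \ref{topology_section}, may be taken locally closed --- then $\cG_{\xi}\subsetneq\overline{\cG_z}$ is a proper closed substack of an integral one, so $\dim\cG_{\xi}<\dim\cG_z$, i.e.\ $\dim G_x<\dim G_z$; but $z$ lies in every neighbourhood of $\xi$ and $\xi$ is regular, forcing $\dim G_z=\dim G_x$ --- a contradiction. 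Hence $|F|=\{\xi\}$, and since a non-regular point of $\cX_t$ over $\phi_t(x)$ would pull back to one of $F$, the claim follows. Granting it, there is $g\in\Gamma(Y_t,\oh_{Y_t})=\Gamma(\cX_t,\oh_{\cX_t})$ with $g(\phi_t(x))\ne 0$ whose principal open $D(g)$ misses the closed set $\phi_t(\cB)$; then $\phi_t^{-1}(D(g))\subseteq\cX_t^{\reg}$ has closed orbits (Lemma \ref{const_stab_closed_orbits}, together with the fact that open substacks inherit closed orbits) and $\phi_t^{-1}(D(g))\arr D(g)\arr S$ is cohomologically affine (a good moduli space onto the affine $D(g)$, composed with an affine morphism to $S$). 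Writing $g=u/t^j$ with $u\in\Gamma(\cX,\cL^{jN})$ ($\cX$ being quasi-compact), one has $\phi_t^{-1}(D(g))=(\cX_t)_g=\cX_t\cap\cX_u=\cX_s$ for $s:=t\,u\in\Gamma(\cX,\cL^{(j+1)N})$ with $s(x)\ne 0$; this global section $s$ exhibits $x\in\cX^{\s}_{\cL}$, completing the cycle.

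I expect the main obstacle to be the claim inside (iii) $\Rightarrow$ (i) that $F$ consists of the single regular point $\xi$: this forces one to combine the upper semi-continuity of $x\mapsto\dim G_x$ (\cite[IV.13.1.3]{ega}), the dimension identity $\dim\cG_{\xi}=-\dim G_x$ for residual gerbes, and the fibre structure of good moduli spaces (a unique closed point lying in the closure of every point), together with the reduction --- as in Section \ref{topology_section} --- to a locally closed $z$ for the dimension count, and with some care about the base change to a geometric point. The remaining steps are routine bookkeeping with the stability of cohomological affineness under the relevant base changes and compositions, the universal closedness of good moduli spaces, and the separatedness of $\sProj$ over its base.
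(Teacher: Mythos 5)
Your cycle is correct, and your (i)$\Rightarrow$(ii)$\Rightarrow$(iii) is essentially the paper's argument (the paper is just terser: it cites Proposition \ref{coarse_square} for the closed orbit on the stable locus and declares (ii)$\Rightarrow$(iii) clear). The genuine divergence is in (iii)$\Rightarrow$(i). The paper works entirely upstairs on $\cX_t$: it stratifies by stabilizer dimension, takes $\cZ_1=\overline{\{x\}}$ and $\cZ_2=\cS_{r+1}\cup\overline{\cX_t\setminus\cS_r}$ (disjoint closed substacks, by regularity and the closed-orbit hypothesis), applies the separation property of Theorem \ref{good_thm}(\ref{good_separate}) to produce $f\in\Gamma(\cX_t,\oh_{\cX_t})$ with $f(x)\neq 0$ and $f|_{\cZ_2}=0$, and then notes that $(\cX_t)_f$ has constant stabilizer dimension, hence closed orbits by Lemma \ref{const_stab_closed_orbits}, with $(\cX_t)_f=\cX_{t^M\cdot f}$ for a global section. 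You instead first prove the stronger intermediate fact that the good-moduli fibre $F=\cX_t\times_{Y_t}\Spec k$ through $x$ is a single regular point (unique closed point of a fibre plus a dimension count on residual gerbes), deduce $\phi_t(x)\notin\phi_t(\cB)$ for the closed non-regular locus $\cB$, and only then cut out a principal open $D(g)$ of the affine $Y_t$. Both routes work; the paper's avoids any fibre analysis (that the fibre through a stable point is a single orbit is a consequence of (i), not an ingredient), while yours buys the explicit statement ``closed orbit plus regular implies the fibre is exactly the orbit,'' at the cost of the extra specialization/dimension argument. Two small corrections to your writeup: since residual gerbes have dimension $-\dim G$, the inequality $\dim\cG_{\xi}<\dim\overline{\cG_z}$ translates to $\dim G_z<\dim G_x$, not $\dim G_x<\dim G_z$ (the contradiction with regularity of $\xi$ is unaffected); and the uniqueness of the closed point in a fibre is the first proposition of Section \ref{topology_section} (whose proof rests on Theorem \ref{good_thm}(\ref{good_separate})), rather than part (\ref{good_separate}) itself.
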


\bpf   We begin with showing that (i) implies (ii).  Let $\phi: \cX^{\ss}_{\cL} \arr Y$ be a good moduli space and $V \subseteq Y$ such that $\phi^{-1}(V) = \cX^{\s}_{\cL}$.  Write $x: \Spec k \arr \cX$ and let $\bar{\cX} = \cX \times_S k$, $\bar{Y} = Y \times_S k$,...  Consider
$$\xymatrix{
BG_x \ar[r] 	& \bar \cX^{\s}_{\cL} \times_{\bar V} \Spec k \ar[r] \ar[d]				& \bar \cX^{\s}_{\cL} \ar[r] \ar[d]		&\bar \cX^{\ss}_{\cL} \ar[d] \\
			& \Spec k \ar[r]^{\bar \phi (\bar x)}		& \bar V \ar[r]					& \bar Y &\quad 
}$$
First, all points in $\cX^{\s}_{\cL}$ are regular. By Proposition \ref{coarse_square}, the composition $BG_x \arr  \bar \cX^{\s}_{\cL} \times_{\bar V} \Spec k \arr  \bar \cX^{\s}_{\cL}$ is a closed immersion.  

It clear the (ii) implies (iii).  Suppose (iii) is true and define the closed substacks of $\cX_t$ by $\cS_r = \{x \in |\cX_t| \, \big| \dim G_x \ge r\}$.  For some $r$, $x \in \cS_r \setminus \cS_{r+1}$.  If we let
$$\begin{aligned}
\cZ_1	&= \{ x\} \\
\cZ_2 &= \cS_{r+1} \cup \overline{ \cX_t \setminus \cS_r }
\end{aligned}$$
which are closed substacks of $\cX_t$.  Since $x$ is regular, they are disjoint.  We have $\phi: \cX_t \arr \Spec \Gamma(\cX_t, \oh_{\cX})$ is a good moduli space and by Proposition \ref{good_thm}(\ref{good_separate}), $\phi(Z_1) \cap \phi(Z_2) = \emptyset$.  There exists $f \in \Gamma(\cX_t, \oh_{\cX})$ with $f(x) \ne 0$ and $f|_{\cZ_2} = 0$.  The stabilizers of points in $(\cX_t)_f$ have the same dimension so by Lemma \ref{const_stab_closed_orbits}, $(\cX_t)_f$ has closed orbits.  Finally, since $\cX_s$ is quasi-compact, there exists an $M$ such that $t^M \cdot f \in \Gamma(\cX, \cL^{MN})$ and $(\cX_t)_f = \cX_{t^M \cdot f}$.  This implies (i).   \epf

\subsection{Converse statements}

The semi-stable locus of a line bundle admits a quasi-projective good moduli space.  In this section, we show the converse holds under suitable hypotheses: given an open substack which admits a quasi-projective good moduli space, then the open substack is contained in the semi-stable locus of some line bundle.  The following theorem provides a generalization of \cite[Converse 1.13]{git}.  We note that although Mumford states the result for the stable locus, the same proof holds for the semi-stable locus.  

We thank Angelo Vistoli for pointing out the proof of the following lemma:

\begin{lemma} Let $\cX$ be a noetherian regular Artin stack and $\cU \subseteq \cX$ an open substack.  Then $\Pic(\cX) \arr \Pic(\cU)$ is surjective.
\end{lemma}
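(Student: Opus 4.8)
The plan is to reduce to the classical fact that on a regular noetherian scheme every line bundle on an open subscheme extends (because there $\Pic = \Cl$ and a Weil divisor extends to its closure), and to descend this along a smooth presentation; only the last step is genuinely stacky.

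First I would make the usual reductions. Since $\cX$ is noetherian I may assume it connected, hence — being regular, so normal — irreducible; and if $\cU=\emptyset$ there is nothing to prove, so $\cU$ is dense. Write $\cZ = \cX\setminus\cU$ with its reduced structure, let $\cZ_1$ be the (finite) union of its codimension-one components and $\cZ' = \overline{\cZ\setminus\cZ_1}$, which has codimension $\geq 2$. For $\cU' = \cX\setminus\cZ'$ the restriction $\Pic(\cX)\arr\Pic(\cU')$ is an isomorphism: the push-forward of a line bundle over a codimension-$\geq 2$ complement is, tested on a smooth presentation, a coherent reflexive sheaf, hence invertible because $\cX$ is regular and so locally factorial, and it restricts back to the given bundle. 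So I may assume $\cZ$ is pure of codimension one; stripping off its irreducible components one at a time then reduces me to the case $\cZ = \overline{\{\eta\}}$ irreducible of codimension one with generic point $\eta$.

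In that case I would choose a smooth presentation $p\colon X\arr\cX$ with $X$ a regular noetherian affine scheme, and set $R = X\times_{\cX}X$ and $R_2 = R\times_{s,X,t}R$, all regular and noetherian. A line bundle on $\cU$ is a line bundle $M$ on $U_X = p^{-1}(\cU)$ together with a descent datum $\varphi\colon s^*M\iso t^*M$ on $U_R$ satisfying the cocycle identity on $U_{R_2}$. Since $X$ is regular, $M$ is the restriction of $\oh_X(D_0)$ for a divisor $D_0$ obtained by taking closures; since $\cU$ is dense and $p,s,t$ are smooth, $U_R$ is dense in $R$ and $U_{R_2}$ in $R_2$ while $R_2$ is reduced, so $\varphi$ is a meromorphic function on all of $R$ and the cocycle identity propagates from $U_{R_2}$ to $R_2$. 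Comparing divisors gives $\operatorname{div}_R(\varphi) = s^*D_0 - t^*D_0 + \Delta$ for a divisor $\Delta$ on $R$ supported on $p^{-1}(\cZ)$, which is pure of codimension one; and applying $\operatorname{div}$ to the cocycle identity, together with the identity that $\partial_0^*-\partial_1^*+\partial_2^*$ annihilates $s^*-t^*$, yields $\partial_0^*\Delta - \partial_1^*\Delta + \partial_2^*\Delta = 0$. To conclude I would find a divisor $E$ on $X$ supported on $p^{-1}(\cZ)$ with $s^*E - t^*E = \Delta$: then $\varphi$ becomes an honest descent datum for $\oh_X(D_0+E)$, so the latter descends to a line bundle on $\cX$ restricting to the given one on $\cU$ (as $E$ vanishes on $U_X$). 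Such an $E$ exists because the generic points of $p^{-1}(\cZ)$ in $X$, in $R$, in $R_2$, $\dots$ all lie over $\eta$, so the complex of divisor groups $\ZZ^{\{\eta_i\}}\arr\ZZ^{\{\zeta_j\}}\arr\cdots$ with differentials $s^*-t^*$, $\partial_0^*-\partial_1^*+\partial_2^*$, $\dots$ is — since smooth pullback of a reduced divisor is reduced — the Cech complex of the presentation $X_\eta\arr\cG_{\eta}$ of the residual gerbe at $\eta$ (see Proposition~\ref{gerbe_map}) with coefficients in the constant sheaf $\ZZ$. Its $H^1$ injects into $H^1(\cG_{\eta},\ZZ)$, which vanishes: $\cG_{\eta}\arr\Spec k(\eta)$ is a gerbe, $H^1(\Spec k(\eta),\ZZ)=0$, and the geometric stalk of $R^1$ is $H^1(BG,\ZZ) = \Hom(G,\ZZ)=0$ for the finite-type banding group $G$. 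As $\Delta$ is a cocycle it is a coboundary, which produces $E$.

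The single non-formal ingredient — and the step I expect to be the main obstacle to write out cleanly — is this last one: along a single presentation a line bundle on $\cU$ need not extend naively, since the prescribed orders of vanishing along the various preimages of $\eta$ must be reconciled compatibly under the groupoid, and it is exactly the vanishing $H^1(\cG_{\eta},\ZZ)=0$ that makes the reconciliation possible. Everything else is routine divisor bookkeeping and smooth descent, and I would keep the write-up at that level.
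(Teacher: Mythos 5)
Your proposal is correct in outline, but it takes a genuinely different --- and much heavier --- route than the paper's. The paper's entire proof is three lines: extend $\cL$ to a coherent sheaf $\cF$ on $\cX$ by \cite[Cor.~15.5]{lmb}, replace $\cF$ by its double dual (which still restricts to $\cL$ on $\cU$), and observe that a rank-one reflexive sheaf on a regular, hence locally factorial, noetherian stack is invertible. Note that you already invoke exactly this last mechanism in your codimension-$\geq 2$ step; had you also quoted the coherent-extension theorem, the whole codimension-one apparatus (closures of divisors on a presentation, the correction divisor $\Delta$, and the vanishing of $\check{H}^1$ of the cover $X_\eta \arr \cG_\eta$ with $\ZZ$-coefficients) would be unnecessary. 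What your route buys is independence from the coherent-extension result and a concrete divisorial picture of the obstruction to extending across the boundary, localized at the residual gerbe of its generic point; what it costs is precisely the step you flag: to make $H^1(\cG_\eta,\ZZ)=0$ rigorous you must fix a topology in which the smooth surjection $X_\eta \arr \cG_\eta$ is a covering, compare fppf and \'etale cohomology of the (smooth) group scheme $\ZZ$, and deal with a possibly non-neutral gerbe banded by a possibly non-smooth finite-type group (the stalk computation of $R^1\pi_*\ZZ$ needs either topological invariance of the \'etale site under the purely inseparable extension producing a point, or a direct torsor argument showing every $\ZZ$-torsor on the gerbe is trivialized by the cover and then killed by $\Hom(G,\ZZ)=0$); in addition the identifications of generic points of the preimages of $\cZ$ in $X$, $R$, $R_2$ with connected components of the fibers over $\eta$, and of the divisor differentials with the \v{C}ech differentials, rest on routine but necessary facts (flatness sends generic points to generic points, the fibers are regular so irreducible equals connected components, smooth pullback of a reduced divisor is reduced). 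All of this can be discharged, so I see no fatal gap; but the paper's extend-and-reflexivize argument is the short path, and your write-up would benefit from at least recording it as the alternative.
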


\bpf Let $i: \cU \hookarr \cX$.  If $L$ is a line bundle on $\cU$, then by \cite[Cor. 15.5]{lmb}, there exists a coherent sheaf $\cF$ such that $\cF|_{\cU} \cong \cL$.  Since $\cF^{\v\v}|_{\cU} \cong \cL^{\v\v} \cong \cL$, we may assume that $\cF$ is reflexive.  Since $\cX$ is noetherian and regular, any reflexive rank 1 sheaf is invertible. 
\epf

\begin{theorem}  (Analogue of \cite[Converse 1.13]{git}) Let $\cX$ be a noetherian regular Artin stack with affine diagonal over a quasi-compact scheme $S$.  Then
\begin{enumeratei}
\item If $\cW \subseteq \cX$ is an open substack and $\varphi: \cW \arr W$ is a tame moduli space with $W$ quasi-projective over $S$, then there exists a line bundle $\cL$ on $\cX$ such that $\cW \subseteq \cX^{\s}_{\cL}$.
\item If $\cU \subseteq \cX$ is an open substack and $\psi: \cU \arr U$ is a good moduli space with $U$ quasi-projective over $S$, then there exists a line bundle $\cL$ on $\cX$ such that $\cU \subseteq \cX^{\ss}_{\cL}$ and $\cU$ is saturated for the good moduli space $\phi: \cX^{\ss}_{\cL} \arr Y$.
\item If $\cW,\cU, \varphi, \psi$ are as in (i) and (ii) such that $W \subseteq U$ and $\cW = \psi^{-1}(W)$, then there exists a line bundle $\cL$ on $\cX$ such that $\cU \subseteq \cX^{\ss}_{\cL}$.  In particular, we have a diagram
$${\def\objectstyle{\scriptstyle}
\def\labelstyle{\scriptstyle}
\xymatrix@=20pt{
				&\cX^{\s}_{\cL} \ar@{^(->}[rr] \ar[dd] 			&				& \cX^{\ss}_{\cL} \ar[dd]^{\phi}  \\
\cW \ar@{^(->}[rr] \ar[dd]^{\varphi} \ar@{^(->}[ur] &						& \cU \ar[dd]^{\psi} \ar@{^(->}[ur]	& \\ 
				&V \ar@{^(->}[rr] 			& 				& Y  \\
W\ar@{^(->}[rr] \ar@{^(->}[ur]	&							&U \ar@{^(->}[ur]		&
}}$$
where the four vertical faces are cartesian and the far square is as in Theorem \ref{general_good_thm}.

\end{enumeratei}
\end{theorem}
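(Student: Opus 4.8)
The plan is to prove (i) and (ii) by a single construction—differing only in the bookkeeping of closed orbits—and then to deduce (iii). Since the statement is Zariski-local on $S$ and a relatively ample line bundle on $U$ is available globally, I will argue as if $S=\Spec R$ is affine, producing over each member $S_j$ of a finite affine cover of $S$ the sections that witness (semi)stability. First I would choose a line bundle $\cM$ on $U$ that is ample over $S$; by ampleness and quasi-compactness of $U$, after replacing $\cM$ by a power there are $f_1,\dots,f_r\in\Gamma(U,\cM)$ with $U=\bigcup_iU_{f_i}$ and each $U_{f_i}$ affine. Setting $g_i:=\psi^*f_i$, one has $\cU_{g_i}=\psi^{-1}(U_{f_i})$, and by Proposition \ref{good_base_change} the restriction $\psi|_{\cU_{g_i}}\colon\cU_{g_i}\to U_{f_i}$ is a good moduli space over an affine scheme, hence $\cU_{g_i}$ is a cohomologically affine stack; in case (i), $\cW$—and therefore each open $\cW_{g_i}$—has closed orbits by Proposition \ref{good_is_coarse}. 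Finally, because $\cX$ is noetherian and regular, the preceding lemma furnishes a line bundle $\cL_0$ on $\cX$ restricting to $\psi^*\cM$ on $\cU$.

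The crux—and the step I expect to be the main obstacle—is to pass from $\cL_0$ to a line bundle $\cL$ on $\cX$ carrying sections that extend the $g_i$ and whose non-vanishing loci in $\cX$ are cohomologically affine. Working one component of $\cX$ at a time (irreducible, by regularity), $V(g_i)\subseteq\cU$ is an effective Cartier divisor; its closure $\overline{V(g_i)}$ in $\cX$ is an effective Weil, hence Cartier (regularity), divisor, and $\cO_\cX(\overline{V(g_i)})\otimes\cL_0^{-1}$ restricts trivially to $\cU$, so is $\cO_\cX(B_i)$ with $B_i$ supported on $\cX\setminus\cU$ (localization sequence for divisor classes on a regular noetherian stack). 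Choosing an effective divisor $B$ supported on $\cX\setminus\cU$ that dominates each $B_i$ and has multiplicity at least one along every codimension-one component of $\cX\setminus\cU$, and putting $\cL:=\cL_0\otimes\cO_\cX(B)$, one still has $\cL|_\cU\cong\psi^*\cM$ while $\cL\cong\cO_\cX(D_i)$ with $D_i:=\overline{V(g_i)}+(B-B_i)$ effective; the corresponding section $\sigma_i\in\Gamma(\cX,\cL)$ restricts on $\cU$ to a unit multiple of $g_i$, and by construction $\Supp D_i$ contains the whole codimension-one part of $\cX\setminus\cU$, so $\cX_{\sigma_i}$ coincides with $\cU_{g_i}$ outside a closed substack of codimension $\ge2$. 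It then remains to conclude that $\cX_{\sigma_i}$ is itself cohomologically affine (and, for (i), has closed orbits). I would establish this via a Hartogs-type lemma: if a normal noetherian stack contains a dense cohomologically affine open whose complement has codimension $\ge2$, then (using that $\cX$ has affine diagonal) the retraction onto $\Spec$ of its global sections is an isomorphism onto that open, so the ambient stack equals it; for case (i) one promotes ``closed orbits on $\cW_{g_i}$'' to ``closed orbits on $\cX_{\sigma_i}$'' using the orbit-closure description of the good moduli space together with the characterization of the stable locus in Proposition \ref{stable_equiv}.

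Granting this, each $x\in\cU$ lies in some $\cU_{g_i}$, where $\sigma_i(x)\ne0$ and $\cX_{\sigma_i}$ is cohomologically affine, so $x\in\cX^{\ss}_\cL$—and in case (i) $\cX_{\sigma_i}=\cW_{g_i}$ also has closed orbits, so $x\in\cX^{\s}_\cL$; thus $\cU\subseteq\cX^{\ss}_\cL$ (resp. $\cW\subseteq\cX^{\s}_\cL$). Each $\cX_{\sigma_i}$, being of the form $\cX_t$ for a section $t$ of $\cL$ with $\cX_t$ cohomologically affine, is saturated for the good moduli space $\phi\colon\cX^{\ss}_\cL\to Y$ of Theorem \ref{general_good_thm}; a union of saturated opens is saturated, so $\cU=\bigcup_i\cX_{\sigma_i}$ is saturated for $\phi$, and the induced good moduli space $\cU\to\phi(\cU)$ agrees with $\psi$ by uniqueness (Theorem \ref{universal_thm}), giving $\phi(\cU)\cong U$.

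For part (iii), I would apply the same construction with the affine cover $\{U_{f_i}\}$ of $U$ chosen to refine an affine cover of the open $W\subseteq U$, using the single line bundle $\cL$ produced in (ii). Then $\cW=\psi^{-1}(W)$ is saturated in $\cX^{\ss}_\cL$, the image $V:=\phi(\cW)$ is identified with $W$, one has $\cW=\phi^{-1}(V)\subseteq\cX^{\s}_\cL$, and all four vertical faces of the displayed cube are cartesian—each being a base change of $\phi$ (resp.\ of $\phi|_{\cX^{\s}_\cL}$) along an open immersion onto a saturated image. The remaining commutativities and identifications are then forced by uniqueness of good and tame moduli spaces (Theorem \ref{universal_thm}).
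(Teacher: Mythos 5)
Your skeleton is the same as the paper's: extend $\psi^*\cM$ across $\cX$ by the Picard lemma, modify by an effective divisor supported on the codimension-one components $\cD_j$ of $\cX \setminus \cU$, and exhibit, locally over $S$, sections of the resulting $\cL$ whose non-vanishing loci are exactly the opens $\cU_{g_i}$; covering $\cU$, saturation of each $\cX_t$, and Proposition \ref{good_is_coarse} then give (ii), (i) and (iii) essentially as you say. Your route to the sections (closing up $V(g_i)$ and writing $\cL \cong \oh_{\cX}(D_i)$) is a workable variant of the paper's, which instead extends $\psi^*s_0$ to a section of $\cL\otimes\oh_{\cX}(N\sum_j\cD_j)$ vanishing along every $\cD_j$. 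Two stack-specific cautions on your variant: on an Artin stack $\Pic$ and the Weil divisor class group genuinely differ (already for $B\GG_m$), so ``localization sequence for divisor classes'' is not the right citation; what makes the step legitimate is that the trivialization of $\oh_{\cX}(\overline{V(g_i)})\otimes\cL_0^{-1}$ over $\cU$ is an honest nonvanishing section over a dense open of each component of $\cX$ on which $g_i$ is not identically zero, and its divisor as a rational section is the desired $B_i$. On components where $g_i$ vanishes identically (in particular components disjoint from $\cU$) no such $B_i$ need exist, and there you must simply take $\sigma_i=0$; otherwise $\cX_{\sigma_i}$ can contain a whole such component and your claim that it agrees with $\cU_{g_i}$ up to codimension two fails. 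Also you need $B \ge B_i + \sum_j \cD_j$, not merely $B\ge B_i$ together with multiplicity one along each $\cD_j$: if the coefficients of $B$ and $B_i$ agree along some $\cD_j$, then $\Supp D_i$ misses it.

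The genuine gap is the step you call a Hartogs-type lemma, which is exactly where the paper's proof has its content, and the justification you sketch does not stand: for the stack $\cX_{\sigma_i}$ there is no sense in which ``the retraction onto $\Spec$ of its global sections is an isomorphism onto'' the open $\cU_{g_i}$ --- the map to the spectrum of global sections is at best a good moduli space, never an open immersion, and a cohomological Hartogs statement for arbitrary quasi-coherent sheaves across a codimension-two closed substack is false. The lemma itself is true, and the paper's inline argument proves it: working over an affine open of $S$, choose a smooth cover $X \arr \cX_{\sigma_i}$ with $X$ an affine scheme; since $\cX \arr S$ has affine diagonal this cover is an affine morphism, so the preimage of $\cU_{g_i}$ is a cohomologically affine scheme, hence affine by Serre's criterion (Proposition \ref{serre_crit}); $X$ is regular and the preimage is dense (smooth morphisms are open), so the complement of this affine open in $X$ is pure of codimension one, while it lies over $\cX_{\sigma_i}\setminus\cU_{g_i}$, which by construction has codimension at least two --- hence it is empty and $\cX_{\sigma_i}=\cU_{g_i}$. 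With this supplied (and the bookkeeping above), your argument closes; note that the closed-orbit ``promotion'' you mention in case (i) is then unnecessary, since the equality $\cX_{\sigma_i}=\cW_{g_i}$ already yields closed orbits by Proposition \ref{good_is_coarse}.
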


\bpf For (ii), let $\cM$ be an $S$-ample line bundle on $U$.  By the lemma, there exists a line bundle $\cL$ on $\cX$ extending $\psi^* \cM$.  Let $\cD_1, \ldots \cD_k$ be the components of $\cX \setminus \cU$ of codimension 1 and write $\cL_{N} = \cL \tensor \oh_{\cX}( N (\sum_i \cD_i))$.

Set $q: U \arr S$ and $p: \cX \arr S$. Let $S' \subseteq S$ be an affine open and set $U' = q^{-1}(U)$, $\cX' = p^{-1}(U')$ and $\cU' = \cX' \cap \cU$.  We will show that if $s_0 \in \Gamma(U', \cM^{\tensor n})$ such that $U'_{s_0}$ is affine, then $s=\psi^*s_0$ extends to a section $t \in \Gamma(\cX', \cL_N^{\tensor n})$ for some $N$ such that $\cX'_t = \cU'_{s}$.  We may choose $N$ large enough such that $s$ extends to a section $t$ which vanishes on each $\cD'_i = \cD_i \cap \cX'$.  We have
$$\cU'_s \subseteq \cX'_t \subseteq \cX' \setminus \cup_i \cD'_i$$ 
If $g: X \arr \cX'_t$ is a smooth presentation with $X$ a scheme, then since $\cX \arr S$ has affine diagonal, $g$ is an affine morphism.  Since $\cU'_s$ is cohomologically affine, $U = g^{-1}(\cU'_s)$ is an affine scheme and therefore all components of $X \setminus U$ have codimension 1.  Since $t$ vanishes on each codimension 1 component of $\cX' \setminus \cU'$, it follows that $\cX'_t = \cU'_s$.  Therefore, $\cU'_s \subseteq \cX_{\cL_N}^{\ss}$.  We may cover $\cU$ with finitely many such open substacks.  Clearly, $U \subseteq Y$ and $\cU = \phi^{-1}(U)$.

Statements (i) and (iii) follow from similar arguments by realizing that the open substacks $\cU'_s$ have closed orbits by Proposition \ref{good_is_coarse}. \epf

\section{Linearly reductive group schemes} \label{sect_lin_red}

\begin{defn} An fppf group scheme $G \arr S$ is \emph{linearly reductive} if the morphism $BG \arr S$ is cohomologically affine. \end{defn}

\begin{remark}  Clearly $G \arr S$ is linearly reductive if and only if $BG \arr S$ is a good moduli space. \end{remark}

\begin{remark} If $S = \Spec k$, this is equivalent to usual definition of linearly reductive (see Proposition \ref{lin_red_equiv_k}).   If $\charr k = 0$, then $G \arr \Spec k$ is linearly reductive if and only if $G \arr \Spec k$ is reductive (ie. the radical of $G$ is a torus).  

Linear reductive finite flat group schemes of finite presentation have been classified recently by Abramovich, Olsson and Vistoli in \cite{tame}.  Over a field, linearly reductive algebraic groups have been classified by Nagata in \cite{nagata_complete}.  It is natural to ask whether these results can be extended to arbitrary linearly reductive group schemes.

If $G \arr S$ is a finite flat group schemes of finite presentation, then $G \arr S$ is linearly reductive if and only if the geometric fibers are linearly reductive (\cite[Theorem 2.19]{tame}).  If in addition $S$ is noetherian, linearly reductivity can even be checked on the fibers of closed points of $S$.

This result does not generalize to arbitrary fppf group schemes $G \arr S$.  Indeed, if $S = \ZZ[\frac{1}{2}]$, let $G \arr \AA^1$ be the group scheme with fibers $\ZZ / 2 \ZZ$ over all points except over the origin where the fiber is the trivial.  There is a unique non-trivial action of $G$ on $\AA^2 \arr \AA^1$.  Let $\cX=[\AA^1 / G]$ and $\cX_0$ be the fiber over the origin.  Then $\Gamma(\cX, \oh_{\cX}) \arr \Gamma(\cX_0, \oh_{\cX_0})$ is not surjective (ie. invariants can't be lifted) implying $G \arr \AA^1$ is not linearly reductive.  Clearly the geometric fibers are linearly reductive.  One might hope that if $G \arr S$ has geometrically connected fibers, then linearly reductivity can be checked on geometric fibers.

If $G \arr S$ is an fppf group scheme, it is not an open condition on $S$ that the fibers are linearly reductive.  For example, the only fiber of $GL_n(\ZZ) \arr \Spec \ZZ$ which is linearly reductive is the generic fiber.  If in addition $G \arr S$ is finite, then by Proposition \cite[Lemma 2.16 and Theorem 2.19]{tame}, this is a local property.
\end{remark}

\begin{example} \label{lin_red_examples} \quad 
\begin{enumerate1}
\item $\GL_n$, $\PGL_n$ and $\SL_n$ are linearly reductive over $\QQ$.  They are not linearly reductive over $\ZZ$ although $\GL_n$ and $\PGL_n$ are \emph{reductive} group schemes over $\ZZ$.
\item  A diagonalizable group scheme is linearly reductive (\cite[I.5.3.3]{sga3}). In particular, any torus $(\GG_m)^n \arr S$ is linearly reductive and $\mu_n \arr S$ is linearly reductive where $\mu_n =  \Spec \ZZ[t]/(t^n-1) \times_{\ZZ} S$ .
\item An abelian scheme (ie. smooth, proper group scheme with geometrically connected fibers) is linearly reductive.
\end{enumerate1}
\end{example}

\begin{prop}  \label{lin_red_noeth} (Generalization of \cite[Proposition 2.5]{tame})
Suppose $S$ is noetherian and $G \arr S$ be an fppf group scheme.  The following are equivalent:
\begin{enumeratei}
\item $G \arr S$ is linearly reductive.
\item The functor $\coh^G(S) \arr \coh(S)$ defined by $F \mapsto F^G$ is exact.
\end{enumeratei}
\end{prop}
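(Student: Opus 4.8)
The plan is to translate the statement into a statement about the structure morphism $\pi: BG \arr S$ and then invoke the earlier results on cohomologically affine morphisms and good moduli spaces. Under the standard equivalence $\coh^G(S) \cong \coh(BG)$ (a $G$-equivariant coherent sheaf on $S$ is the same thing as a coherent sheaf on the quotient stack $BG$), the invariants functor $F \mapsto F^G$ is identified with $\pi_*$. First I would record the hypotheses needed to apply those results: since $G \arr S$ is fppf, $\pi$ is locally of finite type, so $S$ noetherian forces $BG$ to be a locally noetherian Artin stack; and $\pi$ is quasi-compact since $G \arr S$ is quasi-compact (over any affine open $U \subseteq S$ one has $\pi^{-1}(U) = B(G\times_S U)$, which is quasi-compact as it has the presentation $U \arr B(G\times_S U)$ by a quasi-compact scheme).

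For the implication (ii) $\Rightarrow$ (i): assuming $F \mapsto F^G$ is exact as a functor $\coh^G(S) \arr \coh(S)$, composing with the exact inclusion $\coh(S) \hookarr \qcoh(S)$ shows that $\pi_*: \coh(BG) \arr \qcoh(S)$ is exact. Since $BG$ is locally noetherian and $\pi$ is quasi-compact, Proposition \ref{coh_aff_coherence} then gives that $\pi$ is cohomologically affine, i.e. $G \arr S$ is linearly reductive.

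For the implication (i) $\Rightarrow$ (ii): if $G \arr S$ is linearly reductive then $\pi$ is cohomologically affine, and since $G$ acts trivially on $\oh_S$ and $S \arr BG$ is an fppf presentation, the natural map $\oh_S \arr \pi_* \oh_{BG}$ is an isomorphism; hence $\pi: BG \arr S$ is a good moduli space (this is the remark immediately preceding the proposition). Cohomological affineness gives that $\pi_*: \qcoh(BG) \arr \qcoh(S)$ is exact, hence so is its restriction to coherent sheaves; meanwhile Theorem \ref{good_thm}(\ref{good_noeth}), applied to the locally noetherian stack $BG$, guarantees that $\pi_*$ preserves coherence. Therefore $F \mapsto F^G = \pi_* F$ is a well-defined exact functor $\coh^G(S) \arr \coh(S)$.

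I expect the only genuine subtlety to be in the forward direction's appeal to coherence preservation: exactness of $\pi_*$ on all of $\qcoh(BG)$ does not by itself imply that $\pi_* F$ is coherent for $F$ coherent, so one really does have to invoke the finiteness statement in Theorem \ref{good_thm}(\ref{good_noeth}) (whose proof in turn uses that $S$ is noetherian). Everything else is a routine unwinding of the identification $\coh^G(S) \cong \coh(BG)$ and of $F^G$ with $\pi_* F$.
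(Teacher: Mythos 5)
Your proof is correct and is essentially the paper's argument: the paper proves this simply by noting that, under the identification $\coh^G(S)\cong\coh(BG)$ with $F^G = \pi_*\cF$ for the quasi-compact morphism $\pi: BG \arr S$ and $BG$ locally noetherian, the statement is exactly Proposition \ref{coh_aff_coherence}. The one extra tool you invoke, Theorem \ref{good_thm}(\ref{good_noeth}) to get coherence of $F^G$, is valid but heavier than needed: $F^G$ is a quasi-coherent subsheaf of the coherent sheaf $F$ (inject $\pi_*\cF$ into $\pi_*p_*p^*\cF = F$ using the fppf presentation $p: S \arr BG$ and left exactness of $\pi_*$), hence automatically coherent on the noetherian scheme $S$.
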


\bpf This is clear from Proposition \ref{coh_aff_coherence}. \epf

\begin{prop}  \label{lin_red_equiv_k}
Let $G \arr \Spec k$ be a finite type and separated group scheme.  The following are equivalent:
\begin{enumeratei}
\item $G$ is linearly reductive.
\item The functor $V \mapsto V^G$ from $G$-representations to vector spaces is exact.
\item The functor $V \mapsto V^G$ from finite dimensional $G$-representations to vector spaces is exact.
\item Every $G$-representation is completely reducible.
\item Every finite dimensional $G$-representation is completely reducible.  
\item For every finite dimensional $G$-representation $V$ and $0 \ne v \in V^G$, there exists $F \in (V^{\vee})^G$ such that $F(v) \ne 0$.
\end{enumeratei}
\end{prop}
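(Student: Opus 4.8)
The plan is to pass to the classifying stack $BG$, using the dictionary $\qcoh(BG)\simeq\{\text{all }G\text{-representations}\}$ and $\coh(BG)\simeq\{\text{finite-dimensional }G\text{-representations}\}$ (descent along the fppf cover $\Spec k\arr BG$; here $G$ is automatically flat over the field $k$), under which the global sections functor $\Gamma(BG,-)$ corresponds to $V\mapsto V^G$. This functor is left exact, so ``exact'' in (ii) and (iii) amounts to right exactness. Since $BG$ is quasi-compact (it is covered by $\Spec k$), condition (i) is by definition the exactness of $\Gamma(BG,-)$ on $\qcoh(BG)$, i.e.\ condition (ii); thus (i) $\Leftrightarrow$ (ii). Restriction to coherent sheaves gives (ii) $\Rightarrow$ (iii), while (iii) $\Rightarrow$ (i) is Proposition \ref{lin_red_noeth} applied with $S=\Spec k$ (legitimate since $G$ is of finite type over the noetherian scheme $\Spec k$, so $BG$ is locally noetherian and Proposition \ref{coh_aff_coherence} applies).

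It then remains to connect (iii), (iv), (v), (vi), which I would do by the classical arguments. For (iii) $\Rightarrow$ (v): for a subrepresentation $W$ of a finite-dimensional $V$, the restriction map $\Hom_k(V,W)\arr\End_k(W)$ is a $G$-equivariant surjection of finite-dimensional representations, so by (iii) some $G$-equivariant $\pi\colon V\arr W$ restricts to $\id_W$; then $V=W\oplus\ker\pi$, and induction on $\dim V$ gives complete reducibility. The implication (iv) $\Rightarrow$ (v) is immediate. For (v) $\Rightarrow$ (vi): choose a $G$-stable complement to $kv$ in $V$; the coordinate functional $F$ with $F(v)=1$ then lies in a trivial subrepresentation of $V^\vee$, hence $F\in(V^\vee)^G$, and $F(v)\neq0$. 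For (vi) $\Rightarrow$ (iii): given a short exact sequence $0\arr A\arr B\xrightarrow{\pi}C\arr0$ of finite-dimensional representations and an invariant $c\in C^G$, replace $C$ by the trivial line $kc$ and $B$ by $\pi^{-1}(kc)$ to reduce to $C=k$; dualizing yields $0\arr k\xrightarrow{\pi^\vee}B^\vee\arr A^\vee\arr0$ with $\pi$ a nonzero element of $(B^\vee)^G$, and applying (vi) to $B^\vee$ and the invariant vector $\pi$ produces $b\in(B^{\vee\vee})^G=B^G$ with $\pi(b)\neq0$, a lift of $c$; combined with left exactness this is (iii).

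The only step requiring more than finite-dimensional input is (v) $\Rightarrow$ (iv): one needs that every $G$-representation is the directed union of its finite-dimensional subrepresentations, and then such a representation, being a union of semisimple subrepresentations, equals the sum of its simple subrepresentations, hence is semisimple. I expect this local-finiteness statement to be the main obstacle: the familiar proof (every element of a comodule lies in a finite-dimensional subcomodule) uses the Hopf algebra structure on $\oh(G)$ and so presupposes that $G$ is affine, whereas here $G$ may be non-affine (e.g.\ an abelian variety). The substitute is that $BG$ is a noetherian Artin stack --- quasi-compact with the noetherian presentation $\Spec k$ --- so every quasi-coherent sheaf is the filtered union of its coherent subsheaves (cf.\ \cite[Prop.\ 15.4]{lmb}), which is precisely the desired statement under the dictionary above. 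Beyond this point the argument is routine bookkeeping, provided the splittings and invariance assertions are read as statements about representations of the group scheme $G$ rather than merely about its $k$-points.
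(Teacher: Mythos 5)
Your proof is correct and follows essentially the same route as the paper: the identification (i) $\Leftrightarrow$ (ii) $\Leftrightarrow$ (iii) via $\qcoh(BG)\simeq\{G\text{-representations}\}$ together with Proposition \ref{lin_red_noeth}, followed by the classical finite-dimensional arguments (splitting off subrepresentations using exactness of invariants on internal Homs, and the dualization trick relating (vi) to (iii)). The only point where you are more explicit than the paper is (v) $\Rightarrow$ (iv), where the paper simply invokes Zorn's lemma; your appeal to local finiteness of quasi-coherent sheaves on the noetherian stack $BG$ (\cite[Prop.\ 15.4]{lmb}) is precisely the input needed to make that step work when $G$ is not assumed affine, so this is a welcome clarification rather than a divergence.
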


\bpf  The category of quasi-coherent $\oh_{BG}$-modules is equivalent to category of $G$-representations so that (ii) is a restatement of the definition of linearly reductive.  Proposition \ref{lin_red_noeth} implies that (ii) is equivalent to (iii).   For (iii) $\implies$ (v), if $0 \arr V_1 \arr V_2 \arr V_3 \arr 0$ is an exact sequence of finite dimensional $G$-representations, then by applying the functor $\Hom^G(V_3, \cdot) = \Hom^G(k,V_3^\vee \tensor \cdot) = (V_3^{\vee} \tensor \cdot )^G$ which is exact, we see that the sequence splits.  Conversely, it is clear that (v) $\implies$ (iii).  A simple application of Zorn's lemma implies that (iv) $\iff$ (v).  We have established the equivalences of (i) through (v).

For (iii) $\implies$ (vi), $0 \neq v \in V^G$ gives a surjective morphism of $G$-representations $v: V^{\vee} \arr k, \alpha \mapsto \alpha(v)$.  After taking invariants, $(V^{\vee})^G \arr k$ is surjective which implies there exists $F \in (V^{\vee})^G$ with $F(v) \neq 0$.  Conversely for (vi) $\implies$ (iii), suppose $\alpha: V \arr W$ is a surjective morphism of finite dimensional $G$-representations and $w \in W^G$.  Then $\alpha^{-1}(w) = V' \arr k$ is surjective morphism of $G$-representations giving $0 \neq F \in V'^{\vee}$ so by (vi) there exists $v' \in V'^G \subseteq V^G$ with $F(v') \neq 0$.  The image of $v' \in W^G$ is a scalar multiple of $w$ so it follows that $V^G \arr W^G$ is surjective.
\epf

\begin{remark}
The equivalences of (ii) - (vi) remain true without the assumptions that $G$ is finite type and separated over $k$.
\end{remark}

\begin{prop}  (Generalization of \cite[Proposition 2.6]{tame})
Let $G \arr S$ be an fppf group scheme, $S' \arr S$ a morphism of schemes and $G' = G \times_S S'$.  Then
\begin{enumeratei}
\item If $G \arr S$ is linearly reductive, then $G' \arr S'$ is linearly reductive.
\item If $S' \arr S$ is faithfully flat and $G' \arr S'$ is linearly reductive, then $G \arr S$ is linearly reductive.
\end{enumeratei}
\end{prop}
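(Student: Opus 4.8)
The plan is to read off both statements directly from the base-change and descent properties of cohomologically affine morphisms in Proposition \ref{coh_aff_prop}, once one recalls that formation of the classifying stack commutes with base change. By definition, the linear reductivity of $G \arr S$ is the assertion that the structure morphism $BG \arr S$ is cohomologically affine, and similarly $G' \arr S'$ is linearly reductive if and only if $BG' \arr S'$ is cohomologically affine.

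The one preliminary point to nail down is the canonical identification of Artin stacks $BG \times_S S' \cong BG'$ over $S'$, which follows from $[S/G]\times_S S' = [(S\times_S S')/(G\times_S S')] = [S'/G']$; in particular the square with vertical maps $BG' \arr S'$ and $BG \arr S$ and horizontal maps $BG' \arr BG$ and $S' \arr S$ is $2$-cartesian. (Quasi-compactness of $BG \arr S$ and $BG' \arr S'$ is automatic, since $G \arr S$ and $G' \arr S'$ are fppf and hence admit presentations by quasi-compact schemes, but in any case this is subsumed in the cited statements.) For part (i), I would then apply Proposition \ref{coh_aff_prop}(\ref{coh_aff_base}), with the role of the base scheme played by $S$ and the base-change morphism being $S' \arr S$, to the cohomologically affine morphism $f = (BG \arr S)$: the conclusion is that $BG \times_S S' \arr S'$ is cohomologically affine, and under the identification above this is exactly $BG' \arr S'$. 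Hence $G' \arr S'$ is linearly reductive.

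For part (ii), I would feed the $2$-cartesian square above into Proposition \ref{coh_aff_prop}(\ref{coh_aff_descent}): there the morphism $g = (S' \arr S)$ is faithfully flat by hypothesis and $f' = (BG' \arr S')$ is cohomologically affine by hypothesis, so the conclusion is that $f = (BG \arr S)$ is cohomologically affine, i.e. $G \arr S$ is linearly reductive.

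There is no real obstacle here: the proposition is a formal corollary of Proposition \ref{coh_aff_prop}, and the only step requiring any thought is the identification $BG\times_S S'\cong BG'$ together with the observation that the relevant square is $2$-cartesian, both of which are routine. If one prefers to avoid invoking (\ref{coh_aff_base}) directly in part (i), the same conclusion also follows from (\ref{coh_aff_bc}), since the base $S$, regarded as a stack over itself, has affine (in particular quasi-affine) diagonal.
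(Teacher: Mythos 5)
Your proof is correct and is essentially the paper's own argument: the paper simply notes that $BG' = BG \times_S S'$ and that both statements then follow directly from Proposition \ref{coh_aff_prop}, which is exactly the base-change and faithfully flat descent reasoning you spell out via parts (\ref{coh_aff_base}) and (\ref{coh_aff_descent}).
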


\bpf  Since $BG' = BG \times_S S'$, this follows directly from Proposition \ref{coh_aff_prop}. \epf

\begin{example} \label{ex_lin_red}
If $G \arr S$ is a linearly reductive group scheme acting on a scheme $X$ affine over $S$, then $p: [X/G] \arr S$ is cohomologically affine.  Indeed, there is a 2-cartesian square:
$$\xymatrix{
X \ar[r] \ar[d]	& S \ar[d]\\
[X/G] \ar[r]		& BG
}$$
Since $S \arr BG$ is fppf and $X \arr S$ is affine, $[X/G] \arr BG$ is an affine morphism.  This implies that the composition $[X/G] \arr BG \arr S$ is cohomologically affine.  Furthermore, from the property P argument of of \ref{coh_aff_propP}, it follows that $[X/G] \arr p_* \oh_{[X/G]}$ is a good moduli space.

Conversely, if $G \arr S$ is an affine group scheme acting on an algebraic space  $X$ and $[X/G] \arr S$ is cohomologically affine, then $X$ is affine over $S$.  This follows from Serre's criterion (see Proposition \ref{serre_crit}) since $X \arr S$ is the composition of the affine morphism $X \arr [X/G]$ with the cohomologically affine morphism $[X/G] \arr S$. 
\end{example}

\begin{example}  A morphism of Artin stacks $f: \cX \arr \cY$ is said to have affine diagonal if $\Delta_{\cX/\cY}: \cX \arr \cX \times_{\cY} \cX$ is an affine morphism.  The property of a morphism having affine diagonal is stable under composition, arbitrary base change and satisfies fppf descent.   If $G \arr S$ is an fppf affine group scheme acting on an algebraic space $X \arr S$ with affine diagonal, then $[X/G] \arr S$ has affine diagonal.  Indeed, let $\cX = [X/G]$ and consider
$$\xymatrix{
G \times_S X \ar[r]^{\psi} \ar[d]		& X \times_S X \ar[r]^{p_1} \ar[d]	& X \\
\cX \ar[r]^{\Delta_{\cX/S}}			& \cX \times_S \cX
}$$
where the square is 2-cartesian.  Since $G \arr S$ is affine, $p_1 \circ \psi$ is affine.  Since $X \arr S$ has affine diagonal, $p_1$ has affine diagonal.  It follows from the property P argument of \ref{coh_aff_propP} that $\psi$ is affine so by descent $\cX \arr S$ has affine diagonal.  In particular, $BG \arr S$ has affine diagonal.  
\end{example}

\subsection{Linearly reductivity of stabilizers, subgroups, quotients and extensions}

\begin{prop} \label{lin_red_stab2}  Suppose $\cX$ is a locally noetherian Artin stack and $\xi \in |\cX|$.  If $x: \Spec k \arr \cX$ is any representative, then $G_x$ is linearly reductive if and only if $\cG_{\xi}$ is cohomologically affine.
\end{prop}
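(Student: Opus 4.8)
The plan is to compare three conditions: (1) $G_x$ is linearly reductive, i.e.\ $BG_x \arr \Spec k$ is cohomologically affine; (2) $\cG_\xi \arr \Spec k(\xi)$ is cohomologically affine; and (3) $\cG_\xi$ is a cohomologically affine stack, i.e.\ $\cG_\xi \arr \Spec\ZZ$ is cohomologically affine. I will prove $(1)\Leftrightarrow(2)$ and $(2)\Leftrightarrow(3)$ purely by the formal stability properties of cohomologically affine morphisms from Propositions \ref{coh_aff_prop} and \ref{coh_aff_propP}, and then chain the equivalences.

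First I would record the geometric input. Since $\cX$ is locally noetherian, $\xi$ is algebraic by Proposition \ref{gerbe_map}: $\bar\xi \cong \Spec k(\xi)$ for a field $k(\xi)$, $\cG_\xi$ is an Artin stack, and $\cG_\xi \arr \Spec k(\xi)$ is of finite type, hence quasi-compact (so $\cG_\xi$ is quasi-compact as well, and the three conditions above even make sense). For the given representative $x\colon \Spec k \arr \cX$ the resulting field inclusion $k(\xi)\hookrightarrow k$ gives a faithfully flat, affine morphism $\iota\colon \Spec k \arr \Spec k(\xi)$, and by the cartesian square in \eqref{BGgerbe} there is an identification $BG_x \cong \cG_\xi \times_{\Spec k(\xi)} \Spec k$ compatible with $\iota$.

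For $(2)\Leftrightarrow(1)$ I would feed this cartesian square into Proposition \ref{coh_aff_prop}. If (2) holds, then because $\Spec k(\xi)$, being a scheme, has quasi-affine diagonal, part (\ref{coh_aff_bc}) shows the base change $BG_x \arr \Spec k$ is cohomologically affine, which is (1). Conversely, if (1) holds, then since $\iota$ is faithfully flat, part (\ref{coh_aff_descent}) applied to the same square shows $\cG_\xi \arr \Spec k(\xi)$ is cohomologically affine, which is (2). For $(2)\Leftrightarrow(3)$: if (2) holds, compose $\cG_\xi \arr \Spec k(\xi)$ with the affine (hence cohomologically affine) morphism $\Spec k(\xi)\arr\Spec\ZZ$ and apply Proposition \ref{coh_aff_prop}(\ref{coh_aff_comp}) to get (3); conversely, if (3) holds, apply Proposition \ref{coh_aff_propP} to $\cG_\xi \arr \Spec k(\xi)\arr \Spec\ZZ$, using that $\Spec k(\xi)\arr\Spec\ZZ$ has affine diagonal and that $\Spec\ZZ$ has (quasi-)affine diagonal, to get (2). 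Combining $(1)\Leftrightarrow(2)\Leftrightarrow(3)$ finishes the proof.

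I do not anticipate a real obstacle: everything reduces to the already-proved stability of cohomological affineness under composition, under base change along a target with quasi-affine diagonal, and under faithfully flat descent. The only matters requiring a bit of care are bookkeeping: reconciling the \emph{absolute} notion ``$\cG_\xi$ is cohomologically affine'' (which is relative to $\Spec\ZZ$) with the \emph{relative} statement over $\Spec k(\xi)$ --- this is exactly the short step $(2)\Leftrightarrow(3)$ --- and making sure that \eqref{BGgerbe} is invoked so that $BG_x$ is realized as $\cG_\xi\times_{\Spec k(\xi)}\Spec k$ for the given arbitrary representative $x$, not merely for a specially chosen one.
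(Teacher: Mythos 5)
Your proof is correct and follows essentially the same route as the paper, which deduces the statement from the cartesian square in \eqref{BGgerbe} together with fpqc descent (and base change) of cohomological affineness; you have simply spelled out the details, including the harmless reconciliation of the absolute notion ``$\cG_\xi$ is cohomologically affine'' with the relative statement over $\Spec k(\xi)$ via Propositions \ref{coh_aff_prop} and \ref{coh_aff_propP}.
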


\bpf This follows from diagram \ref{BGgerbe} and fpqc descent. \epf

The above proposition justifies the following definition.

\begin{defn}  If $\cX$ is a locally noetherian Artin stack, a point $\xi \in |\cX|$ has a \emph{linearly reductive stabilizer} if for some (equivalently any) representative $x: \Spec k \arr \cX$, $G_x$ is linearly reductive.
\end{defn}

The following is an easy but useful fact insuring linearly reductivity of closed points.

\begin{prop} \label{closed_orbits_lin_red}
Let $\cX$ be a locally noetherian Artin stack and $\phi: \cX \arr Y$ a good moduli space.  Any closed point $\xi \in |\cX|$ has a linearly reductive stabilizer.  In particular, for every $y \in Y$, there is a $\xi \in |\cX_y|$ with linearly reductive stabilizer.
\end{prop}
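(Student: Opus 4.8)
The plan is to reduce to the case where $\cX$ is cohomologically affine with a unique closed point, and then invoke Proposition \ref{lin_red_stab2}. First I would let $\xi \in |\cX|$ be a closed point, let $x: \Spec k \arr \cX$ be a representative, and set $y = \phi(\xi)$. By Theorem \ref{good_thm}(\ref{good_orbit_closure_equiv}) (or more directly by base change), I may replace $\phi$ by $\phi_y: \cX_y = \cX \times_Y \Spec k(y) \arr \Spec k(y)$, which is again a good moduli space by Proposition \ref{good_base_change}(i); note $\xi$ remains a closed point of $\cX_y$. So without loss of generality $Y = \Spec k(y)$ is the spectrum of a field, and $\cX$ is cohomologically affine.

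Next, the key point is that when $Y$ is a point, $\cX$ has a \emph{unique} closed point: this is exactly the argument in the proof of Theorem \ref{good_thm}(\ref{good_geom_conn}) together with (\ref{good_separate}) — two disjoint closed substacks would have disjoint images in the single-point space $Y$, which is impossible. Hence $\cG_{\xi} \hookarr \cX$ is a closed immersion onto the unique closed point. Since $\cX$ is locally noetherian, $\cG_{\xi}$ is an Artin stack (Proposition \ref{gerbe_map}), and being a closed substack of the cohomologically affine $\cX$, the composition $\cG_{\xi} \hookarr \cX \arr Y$ is cohomologically affine by Proposition \ref{coh_aff_prop}(\ref{coh_aff_affine}) and (\ref{coh_aff_comp}); as $Y$ is affine this means $\cG_{\xi}$ is a cohomologically affine stack. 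By Proposition \ref{lin_red_stab2}, $G_x$ is linearly reductive, i.e. $\xi$ has linearly reductive stabilizer; and this is independent of the chosen representative by the Definition preceding this Proposition.

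For the final sentence: given $y \in Y$, the fiber $\cX_y \arr \Spec k(y)$ is a good moduli space, so $\cX_y$ is nonempty (Theorem \ref{good_thm}(\ref{good_surjective})) and, being a noetherian topological space, contains a closed point $\xi$; by the first part $\xi$ has linearly reductive stabilizer, and its image in $\cX$ lies in $|\cX_y|$ as required. The only mild obstacle is bookkeeping the base change — ensuring that a closed point of $\cX$ over $y$ maps to a closed point of $\cX_y$ and that closedness is detected fiberwise — but this is immediate since $\cX_y \hookarr \cX$ is a closed immersion when $y$ is a closed point of $Y$, and in general one simply works with $\cX_y$ from the start rather than trying to lift back to $\cX$.
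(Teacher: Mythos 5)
Your argument is correct, but it takes a slightly different route than the paper. Both proofs hinge on the same two ingredients: since $\cX$ is locally noetherian and $\xi$ is closed, the residual gerbe gives a closed immersion $\cG_{\xi} \hookarr \cX$, and by Proposition \ref{lin_red_stab2} it suffices to show $\cG_{\xi}$ is cohomologically affine. The paper gets there without any base change: it applies Lemma \ref{good_affine_prop} to the closed substack $\cG_{\xi} \subseteq \cX$, so that $\cG_{\xi} \arr \im \cG_{\xi} = \Spec k(\xi)$ is itself a good moduli space, which is marginally more direct and even yields the stronger statement that the gerbe's structure morphism is a good moduli space. You instead base-change to the fiber over $y = \phi(\xi)$ (Proposition \ref{good_base_change}(i)) and then compose the affine closed immersion $\cG_{\xi} \hookarr \cX_y$ with the cohomologically affine $\phi_y$ over the affine base $\Spec k(y)$; this works, at the cost of a little bookkeeping you mostly gesture at rather than verify: $\cX_y$ is again locally noetherian (it is locally a localization of a closed substack of $\cX$), $\xi$ stays closed in $\cX_y$ because $\cX_y \arr \cX$ is a monomorphism, and for the same reason $\Aut_{\cX_y(k)}(x) = \Aut_{\cX(k)}(x)$, so linear reductivity computed in the fiber is the stabilizer condition required in $\cX$. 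Your "unique closed point" observation is true but not needed; all you use is that $\cG_{\xi} \hookarr \cX_y$ is a closed immersion. For the last assertion, your fiberwise argument (nonemptiness from Theorem \ref{good_thm}(\ref{good_surjective}), existence of a closed point since $\cX_y$ is quasi-compact and locally noetherian, then the first part applied to $\cX_y \arr \Spec k(y)$) matches what the paper's "in particular" implicitly relies on.
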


\bpf The point $\xi$ induces a closed immersion $\cG_{\xi} \hookarr \cX$.  By Lemma \ref{good_affine_prop}, the morphism from $\cG_{\xi}$ to its scheme-theoretic image, which is necessarily $\Spec k(\xi)$, is a good moduli space.  Therefore $\xi$ has linearly reductive stabilizer.
\epf

\subsection*{Matsushima's Theorem} We can now give a short proof of an analogue of a result sometimes referred to as Matsushima's theorem (see \cite[Appendix 1D]{git3} and \cite{matsushima}):  If $H$ is a subgroup of a \emph{reductive} group scheme $G$, then $H$ is \emph{reductive} if and only if $G/H$ is affine. In \cite{matsushima}, Matsushima proved the statement over the complex numbers using algebraic topology.  The algebro-geometric proof in the characteristic zero case is due Bialynicki-Birula in  \cite{bb_homogeneous} and a characteristic $p$ generalization was provided by Haboush in \cite{haboush_stab} and Richardson in \cite{richardson}.

\begin{thm} \label{matsushima} \quad
Suppose $G \arr S$ is a linearly reductive group scheme and $H \subseteq G$ is an fppf subgroup scheme.  Then
\begin{enumeratei}
\item If $G/H \arr S$ is affine, then $H \arr S$ is linearly reductive.
\item Suppose $G \arr S$ is affine.  If $H \arr S$ is linearly reductive, then $G/H \arr S$ is affine. 
\end{enumeratei}
Suppose $\cX$ is a locally noetherian Artin stack and $\xi \in |\cX|$.  Then
\begin{enumeratei} \setcounter{enumi}{2}
\item If $\cX \arr S$ is cohomologically affine and $\cG_{\xi} \arr \cX$ is affine, then $\xi$ has a linearly reductive stabilizer.
\item If $\cX \arr S$ has affine diagonal and $\xi$ has a linearly reductive stabilizer, then $\cG_{\xi} \arr \cX$ is affine.
\end{enumeratei}
In particular, if $\cX=[X/G]$ where $G \arr S$ is an affine, linear reductive group scheme and $X \arr S$ is affine, then $\xi$ has a linearly reductive stabilizer if and only if $O_X(\xi) \arr X$ is affine.
\end{thm}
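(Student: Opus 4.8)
The plan is to handle the four parts separately, exploiting a single recurring mechanism: every implication of the form ``linearly reductive $\Rightarrow$ affine'' will be proved by first establishing that the relevant morphism is \emph{cohomologically} affine --- which is purely formal from Proposition \ref{coh_aff_prop} --- and then upgrading to affineness via Serre's criterion (Proposition \ref{serre_crit}), the cohomological affineness of $BG\arr S$, of $BH\arr S$, or of $\cG_\xi$ being exactly the reductivity hypothesis in play.

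For (i) I would use that $G$ acts transitively on $G/H$ with stabilizer $H$, giving an equivalence $BH\simeq[(G/H)/G]$ that sits in a $2$-cartesian square with $G/H\arr S$ over $BH\arr BG$ and the fppf atlas $S\arr BG$ on the right; since $G/H\arr S$ is affine and affineness is fppf-local on the target, $BH\arr BG$ is affine, hence cohomologically affine (Proposition \ref{coh_aff_prop}(\ref{coh_aff_affine})), and composing with the cohomologically affine $BG\arr S$ gives that $H$ is linearly reductive. For (ii) I would instead use that $G\arr G/H$ is an $H$-torsor, producing a classifying morphism $G/H\arr BH$ and a $2$-cartesian square with $G\arr S$ over $G/H\arr BH$ and the fppf atlas $S\arr BH$ on the right; as $G\arr S$ is affine (so cohomologically affine) and $S\arr BH$ is faithfully flat, Proposition \ref{coh_aff_prop}(\ref{coh_aff_descent}) makes $G/H\arr BH$ cohomologically affine, and composing with $BH\arr S$ makes $G/H\arr S$ cohomologically affine; since $H$ acts freely on $G$, $G/H$ is an algebraic space of finite type over $S$, so Serre's criterion gives that $G/H\arr S$ is affine. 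Part (iii) is immediate: if $\cG_\xi\arr\cX$ is affine it is cohomologically affine, and composing with $\cX\arr S$ (and using that $\cG_\xi$ maps into an affine open of $S$) shows $\cG_\xi$ is a cohomologically affine stack, whence $\xi$ has linearly reductive stabilizer by Proposition \ref{lin_red_stab2}.

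Part (iv) is the technical heart, and the plan is the following chain of reductions. Affineness of $\cG_\xi\arr\cX$ can be checked over one open substack containing $\xi$, so I may assume $S=\Spec A$ is affine. Choosing a representative $x\colon\Spec k\arr\cX$ and using $\cG_\xi\times_{\Spec k(\xi)}\Spec k\cong BG_x$ from the diagram \eqref{BGgerbe} together with fppf descent (Proposition \ref{coh_aff_prop}(\ref{coh_aff_descent})), the linear reductivity of $G_x$ gives that $\cG_\xi\arr\Spec k(\xi)$ is cohomologically affine. Next set $\cX_{k(\xi)}=\cX\times_S\Spec k(\xi)$; then $\cX_{k(\xi)}\arr\Spec k(\xi)$ has affine diagonal (base change of $\Delta_{\cX/S}$), the morphism $\cG_\xi\arr\cX$ factors through a morphism $\cG_\xi\arr\cX_{k(\xi)}$, and Proposition \ref{coh_aff_propP} applied to $\cG_\xi\arr\cX_{k(\xi)}\arr\Spec k(\xi)$ shows $\cG_\xi\arr\cX_{k(\xi)}$ is cohomologically affine. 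This morphism is representable ($\cX$ is locally noetherian and $\cX_{k(\xi)}\arr\cX$ is representable), and $\cX_{k(\xi)}$ has affine --- in particular quasi-affine --- diagonal over $S$, so by Proposition \ref{coh_aff_prop}(\ref{coh_aff_bc}) pulling back along an affine presentation $X\arr\cX_{k(\xi)}$ preserves cohomological affineness, yielding a quasi-compact cohomologically affine morphism of algebraic spaces that is affine by Serre's criterion; hence $\cG_\xi\arr\cX_{k(\xi)}$ is affine. Composing with the affine morphism $\cX_{k(\xi)}\arr\cX$ (base change of the morphism of affine schemes $\Spec k(\xi)\arr\Spec A$) gives that $\cG_\xi\arr\cX$ is affine.

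For the concluding ``in particular'' I would simply combine (iii) and (iv): when $\cX=[X/G]$ with $G\arr S$ affine and linearly reductive and $X\arr S$ affine, the stack $\cX\arr S$ is cohomologically affine by Example \ref{ex_lin_red} and has affine diagonal over $S$ (since $G$ and $X\arr S$ do), while $\cG_\xi\times_\cX X=O_X(\xi)$ for the presentation $X\arr\cX$, so affineness of $\cG_\xi\arr\cX$ is equivalent to that of $O_X(\xi)\arr X$, affineness being fppf-local on the target. I expect the only genuinely non-formal step anywhere to be the passage from cohomological affineness back to affineness through Serre's criterion --- this is precisely where reductivity is indispensable --- and the extra subtlety in (iv) is making sure cohomological affineness survives the pullback to a presentation, which is what forces the detour through the residual gerbe over its own residue field and the bookkeeping of the diagonal of $\cX_{k(\xi)}$.
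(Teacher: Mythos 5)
Parts (i)--(iii) and the concluding ``in particular'' statement are correct and essentially follow the paper's own route: everything is run through the $2$-cartesian square relating $G/H \arr S$ to $BH \arr BG$, the cancellation statement of Proposition \ref{coh_aff_propP}, and Serre's criterion (Proposition \ref{serre_crit}). Your proof of (ii), which descends cohomological affineness of $G \arr S$ along the atlas $S \arr BH$ and then composes with $BH \arr S$, is a mild variant of the paper's argument (the paper instead makes $BH \arr BG$ cohomologically affine via Proposition \ref{coh_aff_propP} and base changes along $S \arr BG$); both are fine and, as in your version, neither actually uses linear reductivity of $G$ in (ii) beyond affineness of $G \arr S$.

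The one genuine problem is the opening reduction in (iv): ``affineness of $\cG_\xi \arr \cX$ can be checked over one open substack containing $\xi$'' is not a valid principle. Affineness is Zariski-local on the target, so one needs an open \emph{cover} of $\cX$; the natural candidate, your $\cU = \cX \times_S U$ together with the complement of $\overline{\{\xi\}}$, covers $\cX$ only when $\overline{\{\xi\}} \subseteq \cU$, and this can fail because the closure of the image of $\xi$ in $S$ need not stay inside the chosen affine open $U$. In general, a morphism that factors through an open substack $\cU$ and is affine over $\cU$ need not be affine over $\cX$: the open immersion $\AA^2 \setminus \{0\} \hookarr \AA^2$ is affine over $\AA^2 \setminus \{0\}$ but not over $\AA^2$. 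So ``we may assume $S = \Spec A$'' is unjustified as written.

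Fortunately the reduction is also unnecessary. The only use you make of $S$ being affine is to know that $\cX \times_S \Spec k(\xi) \arr \cX$ is affine; but $\Spec k(\xi) \arr S$ is an affine morphism for an arbitrary (quasi-separated) scheme $S$, since the preimage of any affine open of $S$ is either empty or all of the one-point scheme $\Spec k(\xi)$. Hence its base change to $\cX$ is affine with no hypothesis on $S$, and the remainder of your chain (Proposition \ref{lin_red_stab2}, Proposition \ref{coh_aff_propP} over $\Spec k(\xi)$, pullback to a presentation via Proposition \ref{coh_aff_prop}(\ref{coh_aff_bc}), Serre's criterion) goes through verbatim. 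Alternatively, you can avoid the detour through $\cX \times_S \Spec k(\xi)$ altogether, which is what the paper does: $\cG_\xi \arr \Spec k(\xi) \arr S$ is cohomologically affine, $\cX \arr S$ has affine diagonal, so Proposition \ref{coh_aff_propP} applied to $\cG_\xi \arr \cX \arr S$ gives directly that $\cG_\xi \arr \cX$ is cohomologically affine, and then Serre's criterion (after pulling back along a smooth presentation, permitted by Proposition \ref{coh_aff_prop}(\ref{coh_aff_bc}) since $\Delta_{\cX/S}$ is affine) yields affineness.
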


\bpf  For (i) and (ii), the quotient stack $[G/H]$ is an algebraic space which we will denote by $G/H$.  Since the square
$$\xymatrix{
G/H \ar[r] \ar[d]		& S \ar[d]\\
BH \ar[r]			& BG
}$$
is 2-cartesian, $BH \arr BG$ is affine if and only if $G/H \arr S$ is affine.  By considering the composition $BH \arr BG \arr S$, it is clear that if $G/H \arr S$ is affine, then $H$ is linearly reductive.  For the converse, since $BG \arr S$ has affine diagonal, the property P argument of \ref{coh_aff_propP} implies that $G/H \arr S$ is cohomologically affine and therefore affine by Serre's criterion (see Proposition \ref{serre_crit}).

For (iii) and (iv), consider the commutative square
$$\xymatrix{
\cG_{\xi} \ar[r] \ar[d]		& \cX \ar[d] \\
\Spec k(\xi) \ar[r]		& S
}$$
For (iii), the composition $\cG_{\xi} \arr \cX \arr S$ is cohomologically affine.  Since $\Spec k(\xi) \arr S$ has affine diagonal, $\cG_{\xi} \arr \Spec k(\xi)$ is cohomologically affine so $\xi$ has linearly reductive stabilizer.  For (iv), since $\xi$ has linearly reductive stabilizer, the composition $\cG_{\xi} \arr \Spec k(\xi) \arr S$ is cohomologically affine.  Because $\cX \arr S$ has affine diagonal, $\cG_{\xi} \arr \cX$ is cohomologically affine and therefore affine by Serre's criterion. \epf

More generally, we can consider the relationship between the orbits and stabilizers of $T$-valued points.

\begin{prop}  \label{lin_red_stab1}
Let $\cX \arr S$ be an Artin stack and $f: T \arr \cX$ be such that $G_f$ is an fppf group scheme over $T$. Then
\begin{enumeratei}
\item  If $\cX \arr S$ is cohomologically affine and the natural map $BG_f \arr \cX \times_S T$ is affine, then $G_f \arr T$ is linearly reductive.  
\item If $\cX \arr S$ has affine diagonal and $G_f \arr T$ is linearly reductive, then the natural map $BG_f \arr \cX \times_S T$ is affine.
\end{enumeratei}  
In particular, if $\cX=[X/G]$ where $G \arr S$ is linear reductive, $X \arr S$ is affine, and $f: T \arr X$ has fppf stabilizer $G_f \arr T$, then $G_f \arr T$ is linearly reductive if and only if $o_X(f) \hookrightarrow X \times_S T$ is affine.
\end{prop}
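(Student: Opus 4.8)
The plan is to run the argument of Theorem~\ref{matsushima}(iii)--(iv) essentially verbatim, replacing the monomorphism $\cG_\xi \hookarr \cX$ used there by the monomorphism $BG_f \arr \cX \times_S T$ supplied by Proposition~\ref{BG_map} (a morphism of Artin stacks in our situation, since $G_f \arr T$ is an fppf group scheme), and replacing $\Spec k(\xi)$ by $T$. The relevant picture is the commutative triangle with vertices $BG_f$, $\cX \times_S T$ and $T$, in which $BG_f \arr T$ is the structure morphism and $\cX \times_S T \arr T$ is the base change of $\cX \arr S$; recall that by definition $G_f \arr T$ is linearly reductive exactly when $BG_f \arr T$ is cohomologically affine. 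Part (i) is then immediate: $\cX \times_S T \arr T$ is cohomologically affine by Proposition~\ref{coh_aff_prop}(\ref{coh_aff_base}), the affine morphism $BG_f \arr \cX \times_S T$ is cohomologically affine by Proposition~\ref{coh_aff_prop}(\ref{coh_aff_affine}), and their composite $BG_f \arr T$ is cohomologically affine by Proposition~\ref{coh_aff_prop}(\ref{coh_aff_comp}).

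For part (ii), I would first record two facts about the monomorphism $f \colon BG_f \arr \cX \times_S T$: it is representable by algebraic spaces (a monomorphism of algebraic stacks trivializes relative automorphism groups, so each base change of $f$ by a scheme is a sheaf, hence an algebraic space), and it is quasi-compact (because $BG_f \arr T$ is quasi-compact while $\cX \times_S T \arr T$ is quasi-separated, having affine diagonal as a base change of $\cX \arr S$). Hence, just as in the proof of Theorem~\ref{matsushima}(iv), it suffices to prove $f$ cohomologically affine and then conclude it is affine by Serre's criterion (Proposition~\ref{serre_crit}). To get cohomological affineness, apply Proposition~\ref{coh_aff_propP} to the composite $BG_f \arr \cX \times_S T \arr T$: the composite $BG_f \arr T$ is cohomologically affine since $G_f \arr T$ is linearly reductive; $\cX \times_S T \arr T$ has affine diagonal since $\cX \arr S$ does; and $T$ has quasi-affine diagonal over $S$ because $\Delta_{T/S}$ is a quasi-compact immersion. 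This establishes (ii).

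Finally, for the ``in particular'' clause let $\cX = [X/G]$ with $G \arr S$ an affine linearly reductive group scheme and $X \arr S$ affine. By the construction recalled in the Notation section, $o_X(f) \arr X \times_S T$ is the base change of $BG_f \arr \cX \times_S T$ along $X \times_S T \arr \cX \times_S T$, and this last map is an fppf $G$-torsor, in particular faithfully flat; since affineness is fppf-local on the target, $BG_f \arr \cX \times_S T$ is affine if and only if $o_X(f) \arr X \times_S T$ is affine. As $[X/G] \arr S$ is cohomologically affine (Example~\ref{ex_lin_red}) and has affine diagonal ($G \arr S$ and $X \arr S$ being affine), parts (i) and (ii) now apply and yield the equivalence. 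Apart from this bookkeeping, everything follows formally from the stability properties of (cohomologically) affine morphisms already proved; the one point needing a separate, standard argument --- and the place where I expect the only genuine care to be required --- is the representability of the monomorphism $BG_f \arr \cX \times_S T$, without which Serre's criterion could not be invoked in (ii).
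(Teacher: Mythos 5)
Your argument is correct and is essentially the paper's own proof, which simply considers the composition $BG_f \arr \cX \times_S T \arr T$, observes that part (i) is clear from stability of cohomological affineness under base change and composition, and deduces part (ii) from the property P argument of Proposition \ref{coh_aff_propP} together with Serre's criterion. You have merely made explicit the details the paper leaves implicit (representability and quasi-compactness of the monomorphism $BG_f \arr \cX \times_S T$ so that Serre's criterion applies, and the fppf-descent bookkeeping for the ``in particular'' clause, where, as in Theorem \ref{matsushima}, $G \arr S$ should indeed be taken affine).
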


\bpf Consider the composition $BG_f \arr \cX \times_S T \arr T$.  The first part is clear and the second part follows from the property P argument of \ref{coh_aff_propP} and Serre's criterion.
\epf

Matsushima's theorem characterizes subgroup schemes of a linearly reductive group that are linearly reductive.  The following generalization of \cite[Proposition 2.7]{tame} shows that quotients and extensions of linearly reductive groups schemes are also linearly reductive.

\begin{prop} 
Consider an exact sequence of fppf group schemes
$$1 \arr G' \arr G \arr G'' \arr 1 $$
\begin{enumeratei}  
\item If $G \arr S$ is linearly reductive, then $G'' \arr S$ is linearly reductive.
\item If $G' \arr S$ and $G'' \arr S$ are linearly reductive, then $G \arr S$ is linearly reductive.
\end{enumeratei}
\end{prop}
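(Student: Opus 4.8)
The plan is to use the morphism of classifying stacks $\pi : BG \arr BG''$ induced by $G \arr G''$, together with the $2$-cartesian square
$$\xymatrix{
BG' \ar[r] \ar[d] & BG \ar[d]^{\pi} \\
S \ar[r]^{a} & BG''
}$$
in which $a$ is the canonical atlas of $BG''$. This square is the standard identification $BG' \cong S \times_{BG''} BG$: a reduction of structure group to $G'$ of a $G$-torsor is the same datum as a trivialisation of the associated $G''$-torsor. Since $G'' \arr S$ is fppf, $a$ is faithfully flat and quasi-compact; thus properties of $\pi$ can be checked after base change along $a$, where $\pi$ becomes the structure morphism $q : BG' \arr S$. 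Note that $q$ is flat (composing the atlas $S \arr BG'$ with $q$ yields $\id_S$), hence $\pi$ is flat since flatness is fppf-local on the target; note also that $BG \arr S$ and $BG'' \arr S$ are quasi-compact because $G \arr S$ and $G'' \arr S$ are finitely presented.

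For (i), write $h : BG \arr S$ and $h'' : BG'' \arr S$ for the structure morphisms, so $h = h'' \circ \pi$. Since $\pi$ is flat, $\pi^{*} : \qcoh(BG'') \arr \qcoh(BG)$ is exact. Because $G'$ is the \emph{kernel} of $G \arr G''$, it acts trivially on $\pi^{*}\cF$ for every $\cF \in \qcoh(BG'')$, and since $\pi_{*}$ sends a $G$-representation to its sheaf of $G'$-invariants this gives $\pi_{*}\pi^{*} \cong \id$ on $\qcoh(BG'')$; hence $h''_{*} \cong h_{*}\circ\pi^{*}$ as functors $\qcoh(BG'') \arr \qcoh(S)$. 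As $h_{*}$ is exact by hypothesis and $\pi^{*}$ is exact, $h''_{*}$ is exact, i.e.\ $BG'' \arr S$ is cohomologically affine, so $G''$ is linearly reductive.

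For (ii), the base change of $\pi$ along the faithfully flat $a$ is the structure morphism $q : BG' \arr S$, which is cohomologically affine since $G'$ is linearly reductive; by Proposition \ref{coh_aff_prop}(\ref{coh_aff_descent}), $\pi$ is cohomologically affine. As $BG'' \arr S$ is cohomologically affine (this is exactly the hypothesis that $G''$ is linearly reductive) and cohomologically affine morphisms are stable under composition (Proposition \ref{coh_aff_prop}(\ref{coh_aff_comp})), the composite $BG \xarr{\pi} BG'' \arr S$ is cohomologically affine, i.e.\ $G$ is linearly reductive.

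I expect the only real obstacle to be the standard, but slightly delicate, bookkeeping underlying the first paragraph: the identification of the cartesian square $BG' \cong BG \times_{BG''} S$ and the fppf-ness of $a$ and flatness of $\pi$. In particular, for (i) one must verify that $\pi_{*}\pi^{*}$ is genuinely the identity functor — e.g.\ by pulling the unit $\cF \arr \pi_{*}\pi^{*}\cF$ back along $a$, using flat base change, and invoking $q_{*}\oh_{BG'} \cong \oh_{S}$ — rather than merely observing there is a natural map from the identity.
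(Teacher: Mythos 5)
Your proposal is correct and follows essentially the same route as the paper: the same $2$-cartesian square $BG' \cong S \times_{BG''} BG$, descent of cohomological affineness along the atlas plus composition for (ii), and for (i) the identity $h''_* \cong h_* \circ \pi^*$ coming from $\pi_*\pi^* \cong \id$. The verification you flag as the delicate point (pulling the unit back along the atlas, flat base change, and the adjunction isomorphism for $BG' \arr S$) is exactly how the paper justifies $j_*j^* \cong \id$, so nothing is missing.
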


\bpf 
We first note that for any morphism of fppf group schemes $G' \arr G$ induces a morphism $i: BG' \arr BG$ with $i^*$ exact.  Indeed $p: S \arr BG'$ and $i \circ p$ are faithfully flat and $i^*$ is exact since $p^* \circ i^*$ is exact.  There is an induced commutative diagram
$$\xymatrix{
BG' \ar[r]^i \ar[rd]_{\pi_{G'}	}	& BG \ar[r]^j \ar[d]^{\pi_G} 	& BG'' \ar[ld]^{\pi_{G''} }\\
						& S
}$$
and a 2-cartesian diagram
$$\xymatrix{
BG' \ar[r]^i \ar[d]^{\pi_{G'}}		& BG' \ar[d]^j \\
S \ar[r]^p					&BG''
}$$
The natural adjunction morphism $\id \arr j_* j^*$ is an isomorphism.  Indeed it suffices to check that $p^* \arr p^* j_* j^*$ is an isomorphism and there are canonical isomorphisms $p^* j_* j^* \cong {\pi_{G'}}_*i^* j^* \cong {\pi_{G'}}_* \pi_{G'}^* p^*$ such that the composition $p^* \arr {\pi_{G'}}_* \pi_{G'}^* p^*$ corresponds the composition of $p^*$ and the adjunction isomorphism $\id \arr {\pi_{G'}}_* \pi_{G'}^*$.

To prove (i), we have isomorphisms of functors
$${\pi_{G''}}_* \iso {\pi_{G''}}_* j_* j^* \cong {\pi_G}_* j^*$$
with ${\pi_G}_*$ and $j^*$ exact functors.

To prove (ii), $j$ is cohomologically affine since $p$ is faithfully flat and $G' \arr S$ is linearly reductive.   As $\pi_G = \pi_{G''} \circ j$ is the composition of cohomologically affine morphisms, $G \arr S$ is linearly reductive.
\epf

\section{Geometric Invariant Theory} \label{git_section}

The theory of good moduli space encapsulates the geometric invariant theory of linearly reductive group actions.  We rephrase some of the results from Section \ref{good_sect}-\ref{sect_lin_red} in the special case when $\cX$ is quotient stack by a linearly reductive group scheme.

\subsection{Affine Case}
Let $G \arr S$ be a linearly reductive group scheme acting an a scheme $p: X \arr S$ with $p$ affine.  
\begin{thm} \label{git_affine_thm} (Analogue of \cite[Theorem 1.1]{git})  The morphism
$$\phi: [X/G] \arr \sSpec p_* \oh_{[X/G]}$$
 is a good moduli space. 
\end{thm}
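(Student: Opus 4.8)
The plan is to deduce this from two facts already established. First, Example~\ref{ex_lin_red} shows that, since $G \arr S$ is linearly reductive and $X \arr S$ is affine, the structure morphism $\rho\colon [X/G] \arr S$ (so that $\rho_* \oh_{[X/G]}$ is the sheaf denoted $p_* \oh_{[X/G]}$ in the statement) is cohomologically affine. Second, the Remark following Definition~\ref{defn_good} states that a morphism $\rho\colon \cX \arr S$ is cohomologically affine if and only if the canonical morphism $\cX \arr \sSpec_S \rho_* \oh_{\cX}$ is a good moduli space. Taking $\cX = [X/G]$ in the latter gives the theorem, so the real content is the implication that cohomological affineness of $\rho$ forces the canonical morphism to be a good moduli space.

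To prove that implication I would set $\cA = \rho_* \oh_{[X/G]}$, write $\pi\colon Y = \sSpec_S \cA \arr S$ for the affine structure morphism, and let $\phi\colon [X/G] \arr Y$ be the $S$-morphism corresponding to $\id_{\cA}$ under the universal property of relative $\sSpec$, so that $\pi \circ \phi = \rho$. Then I would verify the two conditions of Definition~\ref{defn_good}. For cohomological affineness of $\phi$, I would apply Proposition~\ref{coh_aff_propP} to the factorization $[X/G] \xarr{\phi} Y \xarr{\pi} S$: the morphism $\pi$ is affine, hence quasi-affine and with affine diagonal, and the composite $\pi \circ \phi = \rho$ is cohomologically affine, so Proposition~\ref{coh_aff_propP} yields that $\phi$ is cohomologically affine. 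For the isomorphism $\oh_Y \iso \phi_* \oh_{[X/G]}$, I would use that for $\pi$ affine the functor $\pi_*$ is an equivalence from $\qcoh(Y)$ onto the category of quasi-coherent $\cA$-modules on $S$; in particular it reflects isomorphisms, so it suffices to observe that $\pi_*$ carries $\oh_Y \arr \phi_* \oh_{[X/G]}$ to an isomorphism, and this is the identity map $\cA \arr \rho_* \oh_{[X/G]} = \cA$ by construction of $\phi$.

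I do not expect a genuine obstacle: the argument is purely formal once Example~\ref{ex_lin_red} and Proposition~\ref{coh_aff_propP} are in hand. The two minor points to be careful about are the verification that $\pi$ satisfies the hypotheses of Proposition~\ref{coh_aff_propP} (an affine morphism is quasi-affine, and its diagonal is a closed immersion, hence affine) and the standard fact that $\pi_*$ reflects isomorphisms of quasi-coherent sheaves for $\pi$ affine. I would also note that in the affine case $S = \Spec R$, $X = \Spec A$ this recovers the classical statement that $[X/G] \arr \Spec A^{G}$ is a good moduli space, since $\Gamma([X/G], \oh_{[X/G]}) = A^{G}$; this is the analogue of \cite[Theorem~1.1]{git}.
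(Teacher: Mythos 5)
Your proposal is correct and follows essentially the same route as the paper: the paper's proof simply cites Example \ref{ex_lin_red}, which establishes cohomological affineness of $[X/G] \arr S$ via the cartesian square over $BG$ and then invokes the property P argument of Proposition \ref{coh_aff_propP} to conclude that $[X/G] \arr \sSpec p_* \oh_{[X/G]}$ is a good moduli space. You have merely spelled out the details left implicit there (and in the remark after Definition \ref{defn_good}), namely the application of Proposition \ref{coh_aff_propP} to the factorization through the relative $\sSpec$ and the verification of the structure-sheaf condition via the adjunction and conservativity of $\pi_*$ for the affine morphism $\pi$.
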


\bpf  This is immediate from Example \ref{ex_lin_red}. \epf

\begin{remark}  If $S = \Spec k$, $X = \Spec A$ and $G$ is a smooth affine linearly reductive group scheme, this is \cite[Theorem 1.1]{git} and 
$$X \arr \Spec A^G$$
is the GIT good quotient.
\end{remark}

\begin{cor}  \label{git_flat_families}
\emph{GIT quotients behave well in flat families.}  With the hypotheses of Theorem \ref{git_affine_thm}, for any field valued point $s: \Spec k \arr S$, the induced morphism $\phi_s: [X_s / G_s] \arr Y_s$ is a good moduli space with $Y_s \cong \Spec \Gamma(X_s, \oh_{X_s})^{G_s}$.  If $X \arr S$ is flat, then $Y \arr S$ is flat.
\end{cor}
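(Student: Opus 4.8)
The plan is to deduce everything from the base-change property of good moduli spaces (Proposition~\ref{good_base_change}), the explicit form of the target in Theorem~\ref{git_affine_thm}, and the flatness assertion of Theorem~\ref{good_thm}(\ref{good_flat}). Write $Y = \sSpec p_* \oh_{[X/G]}$ and let $q : Y \arr S$ be its structure morphism; since $p$ is quasi-compact, $p_* \oh_{[X/G]}$ is a quasi-coherent $\oh_S$-algebra, so $Y$ is in fact a scheme affine over $S$.

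For the fiberwise statement, fix $s : \Spec k \arr S$ and use the canonical identifications $[X/G] \times_S \Spec k \cong [X_s/G_s]$ and $Y \times_S \Spec k = Y_s$, together with the fact that $G_s \arr \Spec k$ is again linearly reductive. Proposition~\ref{good_base_change}(i) shows that $\phi_s : [X_s/G_s] \arr Y_s$ is a good moduli space. Since $Y$ is affine over $S$, the scheme $Y_s$ is affine, so property~(ii) of a good moduli space, evaluated on global sections, gives
$$\Gamma(Y_s, \oh_{Y_s}) \iso \Gamma\big([X_s/G_s], \oh_{[X_s/G_s]}\big) = \Gamma(X_s, \oh_{X_s})^{G_s},$$
the last equality because the global functions on the quotient stack are exactly the $G_s$-invariant functions on $X_s$; hence $Y_s \cong \Spec \Gamma(X_s, \oh_{X_s})^{G_s}$. (Alternatively, one may invoke Theorem~\ref{git_affine_thm} over $\Spec k$ together with the uniqueness of good moduli spaces mapping to schemes, Theorem~\ref{good_thm}(\ref{good_univ_schemes}).)

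For flatness, suppose $X \arr S$ is flat. The atlas map $X \arr [X/G]$ is a $G$-torsor, hence representable, faithfully flat and of finite presentation; since its composition with $[X/G] \arr S$ is the flat morphism $X \arr S$, faithful flatness of $X \arr [X/G]$ forces $[X/G] \arr S$ to be flat (flatness being local on the source for the fppf topology). Applying Theorem~\ref{good_thm}(\ref{good_flat}) to the good moduli space $\phi : [X/G] \arr Y$ then yields that $q : Y \arr S$ is flat. There is no genuine obstacle in this corollary; the only point that deserves care is this last reduction of the flatness of $[X/G] \arr S$ to that of $X \arr S$, after which Theorem~\ref{good_thm}(\ref{good_flat}) does all the work.
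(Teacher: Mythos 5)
Your proof is correct and follows essentially the same route as the paper: the fiberwise statement via base change of good moduli spaces (Proposition \ref{good_base_change}) together with $[X/G]\times_S k \cong [X_s/G_s]$, and the flatness statement by descending flatness of $X \arr S$ along the atlas to $[X/G] \arr S$ and then applying Theorem \ref{good_thm}(\ref{good_flat}). You merely spell out details the paper leaves implicit, such as the identification $Y_s \cong \Spec\Gamma(X_s,\oh_{X_s})^{G_s}$ from property (ii) on the affine fiber.
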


\bpf If $X \arr S$ is flat, then $\cX = [X/G] \arr S$ is flat and by Theorem \ref{good_thm}(\ref{good_flat}), $Y  \arr S$ is flat.  The second statement follows since good moduli spaces are stable under arbitrary base change and $\cX_s \cong [X_s / G_s]$.
\epf

\subsection{General case} Let $G \arr S$ be a linearly reductive group scheme acting an a scheme $p: X \arr S$ with $p$ quasi-compact.  Suppose $L$ is a $G$-linearization on $X$.  Let $\cX = [X/G]$, $g: X \arr \cX$ and $\cL$ the corresponding line bundle on $\cX$.   Define $X^{\ss}_{L} = g^{-1}(\cX^{\ss}_{\cL})$ and $X^{\s}_{L} = g^{-1}(\cX^{\s}_{\cL})$.  If $S = \Spec k$, then this agrees with the definition of (semi-)stability in \cite[Definition 1.7]{git}.

\begin{thm} \label{git_general_thm} (Analogue of \cite[Theorem 1.10]{git})
\begin{enumeratei}
\item There is a good moduli space $\phi: \cX^{\ss}_{\cL} \arr  Y$ with $Y$ an open subscheme of $\sProj \bigoplus_{n \ge 0} (p_* \cL^n)^G$ and there is an open subscheme $V \subseteq Y$ such that $\phi^{-1}(V) = \cX^{\s}_{\cL}$ and $\phi |_{\cX^{\s}_{\cL}}: \cX^{\s}_{\cL} \arr V$ is a tame moduli space. 
\item If $X^{\ss}_L$ and $S$ are quasi-compact over $S$ (for example, if $|X|$ is a noetherian topological space), then there exists an $S$-ample line bundle $\cM$ on $Y$ such that $\phi^* \cM \cong \cL^N$ for some $N$.  
\item If $S$ is an excellent quasi-compact scheme and $X$ is finite type over $S$, then $Y \arr S$ is quasi-projective.  If $X \arr S$ is projective and $\cL$ is relatively ample, then $Y \arr S$ is projective.

\end{enumeratei}
\end{thm}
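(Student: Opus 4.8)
\medskip\noindent
The plan is to deduce the whole theorem from Theorem \ref{general_good_thm} applied to the Artin stack $\cX=[X/G]$ equipped with the line bundle $\cL$, through the elementary dictionary
$$\cX^{\ss}_{\cL}=[X^{\ss}_{L}/G],\qquad \cX^{\s}_{\cL}=[X^{\s}_{L}/G],\qquad q_{*}\cL^{n}\cong\bigl(p_{*}\cL^{n}\bigr)^{G},$$
where $q\colon\cX\arr S$ is the structure morphism and, as in the statement, $p_{*}\cL^{n}$ denotes the push-forward to $S$ of $g^{*}\cL^{n}=L^{n}$ together with its natural $G$-linearization (by ``$\cL$ relatively ample'' we understand that $L$ is relatively ample on $X\arr S$). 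The last identification is the sheafy incarnation of the equivalence between $G$-equivariant and $G$-invariant global sections; it respects the algebra structures, hence gives an isomorphism of graded $\oh_{S}$-algebras $\bigoplus_{n\ge0}q_{*}\cL^{n}\cong\bigoplus_{n\ge0}(p_{*}\cL^{n})^{G}$ and thus of their relative $\sProj$'s. Granting it, part (i) is literally Theorem \ref{general_good_thm}(i). For part (ii) I would first note that $\cX^{\ss}_{\cL}=[X^{\ss}_{L}/G]$ is quasi-compact over $S$ whenever $X^{\ss}_{L}$ is, via the atlas $X^{\ss}_{L}\arr[X^{\ss}_{L}/G]$, so that, $S$ being quasi-compact, Theorem \ref{general_good_thm}(ii) applies verbatim. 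For the first sentence of part (iii), $\cX=[X/G]$ is of finite type over $S$ as soon as $X$ is (the atlas $X\arr\cX$ is then of finite type over $S$), whence Theorem \ref{general_good_thm}(iii) gives that $Y\arr S$ is quasi-projective.

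\medskip\noindent
It remains to prove the last sentence of (iii): if $X\arr S$ is projective and $\cL$ is relatively ample, then $Y\arr S$ is projective. The strategy is to show that in this case $Y$ is \emph{all} of $\sProj\bigoplus_{n\ge0}(p_{*}\cL^{n})^{G}$, not merely an open subscheme of it, and that the latter is a finitely generated graded $\oh_{S}$-algebra; a relative $\sProj$ of such an algebra (generated, after passage to a Veronese subalgebra, in degree one by finitely many sections) is projective over the base, which finishes the argument. For the first claim: given an affine open $U\subseteq S$ and a $G$-invariant section $f\in\Gamma(p^{-1}(U),\cL^{n})$ with $n>0$, the open subscheme $X_{f}\subseteq p^{-1}(U)$ is affine over $U$ because $L$ is relatively ample on the projective morphism $p^{-1}(U)\arr U$ (\cite[II.4.5.2]{ega}); by Example \ref{ex_lin_red} the stack $[X_{f}/G]\arr U$ is then cohomologically affine, so $f$ exhibits every point of $X_{f}$ as semi-stable and the standard affine open $D_{+}(f)$ of $\sProj$ lies in $Y$. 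As these $D_{+}(f)$ cover $\sProj\bigoplus_{n\ge0}(p_{*}\cL^{n})^{G}$, we get $Y=\sProj\bigoplus_{n\ge0}(p_{*}\cL^{n})^{G}$. For the second claim: projectivity of $X\arr S$ gives (using that $S$ is quasi-compact) that $\bigoplus_{n\ge0}p_{*}L^{n}$ is a finitely generated graded $\oh_{S}$-algebra, and, $G\arr S$ being linearly reductive, the Reynolds operator splits each inclusion $(p_{*}L^{n})^{G}\hookrightarrow p_{*}L^{n}$; finite generation of $\bigoplus_{n\ge0}(p_{*}L^{n})^{G}$ then follows by the Reynolds-operator argument underlying Fogarty's result used in the proof of Theorem \ref{good_thm}(\ref{good_finite_type}) (and is classical, the Hilbert--Nagata theorem, when $S$ is a field, cf.\ \cite{fogarty2}).

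\medskip\noindent
I expect the genuine obstacle to be exactly this last finite-generation step --- the relative form of Hilbert's fourteenth problem --- which is the only place where linear reductivity of $G$, and possibly noetherianness (or excellence) of $S$, is really used; everything else is formal bookkeeping with Theorem \ref{general_good_thm} and the quotient-stack dictionary. A secondary, purely technical, point to watch is that the isomorphism $\bigoplus_{n\ge0}q_{*}\cL^{n}\cong\bigoplus_{n\ge0}(p_{*}L^{n})^{G}$ be matched with the universal property of sheafy $\sProj$ used to construct $\phi$ in Theorem \ref{general_good_thm}, so that the two descriptions of $Y$, and of the morphism $\phi$, literally coincide.
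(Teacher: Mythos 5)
Your proposal matches the paper's proof almost step for step: the paper disposes of (i), (ii) and the first sentence of (iii) by declaring the theorem a direct translation of Theorem \ref{general_good_thm} under the quotient dictionary, and for the last sentence it records exactly your observation — under the projectivity/ampleness hypotheses every invariant-section locus $\cX_t$ over an affine of $S$ is cohomologically affine (your argument via affineness of $X_t$ and Example \ref{ex_lin_red} is the intended one), whence $Y = \sProj \bigoplus_{n\ge0}(p_*\cL^n)^G$ — and then stops. Where you go beyond the paper is in explaining why the full $\sProj$ is actually projective over $S$, via finite generation of the invariant section algebra; this is indeed the substantive remaining point, and the one place where your justification is shaky: the paper never constructs a Reynolds operator splitting $(p_*L^n)^G \hookarr p_*L^n$ over a general base (linear reductivity is exactness of invariants, not a functorial splitting, and its existence over an arbitrary excellent $S$ is not obvious), and Fogarty's result invoked in the proof of Theorem \ref{good_thm}(\ref{good_finite_type}) is a geometric finiteness theorem, not a Reynolds-operator argument. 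The fix stays entirely inside the paper: either run the classical Hilbert induction replacing the Reynolds operator by exactness of push-forward from $[\sSpec_S \bigoplus_n p_*L^n\,/\,G]$, or more directly note that $[\sSpec_S \bigoplus_n p_*L^n\,/\,G] \arr \sSpec_S \bigoplus_n (p_*L^n)^G$ is a good moduli space by Theorem \ref{git_affine_thm} with source finite type over the excellent scheme $S$, so Theorem \ref{good_thm}(\ref{good_finite_type}) gives that the invariant algebra is a finitely generated graded $\oh_S$-algebra, after which your Veronese reduction yields projectivity of $Y\arr S$ as claimed.
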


\bpf This is a direct translation of Theorem \ref{general_good_thm}. For the final statement, the extra hypotheses imply that for every section $s \in \Gamma(\cX^{\ss}, \cL^n)$ over an affine in $S$, the locus $\cX_s$ is cohomologically affine which implies that $Y = \sProj \bigoplus_{n \ge 0} (p_* \cL^n)^G$. 
\epf

\begin{remark} 
If $S = \Spec k$ and $G$ is a smooth affine linearly reductive group scheme, this is \cite[Theorem 1.10]{git} and 
$$X_{L}^{\ss} \arr Y \subseteq \Proj \bigoplus_{n \ge 0} \Gamma(X,L^n)^G$$
is the GIT good quotient.
\end{remark}

\newpage
\bibliography{../references}{}

\def\polhk#1{\setbox0=\hbox{#1}{\ooalign{\hidewidth
  \lower1.5ex\hbox{`}\hidewidth\crcr\unhbox0}}}
\providecommand{\bysame}{\leavevmode\hbox to3em{\hrulefill}\thinspace}
\providecommand{\MR}{\relax\ifhmode\unskip\space\fi MR }
\providecommand{\MRhref}[2]{%
  \href{http://www.ams.org/mathscinet-getitem?mr=#1}{#2}
}
\providecommand{\href}[2]{#2}
\begin{thebibliography}{MFK94}

\bibitem[AOV08]{tame}
Dan Abramovich, Martin Olsson, and Angelo Vistoli, \emph{Tame stacks in
  positive characteristic}, Ann. Inst. Fourier (Grenoble) \textbf{58} (2008),
  no.~4, 1057--1091.

\bibitem[Art74]{artin_versal}
M.~Artin, \emph{Versal deformations and algebraic stacks}, Invent. Math.
  \textbf{27} (1974), 165--189.

\bibitem[BB63]{bb_homogeneous}
A.~Bia{\l}ynicki-Birula, \emph{On homogeneous affine spaces of linear algebraic
  groups}, Amer. J. Math. \textbf{85} (1963), 577--582.

\bibitem[Cap94]{caporaso}
Lucia Caporaso, \emph{A compactification of the universal {P}icard variety over
  the moduli space of stable curves}, J. Amer. Math. Soc. \textbf{7} (1994),
  no.~3, 589--660.

\bibitem[Con05]{conrad}
Brian Conrad, \emph{Keel-mori theorem via stacks},
  \url{http://www.math.lsa.umich.edu/~bdconrad/papers/coarsespace.pdf} (2005).

\bibitem[DM69]{deligne-mumford}
P.~Deligne and D.~Mumford, \emph{The irreducibility of the space of curves of
  given genus}, Inst. Hautes \'Etudes Sci. Publ. Math. (1969), no.~36, 75--109.



\bibitem[EGA]{ega}
A.~Grothendieck, \emph{\'{E}l\'ements de g\'eom\'etrie alg\'ebrique}, Inst.
  Hautes \'Etudes Sci. Publ. Math. (1961-1967), no.~4,8,111,17,20,24,28,32.


\bibitem[FC90]{faltings-chai}
Gerd Faltings and Ching-Li Chai, \emph{Degeneration of abelian varieties},
  Ergebnisse der Mathematik und ihrer Grenzgebiete (3) [Results in Mathematics
  and Related Areas (3)], vol.~22, Springer-Verlag, Berlin, 1990, With an
  appendix by David Mumford.

\bibitem[Fog83]{fogarty}
John Fogarty, \emph{Geometric quotients are algebraic schemes}, Adv. in Math.
  \textbf{48} (1983), no.~2, 166--171.

\bibitem[Fog87]{fogarty2}
\bysame, \emph{Finite generation of certain subrings}, Proc. Amer. Math. Soc.
  \textbf{99} (1987), no.~1, 201--204.

\bibitem[Gie77]{gieseker_surface_bundles}
D.~Gieseker, \emph{On the moduli of vector bundles on an algebraic surface},
  Ann. of Math. (2) \textbf{106} (1977), no.~1, 45--60.
  
  \bibitem[GIT]{git}
David Mumford, \emph{Geometric invariant theory}, Ergebnisse der Mathematik und
  ihrer Grenzgebiete, Neue Folge, Band 34, Springer-Verlag, Berlin, 1965.

\bibitem[Hab78]{haboush_stab}
W.~J. Haboush, \emph{Homogeneous vector bundles and reductive subgroups of
  reductive algebraic groups}, Amer. J. Math. \textbf{100} (1978), no.~6,
  1123--1137.

\bibitem[Has05]{hassett_genus2}
Brendan Hassett, \emph{Classical and minimal models of the moduli space of
  curves of genus two}, Geometric methods in algebra and number theory, Progr.
  Math., vol. 235, Birkh\"auser Boston, Boston, MA, 2005, pp.~169--192.

\bibitem[HH06]{hassett-hyeon_contraction}
Brendan Hassett and Donghoon Hyeon, \emph{Log canonical models for the moduli
  space of curves: First divisorial contraction}, math.AG/0607477 (2006).

\bibitem[HH08]{hassett-hyeon_flip}
\bysame, \emph{Log minimal model program for the moduli space of stable curves:
  The first flip}, math.AG/0806.3444 (2008).

\bibitem[HL97]{huybrechts-lehn}
Daniel Huybrechts and Manfred Lehn, \emph{The geometry of moduli spaces of
  sheaves}, Aspects of Mathematics, E31, Friedr. Vieweg \& Sohn, Braunschweig,
  1997.

\bibitem[HL07a]{hyeon-lee_genus3}
Donghoon Hyeon and Yongnam Lee, \emph{Log minimal model program for the moduli
  space of stable curves of genus three}, math.AG/0703093 (2007).

\bibitem[HL07b]{hyeon-lee_genus2}
Donghoon Hyeon and Yongnam Lee, \emph{Stability of tri-canonical curves of
  genus two}, Math. Ann. \textbf{337} (2007), no.~2, 479--488.

\bibitem[KKV89]{kkv}
Friedrich Knop, Hanspeter Kraft, and Thierry Vust, \emph{The {P}icard group of
  a {$G$}-variety}, Algebraische Transformationsgruppen und Invariantentheorie,
  DMV Sem., vol.~13, Birkh\"auser, Basel, 1989, pp.~77--87.

\bibitem[KM97]{keel-mori}
Se{\'a}n Keel and Shigefumi Mori, \emph{Quotients by groupoids}, Ann. of Math.
  (2) \textbf{145} (1997), no.~1, 193--213.

\bibitem[Knu71]{Knutson}
Donald Knutson, \emph{Algebraic spaces}, Springer-Verlag, Berlin, 1971, Lecture
  Notes in Mathematics, Vol. 203.

\bibitem[Kol97]{kollar_quotients}
J{\'a}nos Koll{\'a}r, \emph{Quotient spaces modulo algebraic groups}, Ann. of
  Math. (2) \textbf{145} (1997), no.~1, 33--79.

\bibitem[Kra89]{kraft}
Hanspeter Kraft, \emph{{$G$}-vector bundles and the linearization problem},
  Group actions and invariant theory (Montreal, PQ, 1988), CMS Conf. Proc.,
  vol.~10, Amer. Math. Soc., Providence, RI, 1989, pp.~111--123.

\bibitem[Lie07]{lieblich_twisted}
Max Lieblich, \emph{Moduli of twisted sheaves}, Duke Math. J. \textbf{138}
  (2007), no.~1, 23--118.

\bibitem[LMB00]{lmb}
G{\'e}rard Laumon and Laurent Moret-Bailly, \emph{Champs alg\'ebriques},
  Ergebnisse der Mathematik und ihrer Grenzgebiete. 3. Folge. A Series of
  Modern Surveys in Mathematics [Results in Mathematics and Related Areas. 3rd
  Series. A Series of Modern Surveys in Mathematics], vol.~39, Springer-Verlag,
  Berlin, 2000.

\bibitem[Lun73]{luna}
Domingo Luna, \emph{Slices \'etales}, Sur les groupes alg\'ebriques, Soc. Math.
  France, Paris, 1973, pp.~81--105. Bull. Soc. Math. France, Paris, M\'emoire
  33.

\bibitem[Mar77]{maruyama_sheaves}
Masaki Maruyama, \emph{Moduli of stable sheaves. {I}}, J. Math. Kyoto Univ.
  \textbf{17} (1977), no.~1, 91--126. \MR{MR0450271 (56 \#8567)}

\bibitem[Mat60]{matsushima}
Yoz{\^o} Matsushima, \emph{Espaces homog\`enes de {S}tein des groupes de {L}ie
  complexes}, Nagoya Math. J \textbf{16} (1960), 205--218.

\bibitem[Mel07]{melo}
Margarida Melo, \emph{Compactified picard stacks over {$\bar{\mathcal M}_g$}},
  math.AG/0710.3008 (2007).

\bibitem[MFK94]{git3}
D.~Mumford, J.~Fogarty, and F.~Kirwan, \emph{Geometric invariant theory}, third
  ed., Ergebnisse der Mathematik und ihrer Grenzgebiete (2) [Results in
  Mathematics and Related Areas (2)], vol.~34, Springer-Verlag, Berlin, 1994.


\bibitem[Nag59]{nagata_hilbert14}
Masayoshi Nagata, \emph{On the {$14$}-th problem of {H}ilbert}, Amer. J. Math.
  \textbf{81} (1959), 766--772.

\bibitem[Nag62]{nagata_complete}
\bysame, \emph{Complete reducibility of rational representations of a matric
  group.}, J. Math. Kyoto Univ. \textbf{1} (1961/1962), 87--99.

\bibitem[Nag64]{nagata_invariants-affine}
\bysame, \emph{Invariants of a group in an affine ring}, J. Math. Kyoto Univ.
  \textbf{3} (1963/1964), 369--377.

\bibitem[Ols07]{Olsson-sheaves}
Martin Olsson, \emph{Sheaves on {A}rtin stacks}, J. Reine Angew. Math.
  \textbf{603} (2007), 55--112.

\bibitem[RG71]{raynaud-gruson}
Michel Raynaud and Laurent Gruson, \emph{Crit\`eres de platitude et de
  projectivit\'e. {T}echniques de ``platification'' d'un module}, Invent. Math.
  \textbf{13} (1971), 1--89.

\bibitem[Ric77]{richardson}
R.~W. Richardson, \emph{Affine coset spaces of reductive algebraic groups},
  Bull. London Math. Soc. \textbf{9} (1977), no.~1, 38--41.

\bibitem[Ryd07]{rydh_quotients}
David Rydh, \emph{Existence of quotients by finite groups and coarse moduli
  spaces}, math.AG/0708.3333 (2007).

\bibitem[Ryd08]{rydh_approx}
\bysame, \emph{Noetherian approximation of algebraic spaces and stacks}, in
  preparation (2008).

\bibitem[Sch91]{schubert}
David Schubert, \emph{A new compactification of the moduli space of curves},
  Compositio Math. \textbf{78} (1991), no.~3, 297--313.

\bibitem[Ses77]{seshadri_reductivity}
C.~S. Seshadri, \emph{Geometric reductivity over arbitrary base}, Advances in
  Math. \textbf{26} (1977), no.~3, 225--274.

\bibitem[Ses82]{seshadri_bundles}
\bysame, \emph{Fibr\'es vectoriels sur les courbes alg\'ebriques},
  Ast\'erisque, vol.~96, Soci\'et\'e Math\'ematique de France, Paris, 1982,
  Notes written by J.-M. Drezet from a course at the \'Ecole Normale
  Sup\'erieure, June 1980.

\bibitem[SGA3]{sga3}
\emph{Sch\'emas en groupes}, S\'eminaire de G\'eom\'etrie Alg\'ebrique du Bois
  Marie 1962/64 (SGA 3). Dirig\'e par M. Demazure et A. Grothendieck. Lecture
  Notes in Mathematics, Vol. 151,152,153, Springer-Verlag, Berlin, 1962/1964.

\bibitem[Sim94]{simpson_sheaves}
Carlos~T. Simpson, \emph{Moduli of representations of the fundamental group of
  a smooth projective variety. {I}}, Inst. Hautes \'Etudes Sci. Publ. Math.
  (1994), no.~79, 47--129.

\bibitem[Vis05]{vistoli_fga}
Angelo Vistoli, \emph{Grothendieck topologies, fibered categories and descent
  theory}, Fundamental algebraic geometry, Math. Surveys Monogr., vol. 123,
  Amer. Math. Soc., Providence, RI, 2005, pp.~1--104.

\end{thebibliography}

\bibliographystyle{amsalpha}

\end{document}